\documentclass[10pt]{amsart}
\usepackage{amssymb}
\usepackage{amsbsy}
\usepackage{amscd}
\usepackage[mathscr]{eucal}
\usepackage{verbatim}
\usepackage{version}
\usepackage{color}
\usepackage[matrix,arrow]{xy}
\usepackage{graphicx}

\def\cal{\mathcal}
\def\Bbb{\mathbb}
\def\frak{\mathfrak}

\newenvironment{NB}{
\color{red}{\bf NB}. \footnotesize 
}{}
\excludeversion{NB}
\newenvironment{NB2}{
\color{blue}{\bf NB}. \footnotesize
}{}

\newcommand{\SL}  {\operatorname{SL}}
\newcommand{ \Supp}{\operatorname{Supp}}
\newcommand{\Ext}{\operatorname{Ext}}
\newcommand{\Hom}{\operatorname{Hom}}

\newcommand{\im}{\operatorname{im}}

\newcommand{\rk}{\operatorname{rk}}

\newcommand{\NS}{\operatorname{NS}}
\newcommand{\coker}{\operatorname{coker}}
\newcommand{\Pic}{\operatorname{Pic}}
\newcommand{\ch}{\operatorname{ch}}

\newcommand{\Hilb}{\operatorname{Hilb}}

\newcommand{\Coh}{\operatorname{Coh}}

\newcommand{\Div}{\operatorname{Div}}

\newcommand{\alg}{\operatorname{alg}}

\newcommand{\f}{{\bf f}}

\newcommand{\End}{\operatorname{End}}

\newcommand{\Mod}{M}

\font\b=cmr10 scaled \magstep5
\def\bigzerou{\smash{\lower1.7ex\hbox{\b 0}}}

\numberwithin{equation}{section}
\theoremstyle{plain}
 \newtheorem{thm}{Theorem}[section]
 \newtheorem{lem}[thm]{Lemma}
 \newtheorem{prop}[thm]{Proposition}
 \newtheorem{cor}[thm]{Corollary}

\theoremstyle{definition}
 \newtheorem{defn}[thm]{Definition}
 
\theoremstyle{remark}
 \newtheorem{rem}[thm]{Remark}
 \newtheorem{ex}[thm]{Example}
 
\begin{document}

\title{Stability and Fourier-Mukai transforms on an eliptic surface.
}
\author{K\={o}ta Yoshioka}
\address{Department of Mathematics, Faculty of Science,
Kobe University,
Kobe, 657, Japan
}
\email{yoshioka@math.kobe-u.ac.jp}

\thanks{
The author is supported by the Grant-in-aid for 
Scientific Research (No. 21H04429, 23K03053, 26K06742), JSPS}
\keywords{elliptic surfaces, Bridgeland stability, Fourier-Mukai transforms}

\begin{abstract}
We shall introduce a stability condition for a coherent sheaf associated to an elliptic surface. 
Then we study the behavior under relative Fourier-Mukai transforms.
\end{abstract}

\maketitle

\renewcommand{\thefootnote}{\fnsymbol{footnote}}
\footnote[0]{2010 \textit{Mathematics Subject Classification}. 
Primary 14D20.}

\section{Introduction}
The study of moduli spaces of stable sheaves on elliptic surfaces are started by fundamental works of  
Friedman (see \cite{F1} and \cite{F2}).
He obtained a birational description of the moduli spaces of rank 2 stable sheaves.
In particular it is proved that the moduli spaces are birationally equivalent to the 
Hilbert scheme of points, if the relative degree of the first Chern class
is odd.
Later it was generalized to higher rank cases by Bridgeland \cite{Br:1} and Yoshioka \cite{Y:Nagoya}. 
For the construction of the birational map,
Bridgeland's method \cite{Br:1} is notable, that is,
he developed a powerful method called relative Fourier-Mukai transforms.
They are equivalences of the derived category of coherent sheaves on elliptic surfaces by using moduli spaces of stable fiber sheaves on the elliptic fibration.
Then the birational maps for higher rank cases are easily follow.
Since then, 
many properties of moduli spaces are found by Bridgeland's method (cf. \cite{BBH}, \cite{BH}, \cite{HM}, \cite{JM}, \cite{Y:7}, \cite{Y:twist2}).

In \cite{Y:elliptic}, we introduced a notion of stability for coherent sheaves 
associated to relative Fourier-Mukai transforms
and studied the wall crossing behaviors. 
In this paper, we shall generalize the notion to the case where the relative degree and the rank is not 
relatively prime, and study some properties.

Let $\pi:X \to C$ be a minimal elliptic surface over ${\Bbb C}$. 
Let 
$$
H^{*}(X,{\Bbb Q})_{\alg}:=H^0(X,{\Bbb Q}) \oplus \NS(X)_{\Bbb Q} \oplus H^4(X,{\Bbb Q})
$$
be the algebraic part of the cohomology ring $H^*(X,{\Bbb Q})$, where
$\NS(X)_{\Bbb Q}=\NS(X) \otimes_{\Bbb Z} {\Bbb Q}$.
We define $\varrho_X \in H^4(X,{\Bbb Z})$ by
$\int_X \varrho_X=1$.
Then we have
$$
H^{*}(X,{\Bbb Q})_{\alg}={\Bbb Q} \oplus \NS(X)_{\Bbb Q} \oplus {\Bbb Q}\varrho_X.
$$ 
For $u=x_0+x_1+x_2 \varrho_X$ ($x_0,x_2 \in {\Bbb Q}$, $x_1 \in \NS(X)_{\Bbb Q}$),
we set $u^\vee:=x_0-x_1+x_2 \varrho_X$.
As in \cite{ellipticStab}, we introduce Mukai pairing and Mukai vector. 
\begin{enumerate}
\item[(1)]
We define the Mukai pairing $\langle \;\;,\;\; \rangle$ on 
$H^*(X,{\Bbb Q})_{\alg}$
by
\begin{equation}
\langle u,v \rangle=-\int_X u^\vee v=(x_1 \cdot y_1)-x_0 y_2-x_2 y_0,\;
u=x_0+x_1+x_2 \varrho_X, v=y_0+y_1+y_2 \varrho_X,
\end{equation} 
where $(x_1 \cdot y_1)$ is the intersection pairing of $\NS(X)_{\Bbb Q}$.
\item[(2)]
For $E \in {\bf D}(X)$, we define the Mukai vector $v(E)$ by
\begin{equation}
v(E):=\ch E (1+\tfrac{\chi({\cal O}_X)}{2}\varrho_X) \in H^*(X,{\Bbb Q})_{\alg}.
\end{equation}
\item[(3)]
If $v \in v({\bf D}(X))$, then we also call $v$ a Mukai vector.  
\end{enumerate}

For an object $E \in {\bf D}(X)$,
we define $(f,G,H)$-semistability in Definition \ref{defn:fGH} and \ref{defn:fGH2},
where $H$ is a ${\Bbb Q}$-divisor which is $\pi$-ample with
$(H^2)=0$ and $(H \cdot f)=1$.
Roughly speaking, it means that 
$E$ or the derived dual $E^{\vee}$ is a coherent sheaf with a possible torsion fiber sheaf,
where $G \in K(X)$ with the rank $\rk G>0$.

We shall study the moduli stack ${\cal M}_{(f,H)}^G(v)$ 
of $(f,G,H)$-semistable sheaves $E$ with $v(E)=v$.
The $(f,G,H)$-semistability depends on $c_1(G)/\rk G \mod {\Bbb Q}f$.
Thus we have a parameter space 
$$
{\cal G}(v) \subset \{(s,\alpha) \mid s \in {\Bbb R}, \alpha \in 
\NS(X)_{\Bbb R} \cap f^\perp \cap H^\perp \}
$$ 
for $(f,G,H)$-semistability.
As in slope-semistability or Bridgeland semistability,
we have a wall/chamber structure in ${\cal G}(v)$ (Definition \ref{defn:wall}).
We shall prove that there is a chamber parameterizing Gieseker semistable sheaves
with respect to $H_f:=H_f$ $(n \gg 0)$ (Proposition \ref{prop:Gieseker-chamber}). 
Thus our stability is a generalization of Gieseker semistability.
If $-K_X$ is nef, we can study wall crossing behaviors. In particular we 
can compare $(f,G,H)$-semistability with Gieseker stability.   

Let
$\Phi_{X \to X'}^{{\cal P}^{\vee}}:{\bf D}(X) \to {\bf D}(X')$ be 
a relative Fourier-Mukai transform, that is,
$X'$ is a moduli space of stable sheaves supported on fibers of $\pi$
and ${\cal P} \in \Coh(X \times X')$ the universal family.
Then there is an elliptic fibration $\pi':X' \to C$ and we can associate a ${\Bbb Q}$-divisor $H'$ which is
$\pi'$-ample with $({H'}^2)=0$ and $(H' \cdot f')=1$. 
Let $\overline{\Phi}_{X \to X'}^{{\cal P}^{\vee}}$ be its cohomological Fourier-Mukai
transform (see Lemma \ref{lem:cohFM}). 
For $E \in {\bf D}(X)$, we note that
$\rk \Phi_{X \to X'}^{{\cal P}^{\vee}[k]}(E)=
(-1)^{k+1} \langle v(E),v({\cal P}_{|X \times \{ x' \}}) \rangle$ (Lemma \ref{lem:RR}).
Then we have the following result which explains that our semistability behaves nicely under relative Fourier-Mukai transforms. 

\begin{thm}[{Theorem \ref{thm:FMpreserve}}]\label{thm:main}
Let $E$ be a $(f,G,H)$-semistable object.
Assume that $G$ is general and 
$\rk \Phi_{X \to X'}^{{\cal P}^{\vee}}(E) \ne 0$. Then there is a triple $(f',G',H')$ such that 
$\Phi_{X \to X'}^{{\cal P}^{\vee}}(E)$ is $(f',G',H')$-semistable up to shift.
\end{thm}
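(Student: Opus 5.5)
The plan is to reduce $(f,G,H)$-semistability to a condition on the generic fiber together with a twisted-slope condition, and to check that both are transported by $\Phi:=\Phi_{X \to X'}^{{\cal P}^{\vee}}$. As I expect Definitions \ref{defn:fGH} and \ref{defn:fGH2} to say, an $(f,G,H)$-semistable $E$ is (up to taking $E^\vee$) a coherent sheaf, possibly with torsion supported on fibers. Its restriction to a general fiber is semistable of a fixed slope determined by $v(E)$. The remaining condition is a $G$-twisted inequality on subobjects $F\subset E$ whose restriction to the generic fiber has the same fiber slope, measured by $\chi(G,F)/\rk F$ or by the term $(c_1(F)-\rk F\, c_1(G)/\rk G)\cdot H$. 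This reduction is the first step.

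The second step uses the standard structure of relative Fourier--Mukai transforms from \cite{Br:1}. On fibers, $\Phi$ acts as an $\SL_2({\Bbb Z})$-transformation on (rank, fiber degree). By Lemma \ref{lem:RR}, $\rk\Phi(E)=-\langle v(E),v({\cal P}_{|X\times\{x'\}})\rangle$. When this is nonzero, the fiber slope of $E$ differs from that of ${\cal P}_{|X\times\{x'\}}$. Hence, for semistable fiber sheaves of this slope, only one cohomology sheaf of $\Phi$ survives: WIT$_0$ or WIT$_1$ according to the sign of the slope difference. I will therefore show that $E$ is $\Phi$-WIT$_i$ up to fiber-supported pieces. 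The torsion fiber subsheaves and the torsion-free part must be handled separately, which is presumably why the definition allows either $E$ or $E^\vee$ to be a sheaf. After the shift by $[i]$, the image $\widehat E$ is a sheaf, or its dual is one, with generic fiber semistable. This gives the fiber part of the $(f',G',H')$-semistability.

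The third step is to choose $G'$. Using Lemma \ref{lem:cohFM}, I set $v(G')$ to be a suitable modification of $\overline{\Phi}(v(G))$: shift by multiples of $v({\cal O}_{X'})$ and $\varrho_{X'}$ so that $\rk G'>0$, and read off $H'$ from $\overline{\Phi}$ applied to $H$ modulo $f$. The Mukai pairing is preserved by $\overline\Phi$. The twisted condition for subobjects is a sign condition on $\langle v(G),v(F)\rangle$ combined with the fiber-degree pairing against $v(E)$. It therefore becomes a sign condition on $\langle v(G'),v(\Phi(F))\rangle$, possibly multiplied by a sign that must be tracked. Destabilizing subobjects $F'\subset\widehat E$ with equal fiber slope are themselves WIT sheaves, with inverse transform giving subobjects (or quotients, if $i=1$) of $E$. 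Hence the inequality transfers back. When quotients appear, I use the dual formulation, which explains the "up to shift".

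The main obstacle is this last correspondence of subobjects. Subobjects of $\widehat E$ may fail to come from subobjects of $E$ because of fiber-supported torsion and non-WIT pieces. The hypothesis that $G$ is general is what keeps walls from being hit: no subobject gives equality $\langle v(G),v(F)\rangle$-type degeneracies, so strict inequalities transfer. The method for controlling the torsion would be the short exact sequence $0\to T\to E\to E/T\to 0$ with $T$ the fiber-torsion part. I would show that $T$ and $E/T$ are WIT of the same index, or of complementary indices, with $(f,G,H)$-semistability forcing the compatible case.
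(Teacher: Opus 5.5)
\textbf{There is a genuine gap.} It sits in your second step, which is where all the content of the theorem lies. Knowing that the fibre slope of $E$ differs from that of ${\cal P}_{|X\times\{x'\}}$ controls only the generic fibre, and the device you propose for special fibres cannot be carried out.

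\emph{The missing mechanism.} The kernel fibre ${\cal P}_{|X\times\{x'\}}$ is itself a $G$-twisted stable fibre sheaf, with $\chi(G,{\cal P}_{|X\times\{x'\}})=-r_0\rk G\,s$, where $c_1(G)/\rk G=\beta+sH+yf+\alpha$. So it lies in ${\cal F}_G^-$ or ${\cal F}_G^+$ according to the sign of $s$. For $E$ a sheaf (otherwise work with $E^\vee(K_X)$), the condition $E\in{\cal C}_G$ then kills $\Hom(E,{\cal P}_{|X\times\{x'\}})$ or $\Hom({\cal P}_{|X\times\{x'\}},E)$; twisting by $K_X$ is harmless. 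Hence one extreme $\Ext$ group vanishes fibrewise. Consequently the transform, or its twisted derived dual ${\bf R}\Hom_{p_{X'}}(p_X^*E,{\cal P}[1])$ depending on the case, is a two-term complex of locally free sheaves. Its $H^{-1}$ is torsion free and vanishes on the generic fibre, hence is zero.

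\emph{Why $\langle v,v_0\rangle$ alone does not decide the outcome.} Whether the transform or its dual is the sheaf, and whether $\epsilon=\pm1$, depends on the relative position of $s$, $0$ and $p/r$. These are the fibre slopes of $G$, ${\cal P}_{|X\times\{x'\}}$ and $E$ in the paper's normalization. This is the six-case analysis in the proof of Theorem \ref{thm:FMpreserve}, reduced via $E\mapsto E^\vee(K_X)$ and $\Phi^{{\cal Q}^\vee}$ to Propositions \ref{prop:FMstability1}--\ref{prop:FMstability3}.

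\emph{Why your torsion-splitting plan fails.} Semistability does not force the fibre torsion $T$ and $E/T$ to be WIT of compatible indices. For $s>0$, $T$ may contain some ${\cal P}_{|X\times\{x''\}}$; this is allowed by ${\cal C}_G$ and by Remark \ref{rem:fGH}. $\Phi_{X\to X'}^{{\cal P}^\vee[1]}$ sends it to ${\Bbb C}_{x''}[-1]$. So $T$ has the other index, or is not WIT at all, yet $\Phi_{X\to X'}^{{\cal P}^\vee[1]}(E)$ is still a sheaf. What semistability supplies is a Hom-vanishing for $E$ as a whole, not a WIT property of its pieces.

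\textbf{Subobject step.} Destabilizing subobjects or quotients of the transform are not automatically WIT. The paper replaces a destabilizing quotient $F\to F_2$ by a further quotient with $\Hom(A,F_2)=0$ for all $A\in{\cal F}_{G'}^+$ (Lemma \ref{lem:F^+}). This only lowers $\chi(G',F_2)$, so the quotient still destabilizes. In Proposition \ref{prop:FMstability1} one has $\chi(G',{\cal Q}_{|\{x\}\times X'})=\chi(G,{\Bbb C}_x)>0$. Together with the generic-fibre condition, this forces the inverse transform of $F_2$ to be a sheaf, giving an honest exact sequence $0\to E_1\to E\to E_2\to 0$ of sheaves.

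\textbf{Role of genericity.} Since $\chi(G',F_i)=\chi(G,E_i)$ exactly, and Lemma \ref{lem:f-slope} matches the rank ratios, the inequality transfers with no strictness needed. Genericity of $G$ enters only in the equality case. There it forces $v(E_i)\in{\Bbb Q}v$, which disposes of the $H$-tie-breaker in Definition \ref{defn:fGH}. That tie-breaker is not preserved by $\Phi$ in general; it is preserved only for $\alpha=0$ (Lemma \ref{lem:special}).

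\textbf{Choice of $G'$ and $H'$.} Your normalization of $G'$ is wrong. Adding multiples of $v({\cal O}_{X'})$ changes $c_1(G')/\rk G'$, and hence the stability condition (Lemma \ref{lem:fGH-indep}). The correct choice is $\epsilon\,\Phi(G)$ with $\epsilon=\pm1$. When $\epsilon=-1$ all inequalities reverse, and this is exactly what passing to derived duals through Definition \ref{defn:fGH2} absorbs. Similarly, $H'$ is $-c_1(\overline{\Phi}(r_0e^\beta))$, not read off from $\overline{\Phi}(H)$. Rank and fibre degree are exchanged, and $\overline{\Phi}(He^\beta)=r_0e^{\beta'}$ has rank $r_0$.
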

Theorem \ref{thm:main} shows that $\Phi_{X \to X'}^{{\cal P}^{\vee}}$ induces a bijective 
correspondence of the set of chambers. 
Hence by studying chamber structure, 
we can study the behavior of Gieseker semistable objects
under relative Fourier-Mukai transforms. 
In particular we get the following result.
\begin{thm}[{cf. Theorem \ref{thm:FM2}}]\label{thm:m>>0}
Let $E$ be a Gieseker semistable sheaf with respect to $H_f$. 
Then
$\Phi_{X \to X'}^{{\cal P}^{\vee}[1]}(E(mH))$ is Gieseker semistable with respect to
$H'_{f'}$
if $m$ is sufficiently large.  
\end{thm}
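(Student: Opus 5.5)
The plan is to reduce Theorem \ref{thm:m>>0} to Theorem \ref{thm:main} together with the identification of the Gieseker chamber (Proposition \ref{prop:Gieseker-chamber}). First I will rephrase the hypothesis. By Proposition \ref{prop:Gieseker-chamber}, a sheaf $E$ of Mukai vector $v$ is Gieseker semistable with respect to $H_f$ exactly when it is $(f,G,H)$-semistable for $G$ in the distinguished chamber of ${\cal G}(v)$ adjacent to the Gieseker limit. Twisting by ${\cal O}(mH)$ changes nothing essential: since $(H^2)=0$ and $(H\cdot f)=1$, tensoring by ${\cal O}(mH)$ preserves $(f,G,H)$-semistability after replacing $G$ by $G(mH)$. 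So $E(mH)$ is $(f,G(mH),H)$-semistable, with $G(mH)$ in the Gieseker chamber for $v(E(mH))=v(E)e^{mH}$.

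Next I will check the rank hypothesis of Theorem \ref{thm:main}. By Lemma \ref{lem:RR},
\[
\rk \Phi_{X\to X'}^{{\cal P}^{\vee}[1]}(E(mH))=\langle v(E)e^{mH},v({\cal P}_{|X\times\{x'\}})\rangle .
\]
The fiber sheaf ${\cal P}_{|X\times\{x'\}}$ has Mukai vector $r_0 f + d_0\varrho_X$ with $r_0>0$. Hence the pairing equals $r_0\bigl((c_1(E)\cdot f)+m\rk E\bigr)-d_0\rk E$. This is positive and grows linearly in $m$ once $m\gg0$, for $\rk E>0$. (If $\rk E=0$, the sheaf is a torsion fiber-type sheaf and a separate but easier argument is needed.) Thus for $m\gg0$ the transform is of positive rank with the shift $[1]$, and Theorem \ref{thm:main} gives $(f',G',H')$ such that $\Phi^{{\cal P}^\vee[1]}(E(mH))$ is $(f',G',H')$-semistable up to shift. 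Here $G'$ is obtained from $G$ by the cohomological transform $\overline{\Phi}$ of Lemma \ref{lem:cohFM}, and I will track this explicitly.

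The main step, and the expected obstacle, is to show that for $m\gg0$ the parameter $(s',\alpha')$ of $G'$ lies in the Gieseker chamber of ${\cal G}(v')$, and that the shift is $0$, so that the object is an honest sheaf. For this I will compute $\overline{\Phi}$ on $e^{mH}$. The transform essentially exchanges the roles of the rank and the relative degree $(c_1\cdot f)$. A twist by $mH$ on $X$ therefore corresponds on $X'$ to moving the parameter $s'$ toward the end of ${\cal G}(v')$ where the relative degree dominates, which is the Gieseker limit. I will try to show that $|s'|\to\infty$ (or $s'\to$ the Gieseker boundary) linearly in $m$, while $\alpha'$ stays bounded.

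Then I will use local finiteness of walls near that boundary (Definition \ref{defn:wall}) to conclude that the chamber is eventually constant and equals the one in Proposition \ref{prop:Gieseker-chamber}. The delicate point is that $v'$ itself depends on $m$, so the walls move with $m$. To handle this I will normalize by twisting on $X'$ by a suitable line bundle ${\cal O}(kH')$, so that $v'$ ranges in a bounded family. I will then compare the wall inequalities directly, expecting the Bogomolov-type bound on destabilizing subobjects to control them uniformly. Finally, since $G'$ is in the Gieseker chamber and $\rk>0$, the $(f',G',H')$-semistable object is a sheaf by Definition \ref{defn:fGH}. This forces the shift to be trivial, which gives Gieseker semistability with respect to $H'_{f'}$.
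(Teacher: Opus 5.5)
\paragraph{Verdict.} Your proposal has a genuine gap at the step you yourself flag as the main one, and it comes from the choice of $G$.

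\paragraph{Why the chosen parameter fails.} You transport a fixed Gieseker-chamber parameter $G$ along with $E$, i.e.\ you use $G(mH)$. Work in the coordinates of subsection \ref{subsect:FM}, normalized by $\beta$ as in \eqref{eq:beta}, so that $s=0$ corresponds to the slope $d_0/r_0$ of the fiber sheaves parametrized by $X'$. Then your parameter is $s_\beta=s_G+m-\frac{d_0}{r_0}$. Also $p/r=\frac{d}{r}+m-\frac{d_0}{r_0}$, so $p-rs_\beta=r(\frac{d}{r}-s_G)$ is a fixed positive constant. The transformed parameter is $-\frac{1}{r_0^2s_\beta}$, and the $f'$-slope of $F$ is $\mu_{\beta',f'}(F)=-\frac{r}{r_0^2p}$. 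Hence
\[
-\frac{1}{r_0^2s_\beta}-\mu_{\beta',f'}(F)=-\frac{p-rs_\beta}{r_0^2\,p\,s_\beta}=O(m^{-2}).
\]
So $s'$ does not tend to $-\infty$. It tends to $0$ and collapses onto the slope of $v'$, which is exactly where walls for $v'$ can accumulate.

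At such a point the explicit criterion of Proposition \ref{prop:Gieseker-chamber} on $X'$ cannot be checked. Its constant $\frac{\mu'}{2}\bigl(-(D'^2)+\langle v'^2\rangle+r'^2\chi({\cal O}_{X'})\bigr)+A'$ does not become small in general. It even grows when $\chi({\cal O}_X)\neq 0$, because $r'=r_0p$ grows linearly in $m$.

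Your proposed remedy does not work either. Twisting by ${\cal O}(kH')$ leaves $r'$, $\langle v'^2\rangle$ and $D'$ unchanged, so $v'$ does not range in a bounded family. Local finiteness of walls near the boundary then gives nothing uniform in $m$.

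\paragraph{The missing idea.} Move $G$ inside the Gieseker chamber on $X$ instead of twisting it along with $E$. The constant $C=\frac{\mu}{2}\bigl(-(D^2)+\langle v^2\rangle+r^2\chi({\cal O}_X)\bigr)+A$ is invariant under $v\mapsto ve^{mH}$, while $d/r$ increases by $m$. So for $m\gg0$ the Gieseker region $s<\frac{d}{r}+m-C$ contains every $s$ with $0<s-\frac{d_0}{r_0}\ll 1$; this is the hypothesis of Theorem \ref{thm:FM2}.

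Take $G=e^{sH+\alpha}$ with such an $s$. You are then in case (b) of Theorem \ref{thm:FMpreserve}, and $s'=-\frac{1}{r_0^2(s-d_0/r_0)}$ is as negative as you like. Proposition \ref{prop:Gieseker-chamber} therefore applies on $X'$ directly; this is Remark \ref{rem:Gieseker}. Proposition \ref{prop:FMstability1} then gives the equivalence and shows that $F$ is a sheaf, so no shift occurs. This is the paper's proof.

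Your $s'$ does in fact lie in the same chamber on $X'$, since both parameters come from one connected wall-free region on $X$. But seeing this requires exactly the observation above.
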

By studying the wall crossing behavior, we can show that the topological invariants
do not depend on the choice of the chamber if $(K_X \cdot H) \leq 0$.
Thus we get the following result.

\begin{prop}[{cf. Proposition \ref{prop:indep}}]
Assume that $(K_X \cdot H) \leq 0$.
\begin{enumerate}
\item[(1)]
The vertual Hodge number $e({\cal M}_{(f,H)}^G(v))$
does not depend on the choice of chambers.
\item[(2)]
$\Phi_{X \to X'}^{{\cal P}^{\vee}}$ preserves virtual Hodge polynomials
$e({\cal M}_{H_f}(v))$
of the moduli of Gieseker semistable sheaves.
\end{enumerate}
\end{prop}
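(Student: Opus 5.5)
The plan is to reduce both parts to a wall-crossing formula for the motivic invariants, and then use Theorem \ref{thm:main} to move between chambers on $X$ and on $X'$.

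\emph{Wall crossing.} For (1), fix $v$ and two adjacent chambers of ${\cal G}(v)$ separated by a wall $W$, as in Definition \ref{defn:wall}. Take a general parameter $G_0$ on $W$ and parameters $G_\pm$ in the two neighbouring chambers. Every $(f,G_\pm,H)$-semistable object is $(f,G_0,H)$-semistable. Conversely, each $(f,G_0,H)$-semistable object has a unique Harder--Narasimhan filtration with respect to $G_\pm$, whose factors all have Mukai vectors $v_i$ lying on the same wall, i.e.\ proportional to $v$ in the relevant quotient. Summing over HN types gives the usual identity in the motivic Hall algebra:
\[
e({\cal M}^{G_0}_{(f,H)}(v))=\sum \mathbb{L}^{-\sum_{i<j}\chi(v_i,v_j)}\prod_i e({\cal M}^{G_\pm}_{(f,H)}(v_i)),
\]
where $\chi(v_i,v_j)=\dim\Hom-\dim\Ext^1+\dim\Ext^2$ between the factors. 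This holds because the fibres of the stack of filtrations over the product of stacks are affine (stacky) bundles of dimension $-\chi$, provided $\Ext^{2}$ between the factors vanishes in the right direction.

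\emph{Vanishing of $\Ext^2$.} This is where $(K_X\cdot H)\le 0$ is used, and I expect it to be the main obstacle. By Serre duality, $\Ext^2(E_i,E_j)=\Hom(E_j,E_i\otimes K_X)^\vee$. For $i<j$ we have $\phi(E_i)>\phi(E_j)$, and $K_X\equiv$ (fibre classes) with $(K_X\cdot H)\le 0$. So twisting by $K_X$ does not increase the $(f,G_\pm,H)$-phase, and the Hom vanishes by semistability. The delicate case is equality, where both factors are torsion fibre sheaves or have equal reduced invariants. There I would argue with the slope-type inequality on the $G_0$-wall directly, and use that $(f,G,H)$-stability depends only on $c_1(G)/\rk G \bmod {\Bbb Q}f$.

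\emph{Conclusion of (1).} Once the $\Ext^2$ term vanishes, $\chi(v_i,v_j)=-\langle v_i,v_j\rangle$ is symmetric up to sign. The formula can then be inverted recursively in $v$: by induction on the number of factors, $e({\cal M}^{G_+}(v))$ and $e({\cal M}^{G_-}(v))$ are both expressed by the same universal formula in terms of $e({\cal M}^{G_0}(v_i))$. Hence they are equal. Connecting any two chambers by a path crossing finitely many walls proves (1).

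\emph{Part (2).} By Proposition \ref{prop:Gieseker-chamber}, ${\cal M}_{H_f}(v)$ is ${\cal M}^G_{(f,H)}(v)$ for $G$ in the Gieseker chamber. Twisting by $mH$ is an isomorphism of moduli stacks. By Theorem \ref{thm:main}, $\Phi_{X\to X'}^{{\cal P}^\vee}$ induces an isomorphism of stacks ${\cal M}^G_{(f,H)}(v)\cong{\cal M}^{G'}_{(f',H')}(v')$ with $v'=\overline{\Phi}(v)$. By Theorem \ref{thm:m>>0}, after a twist with $m\gg0$ the image chamber is the Gieseker chamber for $H'_{f'}$. Combining these with (1) on both $X$ and $X'$ (note $(K_{X'}\cdot H')=(K_X\cdot H)$ since both are determined by the Euler characteristic and the base $C$), we get $e({\cal M}_{H_f}(v))=e({\cal M}_{H'_{f'}}(v'))$. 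When $\rk\overline{\Phi}(v)=0$, I would first compose with the twist $E\mapsto E(mH)$ to reach nonzero rank before applying the transform.
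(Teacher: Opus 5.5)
Your overall route is the paper's. You stratify the wall stack by $(f,G_\pm,H)$-Harder--Narasimhan type and show that the HN strata satisfy $e({\cal F}^\pm(v_1,\dots,v_m))=(xy)^{\sum_{i<j}\langle v_i,v_j\rangle}\prod_i e({\cal M}^{G_\pm}_{(f,H)}(v_i)^{ss})$. The $G_-$-types are the reversals of the $G_+$-types, so you can induct. Part (2) then follows from (1) and Theorem \ref{thm:main}.

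\textbf{The key step is argued in the wrong direction.} The step you single out as the crux does not work as written. The fibre of the stack of filtrations over $\prod_i{\cal M}(v_i)$ is built from $\Ext^1(E_j,E_i)$ and $\Hom(E_j,E_i)$ with $i<j$ (later factor into earlier). So the exponent is $-\chi(E_j,E_i)$, which is harmless here by symmetry. The vanishing you actually need is $\Ext^2(E_j,E_i)=\Hom(E_i,E_j(K_X))^\vee=0$ for $i<j$.

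This holds because $\phi_\pm(E_i)>\phi_\pm(E_j)\ge\phi_\pm(E_j(K_X))$. The second inequality is where $(K_X\cdot H)\le 0$ and $\mu_f(E_j)>\mu_f(G_\pm)$ enter; this is exactly Lemma \ref{lem:EF2}. What you wrote, $\Hom(E_j,E_i\otimes K_X)$ for $i<j$, goes from the lower-phase factor to a twist of the higher-phase one. It need not vanish: when $K_X\cong{\cal O}_X$ it is just $\Hom(E_j,E_i)$.

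There is also no delicate equality case. HN phases are strictly decreasing, which is precisely the strict inequality Lemma \ref{lem:EF2} needs when $(K_X\cdot H)=0$. The rank-zero factors can only occur first or last, and they are killed by the ${\cal C}_{G_\pm}$ and ${\cal F}^\pm_{G_\pm}$ conditions, since $\chi(G,A(K_X))=\chi(G,A)$ for a fibre sheaf $A$.

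\textbf{Smaller corrections.}
\begin{enumerate}
\item[(a)] On the wall you must use the weak stack $\overline{\cal M}^{G_0}_{(f,H)}(v)^{ss}$, defined by the inequality on $\chi(G_0,\cdot)$ alone, as the paper does. With the $H$-tie-breaker of Definition \ref{defn:fGH}, an $(f,G_\pm,H)$-semistable sheaf need not be $(f,G_0,H)$-semistable. For example, a fibre subsheaf $A$ with $\chi(G_+,A)<0=\chi(G_0,A)$ is excluded by Remark \ref{rem:fGH}(1).
\item[(b)] In (2), the twist by $mH$ and Theorem \ref{thm:m>>0} are superfluous, and they change the target vector, since $\overline{\Phi}(ve^{mH})\ne\overline{\Phi}(v)$. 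What gives $e({\cal M}_{H_f}(v))=e({\cal M}_{H'_{f'}}(\overline{\Phi}(v)))$ is this: take $G$ in the Gieseker chamber for $v$, apply Theorem \ref{thm:main} to land in some chamber for $v'$, then apply (1) on $X'$.
\item[(c)] For the same reason, twisting cannot handle $\rk\overline{\Phi}(v)=0$. That case lies outside Theorem \ref{thm:main} (cf.\ Theorem \ref{thm:FMpreserve0}).
\item[(d)] $(K_{X'}\cdot H')$ is not determined by $\chi({\cal O})$ and $C$ alone, since multiple-fibre multiplicities contribute. Only its sign is needed, and that sign is the Kodaira dimension, which is a derived invariant.
\end{enumerate}
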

We remark that (2) is a consequence of (1) and Theorem \ref{thm:main}.
In particular,  
$\Phi_{X \to X'}^{{\cal P}^{\vee}}$ preserves virtual Hodge polynomials
$e({\cal M}_{H_f}(v))$
for a rational elliptic surface and an elliptic ruled surface.

\begin{NB}
\begin{prop}
Let
$\Phi_{X \to X'}^{{\cal P}^{\vee}}:{\bf D}(X) \to {\bf D}(X')$ be 
a relative Fourier-Mukai transform.
For a Mukai vector $v$ with $\rk v>0$,
assume that $v':=\Phi_{X \to X'}^{{\cal P}^{\vee}[k]}(v)$ satisfies $\rk v'>0$.
Then the virtual Hodge polynomials are the same, that is,
 $e({\cal M}_H(v)^{ss})=e({\cal M}_{H'}(v')^{ss})$, where
$H$ and $H'$ are sufficiently close to fibers. 
\end{prop}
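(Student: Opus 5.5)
The statement sits inside an excluded \texttt{NB} environment and is an early form of Proposition \ref{prop:indep}(2). I would prove it by combining Theorem \ref{thm:main}, Proposition \ref{prop:Gieseker-chamber} and chamber-independence of virtual Hodge polynomials. The idea is to pass from Gieseker semistability with respect to $H$ (close to the fibre, i.e.\ $H_f$ with $n\gg0$) to $(f,G,H)$-semistability for $G$ in the Gieseker chamber, transport across the Fourier--Mukai transform, and then come back on $X'$.

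First, by Proposition \ref{prop:Gieseker-chamber} I choose $G$ general in the chamber of ${\cal G}(v)$ where $(f,G,H)$-semistability agrees with Gieseker $H_f$-semistability. This gives ${\cal M}_H(v)^{ss}={\cal M}_{(f,H)}^G(v)$. Next I apply Theorem \ref{thm:main}. Since $\rk v'\neq 0$, each such $E$ maps to an $(f',G',H')$-semistable object up to the fixed shift $[k]$. The point to check here is that $G'$ is determined by $G$ and $v$ alone, via $\overline{\Phi}_{X \to X'}^{{\cal P}^{\vee}}$ acting on $c_1(G)/\rk G$, and not by $E$. The inverse transform $\widehat{\Phi}$ sends $(f',G',H')$-semistable objects back to $(f,G,H)$-semistable ones by Theorem \ref{thm:main} applied on $X'$. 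So $\Phi$ induces an isomorphism of stacks
$$
{\cal M}_{(f,H)}^G(v)\cong {\cal M}_{(f',H')}^{G'}(v').
$$
It is compatible in families because relative Fourier--Mukai transforms preserve flatness of families of complexes. Hence the virtual Hodge polynomials agree.

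The chamber containing $G'$ is in general not the Gieseker chamber for $v'$. So finally I invoke Proposition \ref{prop:indep}(1) on $X'$ to move $G'$ into the Gieseker chamber of ${\cal G}(v')$ without changing $e$. Proposition \ref{prop:Gieseker-chamber} then identifies the result with $e({\cal M}_{H'}(v')^{ss})$. Here $\rk v'>0$ is used so that the Gieseker chamber for $v'$ consists of sheaves rather than duals of sheaves.

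The main obstacle is this last wall-crossing step. One must show that crossing each wall in ${\cal G}(v')$ leaves $e$ unchanged. I would try to do this by writing the stack at each side of a wall via Harder--Narasimhan type filtrations with respect to the other side. The plan is to show that the contributions $e(\Ext^1)-e(\Hom)$ from $\chi(F_i,F_j)$ are symmetric, by Riemann--Roch and Serre duality. This requires $\langle v_i,v_j\rangle$-type terms to cancel, which is where the hypothesis $(K_X\cdot H)\le0$ (e.g.\ $-K_X$ nef) enters, guaranteeing $\Hom(F_j,F_i\otimes K_X)=0$ for the relevant pairs.
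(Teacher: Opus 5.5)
Your architecture matches the paper's. The final form of this statement is part (2) of the Proposition in the introduction (cf.\ Proposition \ref{prop:indep}), which the paper deduces from chamber-independence, Theorem \ref{thm:FMpreserve} and the Gieseker chamber of Proposition \ref{prop:Gieseker-chamber}.

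The statement as given, however, has no hypothesis on $K_X$. Your last step, like the paper's, rests on the vanishing $\Ext^2(F_i,F_j)=0$ of Lemma \ref{lem:EF2}, which needs $(K_X\cdot H)\le 0$, and you import this assumption without saying so. The paper adds it explicitly in its final version. For elliptic surfaces of Kodaira dimension one neither argument proves anything, so you should say you prove the claim only under $(K_X\cdot H)\le 0$. Also, the stratification that makes wall-crossing work is not of one side by the other side's Harder--Narasimhan types, which gives no product formula. It is of the wall stack $\overline{\cal M}_{(f,H)}^G(v)^{ss}$, stratified once by $G_+$- and once by $G_-$-types, with induction on rank (proof of Proposition \ref{prop:indep}).

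The genuine gap is the order ``transport, then cross walls in ${\cal G}(v')$'' when $\langle v,v_0\rangle>0$, i.e.\ $k=1$.
\begin{itemize}
\item By Definition \ref{defn:G(v)}, ${\cal G}(v')$ is disconnected: the locus $\mu_{f'}(G')=\mu_{f'}(v')$ is removed, and chambers and wall-crossings stay inside one component. By Definition \ref{defn:fGH2}, one component parametrizes sheaves and the other derived duals of sheaves.
\item Write $c_1(G)=\rk G(\beta+sH+yf+\alpha)$ and $v=e^\beta(r+pH+qf+D+a\varrho_X)$ as in the proof of Theorem \ref{thm:FMpreserve}, so $k=1$ means $p>0$. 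Choose $G$ in the Gieseker chamber with $s<0$; this is always possible since that chamber is unbounded below, and unavoidable if the chamber lies in $\{s<0\}$.
\item You are then in case (a), Proposition \ref{prop:FMstability3}: $\epsilon=-1$ and $\mu_{\beta',f'}(\Phi(E))=-\frac{r}{r_0^2p}<0<-\frac{1}{sr_0^2}=\mu_{\beta',f'}(-G')$. So $-G'$ lies in the dual component of ${\cal G}(v')$, while the Gieseker chamber of $v'$ lies in the sheaf component.
\item No sequence of wall-crossings in ${\cal G}(v')$ connects the two. The condition $\rk v'>0$ is automatic from the choice of $k$ and says nothing about which component you land in. Proposition \ref{prop:indep} is phrased for all chambers, but the wall-crossing proof you sketch, which is the paper's, only compares chambers within one component.
\end{itemize}

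The repair for $k=1$ is to cross walls on $X$ instead. Move $G$ inside $\{s<p/r\}$ from the Gieseker chamber to $0<s\ll 1$; this is a compact segment, so only finitely many walls are crossed. Then Remark \ref{rem:Gieseker} (Proposition \ref{prop:FMstability1} together with Proposition \ref{prop:Gieseker-chamber}) identifies ${\cal M}_{(f,H)}^G(v)^{ss}$ directly with a $G'$-twisted Gieseker moduli stack for $H'_{f'}$. This is how Corollary \ref{cor:Gieseker} and Theorem \ref{thm:FM2} are organized. For $k=2$ your order is fine. By Proposition \ref{prop:FMstability2}, the Gieseker chamber of $v$ lands in the sheaf component of ${\cal G}(v')$, with slope near $0^+$ below $\mu_{\beta',f'}(v')$. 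From there you can wall-cross within that component to the Gieseker chamber of $v'$.
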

\end{NB}

In \cite{Y:elliptic}, 
we also studied wall crossing behaviours for the moduli of $G$-twisted semi-stable 1-dimensional sheaves.
The following result explain the relation.

\begin{thm}[{Theorem \ref{thm:FMpreserve0}}]
Let $v=r+\xi+a \varrho_X$ $(r>0, \xi \in \NS(X))$ be a Mukai vector 
such that $\langle v,v({\cal P}_{|X \times \{ x' \}}) \rangle = 0$.
We take $G \in K(X)_{\Bbb Q}$ such that
$\rk G>0$ and $(f,G,H)$ is general with respect to $v$.
For an object $E \in {\bf D}(X)$ with $v(E)=v$, the following conditions are equivalent.
\begin{enumerate}
\item
$E$ is $(f,G,H)$-semistable.
\item
$F:=\Phi_{X \to X'}^{{\cal P}^{\vee}[2]}(E)$ is a $-G'$-twisted semistable 1-dimensional
sheaf with respect to $H'_{f'}$.
\end{enumerate}
\end{thm}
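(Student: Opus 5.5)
The plan is to reduce the equivalence to the rank-zero case of the Fourier--Mukai analysis behind Theorem \ref{thm:main}, and then translate the $(f,G,H)$-stability of $E$ into twisted stability of $F$ through a dictionary of subobjects. The condition $\langle v,v({\cal P}_{|X\times\{x'\}})\rangle=0$ gives, by Lemma \ref{lem:RR}, $\rk \Phi_{X\to X'}^{{\cal P}^\vee[k]}(E)=0$. So the image of $E$ is a torsion object on $X'$, and Theorem \ref{thm:main} does not apply directly.

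\textbf{Step 1: $F$ is a sheaf.} Start with an $(f,G,H)$-semistable $E$. Restricted to the generic fiber, $E$ is a semistable bundle whose slope equals that of ${\cal P}_{|X\times\{x'\}}$, since the pairing vanishes. Fiberwise Mukai vanishing for $\Phi$ then shows:
\begin{itemize}
\item $\Phi^{{\cal P}^\vee}(E)$ is concentrated in the single cohomological degree giving the shift $[2]$;
\item $F$ is supported on finitely many points of each fiber of $\pi'$, hence is a torsion sheaf with $\dim \Supp F\le 1$;
\item $F$ is pure, because a $0$-dimensional subsheaf of $F$ would come back under the inverse transform as a subobject of $E$ that is a fiber sheaf of the wrong phase, which $(f,G,H)$-semistability forbids (the torsion fiber sheaves permitted by Definition \ref{defn:fGH} need exactly this check).
\end{itemize}
I will run these arguments separately for each of the two descriptions in Definitions \ref{defn:fGH} and \ref{defn:fGH2}: $E$ a sheaf, and $E^\vee$ a sheaf.

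\textbf{Step 2: matching the stabilities.} I will show that $\Phi$ matches the subobjects that test the two stabilities. Subsheaves $F_1\subset F$ of a pure 1-dimensional sheaf correspond, through the inverse transform $\widehat\Phi$, to subobjects $E_1\subset E$ in the relevant heart with $\langle v(E_1),v({\cal P}_{|X\times\{x'\}})\rangle=0$, together with fiber-sheaf subobjects. Using the cohomological transform $\overline{\Phi}$ of Lemma \ref{lem:cohFM}, I define $G'$ as the image of $G$ suitably normalised, with sign $-G'$ because of the shift $[2]$. I then check the identity
$$
\chi(G',F_1)\cdot(\text{const})=\langle v(G),v(E_1)\rangle\cdot(\text{const})+(\text{terms in } (H\cdot c_1)),
$$
so that the $G'$-twisted slope $\chi(-G',F_1)/(c_1(F_1)\cdot H'_{f'})$ is an increasing affine function of the $(f,G,H)$-slope of $E_1$. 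Once this holds, semistability transfers in both directions. For (2)$\Rightarrow$(1), the same dictionary applied with $\widehat\Phi$ shows $E$ lies in the correct heart and has no destabilizing subobject.

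\textbf{Expected main obstacle.} The hard step is the bookkeeping in Step 2. Destabilizing subobjects of $E$ need not have vanishing pairing with $v({\cal P}_{|X\times\{x'\}})$, so their transforms are not sheaves in one degree, and the dictionary must treat the pieces of positive and negative fiber degree separately. I will use the generality of $(f,G,H)$ to rule out walls. A subobject of nonzero fiber degree makes the $(f,G,H)$-slope comparison strict at leading order in the fiber direction, and this strictness lines up with the sign of the corresponding quotient or sub of $F$. Only subobjects of zero fiber degree affect the comparison at the second order, and for those the affine relation above applies. Checking that this leading-order sign agrees exactly with the order of the vanishing-degree sheaves $\Phi^{{\cal P}^\vee[1]}$ and $\Phi^{{\cal P}^\vee[2]}$ is where I expect the real work.
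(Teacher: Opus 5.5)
Your outline has the same shape as the paper's argument: transform, show $F$ is a pure one-dimensional sheaf, match destabilizing subobjects, use generality to exclude ties, and treat the two descriptions of $E$ separately. But Step 1 fails as justified, and the step you flag as the main obstacle is a misdiagnosis.

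\textbf{Step 1.} Mukai vanishing on the generic fiber only controls $F$ over a dense open subset of $C$. It cannot give concentration of $\Phi_{X\to X'}^{{\cal P}^\vee}(E)$ in one degree. Your claim that $F$ meets each fiber of $\pi'$ in finitely many points is false. An $(f,G,H)$-semistable sheaf may have a torsion fiber subsheaf $A$ (necessarily with $\chi(G,A)<0$, Remark \ref{rem:fGH}). Then $\Hom(\Phi_{X\to X'}^{{\cal P}^\vee[2]}(A),F)=\Hom(A,E)\neq 0$, and since $F$ is pure this gives a subsheaf of $F$ supported on a whole fiber.

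The missing input is a vanishing on \emph{every} fiber, and it depends on the sign of $s:=\mu_f(G)-\mu_f(E)$. With $c_1({\cal P}_{|X\times\{x'\}})=r_0f$ one has $\chi(G,{\cal P}_{|X\times\{x'\}})=-sr_0\rk G$.
\begin{itemize}
\item If $s<0$ (so $E$ is a sheaf as in Definition \ref{defn:fGH}), then $E\in{\cal C}_G$ gives $\Hom({\cal P}_{|X\times\{x'\}},E)=0$ for all $x'\in X'$. Hence $F$ is a two-term complex of locally free sheaves. $H^{-1}(F)$ is torsion free and generically zero, so $F$ is a sheaf of homological dimension $\le 1$ and rank $0$, i.e.\ pure of dimension one. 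Your purity bullet is this same vanishing in disguise, since $\Phi_{X\to X'}^{{\cal P}^\vee[2]}({\cal P}_{|X\times\{x'\}})={\Bbb C}_{x'}$. It has to come first and carry the concentration. This case is Proposition \ref{prop:FMstability4}.
\item If $s>0$, then $E$ is only a complex (Definition \ref{defn:fGH2}). Running the same argument would mean working in a tilted heart with a different vanishing. The paper instead dualizes: $E^\vee(K_X)$ is $(f,G^\vee,H)$-semistable with parameter $-s<0$. The first case, with ${\cal Q}={\cal P}^\vee[1]$ in place of ${\cal P}$, gives the statement for $\Phi_{X\to X'}^{{\cal Q}^\vee[2]}(E^\vee(K_X))=F^\vee[1]$, with $\Phi_{X\to X'}^{{\cal Q}^\vee[2]}(G^\vee)={G'}^\vee(-K_{X'})$. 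One concludes because $F\mapsto F^\vee[1]$ exchanges $G$-twisted and $G^\vee(-K)$-twisted semistability of pure one-dimensional sheaves.
\end{itemize}

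\textbf{Step 2.} Subobjects of nonzero fiber degree never enter. Definition \ref{defn:fGH} tests only subsheaves $E_1$ with $\mu_f(E_1)=\mu_f(E)$, i.e.\ with zero pairing against $v({\cal P}_{|X\times\{x'\}})$. Every subsheaf of the rank-zero sheaf $F$ has rank zero, so its inverse transform also has zero pairing.

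The dictionary is exact, not an affine function with $H$-corrections. Since $\Phi_{X\to X'}^{{\cal P}^\vee[1]}$ is an equivalence, $\chi(-G',F_1)=\chi(G,E_1)$, and by Lemma \ref{lem:coh-FM}, $(c_1(F_1)\cdot f')=\rk E_1/r_0$. Hence for $H'+nf'$ with $n\gg0$, the leading comparison is exactly $\chi(G,E_1)/\rk E_1$ against $\chi(G,E)/\rk E$, and fiber subsheaves of $F$ match rank-zero $E_1$.

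The second-order tie-breakers do not correspond. By Lemma \ref{lem:coh-FM}, $(c_1(F_1)\cdot H')$ is $-r_0$ times the $\varrho_X$-coefficient of $e^{-\beta}v(E_1)$, whereas Definition \ref{defn:fGH} breaks ties with $(c_1(E_1)\cdot H)$. This is why generality is invoked to force $v(E_1)\in{\Bbb Q}v$ whenever equality holds.

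What actually needs care is that both terms of a destabilizing sequence transform to sheaves:
\begin{itemize}
\item First modify the destabilizing subsheaf of $F$ (resp.\ quotient of $E$), using the analogues of Lemmas \ref{lem:F^-} and \ref{lem:F^+}. This keeps it destabilizing and ensures it admits no nonzero map to any ${\cal Q}_{|\{x\}\times X'}$ (resp.\ from any ${\cal P}_{|X\times\{x'\}}$).
\item For (2)$\Rightarrow$(1), you need $\Hom(F,{\cal Q}_{|\{x\}\times X'})=0$. This follows from semistability of $F$, because $\chi(-G',{\cal Q}_{|\{x\}\times X'})=-\rk G<0$.
\end{itemize}
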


Thus wall-crossings for $(f,G,H)$-semistability is the same as the one for stable 1-dimensional sheaves
(see Remark \ref{rem:reduce} for a generalization). 

\begin{NB}
Then $(f,G,H)$-semistability corresponds to 
a twisted semi-stability for 1-dimensional stable sheaves
(see Proposition \ref{prop:FMstability4})..
Thus two wall-crossings in \cite{Y:elliptic} are the same. 
\end{NB}

\begin{NB}
Assume that there is no multiple fiber.
Then for a primitive Mukai vector $v_0=rf+d\varrho_X$,
$X':=M_H^G(v_0)$ is a smooth projective surface and there is a universal family
${\cal P}$ as a $(1_X \times {\alpha'}^{-1})$-twisted sheaf on $X \times X'$, where
$\alpha'$ is a 2-cocycle of ${\cal O}_{X'}^{\times}$.
Then we have a Fourier-Mukai transform
$\Phi_{X \to X'}^{{\cal P}^{\vee}}:
{\bf D}(X) \to {\bf D}^{\alpha'}(X')$.
Hence we can find a suitable $v_0$
such that $\Phi_{X \to X'}^{{\cal P}^{\vee}}(E)$ is a $G'$-twisted semistable sheaf 
for any $E \in M_{(f,H)}^G(v)$. Thus our wall-crossing is regarded as a wall-crossing 
of $G'$-twisted semistability.
\end{NB}

In \cite{ellipticStab}, we studied Bridgeland stability conditions on elliptic surfaces and their relations with 
relative Fourier-Mukai transforms.
For a chamber ${\cal C}$ of $(f,G,H)$-semistability, there is a  
relative Fourier-Mukai transform under which ${\cal C}$ is transformed to a chamber 
parameterizing Gieseker semistable sheaves (Corollary \ref{cor:Gieseker}).
Since Gieseker semistable objects are examples of stable objects in the sense of Bridgeland \cite{Br:3},
$(f,G,H)$-semistability can be understood by using Bridgeland stability conditions.
On the other hand, a Bridgeland stability $\sigma$ (\cite{Br:stability}) 
consists of a collection of semistable objects ${\cal P}_\sigma(\phi)\; (\phi \in {\Bbb R})$ with
a stability function $Z_\sigma:{\bf D}(X) \to {\Bbb C}$ which 
has much more information, and the parameter space
(that is, the space of stability conditions) is too big for our study of wall crossing
for $(f,G,H)$-semistability. 
Therefore we do not use Bridgeland stability conditions to study our $(f,G,H)$-semistability. 
For relations with Bridgeland stability, we just mention some references \cite{LLM}, \cite{Lo}, \cite{Lo-M}. 

Let us explain the organization of this paper.
In section \ref{sect:stability}, we define our stability and study some properties.
In particular we prove a Bogomolov type inequality for semistable objects.
In section \ref{sect:wall}, we introduce wall and chamber in our parameter space of stability 
conditions.
In section \ref{sect:wall-properties}, we study walls and chambers.
In particular we show that there is a chamber parameterising Gieseker semistable sheaves.
In section \ref{sect:FM}, we study the behavior of our stability under relative Fourier-Mukai 
transforms.
In section \ref{sect:example}, we give some examples.

\section{Pleliminaries}\label{sect:pre}

\subsection{Moduli of stable sheaves.}

Let $X$ be a smooth projective surface and $H$ an ample ${\Bbb Q}$-divisor on $X$.  
For $G \in K(X)_{\Bbb Q}$ with $\rk G>0$,
$G$-twisted semistability of $E$ is defined by replacing the Hilbert polynomial
$\chi(E(nH))$ by $\chi(G,E(nH))$. 
$G$-twisted semistability depends only on $\beta:=c_1(G)/\rk G$ and it is the same as the $\beta$-twisted semistability 
in the sense of Matsuki and Wentworth \cite{MW}.

\begin{defn}\label{defn:moduli}
Let $v \in H^*(X,{\Bbb Q})_{\alg}$ be a Mukai vector (of a coherent sheaf).
Let $\overline{M}_H^G(v)$ be the moduli scheme of
$S$-equivalence classes of 
$G$-twisted semi-stable sheaves $E$ with
the Mukai vector $v(E)=v$
and $M_H^\beta(v)$ the open subscheme consisting of $G$-twisted stable sheaves.
Let ${\cal M}_H^\beta(v)^{ss}$ be the moduli stack of
$G$-twisted  semi-stable sheaves $E$ with $v(E)=v$. 
For $\beta:=c_1(G)/\rk G \in \NS(X)_{\Bbb Q}$,
we also define $\overline{M}_H^\beta(v)$, $M_H^\beta(v)$ and ${\cal M}_H^\beta(v)^{ss}$ similarly.
\end{defn}

$\overline{M}_H^\beta(v)$ is a projective scheme (see \cite{MW} and \cite{Y:twist2}).
If $H$ is general in the ample cone,
$\Mod_H^\beta(v)$ and ${\cal M}_H^\beta(v)^{ss}$ are
 independent of the choice of $\beta$. 
If $v$ is primitive in the free ${\Bbb Z}$-module
$v({\bf D}(X))=v(K(X)) \subset H^*(X,{\Bbb Q})$ and $H$ is general with
respect to $v$, then $\overline{M}_H^\beta(v)=M_H^\beta (v)$.

Let $\pi:X \to C$ be an elliptic surface.
For a $\pi$-ample ${\Bbb Q}$-divisor $H$,
(twisted) Gieseker semistability with respect to $H+nf$ is independent of the choice of $n \gg 0$.
We denote the polarization $H+nf$ $(n \gg 0)$ by $H_f$. 
So ${\cal M}_{H_f}^G(v)^{ss}={\cal M}_{H+nf}^G(v)^{ss}$,
$\overline{M}^G_{H_f}(v)=\overline{M}^G_{H+nf}(v)$ and $M^G_{H_f}(v)=M^G_{H+nf}(v)$,
where $n \gg 0$.

\subsection{Fourier-Mukai transforms}
For smooth projective varieties $X, Y$ and ${\bf P} \in {\bf D}(X \times Y)$,
$\Phi_{X \to Y}^{{\bf P}}:{\bf D}(X) \to {\bf D}(Y)$ is an integral functor defined by
\begin{equation}
\Phi_{X \to Y}^{{\bf P}}(E):={\bf R}p_{Y*}({\bf P} \otimes p_X^*(E)),\; E \in {\bf D}(X),
\end{equation}
where $p_X:X \times Y \to X$ and $p_Y:X \times Y \to Y$ are projections.
If $\Phi_{X \to Y}^{{\bf P}}$ is an equivalence, then it is called a Fourier-Mukai transform.
For the equivalence $\Phi_{X \to Y}^{{\bf P}}$,
\begin{equation}
\begin{split}
\Phi_{X \to Y}^{{\bf P}} \circ \Phi_{Y \to X}^{{\bf P}^{\vee}}=& \otimes p_Y^*({\cal O}_Y(-K_Y))[-n],\\
\Phi_{Y \to X}^{{\bf P}^{\vee}} \circ \Phi_{X \to Y}^{{\bf P}}=& \otimes p_X^*({\cal O}_X(-K_X))[-n],
\end{split}
\end{equation}
where $n=\dim X=\dim Y$.

\begin{NB}
For a spherical object $E_0 \in {\bf D}(X)$,
we have an autoequivalence called spherical twist:
$$
R_{E_0}(E):={\bf R}p_{2*}(p_1^*(E) \otimes {\cal E})[1],\; E \in {\bf D}(X),
$$
where
\begin{equation}
{\cal E}:=\ker(p_1^*(E_0^{\vee}) \otimes p_2^*(E_0) \to {\cal O}_\Delta)
\end{equation}
and $p_i:X \times X \to X$ ($i=1,2$) are $i$-th projections.
\end{NB}

For a spherical object $E_0 \in {\bf D}(X)$,
let $R_{E_0}:{\bf D}(X) \to {\bf D}(X)$ be the spherical twist defined by
\begin{equation}
R_{E_0}(E):=\mathrm{cone}({\bf R}\Hom(E_0,E)\otimes E_0  \to E).
\end{equation}
If $E_0, E \in \Coh(X)$ satisfies $\Ext^2(E_0,E)=0$,
then we have an exact sequence 
$$
0 \to \Hom(E_0,E) \otimes E_0 \to E \to R_{E_0}(E) \to \Ext^1(E_0,E) \otimes E_0 \to 0.
$$

Assume that $X$ is an Enriques surface.
For an exceptional object $E_0 \in {\bf D}(X)$, 
we also have an equivalence 
$R_{E_0}:{\bf D}(X) \to {\bf D}(X)$ such that
\begin{equation}
R_{E_0}(E):=\mathrm{cone}({\bf R}\Hom(E_0,E)\otimes E_0  \oplus 
{\bf R}\Hom(E_0 (K_X),E) \otimes E_0 (K_X) \to E).
\end{equation}
Since $\langle v(E_0)^2 \rangle=-\chi(E_0,E_0)=-1$ and
$v(R_{E_0}(E))=v(E)+2\langle v(E),v(E_0) \rangle v(E_0)$,
$R_{E_0}$ indices a $(-1)$-reflection on the Mukai lattice.

For $E \in {\bf D}(X)$, we set 
\begin{equation}
D_X(E):={\bf R}{\cal H}om_{{\cal O}_X}(E,{\cal O}_X) \in {\bf D}(X).
\end{equation}
Then $D_X$ defines a contravariant functor ${\bf D}(X) \to {\bf D}(X)$. 
For simplicity, we introduce the following notation.
\begin{equation}
\begin{split}
E^*:=&H^0(D_X(E))={\cal H}om_{{\cal O}_X}(E,{\cal O}_X),\\
E^{\vee}:=&D_X(E).
\end{split}
\end{equation}
In particular the double dual $E^{**}$ of $E$ is a locally free sheaf. 
$E^*$ is the usual dual of $E$ and $E^{\vee}$ is the derived dual of $E$.

\begin{rem}
For $E \in {\bf D}(X)$,
$$
(\Phi_{X \to Y}^{{\cal P}^{\vee}[1]}(E))^{\vee}(-K_Y)={\bf R}\Hom_{p_Y} (p_X^*(E),{\cal P}[1])
$$
by Grothendieck duality.
\end{rem}

\subsection{Stable sheaves on a fiber.}\label{subsect:multiple}

Let $\pi:X \to C$ be an elliptic surface such that $R^1 \pi_* {\cal O}_X 
\not \cong {\cal O}_C$.  
\begin{NB}

\begin{lem}[{\cite[Lem. 1.17]{Y:elliptic}}]\label{lem:multiple1}
Let $mf_0$ be a fiber of $\pi$, where $m \geq 1$ is the multiplicity.
We take $E \in {\cal M}_H^\alpha (lrf_0+ld \varrho_X)^s$, where $\gcd(r,d)=1$.
\begin{enumerate}
\item[(1)]
Assume that 
$\Div E=lrf_0$.
Then $l=1$ and $E$ is an ${\cal O}_{f_0}$-module.
\item[(2)]
Assume that $\Supp E \ne f_0$ and $\Div E$ is algebraically equivalent to $lrf_0$.
Then $m \mid lr$ and $\gcd(\frac{lr}{m},ld)=1$.  
\end{enumerate}
\end{lem}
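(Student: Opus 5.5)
The plan is to prove (1) by a nilpotence argument that kills the thickening of $f_0$, and to deduce (2) by applying (1) to the fiber that actually supports $E$.

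For (1), we first reduce $E$ to an ${\cal O}_{f_0}$-module. Let $s \in H^0(X,{\cal O}_X(f_0))$ be the section whose zero divisor is the reduced fiber $f_0$, and consider the homomorphism
$$
s\cdot:\; E(-f_0) \to E .
$$
Since $\Div E=lrf_0$, the sheaf $E$ is set-theoretically supported on $f_0$, so $s^N E(-Nf_0) \to E$ vanishes for $N \gg 0$; hence this map is nilpotent. On the other hand, $c_1(E)=lrf_0$ and $(f_0^2)=0$, so twisting by ${\cal O}_X(-f_0)$ does not change the Mukai vector: $v(E(-f_0))=v(E)$. The $\alpha$-twisted stability of $E(-f_0)$ and of $E$ is therefore measured by the same invariants. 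A nonzero homomorphism between $\alpha$-twisted stable sheaves with the same Mukai vector is an isomorphism, and a nilpotent map cannot be one. Hence $s\cdot=0$, that is, $I_{f_0}E=0$, and $E$ is an ${\cal O}_{f_0}$-module.

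Next we regard $E$ as a stable sheaf on the curve $f_0$, of rank $lr$ and degree $ld$ (or of the corresponding multi-degree if $f_0$ is singular). Stability on a curve of arithmetic genus one forces the rank and degree to be coprime. On a smooth $f_0$ this is Atiyah's classification. On a singular reduced fiber I would use the known description of stable fiber sheaves (Bridgeland's results, or the earlier lemmas of \cite{Y:elliptic}): any stable sheaf of rank $lr$ and degree $ld$ with $l>1$ admits a proper destabilizing subsheaf of equal slope. Thus $\gcd(lr,ld)=1$, which gives $l=1$.

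For (2), stability makes $\Supp E$ connected, so $\Supp E=f_1$ for a single fiber $f_1 \ne f_0$ with multiplicity $m'$, and $\Div E=kf_1$ for some $k>0$. Applying (1) to the reduced fiber $f_1$ with Mukai vector $kf_1+ld\varrho_X$ gives $\gcd(k,ld)=1$. It remains to compare $k$ with $lr$ using the algebraic equivalence $kf_1 \equiv lrf_0$. Numerically this reads $k/m'=lr/m$.
\begin{itemize}
\item If $m'=1$, then $k=lr/m$ is an integer, so $m \mid lr$, and $\gcd(lr/m,ld)=\gcd(k,ld)=1$ as required.
\item If $m'>1$, we must exclude a genuine identification of $f_1/m'$-multiples with non-integral $f$-multiples of $f_0$. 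For this I will use the structure of $\Pic X$ near the fiber classes given by the canonical bundle formula: the classes of the reduced multiple fibers $f_i$ are independent modulo ${\Bbb Z}f$, up to the torsion relations $m_if_i \sim f$. An algebraic equivalence $kf_1 \equiv lrf_0$ between distinct fibers then forces both sides to be integral multiples of $f$. This yields $m' \mid k$ and $m \mid lr$ with $k/m'=lr/m$. Since $\gcd(k,ld)=1$ and $lr/m$ divides $k$, we get $\gcd(lr/m,ld)=1$.
\end{itemize}

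I expect the main obstacle to be this last lattice-theoretic step for multiple $f_1$, where one must control torsion in $\NS(X)$ and the relations among the $f_i$. The step in (1) that invokes the coprimality of stable sheaves on singular fibers should follow from the cited results.
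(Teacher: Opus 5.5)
There is a genuine gap in (2) for $m'>1$; the rest is sound. Specifically, the nilpotence argument giving $I_{f_0}E=0$, the reduction $\gcd(k,ld)=1$ obtained by applying (1) to $f_1$, and the case $m'=1$ all work.

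\textbf{The gap.} When $\gcd(m,m')>1$, the numerical relation $k/m'=lr/m$ does not give $m\mid lr$. You fill this with the claim that the reduced multiple fibers are independent in $\NS(X)$ modulo ${\Bbb Z}f$, apart from $m_if_i\equiv f$. That claim does not follow from the canonical bundle formula, and it is false in general.

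For a counterexample, let $B$ be an elliptic curve, $\tau\in B$ a point of order $2$, and $X=({\Bbb P}^1\times B)/({\Bbb Z}/2)$ with the free involution $(z,b)\mapsto(-z,b+\tau)$.
\begin{enumerate}
\item The map induced by $z\mapsto z^2$ is an elliptic fibration $\pi:X\to{\Bbb P}^1$ with double fibers $2F_0$ and $2F_\infty$.
\item $X$ is also a ${\Bbb P}^1$-bundle over $B/\langle\tau\rangle$, and $F_0,F_\infty$ are disjoint sections of it with $(F_0^2)=(F_\infty^2)=0$.
\item Hence $F_0-F_\infty$ is trivial on every ruling and has degree $0$ on the base, so $F_0-F_\infty\in\Pic^0(X)$.
\item A degree $d$ line bundle on $F_\infty$ therefore satisfies every hypothesis of (2) with $l=r=1$ and $m=2$, yet $2\nmid 1$.
\end{enumerate}
In this example $q(X)=1>g({\Bbb P}^1)$, i.e.\ $R^1\pi_*{\cal O}_X\cong{\cal O}_{{\Bbb P}^1}$. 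That is exactly what the standing assumption $R^1\pi_*{\cal O}_X\not\cong{\cal O}_C$ of this subsection excludes. Your argument never uses that assumption, so as written it cannot be complete.

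\textbf{How to close it.}
\begin{enumerate}
\item Since $\pi_*{\cal O}_X={\cal O}_C$, the map $\pi^*$ is injective on $\Pic$. The assumption $R^1\pi_*{\cal O}_X\not\cong{\cal O}_C$ gives $q(X)=g(C)$, hence $\Pic^0(X)=\pi^*\Pic^0(C)$.
\item So the algebraic equivalence becomes a linear equivalence $kf_1-lrf_0\sim\pi^*\delta$.
\item Write $lr=am+b$ and $k=a'm'+b'$ with $0\le b<m$, $0\le b'<m'$, and use $mf_0\sim\pi^*(p_0)$, $m'f_1\sim\pi^*(p_1)$.
\item If $b>0$, you get $b'f_1+(m-b)f_0\sim\pi^*\delta'$ for some $\delta'$. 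Apply $\pi_*$ together with the standard fact $\pi_*{\cal O}_X(\sum_i a_if_i)={\cal O}_C$ for $0\le a_i<m_i$. This gives ${\cal O}_C(\delta')\cong{\cal O}_C$, so a nonzero effective divisor would be linearly trivial, a contradiction. Hence $b=0$.
\item The same argument then gives $b'=0$.
\end{enumerate}
Thus $m\mid lr$ and $m'\mid k$, and your final divisibility step goes through.

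\textbf{A lesser point in (1).} The coprimality of rank and degree for stable sheaves on $f_0$ is a citation you should pin down: Atiyah for smooth $f_0$, and Burban--Kreussler type results for an irreducible nodal $f_0$. For a cycle ${}_mI_n$ with $n\ge 2$, ``rank and degree'' must be reformulated in terms of $\alpha$-twisted stability before any such classification applies.
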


\begin{lem}[{\cite[Lem. 1.18]{Y:elliptic}}]\label{lem:multiple2}
Let $mf_0$ be a fiber with multiplicity $m$. We set $v:=(0,rf_0,d)$, where
$\gcd(r,d)=1$ and $r>0$.
If $m \nmid r$, then 
$\dim M_H^\alpha(0,rf_0,d)=1$ for a general $\alpha$.
\end{lem}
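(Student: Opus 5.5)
The plan is to show that every $\alpha$-twisted stable sheaf $E$ with $v(E)=(0,rf_0,d)$ is an ${\cal O}_{f_0}$-module, and then to compute the tangent space of the moduli space at such a point.

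First we fix $\alpha$ general with respect to $v=(0,rf_0,d)$. Since $\gcd(r,d)=1$ and $\alpha$ is general, every $\alpha$-twisted semistable sheaf with this Mukai vector is $\alpha$-twisted stable. So it suffices to describe $M_H^\alpha(0,rf_0,d)$ pointwise and to compute its tangent spaces.

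Let $E$ be such a stable sheaf. It is pure of dimension one and $\End(E)={\Bbb C}$, so its support is connected. Its divisor $\Div E$ is algebraically equivalent to $rf_0$, so it is supported in the fibers of $\pi$, hence in a single fiber. We now apply Lemma \ref{lem:multiple1} with $l=1$. If $\Supp E\ne f_0$, part (2) gives $m\mid r$, which contradicts the hypothesis $m\nmid r$. Hence $\Supp E=f_0$ and $\Div E=rf_0$, and part (1) shows that $E$ is an ${\cal O}_{f_0}$-module. Thus the moduli space parametrizes stable sheaves on the reduced curve $f_0$ of rank $r$ (on $f_0$) and degree determined by $d$, with $\gcd(r,d)=1$. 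The curve $f_0$ is either a smooth elliptic curve or a cycle of rational curves, since the fiber $mf_0$ is of type ${}_mI_n$. For $\alpha$ general, stability on $X$ agrees with stability on $f_0$, and by Atiyah's classification (or its analogue for cycles) such stable sheaves exist. In particular the moduli space is nonempty.

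For the dimension, let $E$ be stable. We have $\chi(E,E)=-\langle v^2\rangle=-(rf_0)^2=0$ and $\hom(E,E)=1$. By Serre duality, $\ext^2(E,E)=\hom(E,E(K_X))$. By the canonical bundle formula for elliptic surfaces, ${\cal O}_X(K_X)_{|f_0}$ differs from $\pi^*(\cdot)_{|f_0}$ by ${\cal O}_X((m-1)f_0)_{|f_0}$, which is a torsion line bundle on $f_0$ of exact order $m$. Since $m\nmid r$, the restriction $K_X{}_{|f_0}$ is a nontrivial degree-zero line bundle $L$ on $f_0$. Moreover $E\otimes L\not\cong E$: for coprime $(r,d)$, one has $E\otimes L\cong E$ only when $L^{\otimes r}$ is trivial, and $L$ has order $m$ with $m\nmid r$. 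Since $E$ and $E\otimes L$ are stable with the same slope, it follows that $\hom(E,E(K_X))=0$. Hence $\ext^1(E,E)=1$ and $\ext^2(E,E)=0$, so $M_H^\alpha(v)$ is smooth of dimension $1$ at every point.

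The main obstacle I expect is this vanishing of $\Ext^2(E,E)$, where the hypothesis $m\nmid r$ is used a second time. One has to be careful that the relevant twist is $K_X{}_{|f_0}$, which is governed by ${\cal O}(f_0)_{|f_0}$ of order exactly $m$ on a fiber of type ${}_mI_n$ (this uses the hypothesis $R^1\pi_*{\cal O}_X\not\cong {\cal O}_C$ to exclude wild fibers). If the direct argument with $E\otimes L$ is too delicate in the singular case, I would use a fallback. Via the relative Fourier–Mukai / Atiyah correspondence on $f_0$, the moduli space is identified with $\Pic^0$-type torsors over $f_0$, which are one-dimensional, and this gives $\dim=1$ directly.
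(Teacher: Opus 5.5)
Reducing to ${\cal O}_{f_0}$-modules via Lemma~1.17 of the cited paper is the natural route, and the remarks around this lemma in the source point the same way; the paper itself only cites the result. The deformation-theoretic step, however, has a genuine gap. The vanishing $\Hom(E,E(K_X))=0$ holds only when $E$ is a \emph{locally free} ${\cal O}_{f_0}$-module. It fails at the non-locally-free points whenever the multiple fiber is singular, i.e.\ of type ${}_mI_n$ with $n\ge 1$. Your justification (``$E\otimes L\cong E$ only if $L^{\otimes r}$ is trivial'') relies on $E$ being determined by $\det E$, and that is unavailable for non-locally-free sheaves on a nodal curve. Concretely, take $mf_0$ of type ${}_mI_1$ and $r=1$, so $m\nmid r$. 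Let $\nu:{\Bbb P}^1\to f_0$ be the normalization and $E=\nu_*N$ for a line bundle $N$ of the right degree. The projection formula gives $E\otimes L\cong\nu_*(N\otimes\nu^*L)\cong E$, because $\nu^*L$ has degree $0$ on ${\Bbb P}^1$. Hence $\Ext^2(E,E)\cong\Hom(E,E)\cong{\Bbb C}$ and $\dim\Ext^1(E,E)=2$. Indeed $M_H^\alpha(0,f_0,d)$ is the compactified Jacobian of $f_0$, which is the nodal curve $f_0$ itself, and $[E]$ is its node. The same happens for every coprime $(r,d)$: the connected group $\Pic^0(f_0)$ acts on the finite set of non-locally-free stable sheaves and therefore fixes them. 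So your assertion that $M_H^\alpha(v)$ is smooth of dimension $1$ at every point is false, and your argument gives no bound on the local dimension at these points. This is exactly why the remark placed right after this lemma in the source (Rem.~1.19(1) of the cited paper) asserts $\Hom(E,E(K_X))=0$ only for locally free ${\cal O}_{f_0}$-modules.

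The conclusion $\dim=1$ survives, but you must treat the exceptional points separately. One way is to show that the non-locally-free stable ${\cal O}_{f_0}$-modules of this type form a finite set. At every point, $\dim_{[E]}M_H^\alpha(v)\ge\dim\Ext^1(E,E)-\dim\Ext^2(E,E)=\dim\Hom(E,E)-\chi(E,E)=1$, so no irreducible component has dimension $0$ and none can lie inside that finite set. Hence the generic point of every component is locally free, and there your computation shows the space is smooth of dimension $1$; with nonemptiness this gives $\dim=1$. Alternatively, make your fallback precise. By Atiyah's classification for smooth $f_0$, and by the Fourier--Mukai transform of Burban--Kreussler for cycles of rational curves, the moduli of stable sheaves of coprime type on $f_0$ is isomorphic to $f_0$. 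That is a compactified Jacobian, not a $\Pic^0$-torsor; a torsor would miss precisely the non-locally-free points. A minor correction: over ${\Bbb C}$ there are no wild fibers, so ${\cal O}_{f_0}(f_0)$ automatically has exact order $m$. The hypothesis $R^1\pi_*{\cal O}_X\not\cong{\cal O}_C$ serves a different purpose: it gives $\Pic^0(X)=\pi^*\Pic^0(C)$, which is what Lemma~1.17 needs.
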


\begin{rem}[{\cite[Rem. 1.19]{Y:elliptic}}]
Let $mf_0$ be a fiber with multiplicity $m$. 
We take $E \in {\cal M}_H^\alpha (0,lrf_0,ld)^s$.
\begin{enumerate}
\item[(1)]
Assume that $m \nmid lr$ and $E$ is a locally free ${\cal O}_{f_0}$-module.
Then $l=1$ and $\Hom(E,E(K_X))=0$. In particular 
${\cal M}_H^\alpha (0,lrf_0,ld)^s$ is smooth of dimension 0 at $E$.
\item[(2)]
Assume that $m \mid lr$. Then  
${\cal M}_H^\alpha (0,lrf_0,ld)^s$ is smooth of dimension 1 at $E$.
Moreover $\Supp E \ne f_0$ for a general $E$ if $l>1$.
\end{enumerate}
\end{rem}

\begin{defn}\label{defn:multiple}
Let $mf_0$ be a multiple fiber. We set $v:=(0,rf_0,d)$, where $\gcd(r,d)=1$.
We set
$$
{\cal M}_H^\alpha(lv,lrf_0)^{ss}:=\{E \in {\cal M}_H^\alpha(lv)^{ss}
\mid \Div E=lr f_0 \}.
$$
\end{defn}

\begin{prop}[{\cite[Prop. 1.21]{Y:elliptic}}]\label{prop:multiple}
Assume that $(H,\alpha)$ is general. Then 
$\dim {\cal M}_H^\alpha(l v,lr f_0)^{ss} \leq 0$. In particular
$\dim {\cal M}_H^\alpha(l v)^{ss} \leq 0$ if $l \gcd(r,m)<m$.
\end{prop}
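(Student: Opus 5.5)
The plan is to bound the dimension of the stack through its Jordan--H\"older filtrations. For general $(H,\alpha)$, every $\alpha$-twisted semistable $E$ with $v(E)=lv$ has stable factors $E_1,\dots,E_s$ with $v(E_i)=l_iv$ and $\sum l_i=l$. The divisors add, so $\Div E=\sum_i\Div E_i$. Each $\Div E_i$ is effective and supported in the fiber $mf_0$.

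\emph{Step 1: the factors when $\Div E=lrf_0$.} Since the total divisor is supported on $f_0$, we get $\Div E_i=a_if_0$ with $\sum a_i=lr$. Because $\Div E_i$ is numerically equivalent to $l_irf_0$ (and $f_0$ is not numerically trivial), $a_i=l_ir$. Lemma \ref{lem:multiple1}(1) then forces $l_i=1$ and makes each $E_i$ a stable ${\cal O}_{f_0}$-module with $v(E_i)=v=(0,rf_0,d)$, where $\gcd(r,d)=1$. So $E$ is an iterated extension of $l$ stable ${\cal O}_{f_0}$-modules of Mukai vector $v$.

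\emph{Step 2: dimension of the stable part.} For stable $E_i$ we have
\[
\dim_{[E_i]}{\cal M}\le \operatorname{ext}^1(E_i,E_i)-\hom(E_i,E_i)=\langle v^2\rangle+\hom(E_i,E_i(K_X))=\hom(E_i,E_i(K_X)),
\]
using Serre duality and $\langle v^2\rangle=0$. Now $K_X|_{f_0}$ is a torsion line bundle $L$ of order $m$ on $f_0$, and $\hom(E_i,E_i\otimes L)\le1$ since $E_i$ is simple. There are two cases.
\begin{enumerate}
\item[(a)] If $E_i\otimes L\not\cong E_i$, this Hom vanishes and the stack is $0$-dimensional at $E_i$; this is the locally free case of Remark (1).
\item[(b)] If $E_i\otimes L\cong E_i$, the tangent space is $1$-dimensional. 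The family of stable ${\cal O}_{f_0}$-modules of type $(r,d)$ is a curve (e.g.\ parametrised by determinants), so the coarse locus has dimension $1$. Subtracting $\dim\operatorname{Aut}=1$ still gives stack dimension $0$.
\end{enumerate}
In either case ${\cal M}_H^\alpha(v,rf_0)^{s}$ has dimension $\le 0$.

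\emph{Step 3: the strictly semistable strata.} Stratify ${\cal M}_H^\alpha(lv,lrf_0)^{ss}$ by the isomorphism types of the factors (up to permutation). The stratum of iterated extensions with fixed factor types has dimension at most
\[
\sum_i\dim{\cal M}(v,rf_0)^s+\sum_{i<j}\bigl(\operatorname{ext}^1(E_j,E_i)-\hom(E_j,E_i)\bigr).
\]
Each correction term equals $-\chi(E_j,E_i)+\operatorname{ext}^2(E_j,E_i)=\langle v^2\rangle+\hom(E_i,E_j(K_X))=\hom(E_i,E_j\otimes L)$. The main obstacle is controlling these terms, which are nonzero exactly when $E_j\cong E_i\otimes L^{-1}$. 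I would handle this by counting such coincidences against the loss of moduli: equalities among factor types cut down the first sum by at least the number of nonvanishing correction terms. I would also use that the stabiliser of a polystable point grows accordingly (a $GL$-factor for each repeated summand), which offsets the extra extension directions. This is the standard estimate for a $0$-dimensional (in stack sense) family of objects with $\langle v^2\rangle=0$, giving total dimension $\le 0$.

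\emph{Step 4: the second assertion.} Let $v(E)=lv$ with $l\gcd(r,m)<m$, and suppose some stable factor $E_i$ has $\Supp E_i\ne f_0$. Its divisor is algebraically equivalent to $l_irf_0$, so Lemma \ref{lem:multiple2}'s companion, Lemma \ref{lem:multiple1}(2), gives $m\mid l_ir$. Hence $m/\gcd(r,m)$ divides $l_i$, so $l_i\gcd(r,m)\ge m$, contradicting $l_i\le l$. Thus every factor is supported on $f_0$ and $\Div E=lrf_0$, so ${\cal M}_H^\alpha(lv)^{ss}={\cal M}_H^\alpha(lv,lrf_0)^{ss}$, and the first part gives dimension $\le 0$.
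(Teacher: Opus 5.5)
\textbf{There is a genuine gap in Step~3.}

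Steps 1, 2 and 4 are sound and use the same ingredients as the paper:
\begin{itemize}
\item Lemma~\ref{lem:multiple1}(1) shows every Jordan--H\"older factor is a stable ${\cal O}_{f_0}$-module with Mukai vector $v$.
\item The family of such factors is one-dimensional.
\item Lemma~\ref{lem:multiple1}(2) gives the last assertion.
\end{itemize}
The problem is Step~3, which carries all the content of the proposition. The bookkeeping you propose there is false.

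Take factors $F$ with $F\otimes L\cong F$ (your case (b); this holds for every $F$ when $m\mid r$). Consider the stratum in which all $l$ factors are isomorphic to one such $F$.
\begin{itemize}
\item The locus of $(F,\dots,F)$ in $\prod_{i=1}^l{\cal M}(v,rf_0)^s$ is a curve of points with stabiliser ${\Bbb G}_m^l$, so it has dimension $1-l$. The coincidences therefore lower your first sum by only $l-1$.
\item Meanwhile all $\binom{l}{2}$ correction terms equal $\hom(F,F\otimes L)=1$.
\item Your bound is then $1-l+\binom{l}{2}=\binom{l-1}{2}$, which is positive for $l\ge 3$.
\end{itemize}
The same failure occurs for $m\nmid r$ once an orbit of size $\ge 2$ appears with multiplicities. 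For $m=2$ and factors ordered $F,F,F\otimes L,F\otimes L$, your count gives $1-4+4=1$.

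Your remark about $GL$-stabilisers of polystable points is the right intuition, but it is not part of the filtration count you set up. Making it precise is a genuine punctual dimension estimate. It is of the same nature as the fact that length-$l$ sheaves supported at a fixed point of a smooth surface form a stack of dimension $-1$, i.e.\ that the nilpotent commuting variety has dimension $l^2-1$. It does not follow from the Ext computation.

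\textbf{How the paper closes this.} Between stable factors in different $K_X$-orbits $\{F(pK_X)\}$, $\Hom$, $\chi$ and $\Ext^2$ all vanish, so $\Ext^1$ vanishes as well. Hence $E\cong\bigoplus_i E_i$, with $E_i\in{\cal J}(l_i,F_i)$ for pairwise distinct orbits and $\sum_i l_i=l$.

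The key input is Lemma~\ref{lem:fiber-dim}: the stack ${\cal J}(l,E_0)$ of sheaves generated by one fixed orbit has dimension $\le -1$. The orbit itself moves in a one-dimensional family (Lemma~\ref{lem:multiple2}, or your Step~2), so each block contributes at most $1+(-1)=0$. There are no homomorphisms between different blocks, so these contributions add, giving $\dim{\cal M}_H^\alpha(lv,lrf_0)^{ss}\le 0$.

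You need this lemma, or an equivalent bound for iterated extensions inside a single orbit, for Step~3 to go through.
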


For the proof of this claim, we start with the following definition.

\begin{defn}\label{defn:J}
For $E_0 \in {\cal M}_H^\alpha (l_0 v)^s$,
we set
\begin{equation}
{\cal J}(l, E_0):=\{E \in {\cal M}_H(l v)^{ss} \mid 
\text{ $E$ is generated by $E_0(p K_X)$, $p \in {\Bbb Z}$ } \},
\end{equation}
where $l_0 \mid l$.
\end{defn}

\begin{lem}[{\cite[Lem. 1.23]{Y:elliptic}}]\label{lem:fiber-dim}
$\dim {\cal J}(l,E_0) \leq -1$.
\end{lem}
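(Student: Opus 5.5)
The plan is to estimate the dimension of ${\cal J}(l,E_0)$ by describing its points as iterated extensions and counting parameters. Every $E \in {\cal J}(l,E_0)$ has a filtration
$$0=E^0 \subset E^1 \subset \cdots \subset E^n=E, \qquad F_i:=E^i/E^{i-1} \cong E_0(p_iK_X),$$
with $n=l/l_0$. Since $K_X$ restricted to the support of $E_0$ is torsion, there are only finitely many isomorphism classes $E_0(pK_X)$. So the discrete data $(p_1,\dots,p_n)$ run over a finite set, and it is enough to bound the dimension of each stratum ${\cal J}(p_1,\dots,p_n)$ of objects admitting such a filtration with prescribed types. The strata cover ${\cal J}(l,E_0)$, possibly with overlaps, which does not affect the upper bound.

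\textbf{Step 1: the basic estimate.} All the $F_i$ are $\alpha$-twisted stable with the same reduced Hilbert polynomial. Hence $\Hom(F_j,F_i)$ is ${\Bbb C}$ if $F_i\cong F_j$ and $0$ otherwise. By Serre duality, $\Ext^2(F_j,F_i)\cong\Hom(F_i,F_j(K_X))^{\vee}$. Since $v$ is a fiber class, $\langle v,v\rangle=0$, so $\chi(F_j,F_i)=-\langle l_0v,l_0v\rangle=0$ and therefore
$$\dim\Ext^1(F_j,F_i)-\dim\Hom(F_j,F_i)=\dim\Ext^2(F_j,F_i).$$
By induction on $n$, using the standard estimate for the stack of extensions $0\to E^{n-1}\to E\to F_n\to 0$,
$$\dim{\cal J}(p_1,\dots,p_n)\le \dim{\cal J}(p_1,\dots,p_{n-1})+\dim\Ext^1(F_n,E^{n-1})-\dim\Hom(F_n,E^{n-1})-1.$$
Here the final $-1$ comes from $\operatorname{Aut}(F_n)={\Bbb C}^{\times}$. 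This gives
$$\dim{\cal J}(p_1,\dots,p_n)\le -n+\sum_{i<j}\dim\Ext^2(F_j,F_i).$$
When $p_j\not\equiv p_i+1$ modulo the order of $K_X$ on the support, all the $\Ext^2$ terms vanish and we get $\dim\le -n\le -1$.

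\textbf{Step 2 (the main obstacle): the pairs with $F_i\cong F_j(K_X)$.} These pairs contribute $+1$ each to the sum above, and a naive count could exceed $n$. To handle them, I would not use an arbitrary filtration but a canonical one. Let $E^1\subset E$ be the maximal subsheaf that is a direct sum of copies of a single $E_0(pK_X)$, that is, the isotypic part of the socle, and proceed inductively on $E/E^1$. Then I would redo the induction with blocks: the stratum where $E/E^{k}\cong E_0(pK_X)^{\oplus a}$. At each step the count becomes
$$\dim\Ext^1(F^{\oplus a},E')-\dim\Hom(F^{\oplus a},E')-\dim GL_a.$$
The extra $\Ext^2(F^{\oplus a},E')$ term is bounded by $a\cdot\dim\Hom(E',F(K_X))$. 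I would then use the canonical choice to bound $\dim\Hom(E',F(K_X))$: maximality of the block should prevent $E'$ from mapping onto many copies of $F(K_X)$ in an independent way. This would bound the $\Ext^2$ term by $a\cdot(\text{number of copies of }F(K_X)\text{ in the top of }E')$, which is absorbed by $-a^2$ from $GL_a$ together with the corresponding term from the previous block.

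If this absorption does not close up directly, the fallback is to compare with the stack ${\cal M}_H^\alpha(lv)^{ss}$ for a general twist. There the $\Ext^2$ obstruction space is governed by $\Hom(E,E(K_X))$, and one can use that ${\cal J}(l,E_0)$ is a closed substack of a fiber of the support map, whose dimension drops by at least one compared with the $0$-dimensional count. In either version, the expected conclusion is $\dim{\cal J}(l,E_0)\le -1$.
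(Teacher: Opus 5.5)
Your Step 1 is correct, but it proves the lemma only when no pair $i<j$ with $F_i\cong F_j(K_X)$ occurs. Such pairs cannot be avoided in general. If $E_0(K_X)\cong E_0$, every pair contributes, and Step 1 gives only $\dim\le -n+\binom{n}{2}$, which is $\ge 0$ for $n\ge 3$. This case does occur, e.g.\ for $E_0$ supported on a smooth fiber, or a stable bundle on $f_0$ whose rank is divisible by $m$. Step 2 does not repair this. The inequality by which $a\cdot\dim\Hom(E',F(K_X))$ is to be ``absorbed'' by $-a^2$ and the previous block is never stated. Your proposed bound by ``the number of copies of $F(K_X)$ in the top of $E'$'' is just a restatement of $\dim\Hom(E',F(K_X))$. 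Nothing ties that number to the block sizes: the top of a socle layer can contain more copies than the last block. The fallback asserts without proof exactly the dimension drop the lemma claims. So the proposal has a genuine gap at the only nontrivial point.

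The missing idea is to keep the fiber that Step 1 discards when you bound the image of the filtration stack by the filtration stack itself. Filter one simple at a time, and prove by induction on the length a stronger statement: $\dim{\cal Z}\le -\max_p\dim\Hom(E,E_0(pK_X))$ for every stratum ${\cal Z}\subset{\cal J}(l,E_0)$ on which these dimensions are constant.

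Fix $S=E_0(qK_X)$ with $t=\dim\Hom(E,S)\ge 1$ on ${\cal Z}$. Every nonzero $E\to S$ is surjective, by stability and equal reduced Hilbert polynomials. Its kernel $E'$ is again semistable and generated by the $E_0(pK_X)$. So the stack of sequences $0\to E'\to E\to S\to 0$ with $E\in{\cal Z}$ has two maps. Over ${\cal Z}$ its fibers are ${\Bbb P}(\Hom(E,S))$. Over the pairs $(E',S)$ its fibers are $[\Ext^1(S,E')/\Hom(S,E')]$, of dimension $\dim\Ext^2(S,E')=\dim\Hom(E',S(K_X))$, since $\chi(S,E')=0$. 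Hence
\[
\dim{\cal Z}+t-1\le\max_{{\cal Z}'}\bigl(\dim{\cal Z}'-1+\dim\Hom(E',S(K_X))\bigr)\le -1 .
\]
The second inequality is the induction hypothesis applied to $E'$, because $S(K_X)=E_0((q+1)K_X)$ is again one of the simples. Thus $\dim{\cal Z}\le -t$. Choosing $q$ with $t$ maximal gives the stronger claim, and hence $\dim{\cal J}(l,E_0)\le -1$. The base case is $E=E_0(pK_X)$, with $\dim=-1$ and $t=1$. This uses only the facts you already collected in Step 1; no canonical filtration or support map is needed.
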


{\it Proof of Proposition \ref{prop:multiple}.}

We note that $E \in {\cal M}_H^\alpha(l v,lr f_0)^{ss}$ is generated by
members in ${\cal M}_H^\alpha(v,r f_0)^{ss}$ (Lemma \ref{lem:multiple1} (1)).
There are $F_i \in {\cal M}_H^\alpha(v)^s$ and 
$E_i \in {\cal J}(l_i, F_i)$ such that $E \cong \oplus_i E_i$,
where $\sum_i l_i=l$.
By Lemma \ref{lem:fiber-dim} and Lemma \ref{lem:multiple2}, 
we get our claim. For more details, see \cite[sect. 5.3]{K-Y}.
\qed

\end{NB}
Let $m_1 f_1,m_2 f_2,...,m_s f_s$ be multiple fibers of $\pi$.
For a class $r' f+d' \varrho_X$ $(r' \in {\Bbb Z}_{>0}, d' \in {\Bbb Z})$ with $\gcd(r',d')=1$,
let us consider $r_i f_i+d_i \varrho_X$ ($r_i \in {\Bbb Z}_{>0}, d_i \in {\Bbb Z}$) such that $\gcd(r_i,d_i)=1$ and
${\Bbb Q}(r_i f_i+d_i \varrho_X)={\Bbb Q}(r' f+d' \varrho_X)$.
We set $p_i:=\gcd(r_i,m_i)$.
Then $r' m_i d_i=r_i d'$,
$r_i=p_i r'$ and $m_i=p_i \frac{d'}{d_i}$, where $d_i \mid d'$.
We also have
$r' f+d' \varrho_X=\frac{d'}{d_i}(r_i f_i+d_i \varrho_X)$.
We set
\begin{equation}\label{eq:isotropic}
{\bf f}:=\sum_i l_i(r_i f_i+d_i \varrho_X)+l(r' f+d' \varrho_X)
\end{equation}
where $l_i,l \in {\Bbb Z}$ and
$0 \leq l_i <\frac{d'}{d_i}$.
Then $(l_1,...,l_s,l)$ is uniquely determined by ${\bf f}$.
\begin{lem}[{\cite[Lem. 1.24]{Y:elliptic}}]\label{lem:isotropic-dim}
$\dim {\cal M}_H^\alpha({\bf f})^{ss}=l$.
\end{lem}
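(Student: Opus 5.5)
We need to show that $\dim {\cal M}_H^\alpha({\bf f})^{ss}=l$, where ${\bf f}$ is the isotropic class in \eqref{eq:isotropic} and $\alpha$ is general. The plan is to stratify the stack by Jordan--Hölder type. Since ${\Bbb Q}{\bf f}={\Bbb Q}(r'f+d'\varrho_X)$ and $\alpha$ is general, every semistable $E$ with $v(E)={\bf f}$ has stable factors whose Mukai vectors are multiples of $r'f+d'\varrho_X$. Each such stable factor is a sheaf of one of two kinds:
\begin{itemize}
\item a stable sheaf on a general fiber, with Mukai vector $r'f+d'\varrho_X$;
\item a stable sheaf supported on a multiple fiber $f_i$, with Mukai vector $r_if_i+d_i\varrho_X$.
\end{itemize}
The first kind moves in a one-dimensional family, since the fine moduli is the elliptic surface $X'$ of dimension $2$, minus the one-dimensional automorphism group ${\Bbb G}_m$. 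The second kind contributes isolated (dimension $0$) stable objects, and each has a stack dimension $-1$ coming from its automorphisms.

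First I would write $E$ via its decomposition by support. It splits canonically as $E=\bigoplus_{x\in C}E_x$, where $E_x$ is supported on $\pi^{-1}(x)$. For each $x$ the class $v(E_x)$ is a positive multiple of the primitive class on that fiber. For a multiple fiber $m_if_i$ this gives $v(E_x)=k_i(r_if_i+d_if_i\varrho_X/f_i\cdot)$; more precisely $k_i(r_if_i+d_i\varrho_X)$ with $k_i\ge 0$. For a non-multiple fiber it is $k_x(r'f+d'\varrho_X)$. We have $\frac{d'}{d_i}(r_if_i+d_i\varrho_X)=r'f+d'\varrho_X$, and the representation \eqref{eq:isotropic} is unique once we normalize $0\le l_i<d'/d_i$. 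Hence the total class forces, for each $i$, $k_i\equiv l_i \bmod d'/d_i$. The number of "free" general-fiber units is then $l-\sum_i (k_i-l_i)/(d'/d_i)\le l$.

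Second, I would bound the dimension of each stratum. The stratum where $E$ has $n$ general-fiber units spread over distinct points of $C$ has dimension $\le n$: it contributes $\dim X'\cdot n$ minus automorphisms. Any contribution concentrated on a fixed multiple fiber has dimension $\le 0$, by the multiple-fiber analysis of \cite{Y:elliptic}. The latter is the stratum-by-stratum count of objects generated by a fixed stable object (and its $K_X$-twists), giving $\dim\le -1$ per extension block and $\dim\le 0$ overall.

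Third, I would combine the two bounds. For semistable objects of class $k(r'f+d'\varrho_X)$ on fibers, the stack dimension is additive in a way bounded by the number of "moving" units. So the total is $\le l$, with equality attained by the open stratum:
\[
E=\bigoplus_i E_i\oplus\bigoplus_{j=1}^l F_j,
\]
where $E_i$ is a fixed object of class $l_i(r_if_i+d_i\varrho_X)$ on $f_i$, and the $F_j$ are stable on distinct general fibers. That stratum has dimension $2l-l=l$.

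The main obstacle is the multiple-fiber contribution. One must show that semistable sheaves of class $k_i(r_if_i+d_i\varrho_X)$ with support on $f_i$ form a stack of dimension $\le 0$. When $k_i\ge d'/d_i$, such a sheaf can deform off the fiber, and that situation is absorbed into the general-fiber count. The key input here is that stable objects on $m_if_i$ with $m_i\nmid k_ir_i$ are rigid, using $\Hom(E,E(K_X))=0$. This is what I would verify first.
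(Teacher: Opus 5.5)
The route you propose matches the cited result (\cite[Prop.~1.21, Lem.~1.23]{Y:elliptic}). You decompose $E$ by support. You bound the part on each multiple fibre by $\dim{\cal M}_H^\alpha(k(r_if_i+d_i\varrho_X),kr_if_i)^{ss}\le 0$, which comes from $\dim{\cal J}(k,E_0)\le -1$. Then you count the units $r'f+d'\varrho_X$ that can move along $C$. Your bookkeeping $k_i\equiv l_i \bmod d'/d_i$, which leaves at most $l$ moving units, is fine for the upper bound.

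\textbf{The gap is your description of the stable objects on a multiple fibre.} If $l_i\ge 1$ then $d'/d_i=m_i/p_i>1$, i.e.\ $m_i\nmid r_i$. In that case a stable sheaf $A$ of class $r_if_i+d_i\varrho_X$ on $f_i$ has $\chi(A,A)=0$ and $\Ext^2(A,A)=\Hom(A,A(K_X))^{\vee}=0$, hence $\dim\Ext^1(A,A)=1$. So the vanishing of $\Hom(A,A(K_X))$ gives unobstructedness, not rigidity. These sheaves move in a one-dimensional family, since $\dim M_H^\alpha(0,r_if_i,d_i)=1$ by \cite[Lem.~1.18]{Y:elliptic}. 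The stack is smooth of dimension $0$ at $A$, not $-1$ \cite[Rem.~1.19]{Y:elliptic}. The statement you plan to ``verify first'' is therefore false. It also contradicts your own bound ``$\le 0$ overall'', which is $-1$ (from ${\cal J}(k,E_0)$) plus $1$ (the choice of $E_0$ in this curve).

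\textbf{This breaks the equality.} If each $E_i$ is a fixed object, the locus of $\bigoplus_iE_i\oplus\bigoplus_{j=1}^lF_j$ has dimension $l-\sum_i\dim\operatorname{Aut}(E_i)$. That is $<l$ as soon as some $l_i>0$. Your count $2l-l$ simply drops $\operatorname{Aut}(E_i)$. If the multiple-fibre objects really were isolated with stack dimension $-1$, every such locus would have dimension at most $l-\#\{i\mid l_i>0\}$, and the lemma would fail.

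\textbf{The fix} is to take $E_i=\bigoplus_{j=1}^{l_i}A_{ij}$, with $A_{ij}$ pairwise non-isomorphic stable ${\cal O}_{f_i}$-modules of class $r_if_i+d_i\varrho_X$ varying in the one-dimensional moduli. Since $\Hom(A_{ij},A_{ik})=0$ for $j\ne k$, this locus has dimension $l_i-l_i=0$. Together with $l$ stable sheaves on distinct general fibres, you get a locus of dimension exactly $l$.

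Your third step also needs to be made precise. Collisions of moving units (several units on one fibre, or a repeated stable factor) are controlled by the same estimate $\dim{\cal J}(k,E_0)\le -1$, applied to a stable $E_0$ of class $r'f+d'\varrho_X$. Then each distinct stable factor contributes at most $2-1=1$. The phrase ``additive in a way bounded by the number of moving units'' is not yet an argument.
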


\begin{lem}[{\cite[Lem. 1.25]{Y:elliptic}}]\label{lem:spherical2}
Let $D$ be an effective divisor such that $(D^2)=-2$ and $\pi(D)$ is a point.
Then ${\cal M}_H^\alpha(0,D,a)^s$ consists of a spherical object for 
a general $(H,\alpha)$. 
\end{lem}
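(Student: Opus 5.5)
We need to prove Lemma \ref{lem:spherical2}: for an effective divisor $D$ with $(D^2)=-2$ contained in a fiber, ${\cal M}_H^\alpha(0,D,a)^s$ consists of a spherical object for general $(H,\alpha)$. The plan is to show two things: every stable $E$ with $v(E)=(0,D,a)$ is rigid and spherical, and there is at most one such $E$.

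First, sphericity of a single stable object. Since $D$ lies in a fiber of $\pi$, we have $(K_X\cdot D)=0$, because $K_X$ is numerically a rational multiple of $f$ up to multiple-fiber contributions, all of which are pulled back from $C$. Adjunction then makes $D$ of arithmetic genus $0$. Take $E$ stable with $v(E)=(0,D,a)$. Then
$$\chi(E,E)=-\langle v(E)^2\rangle=-(D^2)=2.$$
Stability gives $\Hom(E,E)={\Bbb C}$. By Serre duality, $\Ext^2(E,E)=\Hom(E,E(K_X))^{\vee}$. Since $\Supp E\subset \pi^{-1}(\text{pt})$, the restriction $K_X|_{\Supp E}$ is numerically trivial. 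Moreover $E(K_X)$ has the same Mukai vector as $E$ and is $\alpha$-twisted stable, because $c_1$ changes only by $\rk E\cdot K_X=0$ and $\chi$ changes by $(D\cdot K_X)=0$. A nonzero map $E\to E(K_X)$ between stable sheaves with equal vectors is therefore an isomorphism. So $\dim\Ext^2(E,E)\le 1$, and $\chi=2$ then forces $\Ext^1(E,E)=0$ if $\Ext^2=1$, or $\dim\Ext^1=-\,$(negative), which is impossible. Hence $\Ext^2(E,E)\cong{\Bbb C}$, i.e. $E\cong E(K_X)$, and $\Ext^1(E,E)=0$.

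The step I expect to be the main obstacle is exactly this isomorphism $E\cong E(K_X)$. I would handle it by using the local form of $K_X$ near a fiber. By the canonical bundle formula, $K_X=\pi^*(L)+\sum(m_i-1)f_i$. Near a non-multiple fiber, ${\cal O}(K_X)$ is trivial, so $E(K_X)\cong E$. If $D$ lies in a multiple fiber $m_if_i$, then ${\cal O}(f_i)|_{m_if_i}$ is torsion of order $m_i$ and the twist may be nontrivial. In that case I would instead compute $\Ext^2$ directly. I expect that for $D$ a $(-2)$-configuration the restriction ${\cal O}(f_i)|_D$ is trivial, since $D$ is a proper subcurve of the fiber whose dual graph is a tree (genus $0$) and torsion line bundles of degree $0$ on a tree of rational curves are trivial. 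This would again give $E(K_X)\cong E$. If this fails, an alternative is to show $\Ext^2=0$; but then $\chi$ would force $\dim\Ext^1=-1$, a contradiction, so the tree argument is needed. This yields that $E$ is spherical.

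Second, uniqueness. For general $(H,\alpha)$ semistability coincides with stability for $v=(0,D,a)$: walls would come from subsheaves with proportional twisted slope, and only finitely many classes are relevant. Given stable $E,E'$ with the same vector, $\chi(E,E')=2>0$. Hence $\Hom(E,E')\ne 0$ or $\Ext^2(E,E')=\Hom(E',E(K_X))^\vee\ne0$. Since $E(K_X)\cong E$ and both sheaves are stable with the same vector, either nonzero map is an isomorphism, so $E\cong E'$. Finally, because $E$ is rigid, the moduli space is a reduced point. Combining the two parts, ${\cal M}_H^\alpha(0,D,a)^s$ consists of a single spherical object, as claimed in Lemma \ref{lem:spherical2}.
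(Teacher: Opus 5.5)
The present paper gives no proof of this lemma; it only quotes \cite[Lem.~1.25]{Y:elliptic}. On its own merits your argument is correct and is the standard one. The three facts $\chi(E,E)=-(D^2)=2$, $\Hom(E,E)={\Bbb C}$ and $\dim\Hom(E,E(K_X))\le 1$ together force $\Ext^1(E,E)=0$ and $\Ext^2(E,E)\cong{\Bbb C}$, hence $E\cong E(K_X)$. Then $\chi(E,E')=2>0$ plus stability gives $E\cong E'$.

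You should delete the paragraph about the ``main obstacle.'' Your own Euler-characteristic computation already proves $E\cong E(K_X)$ in every case, including multiple fibers, so no analysis of $K_X|_D$ is needed. Your remark that ``the tree argument is needed'' contradicts what you had just shown. The premise of that detour is also false in general. For a component $C_1$ of an $I_n$ fiber with $n\ge 2$, the divisor $D=C_1+f$ is effective with $(D^2)=-2$, yet it is supported on the whole cycle and is non-reduced. So $D$ need not be a proper subcurve whose dual graph is a tree.

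Finally, genericity of $(H,\alpha)$ plays no role for the stable locus. It only serves to identify semistable objects with stable ones.
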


\subsection{Mukai pairing and Fourier-Mukai transform on an elliptic surface.}\label{subsect:ellipticMukai} 

Let $\pi:X \to C$ be an elliptic surface over a curve $C$
and $f$ a fiber of $\pi$.
We denote the generic point of $C$ by $\eta$.
Then the generic fiber $\pi^{-1}(\eta)$ is a smooth projective curve with genus 1. 
We recall some results in \cite{ellipticStab}.

\begin{lem}\label{lem:RR}
For $E, F \in {\bf D}(X)$, 
\begin{equation}
-\langle v(E),v(F) \rangle=\chi(E,F(\tfrac{1}{2}K_X)).
\end{equation}
\end{lem}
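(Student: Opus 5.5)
The plan is to reduce the claim to the Hirzebruch--Riemann--Roch theorem together with the definition of the Mukai pairing. By Serre duality and HRR on the surface $X$,
$$
\chi(E,F(\tfrac12 K_X))=\int_X \ch(E)^\vee \ch(F)\, e^{K_X/2}\,\operatorname{td}_X .
$$
Here $\ch(E)^\vee$ is obtained by applying the sign change $(-1)^i$ in degree $2i$, so $\ch(E)^\vee=\ch(E^\vee)$. The twist by $\tfrac12 K_X$ is meant at the level of cohomology: one multiplies by $e^{K_X/2}$, and the $\Bbb Q$-divisor causes no trouble on the Chern character side.

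The key step is the identity
$$
e^{K_X/2}\operatorname{td}_X=1+\tfrac{\chi({\cal O}_X)}{2}\cdot 2\varrho_X
=(1+\tfrac{\chi({\cal O}_X)}{2}\varrho_X)^2 ,
$$
and I will verify it degree by degree. We have $\operatorname{td}_X=1-\tfrac12K_X+\chi({\cal O}_X)\varrho_X$ by Noether's formula. Multiplying by $e^{K_X/2}=1+\tfrac12K_X+\tfrac18K_X^2$ gives:
\begin{itemize}
\item degree $2$: $-\tfrac12K_X+\tfrac12K_X=0$;
\item degree $4$: $\chi({\cal O}_X)\varrho_X-\tfrac14(K_X^2)\varrho_X+\tfrac18(K_X^2)\varrho_X$.
\end{itemize}

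This is where the elliptic surface hypothesis enters, and it is the one place where care is needed. For a minimal elliptic surface, the canonical bundle formula gives $K_X\equiv$ a rational multiple of $f$, so $(K_X^2)=0$. The degree-4 term is then exactly $\chi({\cal O}_X)\varrho_X$. (On a general surface the $K^2$ terms would not cancel, which is precisely why the formula uses $F(\tfrac12K_X)$ and is specific to this setting.)

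It remains to regroup the integrand. Since $\varrho_X$ is central and squares to zero,
$$
\ch(E)^\vee\ch(F)(1+\tfrac{\chi}{2}\varrho_X)^2=\bigl(\ch(E)(1+\tfrac{\chi}{2}\varrho_X)\bigr)^\vee\bigl(\ch(F)(1+\tfrac{\chi}{2}\varrho_X)\bigr)=v(E)^\vee v(F).
$$
This uses that $\varrho_X^\vee=\varrho_X$. Integrating gives $\chi(E,F(\tfrac12K_X))=\int_X v(E)^\vee v(F)=-\langle v(E),v(F)\rangle$ by definition (1) of the Mukai pairing.

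The only nontrivial input is $(K_X^2)=0$ for the minimal elliptic fibration, which I take from the canonical bundle formula. The rest is bookkeeping of the $\vee$ operation and of products with the nilpotent class $\varrho_X$.
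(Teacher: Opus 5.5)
Your proof is correct. The paper gives no argument of its own here (it simply recalls the lemma from \cite{ellipticStab}), and your Hirzebruch--Riemann--Roch computation is the standard verification: the twist by $e^{K_X/2}$ kills the degree-two part of $\operatorname{td}_X$, and $(K_X^2)=0$ for a minimal elliptic surface gives $e^{K_X/2}\operatorname{td}_X=(1+\tfrac{\chi({\cal O}_X)}{2}\varrho_X)^2$. One small remark: only HRR applied to $E^{\vee}\otimes F$ is used, so the appeal to Serre duality is unnecessary.
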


\begin{defn}
For elliptic surfaces $X \to C$, $Y \to C$
and ${\bf P} \in {\bf D}(X \times Y)$, 
we set
\begin{equation}
v({\bf P}):=\ch ({\bf P})p_X^*((1+\tfrac{\chi({\cal O}_X)}{2}\varrho_X))
p_Y^*((1+\tfrac{\chi({\cal O}_Y)}{2}\varrho_Y)) \in H^*(X \times Y,{\Bbb Q}).
\end{equation}
\end{defn}
As a consequence of Grothendieck Riemann-Roch theorem, we have the following.
\begin{lem}\label{lem:cohFM}
For $\Phi_{X \to Y}^{{\bf P}^{\vee}}$,
we have a commutative diagram

\begin{equation}
\begin{CD}
{\bf D}(X) @>{\Phi_{X \to Y}^{{\bf P}^{\vee}}}>> {\bf D}(Y)\\
@V{v}VV @VV{v}V \\
H^*(X,{\Bbb Q})_{\alg} @>{\overline{\Phi}_{X \to Y}^{{\bf P}^{\vee}}}>> H^*(Y,{\Bbb Q})_{\alg}
\end{CD}
\end{equation}
where 
\begin{equation}
\overline{\Phi}_{X \to Y}^{{\bf P}^{\vee}}(\alpha):=p_{Y*}(v({\bf P}^{\vee})p_X^*(e^{-\tfrac{1}{2}K_X})p_X^*(\alpha)).
\end{equation}
\end{lem}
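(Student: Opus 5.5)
The statement to prove is Lemma \ref{lem:cohFM}: the Mukai vector intertwines $\Phi_{X \to Y}^{{\bf P}^{\vee}}$ with $\overline{\Phi}_{X \to Y}^{{\bf P}^{\vee}}$. The plan is to apply Grothendieck--Riemann--Roch to the projection $p_Y:X\times Y\to Y$ and then rewrite all Todd classes using the elliptic-surface identity for $\operatorname{td}$.

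\textbf{Step 1 (GRR).} Write $\Phi_{X\to Y}^{{\bf P}^{\vee}}(E)={\bf R}p_{Y*}({\bf P}^{\vee}\otimes p_X^*E)$. GRR for $p_Y$, whose relative tangent bundle is $p_X^*T_X$, gives
$$
\ch(\Phi(E))\operatorname{td}_Y = p_{Y*}\bigl(\ch({\bf P}^{\vee})\,p_X^*(\ch E\operatorname{td}_X)\,p_Y^*\operatorname{td}_Y\bigr).
$$
Using the projection formula, the factor $\operatorname{td}_Y$ can be moved to either side, so
$$
\ch(\Phi(E)) = p_{Y*}\bigl(\ch({\bf P}^{\vee})\,p_X^*(\ch E\operatorname{td}_X)\bigr).
$$

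\textbf{Step 2 (Todd class of $X$).} For a surface, $\operatorname{td}_X=1-\tfrac12K_X+\chi({\cal O}_X)\varrho_X$. For a minimal elliptic surface $(K_X^2)=0$, so
$$
e^{-\frac12K_X}\bigl(1+\tfrac{\chi({\cal O}_X)}{2}\varrho_X\bigr)
=\bigl(1-\tfrac12K_X+\tfrac18(K_X^2)\varrho_X\bigr)\bigl(1+\tfrac{\chi({\cal O}_X)}{2}\varrho_X\bigr).
$$
Since $(K_X^2)=0$, this equals $1-\tfrac12K_X+\tfrac{\chi({\cal O}_X)}{2}\varrho_X$. Hence
$$
\operatorname{td}_X=e^{-\frac12K_X}\bigl(1+\tfrac{\chi({\cal O}_X)}{2}\varrho_X\bigr)^2,
$$
and so $\ch E\operatorname{td}_X=v(E)\,e^{-\frac12K_X}\bigl(1+\tfrac{\chi({\cal O}_X)}{2}\varrho_X\bigr)$. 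The same identity holds on $Y$.

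\textbf{Step 3 (assembling).} Multiply the identity of Step 1 by $1+\tfrac{\chi({\cal O}_Y)}{2}\varrho_Y$ and pull this factor inside $p_{Y*}$ via the projection formula as $p_Y^*(\cdot)$. Substituting Step 2 then turns the integrand into
$$
v({\bf P}^{\vee})\,p_X^*(e^{-\frac12K_X})\,p_X^*v(E),
$$
where $v({\bf P}^{\vee})$ absorbs both factors $p_X^*(1+\tfrac{\chi({\cal O}_X)}{2}\varrho_X)$ and $p_Y^*(1+\tfrac{\chi({\cal O}_Y)}{2}\varrho_Y)$. This is exactly $\overline{\Phi}(v(E))$.

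\textbf{Expected obstacle.} The only delicate point is the bookkeeping of the Todd square root. The identity $\operatorname{td}_X=e^{-K_X/2}\bigl(1+\tfrac{\chi}{2}\varrho_X\bigr)^2$ depends on $(K_X^2)=0$, which is precisely where minimality of the elliptic surface is used. The remaining subtlety, GRR for a possibly non-projective morphism, does not arise here because $p_Y$ is a projective morphism.
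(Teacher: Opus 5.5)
Your proof is correct and is the same Grothendieck--Riemann--Roch computation the paper invokes; the paper gives no further detail. Your steps are GRR for $p_Y$ with $\operatorname{td}_Y$ cancelling, then $\operatorname{td}_X=e^{-\frac12 K_X}\bigl(1+\tfrac{\chi({\cal O}_X)}{2}\varrho_X\bigr)^2$, which holds because $(K_X^2)=0$, and finally absorbing the $Y$-side factor $1+\tfrac{\chi({\cal O}_Y)}{2}\varrho_Y$ into $v({\bf P}^{\vee})$ via the projection formula. The $Y$-side Todd identity you mention is never used, which is harmless.
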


\begin{lem}\label{lem:fiber}
Assume that $\Phi_{X \to Y}^{{\cal P}^{\vee}}$ is a relative Fourier-Mukai transform, that is,
${\bf P}_{|X \times \{ y \}}$ $(y \in Y)$ is a stable 1-dimensional sheaf on a fiber of $\pi$. 
Then
for $E \in {\bf D}(X)$,
$$
\overline{\Phi}_{X \to Y}^{{\cal P}^{\vee}}(v(E)e^{\lambda f})=
e^{\lambda f}\overline{\Phi}_{X \to Y}^{{\cal P}^{\vee}}
(v(E)),\, \lambda \in {\Bbb Q}.
$$
\end{lem}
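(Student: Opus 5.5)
The plan is to reduce the cohomological identity to an isomorphism of functors, using the fact that a relative Fourier–Mukai kernel is supported on the fiber product. First, since $(f^2)=0$ in $H^*(X,{\Bbb Q})$, we have $e^{\lambda f}=1+\lambda f$ on $X$, and likewise $e^{\lambda f}=1+\lambda f$ on $Y$, where $f$ now denotes the fiber class of $Y \to C$. Since $\overline{\Phi}_{X \to Y}^{{\cal P}^{\vee}}$ is ${\Bbb Q}$-linear, both sides of the claimed identity are affine-linear in $\lambda$:
$$
\overline{\Phi}(v(E))+\lambda\,\overline{\Phi}(v(E)f)
\quad\text{and}\quad
\overline{\Phi}(v(E))+\lambda\, f\,\overline{\Phi}(v(E)).
$$
It therefore suffices to prove $\overline{\Phi}(v(E) f)=f\,\overline{\Phi}(v(E))$, and for this it is enough to check the case $\lambda=1$.

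For $\lambda=1$, I take a general point $p \in C$ over which the fiber is smooth and non-multiple, and set $L={\cal O}_C(p)$. Then $\pi^*L={\cal O}_X(f)$ and ${\pi'}^*L={\cal O}_Y(f)$. Since $\ch(E \otimes {\cal O}_X(f))=\ch(E)e^{f}$, we get $v(E(f))=v(E)e^{f}$, and similarly on $Y$. Lemma \ref{lem:cohFM} then reduces the claim to the isomorphism
$$
\Phi_{X \to Y}^{{\cal P}^{\vee}}(E \otimes \pi^*L) \cong \Phi_{X \to Y}^{{\cal P}^{\vee}}(E)\otimes {\pi'}^*L.
$$
To prove this isomorphism, I use the hypothesis that each ${\cal P}_{|X \times \{y\}}$ is a sheaf on a fiber of $\pi$ over the point of $C$ under $y$. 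This implies that ${\cal P}$, and hence ${\cal P}^{\vee}$, is supported on $X \times_C Y$. Since the two maps $X\times_C Y \to C$ agree, I expect
$$
{\cal P}^{\vee}\otimes p_X^*\pi^*L \cong {\cal P}^{\vee}\otimes p_Y^*{\pi'}^*L.
$$
Given this, the desired isomorphism follows from the projection formula for ${\bf R}p_{Y*}$.

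The main obstacle is justifying this last isomorphism rigorously when the support of ${\cal P}^{\vee}$ is only set-theoretically in $X\times_C Y$. To handle it, I would argue that ${\cal P}$ is a sheaf flat over $Y$, and that on each slice $X \times\{y\}$ it is scheme-theoretically a module over the fiber of $\pi$ (with its induced scheme structure). Consequently ${\cal P}$ is an ${\cal O}_{X\times_C Y}$-module, and ${\cal P}^{\vee}$ is a complex of ${\cal O}_{X\times_C Y}$-modules, up to a shift and twist coming from Grothendieck duality. The two pullbacks of $L$ then become canonically isomorphic after tensoring.

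If this route becomes delicate, the fallback is to work directly in cohomology. Grothendieck–Riemann–Roch shows that $v({\cal P}^{\vee})$ is a class pushed forward from $X\times_C Y$, on which $p_X^*\pi^*[p]=p_Y^*{\pi'}^*[p]$. Hence
$$
v({\cal P}^{\vee})\,p_X^*f=v({\cal P}^{\vee})\,p_Y^*f,
$$
and the projection formula for $p_{Y*}$ gives $\overline{\Phi}(v(E)f)=f\,\overline{\Phi}(v(E))$ directly.
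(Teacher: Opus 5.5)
Your argument is correct. The paper does not prove this lemma itself; it is recalled from \cite{ellipticStab}. Your route is the standard proof: reduce to $\lambda=1$ using $(f^2)=0$ and the linearity of $\overline{\Phi}_{X \to Y}^{{\cal P}^{\vee}}$, then use $\Phi_{X \to Y}^{{\cal P}^{\vee}}(E \otimes \pi^*L) \cong \Phi_{X \to Y}^{{\cal P}^{\vee}}(E) \otimes {\pi'}^*L$, which holds because the kernel lives on $Z:=X \times_C Y$.

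The one step you should write out is ``consequently ${\cal P}$ is an ${\cal O}_Z$-module''. It needs that $Y$ is reduced, not only that ${\cal P}$ is $Y$-flat.
\begin{itemize}
\item Each ${\cal P}_{|X \times \{y\}}$ is annihilated by the ideal of its scheme-theoretic fiber. Indeed, multiplication by $\pi^*t$, with $t$ a local parameter on $C$, is a nilpotent endomorphism of a stable sheaf, hence $0$.
\item Therefore ${\cal P}/I_Z{\cal P}$ has the same fibers over $Y$ as ${\cal P}$, so its Hilbert polynomials are constant. Since $Y$ is smooth, ${\cal P}/I_Z{\cal P}$ is $Y$-flat.
\item Then $I_Z{\cal P}=0$ by Nakayama.
\end{itemize}
After that you do not need Grothendieck duality. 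Put $M:=p_X^*\pi^*L \otimes p_Y^*{\pi'}^*L^{-1}$, which is trivial on $Z$. The isomorphism ${\cal P} \otimes M \cong {\cal P}$ dualizes to ${\cal P}^{\vee} \otimes M^{-1} \cong {\cal P}^{\vee}$.

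Your cohomological fallback is actually the more economical route, because it only needs set-theoretic support.
\begin{itemize}
\item Write $\ch({\cal P})=\sum_k \ch(I_Z^k{\cal P}/I_Z^{k+1}{\cal P})$.
\item Each graded piece ${\cal G}$ is an ${\cal O}_Z$-module, so $\ch({\cal G})e^{c_1(M)}=\ch({\cal G})$.
\item Since $c_1(M)$ is nilpotent, $e^{c_1(M)}-1$ is $c_1(M)$ times a unit, which forces $\ch({\cal G})\,c_1(M)=0$.
\item Together with $\ch({\cal P}^{\vee})=\ch({\cal P})^{\vee}$, this gives $v({\cal P}^{\vee})\,p_X^*f=v({\cal P}^{\vee})\,p_Y^*f$ without any scheme-theoretic analysis.
\end{itemize}
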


\begin{prop}[{\cite{ellipticStab}}]\label{prop:isometry}
Let $\Phi_{X \to Y}^{{\cal P}^{\vee}}$ be an equivalence in Lemma \ref{lem:fiber}. Then 
$\overline{\Phi}$ preserves
$\langle\;\;,\;\; \rangle$.
Thus 
$$
\langle \overline{\Phi}_{X \to Y}^{{\cal P}^{\vee}}(v(E_1)),\overline{\Phi}_{X \to Y}^{{\cal P}^{\vee}}(v(E_2)) \rangle
=\langle v(E_1),v(E_2) \rangle,\;
E_1,E_2 \in {\bf D}(X).
$$ 
\end{prop}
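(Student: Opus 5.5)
The plan is to reduce the statement to a computation on the level of the cohomological transform and then use that the Mukai pairing is the Euler pairing. By Lemma \ref{lem:cohFM}, $v(\Phi_{X \to Y}^{{\cal P}^{\vee}}(E))=\overline{\Phi}_{X \to Y}^{{\cal P}^{\vee}}(v(E))$ for all $E\in{\bf D}(X)$. Hence it suffices to show
$$
\langle v(\Phi(E_1)),v(\Phi(E_2))\rangle=\langle v(E_1),v(E_2)\rangle,\qquad \Phi:=\Phi_{X \to Y}^{{\cal P}^{\vee}} .
$$
Since $v({\bf D}(X))$ spans $H^*(X,{\Bbb Q})_{\alg}$, this gives the isometry on the whole algebraic part by bilinearity.

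By Lemma \ref{lem:RR}, $-\langle v(E_1),v(E_2)\rangle=\chi(E_1,E_2(\tfrac12 K_X))$. The first step is to deal with the $\tfrac12 K_X$ twist, which is only a ${\Bbb Q}$-divisor. I would rewrite the pairing as $-\int_X v(E_1)^\vee v(E_2)$ and note that multiplication by $e^{\lambda}$ for a class $\lambda$ is an isometry of the Mukai pairing, since $(e^{\lambda}u)^\vee(e^{\lambda}w)=u^\vee w$. On an elliptic surface $K_X$ is numerically a rational multiple of $f$ (canonical bundle formula). So $e^{\frac12K_X}=e^{\mu f}$ for some $\mu\in{\Bbb Q}$, and Lemma \ref{lem:fiber} shows that $\overline{\Phi}$ commutes with multiplication by $e^{\mu f}$, using that $K_Y$ is likewise a multiple of the fiber class $f$ on $Y$. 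This reduces the claim to the statement that $\Phi$ preserves $\chi(E_1,E_2\otimes L)$ up to a fiber-class twist, i.e.\ to the untwisted Euler form.

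The second step is that $\Phi$ is an equivalence, so $\chi(\Phi(E_1),\Phi(E_2))=\chi(E_1,E_2)$, since ${\bf R}\Hom$ is preserved. Combining this with the first step gives
$$
\chi(\Phi(E_1),\Phi(E_2)(\tfrac12K_Y))=\chi(E_1,E_2(\tfrac12K_X))
$$
after formally writing $\tfrac12K$ as $e^{\mu f}$ on Mukai vectors. More precisely, I would argue purely cohomologically: $\chi(E_1,E_2)=-\langle v(E_1)e^{-\mu f},v(E_2)\rangle$ (Riemann--Roch). Then I would apply Lemma \ref{lem:fiber} to move $e^{-\mu f}$ through $\overline{\Phi}$, and compare the two sides using $\mu_X$ and $\mu_Y$. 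This requires $K_X\equiv K_Y$ as multiples of $f$, and in particular $\chi({\cal O}_X)=\chi({\cal O}_Y)$.

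The main obstacle is exactly this compatibility. One needs that $X$ and $Y$ have the same numerical canonical class relative to the fiber class, i.e.\ the same $\chi({\cal O})$ and the same multiple-fiber contributions. I would try to get it from the fact that $\Phi$ is an equivalence: derived equivalent surfaces share Hochschild homology, hence $\chi({\cal O})$. For the canonical-bundle degree, I would use that $\Phi$ commutes with $\otimes\omega$ up to shift via Serre functors, $\Phi\circ S_X\cong S_Y\circ\Phi$. Applying this on Mukai vectors shows that $\overline{\Phi}(u\,e^{K_X})=\overline{\Phi}(u)e^{K_Y}$. Combined with Lemma \ref{lem:fiber}, this should force the fiber coefficients of $K_X$ and $K_Y$ to agree, whenever $\overline{\Phi}$ acts nontrivially on the relevant classes. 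If this fails in edge cases, I would instead verify the pairing directly from the explicit matrix of $\overline{\Phi}$ on the sublattice spanned by $1,f,\varrho_X$ and on $f^\perp$.
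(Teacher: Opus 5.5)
The paper gives no argument for this proposition; it only cites \cite{ellipticStab}. So there is no in-paper proof to compare with, and on its own merits your proof is correct.

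\textbf{Two small fixes.} First, with $e^{\frac12 K_X}=e^{\mu f}$, Riemann--Roch gives $\chi(E_1,E_2)=-\langle v(E_1)e^{\mu f},v(E_2)\rangle$. The expression $-\langle v(E_1)e^{-\mu f},v(E_2)\rangle$ that you wrote actually equals $\chi(E_2,E_1)$. Since $\Phi$ preserves that as well, nothing breaks.

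Second, the step you call the main obstacle has no edge cases. Write $K_X\equiv\lambda f$. Then $\Phi(E\otimes\omega_X)\cong\Phi(E)\otimes\omega_Y$ together with Lemma \ref{lem:fiber} gives $\overline{\Phi}(u)\,e^{K_Y}=\overline{\Phi}(u)\,e^{\lambda f}$ for all $u$, where on the right $f$ is the fibre class of $Y$. Since $\overline{\Phi}$ maps onto $H^*(Y,{\Bbb Q})_{\alg}$, you may take $\overline{\Phi}(u)=1$, which gives $K_Y\equiv\lambda f$ on $Y$. This uses neither $\chi({\cal O}_X)=\chi({\cal O}_Y)$ nor any prior knowledge of $K_Y$. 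So the Hochschild homology remark, the hedge ``whenever $\overline{\Phi}$ acts nontrivially'', and the explicit-matrix fallback can all be dropped.

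\textbf{A shorter route.} The canonical-class comparison can be avoided entirely. On any surface $\operatorname{td}_X+\operatorname{td}_X^{\vee}=2(1+\chi({\cal O}_X)\varrho_X)$. Applying ${}^\vee$ to the integrand of $\chi(E_2,E_1)$ then gives
\[
\chi(E_1,E_2)+\chi(E_2,E_1)=\int_X \ch(E_1)^{\vee}\ch(E_2)\,(\operatorname{td}_X+\operatorname{td}_X^{\vee})=-2\langle v(E_1),v(E_2)\rangle .
\]
Equivalently, average your two formulas, using $\tfrac12(e^{\mu f}+e^{-\mu f})=1$ because $(f^2)=0$.

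An equivalence preserves both $\chi(E_1,E_2)$ and $\chi(E_2,E_1)$. Lemma \ref{lem:cohFM} then immediately gives $\langle\overline{\Phi}(v(E_1)),\overline{\Phi}(v(E_2))\rangle=\langle v(E_1),v(E_2)\rangle$, and bilinearity plus the fact that $v({\bf D}(X))$ spans finishes the proof.

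So the relative structure of Lemma \ref{lem:fiber} is not needed for the isometry itself. Your route uses it to move the half-canonical twist across $\overline{\Phi}$. That costs more work, but it also yields the extra fact that $K_X$ and $K_Y$ are the same rational multiple of the fibre class.
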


\section{Stability associated to an elliptic fibration.}\label{sect:stability}

\subsection{$(f,G,H)$-semistability}

Let $\pi:X \to C$ be an elliptic surface over a curve $C$
and $f$ a fiber of $\pi$.
We assume that all fibers are irreducible.
Let $H$ be a relatively ample ${\Bbb Q}$-divisor such that
$(H \cdot f)=1$.
In this section, we take an ample ${\Bbb Q}$-divisor $H+mf$ ($m \gg 0$)
for the Gieseker stability of coherent sheaves on $X$.
In order to define our stability,
we first introduce several definitions. 

\begin{defn}
\begin{enumerate}
\item[(1)]
We set
\begin{equation}
N_H:=\{D \in \NS(X)_{\Bbb Q} \mid (D \cdot f)=(D \cdot H)=0 \}.
\end{equation}
\item[(2)]
We have a orthogonal decomposition
\begin{equation}
H^*(X,{\Bbb Q})_{\alg}=({\Bbb Q}1+{\Bbb Q}H+{\Bbb Q}f+{\Bbb Q}\varrho_X) \oplus_{\perp} N_H.
\end{equation}
For $v \in H^*(X,{\Bbb Q})_{\alg}$, $D(v)$ denotes the $N_H$-component of $v$.
\end{enumerate}
\end{defn}

\begin{rem}\label{rem:mu}
We take a positive integer $\mu$ such that $\mu H \in \NS(X)$.
For $\xi \in \NS(X)$,
we set 
$$
\xi=dH+kf+D, D \in N_H.
$$ 
Then
$d=(\xi \cdot f) \in {\Bbb Z}$,
$\mu k=(\xi \cdot \mu H) \in {\Bbb Z}$.
Hence $\mu D=\mu \xi-d \mu H-k \mu f \in \NS(X)$. 
\end{rem}

We take $G \in K(X)_{\Bbb Q}$ with $\rk G>0$.

\begin{defn}
For $E \in {\bf D}(X)$ with $\rk E>0$, we set
$$
\mu_{f}(E):=\frac{(c_1(E) \cdot f)}{\rk E},\; \mu_{\beta,f}(E):=\mu_f(E(-\beta))
=\frac{((c_1(E)-\rk E \beta) \cdot f)}{\rk E},
$$
where $\beta \in \NS(X)_{\Bbb Q}$.
\end{defn}

\begin{defn}\label{defn:F+F-}
\begin{enumerate}
\item
We set
\begin{equation}
\begin{split}
{\cal F}_G^+ :=&\{A \mid \text{ $A$ is a $G$-twisted stable fiber sheaf with $\chi(G,A) \geq 0$ } \},\\
{\cal F}_G^-:=&\{A \mid \text{ $A$ is a $G$-twisted stable fiber sheaf with $\chi(G,A)<0$ } \}.
\end{split}
\end{equation}
\item
$\langle {\cal F}_G^+ \rangle$ and $\langle {\cal F}_G^- \rangle$ are subcategory
of $\Coh(X)$ generated by ${\cal F}_G^+$ and ${\cal F}_G^-$ respectively.
\end{enumerate}
\end{defn}


Let ${\cal C}_G$ be a subcategory of $\Coh(X)$ such that
$E \in {\cal C}_G$ if 
\begin{enumerate}
\item
$E_{|\pi^{-1}(\eta)}$ is a semistable vector bundle with $\mu_f (E) \geq \mu_f (G)$.
\begin{NB}
or a torsion sheaf.
\end{NB}
\item
$\Hom(A,E)=\Hom(E,B)=0$ for all $A \in {\cal F}_G^+, B \in {\cal F}_G^-$.
\end{enumerate}

Now we define our $(f,G,H)$-semistability as follows (Definition \ref{defn:fGH} and Definition \ref{defn:fGH2}).  

\begin{defn}\label{defn:fGH}
Let $E$ be a coherent sheaf with $\mu_f (E)>\mu_f (G)$.
Then $E$ is $(f,G,H)$-semistable if
$E_{|\pi^{-1}(\eta)}$ is semi-stable vector bundle and
for any subsheaf $E_1$ of $E$ with
$(c_1(E_1) \cdot f)=\frac{\rk E_1}{\rk E}(c_1(E)\cdot f)$,
\begin{enumerate}
\item
$\chi(G,E_1) <\frac{\rk E_1}{\rk E}\chi(G,E)$ or
\item 
$\chi(G,E_1) =\frac{\rk E_1}{\rk E}\chi(G,E)$ and
$(c_1(E_1) \cdot H) \leq \frac{\rk E_1}{\rk E}(c_1(E) \cdot H)$.
\end{enumerate}
\end{defn}

\begin{rem}\label{rem:fGH}
Let $E$ be a $(f,G,H)$-semistable sheaf and $E_1$ the subsheaf in
Definition \ref{defn:fGH}.   
\begin{enumerate}
\item[(1)]
If $\rk E_1=0$, then $E_1$ is a fiber sheaf with 
$\chi(G,E_1) \leq 0$.
If $\chi(G,E_1) = 0$, then $E_1$ is not 0-dimensional and
$(c_1(E_1) \cdot H) \leq 0$. By the relative ampleness of $H$, 
this case does not occur. Thus
$\chi(G,E_1) < 0$.
\item[(2)]
If $\rk E_1=\rk E$, then $E/E_1$ is a fiber sheaf and
$\chi(G,E/E_1) \geq 0$.
\item[(3)]
By (1), (2), we have $E \in {\cal C}_G$.
Moreover the torsion submodule of $E$ is purely 1-dimensional.
Hence there is a locally free resolution of length two for $E$:
$$
0 \to V_{-1} \to V_0 \to E \to 0.
$$ 
\end{enumerate}
\end{rem}

\begin{defn}\label{defn:fGH2}
Let $E \in {\bf D}(X)$ be a two-term complex of locally free sheaves.
Assume that $\mu_f (E)< \mu_f (G)$.
Then $E$ is $(f,G,H)$-semistable if $E^{\vee}(K_X)$ is a coherent sheaf which is
$(f,G^{\vee},H)$-semistable.
\end{defn}


\begin{lem}\label{lem:fGH-indep}
$(f,G,H)$-semistability is determined by $\frac{c_1(G)}{\rk G} \mod {\Bbb Q}f$
and $H$.
In particular $(f,G,H)$-semistability is the same as $(f,G(kf),H)$-semistability
$(k \in {\Bbb Q})$.
\end{lem}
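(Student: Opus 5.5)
The plan is to examine every ingredient of Definitions \ref{defn:fGH} and \ref{defn:fGH2} and show that $G$ enters only through $\chi(G,\cdot)$ evaluated on two kinds of objects. The first kind is sheaves $E_1$ satisfying $(c_1(E_1)\cdot f)=\frac{\rk E_1}{\rk E}(c_1(E)\cdot f)$, used via the normalized differences $\chi(G,E_1)-\frac{\rk E_1}{\rk E}\chi(G,E)$. The second kind is fiber sheaves, through the sign of $\chi(G,A)$ and twisted stability; these only enter through ${\cal F}_G^\pm$, which are not used in the definition itself. The condition $\mu_f(E)\gtrless\mu_f(G)$ depends only on $(\beta\cdot f)$, where $\beta=c_1(G)/\rk G$, and this is unchanged by $\beta\mapsto\beta+kf$ since $(f^2)=0$. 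For Definition \ref{defn:fGH2} the same reduction applies to $G^\vee$, because $c_1(G^\vee)/\rk G^\vee=-\beta$ and $G(kf)^\vee=G^\vee(-kf)$.

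The main computation is Riemann--Roch. For $F\in K(X)$, $\chi(G,F)=-\langle v(G)^\vee\text{-twist}\rangle$; concretely, by Lemma \ref{lem:RR} or directly,
$$\chi(G,F)=\rk G\,\chi(F)+\rk F\,\chi(G^\vee)-\rk G\,\rk F\,\chi({\cal O}_X)-(c_1(G)\cdot c_1(F))\cdot(\text{sign}),$$
so it suffices to track terms of the form $\rk G\,\chi(F)-(c_1(G)\cdot c_1(F))+\rk F\cdot(\text{const depending on }G)$. In the normalized difference $\chi(G,E_1)-\frac{\rk E_1}{\rk E}\chi(G,E)$ the terms proportional to $\rk F$ cancel. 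After dividing by $\rk G>0$, which does not affect the inequalities, what remains is
$$\chi(E_1)-\tfrac{\rk E_1}{\rk E}\chi(E)-\Big(\beta\cdot\big(c_1(E_1)-\tfrac{\rk E_1}{\rk E}c_1(E)\big)\Big)$$
(up to the fixed $K_X/2$ normalization, which is independent of $G$). Replacing $\beta$ by $\beta+kf$ changes this by $-k\big((c_1(E_1)-\frac{\rk E_1}{\rk E}c_1(E))\cdot f\big)=0$, by the hypothesis on $E_1$. Hence conditions (1) and (2) of Definition \ref{defn:fGH} are unchanged, and they depend only on $\beta \bmod {\Bbb Q}f$ and on $H$.

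The step needing the most care is checking that the remaining conditions are genuinely $G$-independent in the required sense. Semistability of $E_{|\pi^{-1}(\eta)}$ involves no $G$ at all. For fiber sheaves $A$, one has $(c_1(A)\cdot f)=0$, so $\chi(G(kf),A)=\chi(G,A)$ because $\ch(A)\cdot f=0$, i.e.\ $\chi(A\otimes{\cal O}(-kf))=\chi(A)$ for rational $k$ interpreted via the formula above. Thus ${\cal F}_G^\pm$ and ${\cal C}_G$ are also unchanged. The case $\mu_f(E)<\mu_f(G)$ follows by applying the first case to $E^\vee(K_X)$ with $G^\vee$. Finally, the dependence on $G$ only through $\beta$, rather than on $v(G)$ itself, follows from the same cancellation, which shows that only $\beta$ survives after dividing by $\rk G$.
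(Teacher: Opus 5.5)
Your proposal is correct and is essentially the paper's proof. The paper likewise writes the Riemann--Roch identity $\chi(G,E_1)-\frac{\rk E_1}{\rk E}\chi(G,E)=-\bigl(c_1(G)\cdot(c_1(E_1)-\frac{\rk E_1}{\rk E}c_1(E))\bigr)+\rk G\bigl(\chi(E_1)-\frac{\rk E_1}{\rk E}\chi(E)\bigr)$ and observes that $c_1(G)$ only pairs with a class orthogonal to $f$; you additionally spell out the peripheral checks (generic-fiber semistability, the $\mu_f$ comparison, and the dual case of Definition \ref{defn:fGH2}) that the paper leaves implicit, and your hedges about a ``sign'' and a ``$K_X/2$ normalization'' are unnecessary, since $\chi(G,F)=\rk G\,\chi(F)+\rk F\,(\chi(G^{\vee})-\rk G\,\chi({\cal O}_X))-(c_1(G)\cdot c_1(F))$ holds exactly.
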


\begin{proof}
For a subsheaf $E_1$ of $E$ with $(c_1(E_1) \cdot f)=\frac{\rk E_1}{\rk E}(c_1(E) \cdot f)$,
$$
\chi(G,E_1)-\frac{\rk E_1}{\rk E}\chi(G,E)=
-(c_1(G) \cdot (c_1(E_1)-\tfrac{\rk E_1}{\rk E}c_1(E)))+
\rk G \left(\chi(E_1)- \tfrac{\rk E_1}{\rk E}\chi(E) \right).
$$
Hence the claim holds.
\end{proof}


\subsection{Some properties of $(f,G,H)$-semistability.}

We first prove the following Bogomolov type inequality.

\begin{lem}\label{lem:Bogomolov}
We set $c_1(G):=\rk G(\alpha+sH)$, $\alpha \in N_H$.
Assume that $s<\frac{d}{r}$.
Let $E$ be a coherent sheaf of $v(E)=v$ such that
$E \in {\cal C}_G$.
\begin{NB}
\begin{enumerate}
\item
$E_{|\pi^{-1}(\eta)}$ is a semi-stable vector bundle,
\item
$\Hom(E,A)=0$ for any $A \in {\cal F}_G^-$
and 
\item
$\Hom(A,E)=0$ for any $A \in {\cal F}_G^+$.
\end{enumerate}
\end{NB}
Then $\langle v(E)^2 \rangle \geq -r^2 \chi({\cal O}_X)$.

In particular if $E$ is $(f,G,H)$-semistable sheaf, 
then $\langle v(E)^2 \rangle \geq -r^2 \chi({\cal O}_X)$.
\end{lem}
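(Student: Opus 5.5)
We reduce the inequality to a dimension count for homomorphisms, using Serre duality and the two vanishing conditions defining ${\cal C}_G$. By Lemma \ref{lem:RR}, $\langle v(E)^2\rangle=-\chi(E,E(\tfrac12 K_X))$. Since $K_X$ is numerically a rational multiple of $f$ and $\pi$-twisting by fiber classes does not change $\langle v(E)^2\rangle$ in the relevant sense, we expect
$$
\langle v(E)^2\rangle = -\chi(E,E)+ (\text{correction in } (K_X\cdot c_1(E))\cdot r),
$$
and the standard identity $\langle v(E)^2\rangle=(c_1^2)-2r\ch_2-r^2\chi({\cal O}_X)$. So the claim $\langle v(E)^2\rangle\ge -r^2\chi({\cal O}_X)$ is equivalent to the Bogomolov-type statement $\Delta(E)=(c_1^2)-2r\ch_2(E)\ge 0$. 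We will prove $\Delta(E)\ge 0$ rather than work with $\chi$ directly.

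The plan is to reduce to the generic fiber and use a relative Fourier-Mukai transform, or more elementarily, to use the semistability of $E_{|\pi^{-1}(\eta)}$. First, let $T\subset E$ be the torsion part. Condition (2) of ${\cal C}_G$ together with Remark \ref{rem:fGH} shows $T$ is a fiber sheaf. Removing $T$, and replacing the torsion free quotient by its double dual, changes $\Delta$ in a controlled direction: for $\mathrm{ch}_2$, adding fiber sheaves $A$ changes $c_1$ by multiples of fiber components and $\ch_2$ by $\chi$-terms. Here we must use the hypothesis $s<d/r$ and the sign conditions $\chi(G,A)\ge 0$ for $A\in{\cal F}_G^+$ (no maps into $E$) and $\chi(G,B)<0$ (no maps from $E$) to check that these modifications do not increase $\Delta$. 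This bookkeeping is the main obstacle. We need a Jordan--Hölder filtration of fiber sheaves by $G$-twisted stable fiber sheaves, and then a compare of $\chi(G,\cdot)$ with the contribution $-(c_1(E)\cdot c_1(A))+r\chi(A)$ to $\Delta$. The hypothesis $s<d/r$, that is $\mu_f(G)<\mu_f(E)$, is exactly what makes the sign match.

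Second, for the reduced object (a sheaf whose restriction to the generic fiber is semistable with no destabilizing fiber sub/quotients), we apply the known fact that a torsion free sheaf whose restriction to a general fiber is semistable satisfies Bogomolov: $\Delta\ge0$. Concretely, we take $\mathcal{E}nd_0(E)$ restricted to the generic fiber, which is semistable of degree $0$. Then $\pi_*$ of a high twist gives a vector bundle on $C$, and Riemann--Roch along $\pi$ gives $\chi(E,E)\le r^2\chi({\cal O}_X)+$ (vanishing $\Ext^2$ estimate via $\Hom(E,E(K_X))$). Equivalently, we apply Miyaoka/Bogomolov with respect to $H+nf$, $n\gg0$: slope-semistability with respect to $H+nf$ for $n\gg0$ is controlled by the fiber degree. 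The ${\cal C}_G$ conditions rule out the remaining destabilizers that have the same fiber slope, after twisting away the $H$-direction using $s<d/r$. Combining the two steps yields $\langle v(E)^2\rangle\ge -r^2\chi({\cal O}_X)$. The last sentence of the lemma follows from Remark \ref{rem:fGH}(3), which gives $E\in{\cal C}_G$ for $(f,G,H)$-semistable $E$.
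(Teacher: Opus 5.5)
Your outline matches the paper's: split off the torsion $T$ (a fibre sheaf), show that $\langle v^2\rangle$ does not increase, and apply Bogomolov to $E/T$. The reduction to $(c_1^2)-2r\ch_2\ge 0$ is immediate from the definition of $v$. The gap is that the torsion step, which is the whole content of the lemma, is left as ``bookkeeping''.

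Under this section's standing assumption that all fibres are irreducible, your sign comparison does close that step in one stroke:
\begin{itemize}
\item $c_1(T)\equiv tf$, so $\langle v(T)^2\rangle=0$.
\item Since $\alpha\in f^\perp$, we have $\chi(G,T)=\rk G(\chi(T)-ts)$.
\item If $T\neq 0$ and $\chi(G,T)<0$, then $\langle v(T),v\rangle=td-r\chi(T)>t(d-rs)\ge 0$.
\item Hence $\langle v(E/T)^2\rangle<\langle v^2\rangle$.
\end{itemize}
You still owe $\chi(G,T)<0$. The condition $\Hom({\cal F}_G^+,E)=0$ controls only stable \emph{subsheaves} of $T$. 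You need a stable subsheaf of maximal $G$-twisted slope to get $\mu_{\max}(T)<0$, and hence $\chi(G,\cdot)<0$ on every Harder--Narasimhan factor. This version uses only $\Hom({\cal F}_G^+,E)=0$.

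The paper's proof also handles stable factors $A$ with $\langle v(A)^2\rangle=-2$ (components of reducible fibres, as in the later K3 examples). There your mechanism fails. For $A$ on a $(-2)$-curve $C$, $\chi(G,A)<0$ only bounds $\chi(A)$ by $((sH+\alpha)\cdot C)$. Meanwhile $\langle v(A),v\rangle=(C\cdot c_1(E))-r\chi(A)$ contains $(C\cdot c_1(E))$, which $d$ does not control. It can be very negative, and then removing one copy of $A$ raises $\langle v^2\rangle$.

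The missing idea is the one your first sentence names but never uses: the second vanishing condition together with Serre duality.
\begin{itemize}
\item Take $A\subset T$ stable of maximal twisted slope. Then $\chi(G,A(K_X))=\chi(G,A)<0$, so $\Ext^2(A,E)=\Hom(E,A(K_X))^\vee=0$.
\item Hence $n:=\dim\Hom(A,E)\ge\chi(A,E)=-\langle v(A),v\rangle$.
\item The evaluation map $\Hom(A,T)\otimes A\to T$ is injective, and removing all of $A^{\oplus n}$ at once changes $\langle v^2\rangle$ by $-2n\langle v(A),v\rangle-2n^2\le 0$.
\item The maximal-slope choice keeps every stable subsheaf of $T/A^{\oplus n}$ with $\chi(G,\cdot)<0$, so you can iterate.
\end{itemize}

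Separately, your second step claims that the ${\cal C}_G$ conditions exclude positive-rank subsheaves of equal fibre slope. That is false: ${\cal C}_G$ only restricts maps to and from fibre sheaves. With irreducible fibres, $L_1\oplus L_2$ with equal fibre degrees but different $H$-degrees lies in ${\cal C}_G$ and is $\mu_{H+nf}$-unstable. The Bogomolov fact you quote still holds, for a different reason. The Harder--Narasimhan factors of $E/T$ with respect to $H+nf$ all have the same $\mu_f$, so their slope differences lie in $f^\perp$, where the intersection form is negative semidefinite. An identity like \eqref{eq:Bog} then gives $\Delta(E/T)\ge 0$.
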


\begin{proof}
Let $T$ be the torsion subsheaf of $E$ and set
$E':=E/T$.
Then $T$ is a fiber sheaf.
We shall prove that $\langle v(E')^2 \rangle \leq \langle v(E)^2 \rangle$.
\begin{NB}
Let 
$$
0 \subset F_1(T) \subset F_2(T) \subset \cdots \subset F_t(T)=T
$$
be the Harder-Narasimhan filtration of $T$ with respect to $G$-twisted semi-stability.
\end{NB}
Let $A$ be a subsheaf of $T$ which is a stable factor of the first filter of the Harder-Narasimhan filtration of $T$ with respect to $G$-twisted semistability.
We set $n:=\dim \Hom(A,E)=\dim \Hom(A,T)$.
Since $\chi(G,A(K_X))=\chi(G,A)<0$ by (iii), (ii) implies $\Hom(E,A(K_X))=0$.
Then 
$$
\Ext^2(A,E)=\Hom(E,A(K_X))^{\vee}=0
$$
 and
$$
\phi:\Hom(A,T) \otimes A \to T
$$
 is injective.
In particular
$n \geq \chi(A,E)=-\langle v(A),v(E) \rangle$.
We set $T_1:=\im \phi \cong A^{\oplus n}$.
\begin{NB}
we have the Harder-Narasimhan filtration  
$$
0 \subset F_1(T)/A^{\oplus n} \subset F_2(T)/A^{\oplus n} \subset \cdots 
\subset F_t(T)/A^{\oplus n}=T/A^{\oplus n}.
$$
\end{NB}
Assume that $\langle v(A)^2 \rangle=-2$.
Then 
\begin{equation}
\begin{split}
\langle v(E/T_1)^2 \rangle=& \langle v(E)^2 \rangle-2n \langle v(A),v(E) \rangle-2n^2\\
\leq &  \langle v(E)^2 \rangle.
\end{split}
\end{equation}
Assume that $\langle v(A)^2 \rangle=0$.
Then we have $v(A)=(0,r_1 f,d_1)$, where $r_1 \in {\Bbb Q}$.
Since $0>\chi(G,A)=-\rk G(r_1 s -d_1)$,
$\langle v(A),v(E) \rangle=r_1 d-rd_1>0$.
Hence $\langle v(E/T_1)^2 \rangle< \langle v(E)^2 \rangle$.

Since $T/T_1$ is the torsion subsheaf of $E/T_1$ and $\chi(G,B)<0$ for all subsheaves $B$ of $T/T_1$,
we apply the same procedure to $T/T_1$. Then 
we get a filtration
$$
0 \subset T_1 \subset T_2 \subset \cdots \subset T_t=T
$$
such that 
$$
\langle v(E)^2 \rangle \geq \langle v(E/T_1)^2 \rangle \geq \langle v(E/T_2)^2 \rangle \geq \cdots \geq \langle v(E/T_t)^2 \rangle.
$$
Since $E'$ is torsion free, Bogomolov inequality implies $\langle v(E')^2 \rangle \geq -r^2 \chi({\cal O}_X)$
(cf. \cite[Lem. 3.3]{Y:Enriques}).
Therefore we get our claim.
\end{proof}

\begin{defn}\label{defn:T_B}
Let $B$ be a compact subset of $(-\infty,\tfrac{d}{r}) \times (N_H) \otimes_{\Bbb Q} {\Bbb R}$.
Let ${\cal T}_B(v)$ be the set of Mukai vectors 
$v_1=(0,\xi_1,a_1)$ such that $(\xi_1 \cdot H)>0$,
$(\xi_1 \cdot f)=0$,
$\langle e^{\alpha+sH},v_1 \rangle \geq 0$ for an $(s,\alpha) \in B$,
$\langle (v-v_1)^2 \rangle \geq -r^2 \chi({\cal O}_X)$.
\end{defn}

\begin{lem}\label{lem:bddv}
${\cal T}_B(v)$ is a finite set.
\end{lem}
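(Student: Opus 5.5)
We need to show that ${\cal T}_B(v)$ is finite. Write $v=r+\xi+a\varrho_X$ with $d=(\xi\cdot f)$. Write $v_1=(0,\xi_1,a_1)$ with $\xi_1=kf+D_1$, $D_1\in N_H$. This decomposition is possible since $(\xi_1\cdot f)=0$ forces the $H$-coefficient to vanish. We have $k=(\xi_1\cdot H)>0$, and $k\in\frac{1}{\mu}{\Bbb Z}$ by Remark \ref{rem:mu}. Likewise $\mu D_1$ is integral in the negative definite lattice $N_H$ (Hodge index: $N_H\subset\{H,f\}^\perp$ and $H+nf$ is ample). The plan is to bound, in order, $k$, then $D_1$, then $a_1$, using the two inequalities in Definition \ref{defn:T_B}.

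\textbf{First inequality.} Expand $\langle e^{\alpha+sH},v_1\rangle$. Since $(H^2)=0$, $(H\cdot f)=1$ and $\alpha\perp H,f$, we get
$$e^{\alpha+sH}=1+\alpha+sH+\tfrac{(\alpha^2)}{2}\varrho_X,$$
so
$$\langle e^{\alpha+sH},v_1\rangle=(\alpha\cdot D_1)+sk-a_1 .$$
The condition $\geq 0$ therefore gives
$$a_1\le sk+(\alpha\cdot D_1)$$
for some $(s,\alpha)\in B$.

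\textbf{Second inequality.} Write $\xi=dH+k_0f+D_0$, so that $\langle v^2\rangle=(\xi^2)-2ra$ and $(\xi^2)=2dk_0+(D_0^2)$. Then
$$\langle (v-v_1)^2\rangle=\langle v^2\rangle-2\langle v,v_1\rangle+(D_1^2),$$
with
$$\langle v,v_1\rangle=(\xi\cdot\xi_1)-ra_1=dk+(D_0\cdot D_1)-ra_1 .$$
The condition therefore reads
$$2ra_1\ \ge\ 2dk+2(D_0\cdot D_1)-(D_1^2)-\langle v^2\rangle-r^2\chi({\cal O}_X).$$

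\textbf{Combining.} Here $r>0$ is implicit, since $d/r$ appears. Multiply the first inequality by $2r$ and combine with the second:
$$2r\bigl(sk+(\alpha\cdot D_1)\bigr)\ \ge\ 2dk+2(D_0\cdot D_1)-(D_1^2)-C,$$
where $C:=\langle v^2\rangle+r^2\chi({\cal O}_X)$. Rearranging,
$$2(rs-d)\,k\ +\ (-(D_1^2))\ \le\ C+2\bigl((r\alpha-D_0)\cdot D_1\bigr).$$
Since $s<d/r$ on the compact set $B$, we have $d-rs\ge\epsilon>0$ uniformly. Also $-(D_1^2)=\|D_1\|^2$ for the positive definite norm $-(\ ,\ )$ on $N_H\otimes{\Bbb R}$. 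Moreover $|r\alpha-D_0|$ is bounded on $B$. Hence
$$2\epsilon k+\|D_1\|^2\le C+c'\|D_1\|$$
for a constant $c'$ independent of $v_1$. With $k>0$ this bounds both $\|D_1\|$ and $k$. Since $k\in\frac1\mu{\Bbb Z}$ and $\mu D_1$ lies in a lattice, only finitely many pairs $(k,D_1)$ occur.

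\textbf{Bounding $a_1$.} For each such pair, $a_1$ is bounded above by the first inequality (using boundedness of $s$ and $\alpha$ on $B$) and below by the second. Also $a_1$ lies in a discrete set: $v_1$ is a Mukai vector of an object, so $a_1\in\frac{1}{N}{\Bbb Z}$ for a fixed $N$ depending on $\mu$ and $\chi({\cal O}_X)/2$ via $\ch_2$ and $K_X$. Hence finitely many $a_1$ occur, and ${\cal T}_B(v)$ is finite.

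The main obstacle is the sign bookkeeping in combining the two inequalities, so that the coefficient of $k$ comes out as the positive quantity $d-rs$, together with the uniformity over $B$. The remaining integrality facts are routine.
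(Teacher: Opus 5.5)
Your proof is correct and is essentially the paper's argument. It uses the same decomposition $\xi_1=kf+D_1$ and the same combination of $\langle e^{\alpha+sH},v_1\rangle\ge 0$ with the bound on $\langle (v-v_1)^2\rangle$, together with uniform positivity of $d-rs$ on the compact set $B$, negative definiteness of $N_H$, and discreteness. The only difference is order: the paper first discards the $k$-term to bound $D_1$ by completing the square, then bounds $dk_1-ra_1$ and $k_1$, while you bound $k$ and $D_1$ together from one inequality and then bound $a_1$ from both sides.

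One small slip: in your rearranged display the coefficient of $k$ should be $2(d-rs)$, not $2(rs-d)$. Your next line already uses the correct sign.
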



\begin{proof}
We set
$s_0:=\max \{ s \mid (s,\alpha) \in B \}$.
For $v_1=(0,\xi_1,a_1) \in {\cal F}_B(v)$,
\begin{equation}
\langle (v-v_1)^2 \rangle=\langle v^2 \rangle+(\xi_1^2)-2(\xi \cdot \xi_1)+2ra_1.
\end{equation}
We set 
$$
\xi_1=k_1 f+D_1, \quad k_1 \in {\Bbb Q},\; D_1 \in N_H.
$$
By
$0 \leq \langle e^{\alpha+sH},v_1 \rangle=((\alpha+s H)\cdot (k_1 f+D_1))-a_1$,
$a_1-sk_1-(\alpha \cdot D_1) \leq 0$.
Since $s \leq s_0 <\frac{d}{r}$ and $k_1=(\xi_1 \cdot H)>0$, we get
$a_1-\frac{d}{r}k_1<(\alpha \cdot D_1)$.
Hence
\begin{equation}\label{eq:bddv}
\begin{split}
-r^2 \chi({\cal O}_X)-\langle v^2 \rangle \leq &
(\xi_1^2)-2(\xi \cdot \xi_1)+2r a_1\\
=& (D_1^2)-2(D \cdot D_1)-2dk_1+2ra_1\\
<& 2r(\alpha \cdot D_1)-2(D \cdot D_1)
+(D_1^2)\\
=& ((D_1+r\alpha-D)^2)-((r\alpha-D)^2).
\end{split}
\end{equation}
Since $N_H$ is negative definite and $B$ is bounded,
the set of $D_1$ is bounded, which implies the choice of $D_1$ is finite.
Then the choice of $dk_1-ra_1$ is also finite by \eqref{eq:bddv}.
Since 
$$
0<(d-rs_0)k_1 \leq (d-rs)k_1 \leq r(\alpha \cdot D_1)-(ra_1-dk_1),
$$
the choice of $(k_1,a_1)$ is also finite.
\end{proof}

\begin{lem}\label{lem:F^-}
Let $E$ be a coherent sheaf such that $E_{|\pi^{-1}(\eta)}$ is a semi-stable vector bundle,
$\mu_f(E) \geq \mu_f(G)$ and
$\Hom(A,E)=0$ for all $A \in {\cal F}_G^+$.  
Then there is an exact sequence
$$
0 \to E_1 \to E \to E_2 \to 0
$$ 
such that $\Hom(E_1, A)=0$ for 
all $A \in \langle {\cal F}_{G}^- \rangle$
and $E_2 \in \langle {\cal F}_{G}^- \rangle$.
\end{lem}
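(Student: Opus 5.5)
We construct the sequence by taking $E_2$ to be the largest quotient of $E$ lying in $\langle {\cal F}_G^- \rangle$. Concretely, we consider all quotients $E \to B$ with $B \in \langle {\cal F}_G^- \rangle$ and let $E_1$ be the intersection of their kernels. We then set $E_2 := E/E_1$.

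The first step is to check that $\langle {\cal F}_G^- \rangle$ is closed under subsheaves of finite direct sums of its objects, at least for those subsheaves that arise as images of $E$. Every object of $\langle {\cal F}_G^- \rangle$ is a fiber sheaf admitting a filtration whose factors are $G$-twisted stable fiber sheaves $A$ with $\chi(G,A)<0$. Equivalently, such an object is a fiber sheaf $B$ all of whose $G$-twisted Harder--Narasimhan factors have negative twisted slope $\chi(G,\cdot)/(c_1(\cdot)\cdot H)$. This also forces $B$ to be purely $1$-dimensional, since $0$-dimensional sheaves have positive $\chi(G,\cdot)$. The property "all HN factors have negative twisted slope" is preserved under subsheaves and extensions, so it is preserved by $E \to \bigoplus_i B_i$. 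Hence for two quotients $E\to B$ and $E \to B'$, the image of $E \to B \oplus B'$ again lies in $\langle {\cal F}_G^- \rangle$. By noetherianity, a descending chain of kernels of such maps stabilizes, because the quotients are fiber sheaves whose $c_1\cdot H$ and $\chi$ are bounded: they are quotients of $E$ supported on fibers, so the chain of kernels inside $E$ has bounded-length cokernels. The intersection $E_1$ is therefore attained by a single quotient, and $E_2 \in \langle {\cal F}_G^- \rangle$.

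The second step is to prove $\Hom(E_1,A)=0$ for all $A \in \langle {\cal F}_G^- \rangle$. Suppose $\varphi: E_1 \to A$ is nonzero, and replace $A$ by $\im \varphi$, which still lies in $\langle {\cal F}_G^- \rangle$ by the subsheaf-closure above. Then take a stable factor so that $A \in {\cal F}_G^-$, i.e.\ the minimal-slope HN quotient. We consider the pushout $0 \to A \to E' \to E_2 \to 0$ of $0\to E_1 \to E \to E_2 \to 0$ along $\varphi$. This gives a surjection $E \to E'$ with $E' \in \langle {\cal F}_G^- \rangle$ by extension closure, and its kernel is strictly smaller than $E_1$, contradicting the maximality of $E_2$. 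Here the hypothesis $\Hom(A',E)=0$ for $A' \in {\cal F}_G^+$, together with generic semistability and $\mu_f(E)\ge\mu_f(G)$, is used to guarantee finiteness: it prevents torsion with nonnegative $\chi(G,\cdot)$ from interfering, and it bounds the possible quotients.

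The main obstacle is the finiteness/termination in the first step. We must show that there is no infinite strictly descending chain $E \supsetneq K_1 \supsetneq K_2 \supsetneq \cdots$ with $E/K_i \in \langle {\cal F}_G^- \rangle$. The plan is to use the condition $\chi(G,E/K_i)<0$ together with $(c_1(E/K_i)\cdot H)>0$ increasing in $i$. Restricted to a general fiber, $E$ is a semistable bundle of slope $\geq \mu_f(G)$, and a quotient of $E_{|f}$ with $\chi(G,\cdot)<0$ has slope below that of $G$. Such quotients are excluded except along finitely many special fibers. On each special fiber the ranks of the quotients are bounded by $\rk E$ times the fiber multiplicity, so $(c_1(E/K_i) \cdot H)$ is bounded and the chain stabilizes. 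If this bound proves delicate, the fallback is to run the argument for each fiber separately, via the restrictions $E_{|nf_0}$ for $n \gg 0$, and take the finite direct sum over the finitely many fibers involved.
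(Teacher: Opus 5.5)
Your overall architecture is sound and essentially matches the paper. $\langle {\cal F}_G^-\rangle$ is the class of fibre sheaves whose $G$-twisted Harder--Narasimhan factors all have negative slope, so it is closed under subsheaves and extensions. Once a maximal quotient $E\to E_2$ in this class exists, your pushout argument gives $\Hom(E_1,A)=0$. The paper instead peels off nonzero maps $F_n\to A$, $A\in{\cal F}_G^-$, one at a time, which produces the same kind of descending chain $E=F_0\supset F_1\supset\cdots$.

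The genuine gap is the step you yourself call the main obstacle: neither of your termination arguments works. Noetherianity gives the \emph{ascending} chain condition on subsheaves of $E$, not the descending one. Fibre-supported quotients of $E$ are also not bounded in general: if $E$ is locally free along a fibre $f_0$, then $E\to E_{|nf_0}$ has $c_1=n(\rk E)f_0$ for every $n$. So there is no bound on the multiplicity along $f_0$ of the form ``$\rk E$ times the fibre multiplicity'', and passing to $E_{|nf_0}$ with $n\gg0$ merely restates the problem of bounding $n$. Any bound must therefore use $\chi(G,\cdot)<0$ in an essential way. Your fibrewise observation controls only the general fibres; on a special fibre nothing you wrote rules out an infinite sequence of elementary modifications with negative quotients.

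The missing input is global: the Bogomolov inequality, packaged in Lemma \ref{lem:bddv}. Write $c_1(G)=\rk G(sH+\alpha)$ modulo $f$, $r=\rk E$ and $d=(c_1(E)\cdot f)$. Given $E=F_0\supset\cdots\supset F_n$ with each $F_{i-1}/F_i$ a fibre sheaf of negative $\chi(G,\cdot)$, let $T_n$ be the torsion of $F_n$.
\begin{itemize}
\item Since $T_n\subset E$, the hypothesis $\Hom(A,E)=0$ for $A\in{\cal F}_G^+$ gives $\chi(G,T_n)\le 0$; this is where that hypothesis is really used.
\item Hence $v_1:=v(E/F_n)+v(T_n)=(0,\xi_1,a_1)$ satisfies $\langle e^{\alpha+sH},v_1\rangle\ge 0$.
\item Since $F_n/T_n$ is torsion free, Bogomolov gives $\langle (v-v_1)^2\rangle\ge -r^2\chi({\cal O}_X)$.
\item Using $a_1\le s(\xi_1\cdot H)+(\alpha\cdot D_1)$, the negative definiteness of $N_H$, and $d-rs>0$ (the setting of Lemma \ref{lem:bddv}), these inequalities bound first $D_1$ and then $(\xi_1\cdot H)$. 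So only finitely many $v_1$ occur.
\end{itemize}
Consequently $(c_1(E/F_n)\cdot H)\le(\xi_1\cdot H)$ is bounded. On the other hand, it increases by at least $1/\mu$ at each step, because the quotients are nonzero purely $1$-dimensional fibre sheaves (with $\mu$ as in Remark \ref{rem:mu}). Thus every strictly descending chain of kernels has bounded length, your directed family of kernels has a minimum, and the rest of your argument goes through.
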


\begin{proof}
Assume that there is a sequence of subsheaves 
\begin{equation}
F_n \subset F_{n-1} \subset \cdots \subset F_1 \subset F_0=E
\end{equation}
such that $F_{i-1}/F_i$ are fiber sheaves with
$\chi(G,F_{i-1}/F_i)<0$.
Let $T_n$ be the torsion subsheaf of $F_n$.
Then $\chi(G,T_n)<0$ and 
$$
v(E)=\sum_{i=1}^n v(F_{i-1}/F_i)+v(T_n)+v(F_n/T_n).
$$
By Lemma \ref{lem:bddv},
the choice of $\sum_{i=1}^n v(F_{i-1}/F_i)+v(T_n)$ is finite.

Assume that there is a non-zero homomorphism $\phi:F_n \to A$ $(A \in {\cal F}_G^-)$.
We set $F_{n+1}=\ker \phi$. Then
$F_n/F_{n+1}$ is a fiber sheaf with $\chi(G,F_n/F_{n+1})<0$. 
Therefore there is $F_n$ such that
$\Hom(F_n,A)=0$ for all $A \in {\cal F}_G^-$.
\end{proof}

\begin{lem}\label{lem:F^+}
Let $E$ be a coherent sheaf such that 
$E_{|\pi^{-1}(\eta)}$ is a semistable vector bundle.
For a locally free sheaf $G$ with $\mu_f (E) \geq \mu_f (G)$,
there is an exact sequence
$$
0 \to E_1 \to E \to E_2 \to 0
$$
such that $E_1 \in \langle {\cal F}_{G}^+ \rangle$
and
$\Hom(A,E_2)=0$ for all $A \in \langle {\cal F}_{G}^+ \rangle$.
\end{lem}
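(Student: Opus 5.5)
The plan is to take $E_1$ to be the maximal subsheaf of $E$ lying in $\langle {\cal F}_G^+ \rangle$. Then $\Hom(A,E/E_1)=0$ for every $A\in\langle{\cal F}_G^+\rangle$ should follow from maximality.

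The subsheaves of $E$ lying in $\langle {\cal F}_G^+ \rangle$ are all fiber sheaves. Hence they lie inside the torsion subsheaf $T$ of $E$, and $T$ is supported on finitely many fibers, since $E_{|\pi^{-1}(\eta)}$ is a vector bundle. The category $\langle {\cal F}_G^+\rangle$ is closed under extensions and quotients, because $G$-twisted stability of fiber sheaves with $\chi(G,\cdot)\ge 0$ behaves like a torsion class for the $G$-twisted Harder–Narasimhan filtration on fiber sheaves. Granting this, the sum of two subsheaves of $E$ in $\langle {\cal F}_G^+\rangle$ is a quotient of their direct sum, so it again lies in $\langle{\cal F}_G^+\rangle$. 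Since $T$ is Noetherian, there is a unique maximal such subsheaf $E_1\subset T$.

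Concretely, I would set $E_1$ to be the part of the $G$-twisted Harder–Narasimhan filtration of $T$ consisting of factors with nonnegative reduced twisted Euler characteristic. Its stable factors are exactly members of ${\cal F}_G^+$. The stability condition used is twisted stability of fiber sheaves with respect to $\chi(G,\cdot)$ divided by $(c_1\cdot H)$, with $0$-dimensional sheaves having slope $+\infty$. The zero-dimensional part of $T$ is automatically included, since $\chi(G,{\Bbb C}_x)=\rk G>0$.

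Set $E_2:=E/E_1$. Suppose $\phi:A\to E_2$ is nonzero with $A\in{\cal F}_G^+$. The image $\phi(A)$ is a fiber sheaf, so it lies in the torsion part $T/E_1$ of $E_2$. It is a quotient of a stable sheaf with $\chi(G,\cdot)\ge0$, so all its twisted HN factors have slope at least that of $A$, which is nonnegative. But $T/E_1$ has all HN factors of negative slope, so $\Hom(A,T/E_1)=0$, a contradiction. Extending from ${\cal F}_G^+$ to all of $\langle{\cal F}_G^+\rangle$ is then a dévissage along filtrations.

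The main obstacle is making precise that the twisted slope on fiber sheaves is compatible with the sign of $\chi(G,\cdot)$. One has to check that a stable fiber sheaf $A$ has $\chi(G,A)\ge0$ exactly when its slope is $\ge 0$; this follows because $(c_1(A)\cdot H)>0$ for nonzero pure $1$-dimensional $A$, by $\pi$-ampleness. Note also that the hypothesis $\mu_f(E)\ge\mu_f(G)$ does not seem needed for the existence of $E_1$. If finiteness is needed instead, I would use the boundedness of Lemma \ref{lem:bddv} exactly as in Lemma \ref{lem:F^-}, producing an increasing chain of subsheaves with quotients in ${\cal F}_G^+$ that must stabilize.
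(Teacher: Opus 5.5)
Your proposal is correct and is essentially the paper's argument. The paper takes the torsion subsheaf $T$, splits it via the $G$-twisted Harder--Narasimhan filtration into $E_1\in\langle{\cal F}_G^+\rangle$ and $T/E_1\in\langle{\cal F}_G^-\rangle$, and observes that $E_2=E/E_1$ has torsion $T/E_1$, which receives no nonzero maps from $\langle{\cal F}_G^+\rangle$. Your remark that the hypothesis $\mu_f(E)\ge\mu_f(G)$ is not used is also consistent with the paper's proof.
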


\begin{proof}
Let $T$ be the torsion submodule of $E$.
Then there is a subsheaf $E_1$ of $T$ such that
$E_1 \in \langle {\cal F}_G^+ \rangle$ and $T/E_1 \in \langle {\cal F}_G^- \rangle$.
Then $E_2:=E/E_1$ satisfies $\Hom(A,E/E_2)=0$ for all $A \in \langle {\cal F}_{G}^+ \rangle$.
\end{proof}
\begin{rem}
In the notation of \cite[Defn. 3.3.3]{PerverseII},
$T \in \widehat{\frak T}, F \in \widehat{\frak F}$.
\end{rem}

\begin{lem}\label{lem:C_G}
For $E \in {\cal C}_G$,
let $E_1$ be a subsheaf of $E$.
Then there is a subsheaf $F$ of $E$ such that
\begin{enumerate}
\item
$F \in {\cal C}_G$ and
$E/F \in {\cal C}_G$,
\item
$F_{|\pi^{-1}(\eta)}=E_{1|\pi^{-1}(\eta)}$ in
$E_{|\pi^{-1}(\eta)}$,
\item
$\chi(G,E_1) \leq \chi(G,F)$.
\end{enumerate}
\end{lem}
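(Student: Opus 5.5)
Given $E\in{\cal C}_G$ and a subsheaf $E_1\subset E$, the plan is to modify $E_1$ only by fiber sheaves. Such modifications do not change the restriction to the generic fiber $\pi^{-1}(\eta)$, so condition (2) is preserved at every step. At each step we check that $\chi(G,\cdot)$ does not decrease.

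\emph{Step 1 (saturate).} Let $E_1'$ be the saturation of $E_1$ in $E$ along the generic fiber. That is, $E_1'$ is the kernel of
$$
E\to (E/E_1)/T(E/E_1),
$$
where $T(E/E_1)$ denotes the torsion part supported on fibers. Since $E_{|\pi^{-1}(\eta)}$ is locally free and the generic-fiber part of $E/E_1$ is what remains, the restriction of $E_1'$ to $\pi^{-1}(\eta)$ equals that of $E_1$. The quotient $E_1'/E_1$ is a fiber sheaf (possibly with $0$-dimensional parts), so $\chi(G,E_1')-\chi(G,E_1)=\chi(G,E_1'/E_1)$.

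This difference need not be nonnegative. To fix this, apply the $G$-twisted Harder--Narasimhan filtration of the fiber torsion $T':=E_1'/E_1\subset E/E_1$, in the style of Lemma \ref{lem:F^+}. It produces a subsheaf $T'^+\subset T'$ with $T'^+\in\langle{\cal F}_G^+\rangle$ (together with $0$-dimensional sheaves, which have $\chi(G,\cdot)>0$) and $T'/T'^+\in\langle {\cal F}_G^-\rangle$. Let $E_1''\subset E_1'$ be the preimage of $T'^+$. Then $\chi(G,E_1'')\ge\chi(G,E_1)$, and $E_1'/E_1''\in\langle{\cal F}_G^-\rangle$.

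\emph{Step 2 (make the quotient lie in ${\cal C}_G$).} The quotient $Q:=E/E_1''$ has an extension
$$
0\to E_1'/E_1''\to Q\to E/E_1'\to 0.
$$
Here $E/E_1'$ is torsion free on the generic fiber and is a quotient of the semistable bundle $E_{|\pi^{-1}(\eta)}$. We need $Q_{|\pi^{-1}(\eta)}$ semistable with $\mu_f\ge\mu_f(G)$. This holds when $E_1$ has the same $f$-slope, which is the case relevant for Definition \ref{defn:fGH}. If the slope is larger, one also enlarges using the generic-fiber Harder--Narasimhan filtration; I will treat this slope condition as implicit in the intended use and verify it from the semistability of $E_{|\pi^{-1}(\eta)}$.

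Now apply Lemma \ref{lem:F^+} to $Q$: there is a maximal $Q_1\in\langle{\cal F}_G^+\rangle$ with $\Hom({\cal F}_G^+,Q/Q_1)=0$. Replace $E_1''$ by the preimage $F$ of $Q_1$. This raises $\chi(G,\cdot)$, since $\chi(G,Q_1)\ge0$. Because $E\in{\cal C}_G$ gives $\Hom(E,B)=0$ for $B\in{\cal F}_G^-$, the quotient $E/F$ admits no nonzero map to $B\in{\cal F}_G^-$ either. So $E/F\in{\cal C}_G$.

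\emph{Step 3 (check $F\in{\cal C}_G$).} First, $\Hom(A,F)\subset\Hom(A,E)=0$ for $A\in{\cal F}_G^+$. It remains to kill maps $F\to B$ with $B\in{\cal F}_G^-$, which we do via Lemma \ref{lem:F^-}. It gives $F'\subset F$ with $F/F'\in\langle{\cal F}_G^-\rangle$ and $\Hom(F',{\cal F}_G^-)=0$. Passing from $F$ to $F'$ raises $\chi(G,\cdot)$, because $\chi(G,F/F')<0$, and it does not change the generic fiber.

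The new quotient $E/F'$ is an extension of $E/F$ by $F/F'\in\langle{\cal F}_G^-\rangle$. Such an extension still has no subobject in ${\cal F}_G^+$, since $\Hom({\cal F}_G^+,{\cal F}_G^-)=0$ by stability and the signs of $\chi(G,\cdot)$. It also still admits no maps to ${\cal F}_G^-$, because any such map would lift to a map from $E$.

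\emph{Main obstacle.} The delicate point is that Steps 2 and 3 interact. Modifying $F$ can reintroduce ${\cal F}_G^+$-subsheaves in the quotient, or ${\cal F}_G^-$-quotients of $F$. I would handle this by iterating Steps 2 and 3. Each iteration strictly increases $\chi(G,F)$ while keeping the generic fiber fixed, so the iteration terminates if the possible values are bounded. The boundedness follows from the finiteness argument of Lemma \ref{lem:bddv}: the Mukai vectors of the fiber-sheaf modifications range over a finite set. This is the step I would verify most carefully.
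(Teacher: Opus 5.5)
Your argument is correct and uses exactly the paper's two ingredients, in the opposite order. The paper first shrinks $E_1$ to the $E_1'$ of Lemma \ref{lem:F^-} and then adds the $\langle{\cal F}_G^+\rangle$-part $T$ of the torsion of $E/E_1'$ given by Lemma \ref{lem:F^+}, so that $\Hom(F,B)=0$ because $\Hom(E_1',B)=\Hom(T,B)=0$; you enlarge first and shrink afterwards. Both versions tacitly need $(c_1(E_1)\cdot f)=\rk E_1\,\mu_f(E)$, which you rightly flag, and under it your Step 2 is in fact vacuous, since after Step 1 the torsion of $E/E_1''$ already lies in $\langle{\cal F}_G^-\rangle$.

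The ``main obstacle'' is not there. Your own Step 3 already shows $F'\in{\cal C}_G$, by Lemma \ref{lem:F^-} plus $\Hom(A,F')\subset\Hom(A,E)=0$. It also shows $E/F'\in{\cal C}_G$: it is an extension of $E/F$ by $F/F'\in\langle{\cal F}_G^-\rangle$, hence has no ${\cal F}_G^+$-subsheaves, and it is a quotient of $E$, hence has no ${\cal F}_G^-$-quotients. So the construction ends after one pass, and no iteration or boundedness argument via Lemma \ref{lem:bddv} is needed.
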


\begin{proof}
We have a subsheaf $E_1'$ of $E_1$ such that
$\Hom(E_1',A)=0$ for all $A \in {\cal F}_G^-$ and 
$E_1/E_1' \in \langle {\cal F}_G^- \rangle$ by Lemma \ref{lem:F^-}.
By Lemma \ref{lem:F^+},
we have a subsheaf  $T$ of $E/E_1'$ such that
$T \in  \langle {\cal F}_G^+ \rangle$ and
$\Hom(A,(E/E_1')/T)=0$ for all $A \in  \langle {\cal F}_G^+ \rangle$.
Since $E \in {\cal C}_G$, $\Hom((E/E_1')/T,B)=0$ for all
$B \in {\cal F}_G^-$.  Hence we get $(E/E_1')/T \in {\cal C}_G$.
Let $F$ be a subsheaf of $E$ such that $E_1' \subset F$ and $F/E_1'=T$.
\begin{NB}
Let $E$ be a coherent sheaf fitting in an exact sequence
$$
0 \to E_1 \to E \to T \to 0
$$
such that $\Hom(E_1,A)=0$ for any $G$-twisted stable fiber sheaf
with $\chi(G,A)<0$ and $T \in \langle {\cal F}_G^+ \rangle$.
Then $\Hom(E,A)=0$ for any $G$-twisted stable fiber sheaf
with $\chi(G,A)<0$. 
\end{NB}

Then $F \in {\cal C}_G$,
$E/F \in {\cal C}_G$ and $F_{|\pi^{-1}(\eta)}=E_{1|\pi^{-1}(\eta)}$ in
$E_{|\pi^{-1}(\eta)}$.
Since 
$$
\chi(G,F)=\chi(G,E_1')+\chi(G,T)=\chi(G,E_1)-\chi(G,E_1/E_1')+\chi(G,T),
$$
we also get 
$\chi(G,F) \geq \chi(G,E_1)$.
\end{proof}

\begin{rem}
$F$ is uniquely determined:
Let $F'$ be a subsheaf of $E$ satisfying (i), (ii), (iii) in Lemma \ref{lem:C_G}.
Since $F_{|\pi^{-1}(\eta)}=E_{1|\pi^{-1}(\eta)}$ in
$E_{|\pi^{-1}(\eta)}$,
$\phi:F' \to E \to E/F$ is a 0-map for a general fiber of $\pi$.
Then we have $\phi(F') \in \langle {\cal F}_G^+ \rangle$. 
Since $\Hom(A,E/F)=0$ for $A \in {\cal F}_G^+$, $F' \to E \to E/F$ is a 0-map. 
Hence $F' \subset F$. We also see that $F \subset F'$. Therefore $F=F'$.
\end{rem}

\begin{lem}\label{lem:EF}
Let $E \in {\cal C}_G$ and $F \in {\cal C}_G$ be $(f,G,H)$-semistable sheaves
such that 
$$
\rk E>0, \rk F>0,\; 
\mu_f (E)=\mu_f (F),\;
\frac{\chi(G,E)}{\rk E}>\frac{\chi(G,F)}{\rk F}.
$$ 
Then $\Hom(E,F)=0$.
\end{lem}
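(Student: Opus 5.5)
We argue by contradiction. Suppose there is a nonzero $\phi:E \to F$, and set $I:=\im \phi$, $K:=\ker \phi$. We compare the invariants $\chi(G,\,\cdot\,)/\rk$ of $I$ as a quotient of $E$ and as a subsheaf of $F$, and use the semistability of both sheaves in the form of Definition \ref{defn:fGH}. Since $E,F \in {\cal C}_G$ have $\mu_f(E)=\mu_f(F)=:\mu$ and semistable restrictions to the generic fiber, the restriction $\phi_{|\pi^{-1}(\eta)}$ is a map of semistable bundles of the same slope. Hence its image is semistable of slope $\mu$, and $(c_1(I)\cdot f)=\mu \rk I$, $(c_1(K)\cdot f)=\mu\rk K$.

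\emph{Case $\rk I=0$.} Then $\phi_{|\pi^{-1}(\eta)}=0$, so $I$ is a fiber sheaf. It is a quotient of $E$ and a subsheaf of $F$. I would take a $G$-twisted HN filtration of $I$ and pick a stable factor. If $I$ has a quotient $B\in{\cal F}_G^-$, this contradicts $\Hom(E,B)=0$. Otherwise the last HN factor has $\chi(G,\cdot)\ge 0$. Since the twisted slope of the first factor is at least that of the last, the first factor, which contains a subsheaf $A\in{\cal F}_G^+$ of $I\subset F$, also has $\chi(G,\cdot)\ge 0$. This contradicts $\Hom(A,F)=0$. The point to check is that the stable pieces at the two ends of the HN filtration really lie in ${\cal F}_G^\pm$, using that the sign of $\chi(G,\cdot)$ is monotone along the twisted HN slopes for fiber sheaves.

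\emph{Case $\rk I>0$.} Apply Definition \ref{defn:fGH} to $K\subset E$ (of the correct $f$-degree, allowed even if $\rk K=0$) and to $I\subset F$. This gives
$$
\chi(G,K)\le \tfrac{\rk K}{\rk E}\chi(G,E),\qquad \chi(G,I)\le \tfrac{\rk I}{\rk F}\chi(G,F).
$$
Since $\chi(G,I)=\chi(G,E)-\chi(G,K)$, the first inequality gives $\chi(G,I)\ge \frac{\rk I}{\rk E}\chi(G,E)$. Combining,
$$
\frac{\chi(G,E)}{\rk E}\le \frac{\chi(G,I)}{\rk I}\le \frac{\chi(G,F)}{\rk F},
$$
which contradicts the hypothesis. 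If $\rk I=\rk E$, so that $K$ is a torsion subsheaf of $E$, I would handle this using Remark \ref{rem:fGH}. If $\rk I=\rk F$, the quotient $F/I$ is a fiber sheaf with $\chi(G,F/I)\ge 0$ by Remark \ref{rem:fGH}(2), and this already gives the needed inequality.

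The main obstacle is the torsion case above: we must make sure that a nonzero fiber sheaf which is simultaneously a quotient of an object of ${\cal C}_G$ and a subsheaf of one cannot exist. If the HN argument proves delicate, the fallback is to apply Lemma \ref{lem:C_G} to $I\subset F$: this replaces $I$ by a saturated $F'\in{\cal C}_G$ with larger $\chi(G,\cdot)$, and in the rank zero case $F'$ would be a torsion subsheaf in ${\cal C}_G$ of a sheaf whose torsion is controlled by Remark \ref{rem:fGH}(1).
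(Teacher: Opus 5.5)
Your proposal is correct. In the positive-rank case it is exactly the paper's argument: the semistability of $E$, applied to $\ker\phi$, and of $F$, applied to $\im\phi$, gives $\frac{\chi(G,E)}{\rk E}\le\frac{\chi(G,\phi(E))}{\rk\phi(E)}\le\frac{\chi(G,F)}{\rk F}$. Your extra remarks on $\rk I=\rk E$ and $\rk I=\rk F$ are unnecessary, since Definition \ref{defn:fGH} already applies to those subsheaves as they stand.

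You differ from the paper in the torsion case, and there the paper needs no HN filtration. The same two inequalities, read with $\rk\phi(E)=0$, give $0\le\chi(G,\phi(E))\le 0$. This works because $\ker\phi\subset E$ has full rank and the correct $f$-degree. Remark \ref{rem:fGH}(1), applied to the rank-zero subsheaf $\phi(E)\subset F$, then gives the strict inequality $\chi(G,\phi(E))<0$, a contradiction. So the ``main obstacle'' you flag disappears once you note that your displayed inequalities remain valid in rank zero and that Remark \ref{rem:fGH}(1) supplies the strictness.

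Your HN argument is nevertheless valid. For a fiber sheaf the $G$-twisted slope is $\chi(G,\cdot)/(c_1(\cdot)\cdot H)$, or $+\infty$ for $0$-dimensional sheaves, so its sign is the sign of $\chi(G,\cdot)$. If the last HN factor has $\chi(G,\cdot)<0$, a stable quotient of it lies in ${\cal F}_G^-$. Otherwise, a stable subsheaf of the first factor lies in ${\cal F}_G^+$; if that factor is $0$-dimensional, take ${\Bbb C}_x$, which Remark \ref{rem:fGH}(3) also excludes from $F$. Either way, one of the Hom-vanishing conditions defining ${\cal C}_G$ is violated.

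Your route buys a slightly more general fact: no nonzero morphism between objects of ${\cal C}_G$ can have a fiber sheaf as image, independently of semistability and of the $\chi$-hypothesis. This is essentially the torsion-pair property of $(\langle{\cal F}_G^+\rangle,\langle{\cal F}_G^-\rangle)$ on fiber sheaves. The paper's route is shorter and uses only the defining inequalities of $(f,G,H)$-semistability.
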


\begin{proof}
For a nonzero homomorphism $\phi:E \to F$,
we have
$$
\frac{\chi(G,E)}{\rk E} \leq \frac{\chi(G,\phi(E))}{\rk \phi(E)} \leq
\frac{\chi(G,F)}{\rk F}
$$
if $\rk \phi(E)>0$.
Hence $\phi(E)$ is a torsion sheaf.
Then 
$$
\rk \phi(E) \frac{\chi(G,E)}{\rk E} \leq \chi(G,\phi(E)) \leq
\rk \phi(E) \frac{\chi(G,F)}{\rk F}.
$$
By Remark \ref{rem:fGH} (1),
we get a contradiction. Therefore $\phi=0$.  
\end{proof}

\begin{lem}\label{lem:EF2}
Assume that $(K_X \cdot H) \leq 0$.
Let $E \in {\cal C}_G$ and $F \in {\cal C}_G$ be $(f,G,H)$-semistable sheaves
such that 
$$
\rk E>0, \rk F>0,\; 
\mu_f(E)=\mu_f(F)>\mu_f(G),\;
\frac{\chi(G,E)}{\rk E} \geq \frac{\chi(G,F)}{\rk F}.
$$ 
Then
$\Ext^2(F,E)=0$, if $(K_X \cdot H)<0$
or $\frac{\chi(G,E)}{\rk E}>\frac{\chi(G,F)}{\rk F}$.
\end{lem}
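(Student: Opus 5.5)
By Serre duality, $\Ext^2(F,E)\cong\Hom(E,F(K_X))^{\vee}$. The plan is therefore to show that every homomorphism $\phi:E\to F(K_X)$ vanishes, arguing exactly as in Lemma \ref{lem:EF} but with $F$ replaced by $F(K_X)$.

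\textbf{Step 1: properties of $F(K_X)$.}
\begin{itemize}
\item Since $K_X$ is a pull-back of a divisor on $C$ up to multiple fibers, $(K_X\cdot f)=0$ (numerically $K_X\equiv \lambda f$ for some $\lambda\in{\Bbb Q}$). Hence $\mu_f(F(K_X))=\mu_f(F)=\mu_f(E)$.
\item Twisting by $K_X$ preserves $G$-twisted stable fiber sheaves. It also preserves $\chi(G,-)$ on fiber sheaves, as already used in Lemma \ref{lem:Bogomolov}. Hence $F(K_X)\in{\cal C}_G$.
\item A subsheaf $F_1(K_X)\subset F(K_X)$ has the same slope condition as $F_1\subset F$.
\item By Riemann--Roch,
$$\chi(G,F_1(K_X))-\tfrac{\rk F_1}{\rk F}\chi(G,F(K_X))=\chi(G,F_1)-\tfrac{\rk F_1}{\rk F}\chi(G,F)+\Big((c_1(F_1)-\tfrac{\rk F_1}{\rk F}c_1(F))\cdot \rk G\, K_X\Big).$$
The last term is a multiple of the $f$-degree difference, which is $0$.
\item The $H$-degree is unchanged after normalization.
\end{itemize}
So $F(K_X)$ is again $(f,G,H)$-semistable. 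Moreover,
$$\chi(G,F(K_X))=\chi(G,F)+\rk F\,\big(c_1(F)-\rk F\tfrac{c_1(G)}{\rk G}\big)\cdot K_X\cdot\tfrac{\rk G}{\rk F}+\rk F\rk G\,\tfrac{(K_X^2)+\ldots}{2}.$$
Since $K_X\equiv\lambda f$ with $(K_X^2)=0$, this gives $\frac{\chi(G,F(K_X))}{\rk F}=\frac{\chi(G,F)}{\rk F}+\rk G\,(\mu_f(F)-\mu_f(G))\lambda$. Here $\lambda=(K_X\cdot H)\le 0$ and $\mu_f(F)>\mu_f(G)$. I will verify this sign carefully against Riemann--Roch, $\chi(G,F)=\int \ch(G^\vee)\ch(F)\operatorname{td}_X$.

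Consequently $\frac{\chi(G,F(K_X))}{\rk F}\le\frac{\chi(G,F)}{\rk F}\le\frac{\chi(G,E)}{\rk E}$. At least one inequality is strict: the first when $(K_X\cdot H)<0$, the second under the alternative hypothesis.

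\textbf{Step 2: conclude.} Apply Lemma \ref{lem:EF} to the pair $E$ and $F(K_X)$. This gives $\Hom(E,F(K_X))=0$, and hence $\Ext^2(F,E)=0$.

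\textbf{Main obstacle.} The hard step is the sign and coefficient bookkeeping in the Riemann--Roch computation of Step 1. The delicate point is identifying $K_X\equiv (K_X\cdot H)f$ numerically. This uses that $K_X$ is numerically a rational multiple of $f$ for a minimal elliptic surface (canonical bundle formula), together with $(H\cdot f)=1$. Once this is settled, everything reduces to Lemma \ref{lem:EF}.
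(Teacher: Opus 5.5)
Your proposal is correct and follows the paper's argument. Serre duality reduces the claim to $\Hom(E,F(K_X))=0$; Riemann--Roch with $K_X\equiv (K_X\cdot H)f$ gives $\chi(G,F(K_X))=\chi(G,F)+\rk G\rk F\,(K_X\cdot H)(\mu_f(F)-\mu_f(G))\le\chi(G,F)$, strictly so if $(K_X\cdot H)<0$; and Lemma \ref{lem:EF} applied to $E$ and $F(K_X)$ finishes. Your explicit check that $F(K_X)$ is again an $(f,G,H)$-semistable object of ${\cal C}_G$ fills in a step the paper leaves implicit, and the $(K_X^2)$ terms you were unsure about cancel against the Todd class in any case.
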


\begin{proof}
Since $\mu_f(F) > \mu_f(G)$, we get
$$
\chi(G,F(K_X))=\chi(G,F)+\rk G \rk F ((\tfrac{c_1(F)}{\rk F}-\tfrac{c_1(G)}{\rk G}) \cdot K_X) \leq \chi(G,F).
$$
Moreover the inequality is strict if $(K_X \cdot H)<0$.
By Lemma \ref{lem:EF}, $\Ext^2(F,E)=\Hom(E,F(K_X))^{\vee}=0$.
\end{proof}

\begin{defn}
Let ${\cal S}(v)$ be the set of pairs of coherent sheaves $(E_1,E)$ such that
\begin{enumerate}
\item
$E \in {\cal C}_G$ with $v(E)=v$
\item
$E_1$ is a subsheaf of $E$ with $((\rk E_1 c_1(E)-\rk E c_1(E_1))\cdot f)=0$
\item
$E_1, E/E_1 \in {\cal C}_G$.
\end{enumerate} 
\end{defn}

\begin{lem}\label{lem:wall1}
$$
\# \{ (\langle v_1^2 \rangle,D(v_1)) \mid v_1=v(E_1), (E_1,E) \in {\cal S}(v) \} <\infty.
$$
\end{lem}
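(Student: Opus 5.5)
Write $v=r+\xi+a\varrho_X$ with $r=\rk E$ and $d=(\xi\cdot f)$, and let $v_1=v(E_1)=r_1+\xi_1+a_1\varrho_X$ for a pair $(E_1,E)\in{\cal S}(v)$. The idea is to bound $\langle v_1^2\rangle$ and $D(v_1)$ using the Bogomolov type inequality of Lemma \ref{lem:Bogomolov}, applied to both $E_1$ and $E/E_1$. Both lie in ${\cal C}_G$, and condition (ii) gives $\mu_f(E_1)=\mu_f(E)$ whenever $\rk E_1>0$.

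\textbf{Positive rank case.} Suppose first $0<r_1<r$. Then both $E_1$ and $E/E_1$ have positive rank. Their restrictions to $\pi^{-1}(\eta)$ are semistable of slope $d/r$, since subsheaves and quotients of a semistable bundle with equal slope are semistable. The hypothesis of Lemma \ref{lem:Bogomolov} concerns $s<d/r$, where $c_1(G)=\rk G(\alpha+sH)$. By Lemma \ref{lem:fGH-indep} we may normalize $G$ modulo ${\Bbb Q}f$; I expect that when $\mu_f(E)>\mu_f(G)$ one can arrange $s<d/r$, and the equality case should be treated separately or excluded by the setup. We then get
\[
\langle v_1^2\rangle\ge -r_1^2\chi({\cal O}_X),\qquad \langle (v-v_1)^2\rangle\ge -(r-r_1)^2\chi({\cal O}_X).
\]

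Next decompose $v_1$ in $({\Bbb Q}1+{\Bbb Q}H+{\Bbb Q}f+{\Bbb Q}\varrho_X)\oplus_\perp N_H$. Since $(\xi_1\cdot f)=r_1d/r$, the difference $v_1-\frac{r_1}{r}v$ has zero rank and zero $f$-degree. So
\[
v_1-\tfrac{r_1}{r}v=kf+D'+b\varrho_X,\qquad D'=D(v_1)-\tfrac{r_1}{r}D(v),
\]
and $\langle (kf+D'+b\varrho_X)^2\rangle=(D'^2)\le 0$. Expanding the identity
\[
\langle v^2\rangle=\langle v_1^2\rangle+\langle (v-v_1)^2\rangle+2\langle v_1,v-v_1\rangle
\]
with $v_1=\frac{r_1}{r}v+w$, the cross terms in $k$ and $b$ combine into $\langle v,w\rangle$, which is linear in $k$ and $b$ and not controlled by itself. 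So use instead
\[
\frac{\langle v_1^2\rangle}{r_1}+\frac{\langle (v-v_1)^2\rangle}{r-r_1}=\frac{\langle v^2\rangle}{r}+\frac{r}{r_1(r-r_1)}(D'^2).
\]
This identity is the standard one, since $\langle w^2\rangle=(D'^2)$ and the linear terms cancel. Combined with the two lower bounds, it gives $-(D'^2)\le C$ for a constant $C$ depending only on $v$ and $r$. Remark \ref{rem:mu} shows $\mu D(v_1)$ lies in a lattice of the negative definite space $N_H$, so $D(v_1)$ takes finitely many values. Once $D'$ is bounded, the identity together with the two lower bounds pins $\langle v_1^2\rangle$ into a bounded set. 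Moreover $\langle v_1^2\rangle$ lies in a discrete set $\frac{1}{N}{\Bbb Z}$, because $\langle v_1^2\rangle=(\xi_1^2)-2r_1a_1$ and $a_1\in \frac12{\Bbb Z}$ up to the fixed $\chi({\cal O}_X)$ shift. Hence only finitely many values of $\langle v_1^2\rangle$ occur.

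\textbf{Rank zero and full rank cases.} If $r_1=0$, then $E_1$ is a fiber sheaf in ${\cal C}_G$, so $(\xi_1\cdot f)=0$ and $\xi_1$ is supported on fibers. Since all fibers are irreducible, $\xi_1\in{\Bbb Q}f$. Then $\langle v_1^2\rangle=0$ and $D(v_1)=0$, so there is a single value. If $r_1=r$, then $E/E_1$ is a fiber sheaf and the same argument applies to $v-v_1$, giving $D(v_1)=D(v)$ and $\langle v_1^2\rangle=\langle v^2\rangle-2\langle v,v-v_1\rangle$. Here I would use Lemma \ref{lem:bddv}, or the fact that $\langle (v-v_1)^2\rangle=0$ and $v_1$ satisfies Bogomolov. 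Bogomolov gives $\langle v_1^2\rangle\ge -r^2\chi({\cal O}_X)$. For the upper bound, note $E_1\subset E$ with a fiber quotient $Q\in{\cal C}_G$ satisfying $\Hom(Q,B)=0$, and I expect $\langle v,v(Q)\rangle$ to have a definite sign. This gives $\langle v_1^2\rangle\le\langle v^2\rangle$, and discreteness then finishes the case.

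\textbf{Main obstacle.} The main difficulty is this full-rank, and symmetrically zero-rank, bookkeeping: establishing the sign of $\langle v,v(Q)\rangle$ for fiber sheaves $Q\in{\cal C}_G$. That is where I would spend effort, using $\chi(G,Q)$ and the computation in the proof of Lemma \ref{lem:Bogomolov}.
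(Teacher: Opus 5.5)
For $0<r_1<r$ your argument is the paper's proof. Your identity
\[
\frac{\langle v_1^2\rangle}{r_1}+\frac{\langle (v-v_1)^2\rangle}{r-r_1}=\frac{\langle v^2\rangle}{r}+\frac{r}{r_1(r-r_1)}(D'^2)
\]
is \eqref{eq:Bog} multiplied by $r_1r_2/r$; the paper writes the $N_H$-term as $((rD_1-r_1D(v))^2)/(r_1^2r_2^2)$. Finiteness then follows exactly as in the paper, from the Bogomolov bounds for $E_1$ and $E/E_1$, the negative definiteness of $N_H$, and discreteness. Your hesitation about $s<\frac{d}{r}$ is unnecessary. $E_1\in{\cal C}_G$ already gives $s=\mu_f(G)\le \frac{d}{r}$. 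The proof of Lemma \ref{lem:Bogomolov} only needs $\langle v(A),v(E)\rangle>0$ for fiber sheaves $A$ with $\chi(G,A)<0$, and this holds as soon as $s\le \frac{d}{r}$.

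The one incomplete step is the case $r_1=r$, which you single out as the main obstacle, and the vanishing you propose to use there points the wrong way. For a fiber sheaf $Q$ with $v(Q)=cf+e\varrho_X$ one has $\chi(G,Q)=\rk G\,(e-sc)$ and $\langle v,v(Q)\rangle=cd-re$. So $\Hom(Q,B)=0$ for $B\in{\cal F}_G^-$ only yields $\chi(G,Q)\ge 0$, i.e.\ an \emph{upper} bound $\langle v,v(Q)\rangle\le c(d-rs)$. That does not bound $\langle v_1^2\rangle=\langle v^2\rangle-2\langle v,v(Q)\rangle$ from above.

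The missing observation is that ${\cal C}_G$ contains no nonzero fiber sheaf. Let $Q\ne 0$ be a fiber sheaf with $\Hom(A,Q)=0$ for all $A\in{\cal F}_G^+$. Then $Q$ has no $0$-dimensional subsheaf, since $\chi(G,{\Bbb C}_x)=\rk G>0$. Moreover the first factor of its $G$-twisted Harder--Narasimhan filtration has negative slope, where the slope is $\chi(G,\cdot)/(c_1(\cdot)\cdot H)$. On the other hand, $\Hom(Q,B)=0$ for all $B\in{\cal F}_G^-$ forces the last factor to have nonnegative slope, a contradiction.

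Hence $r_1=r$ forces $v_1=v$, and likewise $r_1=0$ forces $v_1=0$. These cases contribute only $(\langle v^2\rangle,D(v))$ and $(0,0)$, and the paper's proof, which divides by $r_1r_2$, only has to handle $0<r_1<r$.

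If you prefer to keep your route, use the other vanishing. $\Hom(A,Q)=0$ for $A\in{\cal F}_G^+$ gives $\chi(G,Q)<0$, hence $e<sc\le cd/r$. So $\langle v,v(Q)\rangle>0$, and $-r^2\chi({\cal O}_X)\le\langle v_1^2\rangle<\langle v^2\rangle$. With either fix your proof is complete.
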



\begin{proof}
For $(E_1,E) \in {\cal S}(v)$,
we set 
\begin{equation}
v_1:=v(E_1)=r_1+\xi_1+a_1 \varrho_X, v_2:=v-v_1=r_2+\xi_2+a_2 \varrho_X,\; \xi_1,\xi_2 \in \NS(X).
\end{equation}
Since
$$
\frac{\langle v_i^2  \rangle}{r_i^2}=\frac{(\xi_i^2)}{r_i^2}-2\frac{a_i}{r_i},\;i=1,2,
$$
we see that
\begin{equation}
\begin{split}
\frac{\langle v^2 \rangle}{r_1r_2}
&=\frac{\langle v_1^2 \rangle+\langle v_2^2 \rangle+
2\langle v_1,v_2 \rangle}{r_1r_2}\\
&=\frac{1}{r_1 r_2}\langle v_1^2 \rangle+
\frac{1}{r_1r_2}\langle v_2^2 \rangle+
2\left(\frac{\xi_1}{r_1},\frac{\xi_2}{r_2} \right)-
2\left(\frac{a_1}{r_1}+\frac{a_2}{r_2} \right)\\
&=-\left(\left(\frac{\xi_1}{r_1}-\frac{\xi_2}{r_2} \right)^2 \right)+
\left(\frac{r_1+r_2}{r_1^2r_2}\langle v_1^2 \rangle+
\frac{r_1+r_2}{r_1r_2^2}\langle v_2^2 \rangle \right).
\end{split}
\end{equation}
Hence 
\begin{equation}\label{eq:Bog}
\begin{split}
\frac{\langle v^2 \rangle+r^2 \chi({\cal O}_X)}{r_1r_2}
&=-\left(\left(\frac{\xi_1}{r_1}-\frac{\xi_2}{r_2} \right)^2 \right)+
\left(\frac{r}{r_1^2r_2}\langle v_1^2 \rangle+
\frac{r}{r_1r_2^2}\langle v_2^2 \rangle \right)+\frac{r^2}{r_1 r_2}\chi({\cal O}_X)\\
&=-\left(\left(\frac{\xi_1}{r_1}-\frac{\xi_2}{r_2} \right)^2 \right)+
\frac{r}{r_1 r_2} \left(\frac{\langle v_1^2 \rangle+r_1^2 \chi({\cal O}_X)}{r_1}+
\frac{\langle v_2^2 \rangle+r_2^2 \chi({\cal O}_X)}{r_2} \right).
\end{split}
\end{equation}
We set $\xi_i:=d_i H+k_i f+D_i$, $D_i \in N_H$.
Then we have
$$
\left(\left(\frac{\xi_1}{r_1}-\frac{\xi_2}{r_2}\right)^2 \right)=\frac{((rD_1-r_1 D(v))^2)}{r_1^2 r_2^2}.
$$
Since $N_H$ is negative definite,
by the Bogomolov inequality,
we see that the choice of $(\langle v_1^2 \rangle,D_1)$ is finite.
\end{proof}

\begin{NB}
$\frac{\xi_1}{r_1}-\frac{\xi_2}{r_2}=\frac{r\xi_1-r_1 \xi}{r_1 (r-r_1)}$.
\end{NB}

\begin{rem}
By the proof of Lemma \ref{lem:wall1} (see \eqref{eq:Bog} and 
Lemma \ref{lem:Bogomolov}), we also have 
$$
0 \leq \langle v_1^2 \rangle+r_1^2 \chi({\cal O}_X) \leq \frac{r_1}{r}(\langle v^2 \rangle+r^2 \chi({\cal O}_X)).
$$
\end{rem}

\begin{lem}\label{lem:max}
For $E \in {\cal C}_G$, there is a subsheaf $F$ of $E$ such that
$\rk F >0$, $\mu_f(F)=\mu_f(E)$ and
$$
\chi(G,F') \leq \frac{\rk F'}{\rk F}\chi(G,F)
$$
for any subsheaf $F'$ of $E$ with $(c_1(F') \cdot f)=\rk F' \mu_f(E)$.
\end{lem}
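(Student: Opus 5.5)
The idea is to show that the normalized Euler characteristic $\chi(G,F')/\rk F'$ is bounded above over the relevant subsheaves, that it takes only finitely many values near its supremum, and then to pick a subsheaf realizing the maximum. Let $\mathcal{A}$ be the set of subsheaves $F'\subset E$ with $\rk F'>0$ and $(c_1(F')\cdot f)=\rk F'\mu_f(E)$. This set is nonempty, since $E\in\mathcal{A}$ (if $\rk E>0$; when $\rk E=0$ the statement is vacuous or handled separately). The claim is equivalent to saying that
$$
\sup_{F'\in\mathcal{A}} \frac{\chi(G,F')}{\rk F'}
$$
is attained. Indeed, if $F$ attains the maximum, then subsheaves $F'$ of rank zero with $(c_1(F')\cdot f)=0$ are fiber sheaves. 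For such $F'$ we must check $\chi(G,F')\le 0$. This holds because $E\in\mathcal{C}_G$ forces $\Hom(A,E)=0$ for $A\in\mathcal{F}_G^+$, so every torsion fiber subsheaf lies in $\langle\mathcal{F}_G^-\rangle$ and has $\chi(G,\cdot)<0$.

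First I reduce to subsheaves in $\mathcal{C}_G$. Given $F'\in\mathcal{A}$, Lemma \ref{lem:C_G} produces $F''$ with $F'', E/F''\in\mathcal{C}_G$, the same generic fiber, and $\chi(G,F')\le\chi(G,F'')$. So $F''\in\mathcal{A}$ has the same rank and at least as large a value, and $(F'',E)\in\mathcal{S}(v)$. It therefore suffices to maximize over pairs in $\mathcal{S}(v)$. The rank $\rk F''$ ranges over the finite set $\{1,\dots,\rk E\}$.

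Next I show that only finitely many values of $\chi(G,F'')$ occur above any given level. For $(F'',E)\in\mathcal{S}(v)$ with $v_1=v(F'')=r_1+\xi_1+a_1\varrho_X$, Lemma \ref{lem:wall1} gives finitely many possibilities for $(\langle v_1^2\rangle, D(v_1))$. Write $\xi_1=d_1H+k_1f+D_1$. Here $d_1=r_1\mu_f(E)$ is fixed, and $D_1$ ranges over a finite set. Then $\langle v_1^2\rangle=(\xi_1^2)-2r_1a_1=2d_1k_1+(D_1^2)-2r_1a_1$ (using $(H^2)=0$), so $d_1k_1-r_1a_1$ takes finitely many values. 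By Riemann--Roch, $\chi(G,F'')$ is a linear expression of the form $\rk G(\text{const}\cdot k_1 - a_1)+(\text{terms in }D_1,d_1,r_1)$, and with $c_1(G)=\rk G(\alpha+sH)$ it is, up to a constant, $\rk G(a_1-sk_1)$ plus bounded terms, with appropriate signs.

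There are two cases. If $d_1\neq0$, the finiteness of $d_1k_1-r_1a_1$ together with the linear relation only bounds a one-parameter family. So I additionally need $\chi(G,F'')$ to be bounded above and discrete. Discreteness is automatic: $r_1,k_1\mu,a_1$ lie in fixed lattices by Remark \ref{rem:mu}, and $G\in K(X)_{\mathbb Q}$ has bounded denominators. Boundedness from above should follow from the inequality $\langle (v-v_1)^2\rangle \ge -r_2^2\chi(\mathcal{O}_X)$ for the quotient $E/F''\in\mathcal{C}_G$ via Lemma \ref{lem:Bogomolov}, exactly as in the proof of Lemma \ref{lem:bddv}, combined with $\mu_f(E)\ge\mu_f(G)$. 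If $d_1=0$ (the case $\mu_f(E)=0$), the finiteness of $r_1a_1$ directly bounds $a_1$. I still need an upper bound on $k_1=(\xi_1\cdot H)$, and again get it from the quotient's Bogomolov-type bound.

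Finally, a bounded-above discrete set of values attains its maximum, and any $F''$ realizing it serves as $F$. The main obstacle is the upper bound on $\chi(G,F'')$, equivalently on $k_1$ in the degenerate direction. I would first try to extract it from Bogomolov applied to both $F''$ and $E/F''$, using formula \eqref{eq:Bog} together with the fact that $(\xi_1/r_1-\xi_2/r_2)^2$ involves only $D$-components. If that fails to control $k_1$, I would fall back on an $H_f$-slope (Harder--Narasimhan) bound for subsheaves of $E$: $(c_1(F'')\cdot(H+nf))\le \rk F''\,\mu_{\max}$ bounds $k_1+nd_1$ from above.
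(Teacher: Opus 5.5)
Your overall structure matches the paper's proof: replace $F'$ via Lemma \ref{lem:C_G} so that $(F',E)\in{\cal S}(v)$, use Lemma \ref{lem:wall1} to make $(\langle v_1^2\rangle,D_1)$ finite, reduce to an upper bound on $k_1=(c_1(F')\cdot H)$, then take a maximum. However, the argument you put first for that bound cannot work.

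Put $w:=r_1f+d_1\varrho_X$ with $d_1=r_1\mu_f(E)$. Then $\langle w^2\rangle=0$ and $\langle w,v_1\rangle=\langle w,v-v_1\rangle=0$, because $d_1/r_1=d_2/r_2$. Hence replacing $v_1$ by $v_1+tw$, i.e.\ $(k_1,a_1)\mapsto(k_1+tr_1,\,a_1+td_1)$, leaves $\langle v_1^2\rangle$, $\langle (v-v_1)^2\rangle$ and every term of \eqref{eq:Bog} unchanged. Meanwhile $\chi(G,\cdot)$ changes by $t\,r_1\rk G(\mu_f(E)-\mu_f(G))$, which is unbounded in $t$ as soon as $\mu_f(E)>\mu_f(G)$. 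So no Bogomolov-type inequality on $F'$ or on $E/F'$ can bound $\chi(G,F')$ from above, in either of your cases $d_1\neq 0$ or $d_1=0$.

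The analogy with Lemma \ref{lem:bddv} breaks down for a specific reason. There, the hypothesis $\langle e^{\alpha+sH},v_1\rangle\ge 0$, i.e.\ $\chi(G,T)\le 0$, is an a priori upper bound on exactly the quantity you are trying to bound here. Combined with Bogomolov, that hypothesis is what controls $k_1$.

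The missing input is geometric: $F'$ sits inside the fixed sheaf $E$. This is your fallback, and it is exactly the paper's proof. By Grothendieck's boundedness, $(c_1(F')\cdot(H+mf))$ is bounded above over all subsheaves $F'\subset E$. Since $(c_1(F')\cdot f)=r_1\mu_f(E)$ is fixed for each of the finitely many ranks $r_1$, $k_1$ is bounded above. After eliminating $a_1$ using the finitely many values of $d_1k_1-r_1a_1$, the coefficient of $k_1$ in $\chi(G,F')$ is $\rk G(\mu_f(E)-\mu_f(G))\ge 0$, so $\chi(G,F')$ is bounded above.

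With this step made the argument rather than a contingency, your proof is complete. Your other points are correct and supply details the paper leaves implicit: rank-zero subsheaves satisfy $\chi(G,F')<0$ because $E\in{\cal C}_G$, and the values of $\chi(G,\cdot)$ are discrete, so the supremum is attained.
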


\begin{proof}
We may assume that 
$$
v(G(\tfrac{1}{2}K_X))=1+(xH+\alpha), \alpha \in N_H.
$$
Then 
\begin{equation}\label{eq:(G,F')}
\chi(G,F')=-\langle 1+(xH+\alpha),v(F') \rangle.
\end{equation}
If $E$ is $(f,G,H)$-semistable, then $F=E$. So
we may assume that $E$ is not $(f,G,H)$-semistable.
Let $F'$ be a subsheaf of $E$ such that
$$
\chi(G,F') > \frac{\rk F'}{\rk E}\chi(G,E).
$$
By Lemma \ref{lem:C_G}, we may assume that $F',E/F' \in {\cal C}_G$.

We shall prove that $\chi(G,F')$ is bounded above.
We set
$$
v(F'):=r_1+d_1 H+k_1 f+D_1+a_1 \varrho_X, \; D_1 \in N_H.
$$
We may assume that $v(G)=\rk G e^{xH+\alpha}$ and
$\alpha \in N_H$.
By \eqref{eq:(G,F')},
\begin{equation}
\chi(F' (-xH-\alpha))=
a_1-k_1 x-(\alpha \cdot D_1)=
\frac{(D_1^2)-\langle v_1^2 \rangle}{2r_1}+\frac{(d_1-r_1 x)}{r_1} k_1-(\alpha \cdot D_1).
\end{equation}
We note that $d_1-r_1 x>0$.
Since the choice of $(\langle v(F')^2 \rangle,D_1)$ is finite by Lemma \ref{lem:wall1}, 
it is sufficient to show that $k_1$ is bounded above.

We take a ${\Bbb Q}$-ample divisor $H+mf$.
By the boundedness of Grothendieck,
$$
\{(c_1(F') \cdot (H+mf)) \mid F' \subset E \}
$$
is bounded above.
Hence
$$
\{(c_1(F') \cdot H) \mid F' \subset E, ((\rk E c_1(F')-\rk F c_1(E)) \cdot f)=0 \}
$$
is also bounded above.
Therefore $\chi(G,F')$ is bounded above.

We take $(F,E) \in {\cal S}(v)$ such that $\rk F>0$ and
$$
\frac{\chi(G,F')}{\rk F'} \leq \frac{\chi(G,F)}{\rk F}
$$
for all $(F',E) \in {\cal S}(v)$ with $\rk F'>0$.
Then $F$ satisfies the required property.
\end{proof}

\begin{prop}\label{prop:HNF}
For $E \in {\cal C}_G$, 
there is a filtration  
\begin{equation}\label{eq:HNF}
0 \subset F_1 \subset F_2 \subset \cdots \subset F_s = E
\end{equation}
such that
\begin{enumerate}
\item
$E_i:=F_i/F_{i-1}$ are $(f,G,H)$-semistable for $0< i <s$,
\item
$$
\frac{(c_1(E_i) \cdot f)}{\rk E_i}=\frac{(c_1(E_{i+1} \cdot f)}{\rk E_{i+1}},\;
\frac{\chi(G,E_i)}{\rk E_i}>\frac{\chi(G,E_{i+1})}{\rk E_{i+1}}
$$
 for
$0<i<s$.
\end{enumerate}
\end{prop}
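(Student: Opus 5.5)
We construct the filtration by induction on $\rk E$, taking the first piece to be a maximal destabilizing subsheaf in the sense of Lemma \ref{lem:max}.

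Let $E\in{\cal C}_G$. By Lemma \ref{lem:max} there is a subsheaf $F\subset E$ with $\rk F>0$ and $\mu_f(F)=\mu_f(E)$ such that
$$
\frac{\chi(G,F)}{\rk F}=\max\Bigl\{\frac{\chi(G,F')}{\rk F'} \Bigm| F'\subset E,\ \rk F'>0,\ (c_1(F')\cdot f)=\rk F'\mu_f(E)\Bigr\}.
$$
Among all such maximizers we choose one of maximal rank. Among those, we choose one maximizing $(c_1(F)\cdot H)/\rk F$; this is possible because $(c_1(F')\cdot H)$ is bounded above by the Grothendieck boundedness argument used in Lemma \ref{lem:max}. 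Finally we saturate $F$ by applying Lemma \ref{lem:C_G} to it. This replaces $F$ by a subsheaf with the same generic fiber, so rank and $\mu_f$ are unchanged and $\chi(G,F)$ does not drop, which keeps $F$ a maximizer. It also gives $F\in{\cal C}_G$ and $E/F\in{\cal C}_G$.

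We then check that $F_1:=F$ is $(f,G,H)$-semistable. Its generic restriction is a subsheaf of the semistable bundle $E_{|\pi^{-1}(\eta)}$ of the same slope, so it is semistable. For a subsheaf $F'\subset F$ with the same $f$-slope, the ratio $\chi(G,F')/\rk F'$ is at most the maximal value when $\rk F'>0$. If equality holds, then the choice maximizing $(c_1\cdot H)/\rk$ gives condition (ii) of Definition \ref{defn:fGH}. If $\rk F'=0$, then $F'$ is a torsion fiber sheaf. In that case Remark \ref{rem:fGH}(1), together with $\Hom(A,E)=0$ for all $A\in{\cal F}^+_G$, forces $\chi(G,F')<0$: the $G$-twisted HN factors of $F'$ all have negative $\chi$, since the first factor injects into $E$. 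Since $E/F\in{\cal C}_G$ has positive rank, or is zero when $F$ is not a proper subsheaf, we may apply induction on the rank to $E/F$. This gives a filtration of $E/F$, which we pull back to $E$.

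It remains to prove the strict inequality $\chi(G,F)/\rk F>\chi(G,E_2)/\rk E_2$, where $E_2=F_2/F_1$ is the first semistable factor of $E/F$. Let $F_2\subset E$ be the preimage of $E_2$. Then
$$
\frac{\chi(G,F_2)}{\rk F_2}
$$
is a weighted average of the two ratios $\chi(G,F)/\rk F$ and $\chi(G,E_2)/\rk E_2$. If $\chi(G,E_2)/\rk E_2>\chi(G,F)/\rk F$, then $F_2$ would beat the maximum, which is impossible. If the two ratios are equal, then $F_2$ is a maximizer of larger rank, contradicting the choice of $F$ with maximal rank.

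The main obstacle I expect is the torsion case in the semistability check, together with making the sequence of choices compatible. Saturating via Lemma \ref{lem:C_G} must not destroy maximality of rank or of $H$-degree. To handle this, I would first fix the maximal value and the maximal rank, and only then take the saturation, which keeps both invariants. The termination of the induction is clear because rank drops strictly at each step. The last quotient $E_s$ is semistable by the same argument, since a quotient $E/F_{s-1}$ with no proper destabilizing subsheaf is itself $(f,G,H)$-semistable.
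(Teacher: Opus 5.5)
Your construction is the paper's own. You take the maximizer of $\chi(G,\cdot)/\rk$ from Lemma~\ref{lem:max}, of maximal rank, saturate it by Lemma~\ref{lem:C_G}, and recurse on $E/F$. Your averaging argument for the strict inequality and your treatment of rank-$0$ subsheaves are correct, and they make explicit what the paper leaves implicit. The step that fails is one the paper's proof also passes over (``Moreover $F$ is $(f,G,H)$-semistable''): the tie-breaker by $(c_1\cdot H)$ in Definition~\ref{defn:fGH}. You maximize $(c_1(F)\cdot H)/\rk F$ only among maximizers of \emph{maximal rank}. That gives no control over a subsheaf $F'\subset F$ with $0<\rk F'<\rk F$ and $\chi(G,F')/\rk F'=\chi(G,F)/\rk F$. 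This is exactly the case where the tie-breaker is not automatic; for $\rk F'=\rk F$ it holds because $F/F'$ is a fiber sheaf and $H$ is $\pi$-ample.

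This cannot be patched, because the statement as written fails when $G$ lies on a wall. Let $E_a,E_b$ be $(f,G,H)$-semistable with $\mu_f(E_a)=\mu_f(E_b)>\mu_f(G)$, the same value $\phi$ of $\chi(G,\cdot)/\rk$, and $(c_1(E_a)\cdot H)/\rk E_a>(c_1(E_b)\cdot H)/\rk E_b$. Then $E=E_a\oplus E_b\in{\cal C}_G$, and every subsheaf of $E$ of positive rank and $f$-slope $\mu_f(E)$ has ratio at most $\phi$. So your $F$ is $E$ itself, which is destabilized by $E_a$. Worse, no filtration as in the proposition exists at all. The case $s=1$ is excluded since $E$ is not semistable, and for $s\geq 2$ the first factor has ratio $\leq\phi$ while the last has ratio $\geq\phi$.

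Concretely, take an elliptic K3 surface with a section $\sigma$ and irreducible fibers, with $H=\sigma+f$, an integral class $D'\in N_H$ with $(D'^2)=-2m$, and $G={\cal O}_X(-mH)$. Then $E_a={\cal O}_X(f+D')$ and $E_b={\cal O}_X$ are $(f,G,H)$-semistable, since every nonzero fiber quotient $Q$ of them has $\chi(G,Q)>0$. Both have $\chi(G,\cdot)=2$ and $\mu_f=0>-m$, and their $H$-degrees are $1$ and $0$.

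So the proposition needs one of two repairs. Either add a genericity assumption on $G$ that excludes such ties, or replace the strict inequality in the proposition by strict lexicographic decrease of $\bigl(\chi(G,E_i)/\rk E_i,\,(c_1(E_i)\cdot H)/\rk E_i\bigr)$. In the second form your argument goes through if you choose $F$ to maximize this pair first and only then the rank. Saturation then leaves $F$ unchanged, every $F'\subset F$ satisfies Definition~\ref{defn:fGH}, and your averaging argument yields the lexicographic strict decrease.
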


\begin{proof}
We take a subsheaf $F$ of $E$ such that
$$
\frac{\chi(G,F)}{\rk F} \geq \frac{\chi(G,F')}{\rk F'}
$$
for any subsheaf $F'$ of $E$ (Lemma \ref{lem:max}).
We may assume that $\rk F$ is maximal.
By Lemma \ref{lem:C_G}, $F \in {\cal C}_G$ and $E/F \in {\cal C}_G$.
Moreover $F$ is $(f,G,H)$-semistable.
We set $F_1:=F$. Applying the same procedure to $E/F_1$, we have a required filtration for
$E/F_1$. 
So we get a desired filtration for $E$. 
\end{proof}

\begin{defn}\label{defn:HNF}
Let $E$ be a coherent sheaf such that $E_{|\pi^{-1}(\eta)}$ is $\mu$-semistable
and $(c_1(G^{\vee} \otimes E) \cdot f) \geq 0$.
The Harder-Narasimhan filtration of $E$ is the filtration
\begin{equation}
0 \subset F_0 \subset F_1 \subset F_2 \subset \cdots \subset F_s \subset E
\end{equation}
such that 
\begin{enumerate}
\item
$F_0 \in \langle {\cal F}_G^+ \rangle$, $E/F_s \in \langle {\cal F}_G^- \rangle$
and 
\item
$$
0 \subset F_1/F_0 \subset F_2/F_0 \subset \cdots \subset F_s/F_0 
$$
is the filtration \eqref{eq:HNF} for $F_s/F_0 \in {\cal C}_G$. 
\end{enumerate}
\end{defn}

\begin{defn}\label{defn:fGHstack}
Let ${\cal M}_{(f,H)}^G(v)^{ss}$ be the stack of $(f,G,H)$-semistable objects $E$ with $v(E)=v$, where 
$\rk v>0$.
For the Mukai vector $v$ of a fiber sheaf,
we set ${\cal M}_{(f,H)}^G(v)^{ss}:={\cal M}_{H_f}^G(v)^{ss}$.
\end{defn}
.

\section{Walls and chambers for $(f,G,H)$-semistability.}\label{sect:wall}

\subsection{Dependence on fiber sheaves.}

Let $B$ be a compact subset of $(-\infty,\tfrac{d}{r}) \times (N_H) \otimes_{\Bbb Q} {\Bbb R}$.
We take $G_{s,\alpha} \in K(X) \otimes_{\Bbb Q} {\Bbb R}$ with 
$c_1(G_{s,\alpha})=\rk G_{s,\alpha}(sH+\alpha)$.
 
\begin{defn}\label{defn:hatF+F-}
\begin{enumerate}
\item
We set
\begin{equation}
\begin{split}
\widehat{\cal F}_G^+ :=&\{A \mid \text{ $A$ is a $G$-twisted stable fiber sheaf with $\chi(G,A) > 0$ } \},\\
\widehat{\cal F}_G^-:=&\{A \mid \text{ $A$ is a $G$-twisted stable fiber sheaf with $\chi(G,A) \leq 0$ } \}.
\end{split}
\end{equation}
\item
$\langle \widehat{\cal F}_G^+ \rangle$ and $\langle \widehat{\cal F}_G^- \rangle$ are subcategory
of $\Coh(X)$ generated by $\widehat{\cal F}_G^+$ and $\widehat{\cal F}_G^-$ respectively.
\end{enumerate}
\end{defn}

\begin{lem}
Let $E$ be a coherent sheaf with $v(E)=v$ such that
\begin{enumerate}
\item
$E_{|\pi^{-1}(\eta)}$ is a semi-stable vector bundle,
\item
$\Hom(A,E)=0$ for any $A \in \widehat{\cal F}_{G_{s_0,\alpha_0}}^+$.
\end{enumerate}
Let $T$ be the torsion subsheaf of $E$.
Then $v(T) \in {\cal T}_B(v)$ (see Definition \ref{defn:T_B}).
In particular the choice of $v(T)$ is finite.
\end{lem}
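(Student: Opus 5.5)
We have to check the defining conditions of ${\cal T}_B(v)$ for $v_1:=v(T)=(0,\xi_1,a_1)$, with $(s,\alpha)$ taken to be the point $(s_0,\alpha_0)\in B$; finiteness then follows from Lemma \ref{lem:bddv}. We may assume $T\neq 0$. Since $E_{|\pi^{-1}(\eta)}$ is a vector bundle, $T$ is supported on finitely many fibers, so $T$ is a fiber sheaf. Hence $(\xi_1\cdot f)=0$. Also $(\xi_1\cdot H)\geq 0$, with equality only if $T$ is $0$-dimensional.

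\emph{Step 1 (sign condition).} By Lemma \ref{lem:RR}, $\langle e^{\alpha_0+s_0H},v(A)\rangle$ is a positive multiple of $-\chi(G_{s_0,\alpha_0},A)$ for any fiber sheaf $A$. Because $(c_1(A)\cdot f)=0$, the $K_X$-twist contributes nothing. Take the Harder--Narasimhan filtration of $T$ with respect to $G_{s_0,\alpha_0}$-twisted semistability, and refine it to stable factors. Let $A$ be a stable factor of the first piece. Since $A\subset T\subset E$ and $\Hom(A,E)=0$ for $A\in\widehat{\cal F}^+_{G_{s_0,\alpha_0}}$, we get $\chi(G_{s_0,\alpha_0},A)\leq 0$. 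The first HN piece has maximal twisted slope, so every stable factor of $T$ has twisted slope at most that of $A$. Hence every factor satisfies $\chi(G_{s_0,\alpha_0},\cdot)\leq 0$. Adding these up gives $\chi(G_{s_0,\alpha_0},T)\leq 0$, i.e.\ $\langle e^{\alpha_0+s_0H},v_1\rangle\geq 0$.

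\emph{Step 2 ($(\xi_1\cdot H)>0$).} A nonzero $0$-dimensional subsheaf contains some ${\cal O}_x$. This sheaf is $G$-twisted stable with $\chi(G,{\cal O}_x)=\rk G>0$, so ${\cal O}_x\in\widehat{\cal F}^+$, which is forbidden. Therefore $T$ is pure $1$-dimensional and nonzero. Relative ampleness of $H$ then gives $(\xi_1\cdot H)>0$.

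\emph{Step 3 (Bogomolov bound).} The quotient $E/T$ is torsion free, so the Bogomolov inequality (cf.\ \cite[Lem. 3.3]{Y:Enriques}, as used in Lemma \ref{lem:Bogomolov}) gives $\langle (v-v_1)^2\rangle=\langle v(E/T)^2\rangle\geq -r^2\chi({\cal O}_X)$.

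Steps 1--3 verify all conditions of Definition \ref{defn:T_B}, so $v(T)\in{\cal T}_B(v)$, and Lemma \ref{lem:bddv} gives finiteness.

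The main obstacle is Step 1: passing from the vanishing of $\Hom$ from stable objects to a sign condition on $\chi(G,T)$ requires the HN slope ordering argument. A secondary point is confirming that the normalization of $G_{s,\alpha}$ makes $\langle e^{\alpha+sH},v_1\rangle$ a positive multiple of $-\chi(G_{s,\alpha},T)$. This matches the computation in the proof of Lemma \ref{lem:Bogomolov}, where $\chi(G,A)=-\rk G(r_1s-d_1)$.
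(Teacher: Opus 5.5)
Your proof is correct and follows the paper's argument. The paper likewise gets $\langle v(E/T)^2\rangle\ge -r^2\chi({\cal O}_X)$ from the Bogomolov inequality, reads off purity of $T$ and $\chi(G_{s_0,\alpha_0},T)\le 0$ from (ii), and concludes with Lemma \ref{lem:bddv}; your Harder--Narasimhan step simply writes out what the paper leaves implicit.

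One precision for Step 3: torsion-freeness of $E/T$ alone does not give the Bogomolov bound, so you should say that it is hypothesis (i), i.e.\ semistability of $(E/T)_{|\pi^{-1}(\eta)}=E_{|\pi^{-1}(\eta)}$, that makes the cited inequality apply.
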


\begin{proof}
Since $E/T$ is torsion free, $\langle v(E/T)^2 \rangle \geq -r^2\chi({\cal O}_X)$.
By (ii), $T$ is purely 1-dimensional and $\chi(G_{s_0,\alpha_0},T) \leq 0$.
Hence $v(T) \in {\cal T}_B(v)$.
By Lemma \ref{lem:bddv}, $\# {\cal T}_B(v)<\infty$.
\end{proof}

\begin{defn}
Let ${\cal U}_B(v)$ be the set of 
Mukai vectors $u=(0,\eta,b)$
such that
\begin{enumerate}
\item
$\langle u^2 \rangle=0,-2$,
\item
$\langle e^{\alpha+sH},u \rangle=0$ for some $(s,\alpha) \in B$,
\item
there is a Mukai vector $(0,\xi_1,a_1) \in {\cal T}_B(v)$ and
$\xi_1-\eta$ and $\eta$ are represented by effective divisors.
\end{enumerate}
\end{defn}

\begin{rem}
Assume that $\langle u^2 \rangle=0$.
Then $\langle v,u \rangle>0$. 

Assume that $\langle u^2 \rangle=-2$.
If $\langle v,u \rangle<0$, then $u$ defines a totally semistable wall.
If $\langle v,u \rangle \geq 0$, then
$\langle v-u,u \rangle \geq 2$. If there is a stable sheaf with the Mukai vector 
$v-u$, then $u$ defines a wall.   
\end{rem}

\begin{lem}
${\cal U}_B(v)$ is a finite set.
\end{lem}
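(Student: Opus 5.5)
Fix $u=(0,\eta,b)\in{\cal U}_B(v)$. By condition (iii) there is $v_1=(0,\xi_1,a_1)\in{\cal T}_B(v)$ such that $\eta$ and $\xi_1-\eta$ are effective. Since ${\cal T}_B(v)$ is finite by Lemma \ref{lem:bddv}, it suffices to show that, for each fixed $\xi_1$, only finitely many $u$ occur. The argument splits into bounding $\eta$ and then bounding $b$.

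\emph{Bounding $\eta$.} Because $(\xi_1\cdot f)=0$ and $\eta,\xi_1-\eta$ are effective with $(\eta\cdot f)\ge 0$ and $((\xi_1-\eta)\cdot f)\ge 0$, we get $(\eta\cdot f)=0$. Hence $\eta$ is an effective divisor supported in fibers; since all fibers are irreducible, its components are fibers or components of multiple fibers. Taking an ample divisor $H+mf$, the degree $(\eta\cdot(H+mf))$ lies between $0$ and $(\xi_1\cdot(H+mf))$. Effective divisors of bounded degree with respect to an ample class form finitely many classes in $\NS(X)$. So the classes $\eta$ form a finite set. More directly, $\eta$ is a nonnegative rational multiple of $f$ with $(\eta\cdot H)\le(\xi_1\cdot H)$, and $\eta$ lies in the lattice generated by the $f_i$ and $f$, so there are finitely many choices.

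\emph{Bounding $b$.} Write $\eta=kf+D_\eta$ with $D_\eta\in N_H$; by the above, $D_\eta$ is effectively $0$ and $k=(\eta\cdot H)$. Condition (ii) gives
\[
b=((\alpha+sH)\cdot\eta)=sk+(\alpha\cdot D_\eta)
\]
for some $(s,\alpha)\in B$. Since $B$ is compact, $b$ ranges over a bounded real interval. Moreover $u$ is a Mukai vector, so $b$ lies in a lattice, e.g.\ $b\in\frac{1}{\mu'}{\Bbb Z}$ for a fixed $\mu'$ as in Remark \ref{rem:mu}. Hence there are finitely many $b$. Condition (i) is not needed for the finiteness, though it is consistent with the argument: here $(\eta^2)=0$, so $\langle u^2\rangle=0$ automatically, and the case $\langle u^2\rangle=-2$ corresponds to $D$ with $(D^2)=-2$ as in Lemma \ref{lem:spherical2}. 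In that case $D_\eta$ is still bounded because $N_H$ is negative definite and the possible $\eta$ are finite.

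The main obstacle is making the finiteness of $\eta$ rigorous while allowing possibly non-reduced fiber components. I would handle this with the ample-degree bound above, which gives finiteness uniformly, together with the integrality of $\eta$ and $b$ in a fixed lattice.
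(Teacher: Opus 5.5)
Your proof is correct and follows the paper's argument. Finiteness of ${\cal T}_B(v)$ bounds $\xi_1$, and hence the effective classes $\eta$ with $\xi_1-\eta$ effective. Condition (ii), $b=((\alpha+sH)\cdot\eta)$ with $B$ compact, then bounds $b$, and the discreteness of Mukai vectors leaves finitely many values.

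Your extra detail, such as $(\eta\cdot f)=0$ forcing $\eta$ into fibers, is fine. One correction: the discreteness of $b$ comes from $u\in v(K(X))$ lying in a lattice, not from Remark \ref{rem:mu}.
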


\begin{proof}
Since ${\cal T}_B(v)$ is a finite set, 
the choice of $\xi_1$ is finite, and hence the choice of 
$\eta$ is also finite.
Since $B$ is bounded,
$b-(\eta \cdot(\alpha+sH))=0$ implies
the choice of $b$ is also finite.
Therefore ${\cal U}_B(v)$ is a finite set.
\end{proof}

Let us consider the open subset 
$$
B_1:=\{(s,\alpha) \in B \mid \text{$\langle e^{\alpha+sH},u \rangle \ne 0$ for all $u \in {\cal U}_B(v)$ } \}.
$$
Let $B_1^0$ be a connected component of $B_1$.

\begin{lem}
We take $(s_0,\alpha_0) \in B_1^0$.
Assume that a coherent sheaf $E$ satisfies
\begin{enumerate}
\item
$E_{|\pi^{-1}(\eta)}$ is a semi-stable vector bundle,
\item
$\Hom(A,E)=0$ for any $A \in \widehat{\cal F}_{G_{s_0,\alpha_0}}^+$.
\end{enumerate}
Then
$\Hom(A,E)=0$ for any $A \in \widehat{\cal F}_{G_{s,\alpha}}^+$ and $(s,\alpha) \in B_1^0$.
\end{lem}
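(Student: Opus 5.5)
The plan is to argue by contradiction, running a continuity argument along a path in $B_1^0$. Fix $(s,\alpha)\in B_1^0$ and suppose there is some $A\in \widehat{\cal F}_{G_{s,\alpha}}^+$ with $\Hom(A,E)\neq 0$. We may replace $A$ by its image in $E$. This image is a nonzero subsheaf of the torsion subsheaf $T$ of $E$, and it is still a fiber sheaf.

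The main tool is the finiteness of the relevant Mukai vectors. By the preceding lemma, $v(T)\in{\cal T}_B(v)$, and so there are only finitely many possibilities for $v(T)=(0,\xi_1,a_1)$. Every subsheaf $A'\subset T$ that is a twisted stable fiber sheaf has a Mukai vector $u=(0,\eta,b)$ with the following properties:
\begin{itemize}
\item $\eta$ and $\xi_1-\eta$ are effective;
\item $\langle u^2\rangle\in\{0,-2\}$, since the fibers are irreducible, so a stable fiber sheaf is either supported on a $(-2)$-curve in a fiber or has isotropic class (Lemma \ref{lem:isotropic-dim} and Lemma \ref{lem:spherical2}).
\end{itemize}

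The key step is sign constancy. For any such $u$ the function $(s',\alpha')\mapsto \langle e^{\alpha'+s'H},u\rangle$ is continuous on $B$, and by Lemma \ref{lem:RR} it equals a positive multiple of $\chi(G_{s',\alpha'},A')$ up to sign. It does not vanish on $B_1^0$: if $\langle e^{\alpha'+s'H},u\rangle=0$ at some point of $B$, then $u\in{\cal U}_B(v)$, and that is excluded on $B_1$. Since $B_1^0$ is connected, the sign of $\chi(G_{s',\alpha'},A')$ is constant on $B_1^0$.

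The main obstacle is that twisted stability of $A$ depends on the twisting. The sheaf $A$ is $G_{s,\alpha}$-stable, but it need not be $G_{s_0,\alpha_0}$-stable. To handle this, I would take the Jordan--Hölder or Harder--Narasimhan filtration of $A$ with respect to $G_{s_0,\alpha_0}$-twisted stability on fiber sheaves. Because twisted stability of fiber sheaves at fixed slope only changes across walls of the form $\langle e^{\alpha'+s'H},u\rangle=0$, these filtrations are controlled by the same finite set ${\cal U}_B(v)$.

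Concretely, at $(s_0,\alpha_0)$ the hypothesis forces every $G_{s_0,\alpha_0}$-stable subsheaf $A'\subset T$ to satisfy $\chi(G_{s_0,\alpha_0},A')\le0$. By sign constancy this gives $\chi(G_{s,\alpha},A')\le 0$ as well, and since equality is excluded on $B_1$, in fact $\chi(G_{s,\alpha},A')<0$. Hence $T$ admits a filtration whose factors are all stable with negative $\chi(G_{s,\alpha},\cdot)$. For the first such factor $A_1\subset T$, it follows that every subsheaf of $T$ has $\chi(G_{s,\alpha},\cdot)<0$. This comes from a see-saw argument: $\chi(G_{s,\alpha},\cdot)$ is additive, and the Harder--Narasimhan factors of $T$ with respect to $G_{s,\alpha}$ all lie in $\widehat{\cal F}^-$, since their classes are again of the form in ${\cal U}_B(v)$ or sums thereof.

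Now $A\subset T$ is $G_{s,\alpha}$-stable with $\chi(G_{s,\alpha},A)>0$. This contradicts the fact that the maximal destabilizing part of $T$ with respect to $G_{s,\alpha}$ has negative $\chi$. Therefore $\Hom(A,E)=0$.
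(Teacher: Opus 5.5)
Your sign-constancy observation is sound, and it is the right tool. Take any class $u=(0,\eta,b)$ with $\langle u^2\rangle\in\{0,-2\}$ such that $\eta$ and $\xi_1-\eta$ are effective, where $v(T)=(0,\xi_1,a_1)\in{\cal T}_B(v)$. Then the affine function $(s',\alpha')\mapsto\langle e^{\alpha'+s'H},u\rangle$ cannot vanish on $B_1^0$, so it has constant sign there. The gap is in how you use it at the end.

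You transport signs from $(s_0,\alpha_0)$ to $(s,\alpha)$ for $G_{s_0,\alpha_0}$-stable pieces of $T$. You then assert that every subsheaf of $T$ has $\chi(G_{s,\alpha},\cdot)<0$. This does not follow from having a filtration of $T$ with negative factors. Your see-saw justification is circular: saying the $G_{s,\alpha}$-Harder--Narasimhan factors of $T$ lie in $\widehat{\cal F}^-$ ``since their classes are in ${\cal U}_B(v)$ or sums thereof'' carries no sign information. The sign of $\chi(G_{s,\alpha},\cdot)$ on those factors is exactly what has to be proved, and their stable pieces need not be $G_{s_0,\alpha_0}$-stable subsheaves of $E$, so (ii) says nothing about them.

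The claim that these filtrations are ``controlled by ${\cal U}_B(v)$'' is also wrong. Twisted stability of a fiber sheaf changes where two slopes $\chi(G_{s',\alpha'},\cdot)/(c_1(\cdot)\cdot H)$ coincide. That condition is independent of $s'$ and depends on $\alpha'$ only through $(\alpha'\cdot\eta)$. These are not the hyperplanes $\langle e^{\alpha'+s'H},u\rangle=0$, and with reducible fibers they can cross $B_1^0$.

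The fix is to run your sign constancy in the other direction, on $A$ itself.
\begin{enumerate}
\item Replace $A$ by a $G_{s,\alpha}$-stable subsheaf of $E$ with positive $\chi(G_{s,\alpha},\cdot)$, e.g.\ a stable factor of the maximal-slope Harder--Narasimhan piece of the image of $A$. This step is needed because the image itself need not be stable, contrary to what your last paragraph uses.
\item Now $A\subset T$, $\langle v(A)^2\rangle\in\{0,-2\}$, and $v(A)$ satisfies the effectivity condition. So sign constancy gives $\chi(G_{s_0,\alpha_0},A)>0$.
\item Hence the first $G_{s_0,\alpha_0}$-Harder--Narasimhan factor of $A$ has positive slope. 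It therefore contains a $G_{s_0,\alpha_0}$-stable subsheaf $A'$ with $\chi(G_{s_0,\alpha_0},A')>0$, and $0\ne A'\subset E$ contradicts (ii).
\end{enumerate}

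Repaired this way, your route differs from the paper's. The paper walks along the segment $I$ from $(s_0,\alpha_0)$ to $(s,\alpha)$. It repeatedly passes to destabilizing subsheaves of smaller $H$-degree until it reaches a semistable subsheaf with $\chi=0$ at a point of $I$; a stable factor of that subsheaf gives a class in ${\cal U}_B(v)$ vanishing there. Your global sign constancy replaces this intermediate-value descent in one step. It needs only the connectedness of $B_1^0$, whereas the paper tacitly uses $I\subset B_1^0$.
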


\begin{proof}
We take $(s,\alpha) \in B_1^0$ and
assume that $\Hom(A,E) \ne 0$ with 
$A \in \widehat{\cal F}_{G_{s,\alpha}}^+$.
By taking its stable factor, we may assume that $A$ is a subsheaf of $E$.
Let $I$ be a segment connecting $(s_0,\alpha_0)$ and $(s,\alpha)$.
Since $\chi(G_{s_0,\alpha_0},A)<0$ by (ii),
there is $(s_1,\alpha_1) \in I$ such that $\chi(G_{s_1,\alpha_1},A) = 0$.
Assume that there is a subsheaf $A_1 \subset A$ such that $\chi(G_{s_1,\alpha_1},A_1) > 0$.
Then we have $0<(c_1(A_1) \cdot H)<(c_1(A) \cdot H)$. 
Since $\chi(G_{s_0,\alpha_0},A_1)<0$, we can apply the same argument to the subsheaf $A_1$.
Thus there is $(s_2,\alpha_2) \in I$ such that
$\chi(G_{s_2,\alpha_2},A_1)=0$. 
If there is a subsheaf $A_2 \subset A_1$ such that $\chi(G_{s_2,\alpha_2},A_2) > 0$, then 
we continue the same procedure, and we finally get a subsheaf $A_n \subset E$ and
$(s_{n+1},\alpha_{n+1}) \in I$ 
such that $A_n$ is a $G_{s_{n+1},\alpha_{n+1}}$-semistable sheaf with 
$\chi(G_{s_{n+1},\alpha_{n+1}},A_n)=0$.
\begin{NB}
We have an exact sequence
\begin{equation}
0 \to T' \to E \to E/T' \to 0
\end{equation}
such that $\Hom(A,E/T')=0$ for any semi-stable fiber sheaf $A$ with $\chi(G_{s_n,\alpha_n},A) \geq 0$
and $T'$ is generated by semi-stable fiber sheaves $A$ with $\chi(G_{s_n,\alpha_n},A) \geq 0$.
Then $A_n$ is a subsheaf of $T'$.
We have $\langle v(E/T')^2 \rangle \geq -r^2 \chi({\cal O}_X)$ and 
$\langle e^{\alpha_0+s_0 H},v(T') \rangle \geq 0$.
Hence $v(T') \in {\cal T}_B(v)$, which implies
$v(A_n') \in {\cal U}_B(v)$, where $A_n'$ is a stable factor of $A_n$.
\end{NB}
Since $A_n$ is a subsheaf of $T$ and $v(T) \in {\cal T}_B(v)$, we get
$v(A_n') \in {\cal U}_B(v)$, where $A_n'$ is a stable factor of $A_n$.
Then $(s_{n+1},\alpha_{n+1}) \not \in B_1$. 
Therefore 
$\Hom(A,E)=0$ for any $A \in \widehat{\cal F}_{G_{s,\alpha}}^+$.
\end{proof}

\begin{lem}
Assume that a coherent sheaf $E$ satisfies
\begin{enumerate}
\item
$E_{|\pi^{-1}(\eta)}$ is a semi-stable vector bundle,
\item
$\Hom(A,E)=0$ for any $A \in \widehat{\cal F}_{G_{s_0,\alpha_0}}^+$ and
$\Hom(E,A)=0$ for any $A \in {\cal F}_{G_{s_0,\alpha_0}}^-$.
\end{enumerate}
Then
$\Hom(A,E)=0$ $(A \in \widehat{\cal F}_{G_{s,\alpha}}^+)$ and
$\Hom(E,A)=0$ $(A \in {\cal F}_{G_{s_0,\alpha_0}}^-)$ for all $(s,\alpha) \in B_1^0$.
\end{lem}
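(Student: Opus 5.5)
The first half of the conclusion, $\Hom(A,E)=0$ for $A \in \widehat{\cal F}_{G_{s,\alpha}}^+$ and all $(s,\alpha) \in B_1^0$, is exactly the preceding lemma, since hypotheses (i) and the first part of (ii) are its assumptions. The second half, as literally stated with ${\cal F}_{G_{s_0,\alpha_0}}^-$, is just hypothesis (ii) and needs nothing. The substantive statement, which I will prove, is the intended one: $\Hom(E,A)=0$ for all $A \in {\cal F}_{G_{s,\alpha}}^-$ and $(s,\alpha) \in B_1^0$. The plan mirrors the previous proof, now working with quotients instead of subsheaves.

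Suppose $\phi:E \to A$ is nonzero with $A \in {\cal F}_{G_{s,\alpha}}^-$, so $\chi(G_{s,\alpha},A)<0$. Replacing $A$ by $\im \phi$ and then by a suitable stable factor of a quotient, I may assume $A$ is a fiber sheaf quotient of $E$. By hypothesis, $\chi(G_{s_0,\alpha_0},A') \geq 0$ for every such quotient $A'$ that is stable at $(s_0,\alpha_0)$; otherwise $\Hom(E,A')\neq 0$ with $A' \in {\cal F}^-_{G_{s_0,\alpha_0}}$. Join $(s_0,\alpha_0)$ and $(s,\alpha)$ by a path $I$ inside $B_1^0$. Then $\chi(G_{t},A)$ changes sign along $I$, so it vanishes at some $(s_1,\alpha_1) \in I$. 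If $A$ is not $G_{s_1,\alpha_1}$-semistable, take a destabilizing quotient $A \to A_1$ with $\chi(G_{s_1,\alpha_1},A_1)<0$ and $(c_1(A_1)\cdot H)<(c_1(A)\cdot H)$. The same sign change applies to $A_1$ relative to $(s_0,\alpha_0)$, so we iterate. Because $(c_1\cdot H)$ strictly decreases, the process stops at a quotient $A_n$ of $E$ that is $G_{s_{n+1},\alpha_{n+1}}$-semistable with $\chi(G_{s_{n+1},\alpha_{n+1}},A_n)=0$. Let $A_n'$ be a stable factor of $A_n$; then $\langle v(A_n')^2\rangle \in\{0,-2\}$ and $\langle e^{\alpha_{n+1}+s_{n+1}H},v(A_n')\rangle=0$.

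The main obstacle is showing $v(A_n') \in {\cal U}_B(v)$. The definition of ${\cal U}_B(v)$ requires comparison with some $(0,\xi_1,a_1)\in{\cal T}_B(v)$, but here $A_n$ is a quotient, not a subsheaf of the torsion $T$. My first attempt is to bound the kernel side instead. Let $K:=\ker(E \to A_n)$ and consider the torsion $T$ of $E$ together with the fiber part of $E$ mapping onto $A_n$. Using Lemma \ref{lem:F^-} (applied with $G_{s_{n+1},\alpha_{n+1}}$), I will take the maximal quotient $E\to E_2$ with $E_2 \in \langle {\cal F}^-\rangle$ at $(s_{n+1},\alpha_{n+1})$, perturbed slightly so that strict inequalities hold. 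The kernel $E_1$ is then torsion free modulo a controlled part, so Bogomolov gives $\langle v(E_1)^2\rangle \geq -r^2\chi({\cal O}_X)$. Also $\langle e^{\alpha_0+s_0H},v(E_2)\rangle$ has the right sign by hypothesis (ii), since $\chi(G_{s_0,\alpha_0},\cdot)\ge0$ on quotients. This would put $v(E_2)$ in a set of the form ${\cal T}_B(v)$, suitably adapted to quotients, and the finiteness argument of Lemma \ref{lem:bddv} goes through verbatim with the sign reversed.

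Since $A_n'$ is a subquotient of $E_2$, its divisor $\eta$ and $\xi_2-\eta$ are effective. Hence $v(A_n') \in {\cal U}_B(v)$, possibly after enlarging ${\cal U}_B(v)$ by this finite quotient analogue, which is harmless for the chamber structure. This forces $(s_{n+1},\alpha_{n+1}) \notin B_1$, contradicting $I\subset B_1^0$. Therefore $\Hom(E,A)=0$ for all $A \in {\cal F}^-_{G_{s,\alpha}}$, which completes the lemma.
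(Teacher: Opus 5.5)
Your reduction to quotients and the iteration that produces a $G_{s_{n+1},\alpha_{n+1}}$-semistable quotient $A_n$ of $E$ with $\chi=0$ match the paper. Your reading of the typo in the conclusion (it should be ${\cal F}^-_{G_{s,\alpha}}$) is also right.

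The gap is in the step that puts $v(E_2)$ into a finite set. For a fiber sheaf $A$ one has $\chi(G_{s,\alpha},A)=-\rk G\,\langle e^{sH+\alpha},v(A)\rangle$. So hypothesis (ii) at $(s_0,\alpha_0)$ gives $\chi(G_{s_0,\alpha_0},E_2)\ge 0$, i.e. $\langle e^{\alpha_0+s_0H},v(E_2)\rangle\le 0$. This is the opposite of the condition $\langle e^{\alpha+sH},v_1\rangle\ge 0$ that defines ${\cal T}_B(v)$.

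Your claim that the proof of Lemma \ref{lem:bddv} goes through verbatim with the sign reversed is false. That proof relies on the upper bound $a_1\le sk_1+(\alpha\cdot D_1)$ coming from the correct sign. Without it, fix $k_1>0$ and take $v_1=(0,k_1f,a_1)$ with $a_1\to\infty$. Then $\langle e^{\alpha_0+s_0H},v_1\rangle=s_0k_1-a_1\le 0$ and $\langle (v-v_1)^2\rangle=\langle v^2\rangle-2dk_1+2ra_1\ge -r^2\chi({\cal O}_X)$, so your quotient analogue of ${\cal T}_B(v)$ is infinite. Independently, enlarging ${\cal U}_B(v)$ changes $B_1^0$, so even a finite enlargement would no longer prove the lemma as stated.

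The fix is already inside your construction: read the sign at $(s_{n+1},\alpha_{n+1})$, not at $(s_0,\alpha_0)$. This is the paper's argument.

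\begin{enumerate}
\item Take $0\to E'\to E\to C\to 0$ with $C\in\langle\widehat{\cal F}^-_{G_{s_{n+1},\alpha_{n+1}}}\rangle$ maximal. This is the non-strict version of Lemma \ref{lem:F^-}. It still terminates, because pieces with $\chi=0$ have $(c_1\cdot H)>0$, so no perturbation is needed, and $A_n$ is a quotient of $C$.
\item Then $\chi(G_{s_{n+1},\alpha_{n+1}},C)\le 0$, i.e. $\langle e^{\alpha_{n+1}+s_{n+1}H},v(C)\rangle\ge 0$ at a point of $I\subset B$.
\item $E'$ admits no nonzero maps to $\widehat{\cal F}^-_{G_{s_{n+1},\alpha_{n+1}}}$, and by the preceding lemma it receives none from $\widehat{\cal F}^+_{G_{s_{n+1},\alpha_{n+1}}}$. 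Hence the argument of Lemma \ref{lem:Bogomolov} gives $\langle v(E')^2\rangle\ge -r^2\chi({\cal O}_X)$.
\item Together these give $v(C)\in{\cal T}_B(v)$ on the nose.
\item Since $A_n'$ is a subquotient of the fiber sheaf $C$, both $c_1(A_n')$ and $c_1(C)-c_1(A_n')$ are effective. So $v(A_n')\in{\cal U}_B(v)$ with no enlargement, forcing $(s_{n+1},\alpha_{n+1})\notin B_1$, the required contradiction.
\end{enumerate}
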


\begin{proof}
We take $(s,\alpha) \in B_1^0$ and
assume that there is a quotient $\phi:E \to A$ such that
$\chi(G_{s,\alpha},A) \leq 0$.
Let $I$ be a segment connecting $(s_0,\alpha_0)$ and $(s,\alpha)$.
Since $\chi(G_{s_0,\alpha_0},A)>0$,
there is $(s_1,\alpha_1) \in I$ such that $\chi(G_{s_1,\alpha_1},A) = 0$.
If there is a quotient $A \to A_1$ such that $\chi(G_{s_1,\alpha_1},A_1) < 0$,
then we replace $\phi$ by the quotient $\phi_1:E \to A_1$.
Continuing this procedure, we finally get $(s_n,\alpha_n) \in I$ and
a quotient $\phi_n:E \to A_n$
such that $A_n$ is a $G_{s_n,\alpha_n}$-semistable sheaf with $\chi(G_{s_n,\alpha_n},A_n)=0$.
We have an exact sequence
\begin{equation}
0 \to E' \to E \to C \to 0
\end{equation}
such that $\Hom(E',A)=0$ for all $A \in \widehat{\cal F}_{G_{s_n,\alpha_n}}^-$
and $C \in \langle \widehat{\cal F}_{G_{s_n,\alpha_n}}^- \rangle$.
Then $A_n$ is a quotient of $C$.
We have $\langle v(E')^2 \rangle \geq -r^2 \chi({\cal O}_X)$ and $\langle e^{\alpha+sH},v(C) \rangle \geq 0$.
Hence $v(C) \in {\cal T}_B(v)$, which implies
$v(A_n') \in {\cal U}_B(v)$, where $A_n'$ is a stable factor of $A_n$.
Then $(s_n,\alpha_n) \not \in B_1$. 
Therefore 
$\Hom(E,A)=0$ for any semi-stable fiber sheaf $A$ with $\chi(G_{s,\alpha},A) < 0$.
\end{proof}

\subsection{Wall and chamber}

We shall study the dependence of $(f,G,H)$-semistability on $G$.
By the definition of $(f,G,H)$-semistability (Definition \ref{defn:fGH2}),
we may assume that $\mu_f (E) > \mu_f (G)$.
We note that

\begin{equation}\label{eq:DD_1}
\begin{split}
&-(D^2)+\langle v^2 \rangle+r^2 \chi({\cal O}_X)\\
=& r \left(\frac{-(D_1^2)+\langle v_1^2 \rangle+r_1^2 \chi({\cal O}_X)}{r_1}+
\frac{-(D_2^2)+\langle v_2^2 \rangle+r_2^2 \chi({\cal O}_X)}{r_2} \right).
\end{split}
\end{equation}

\begin{lem}
$$
|r_1^2 ((D^2)-\langle v^2 \rangle)-
r^2((D_1^2)-\langle v_1^2 \rangle)| \leq  r^2 (-(D^2)+\langle v^2 \rangle+r^2 \chi({\cal O}_X)).
$$
\end{lem}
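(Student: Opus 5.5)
Set $Q(w):=-(D(w)^2)+\langle w^2 \rangle+\rk(w)^2\chi({\cal O}_X)$ for $w=v,v_1,v_2$, where $v_1=v(E_1)$, $v_2=v-v_1$ for $(E_1,E)\in{\cal S}(v)$. The plan is to read the inequality off \eqref{eq:DD_1} by isolating $Q(v_1)$ and then using nonnegativity of both summands. Since $N_H$ is negative definite, $-(D_i^2)\ge 0$. By Lemma \ref{lem:Bogomolov} (applicable since $E_1,E/E_1\in{\cal C}_G$), $\langle v_i^2\rangle+r_i^2\chi({\cal O}_X)\ge 0$. Hence $Q(v_i)\ge 0$ for $i=1,2$. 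Then \eqref{eq:DD_1} reads
\[
Q(v)=r\Bigl(\frac{Q(v_1)}{r_1}+\frac{Q(v_2)}{r_2}\Bigr),
\]
and both terms in the bracket are nonnegative. This gives
\[
0\le Q(v_1)\le \frac{r_1}{r}\,Q(v).
\]

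Next I rewrite the left-hand side of the claim in terms of $Q$. Since $(D^2)-\langle v^2\rangle = r^2\chi({\cal O}_X)-Q(v)$ and $(D_1^2)-\langle v_1^2\rangle=r_1^2\chi({\cal O}_X)-Q(v_1)$, the $\chi$ terms cancel:
\[
r_1^2((D^2)-\langle v^2\rangle)-r^2((D_1^2)-\langle v_1^2\rangle)=r^2Q(v_1)-r_1^2Q(v).
\]
It therefore suffices to prove
\[
|r^2Q(v_1)-r_1^2Q(v)|\le r^2Q(v).
\]

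To bound the absolute value, I use $0\le r_1\le r$ together with the estimate on $Q(v_1)$. For the upper side, $r^2Q(v_1)\le r r_1 Q(v)$, so
\[
r^2Q(v_1)-r_1^2Q(v)\le r_1(r-r_1)Q(v)\le r^2Q(v).
\]
For the lower side, $r^2Q(v_1)\ge 0$, so
\[
r^2Q(v_1)-r_1^2Q(v)\ge -r_1^2Q(v)\ge -r^2Q(v).
\]
Together these give the claim.

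The only point needing care is the applicability of the Bogomolov bound to both $v_1$ and $v_2$. Lemma \ref{lem:Bogomolov} assumes $s<d/r$, and here it must hold with the slopes $d_i/r_i$ of $E_1$ and $E/E_1$. This is fine because $\mu_f(E_i)=\mu_f(E)>\mu_f(G)$ by the defining condition of ${\cal S}(v)$. It also relies on $0<r_1<r$. In the degenerate cases $r_1=0$ or $r_1=r$, the torsion pieces are handled separately; the claim there is immediate or follows by the same computation with $Q\ge 0$.
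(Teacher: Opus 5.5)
Your proposal is correct and follows the paper's proof. Like the paper, you insert the $\chi({\cal O}_X)$ terms to rewrite the left-hand side as $r^2Q(v_1)-r_1^2Q(v)$, read off $0\le Q(v_1)\le \frac{r_1}{r}Q(v)$ from \eqref{eq:DD_1} (both summands being nonnegative by the Bogomolov bound and negative definiteness of $N_H$), and finish with the elementary estimate you spell out; your comments on the degenerate ranks go slightly beyond the paper, which tacitly works with $0<r_1<r$, where \eqref{eq:DD_1} is defined.
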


\begin{proof}
We note that
\begin{equation}
r_1^2 ((D^2)-\langle v^2 \rangle)-
r^2((D_1^2)-\langle v_1^2 \rangle)=
r_1^2 ((D^2)-\langle v^2 \rangle-r^2 \chi({\cal O}_X) )-
r^2((D_1^2)-\langle v_1^2 \rangle-r_1^2 \chi({\cal O}_X)).
\end{equation}

By \eqref{eq:DD_1},
$$
0 \leq -(D_1^2)+\langle v_1^2 \rangle+r_1^2 \chi({\cal O}_X) \leq 
\frac{r_1}{r} (-(D^2)+\langle v^2 \rangle+r^2 \chi({\cal O}_X)).
$$
Then it is easy to see the inequality.

\begin{NB}
\begin{equation}
r_1^2 ((D^2)-\langle v^2 \rangle)-
r^2((D_1^2)-\langle v_1^2 \rangle) \geq r_1^2 ((D^2)-\langle v^2 \rangle-r^2 \chi({\cal O}_X))
\end{equation}
\end{NB}
\end{proof}

\begin{defn}\label{defn:G(v)}
For a Mukai vector 
$$
v=r+dH+kf+D+a \varrho_X, D \in N_H,
$$
we set
\begin{equation}
{\cal G}(v):=\{(s,\alpha) \mid s \in {\Bbb R}, rs-d \ne 0, \alpha \in (H_H) \otimes_{\Bbb Q} {\Bbb R} \}. 
\end{equation}
\end{defn}

\begin{defn}\label{defn:wall}
\begin{enumerate}
\item
[(1)]
Let ${\cal U}(v)$ be the set of Mukai vectors $u=(0,\eta,b)$ such that
\begin{enumerate}
\item
$\langle u^2 \rangle=0,-2$,
\item
$\eta$ is effective with $(\eta \cdot f)=0$, and
\item
there is a Mukai vector $u'=(0,\eta',b')$ such that
$\eta'$ is effective with 
$(\eta' \cdot f)=0$,
there is $(s,\alpha) \in {\cal G}(v)$ with
$\langle e^{sH+\alpha},u \rangle=\langle e^{sH+\alpha},u' \rangle=0$ 
and $\langle (v-u-u')^2 \rangle \geq -r^2 \chi({\cal O}_X)$. 
\end{enumerate}
\item
[(2)]
Let $U(v)$ be a subset of $H^*(X,{\Bbb Q})_{\alg}$ consisting of Mukai vectors 
$$
v_1=r_1+d_1 H+k_1 f+D_1+a_1 \varrho_X,\; D_1 \in N_H
$$
such that 
\begin{enumerate}
\item
$r v_1-r_1 v \ne 0$.
\item
$0 \leq r_1 < r$, $d_1=r_1 \frac{d}{r}$, 
\item
If $0<r_1<r$, then
$\langle v_1^2 \rangle \geq -r_1^2 \chi({\cal O}_X)$ and
$\langle (v-v_1)^2 \rangle \geq -(r-r_1)^2 \chi({\cal O}_X)$,
\item
If $r_1=0$, then $v_1 \in {\cal U}(v)$.
\end{enumerate}
\item[(3)]
For $v_1 \in U(v)$, 
we define a wall 
\begin{equation}
W_{v_1}:=\{(s,\alpha) \in {\cal G}(v) \mid \langle e^{sH+\alpha},rv_1-r_1 v \rangle=0 \}.
\end{equation}
\item[(4)]
A chamber ${\cal C}$ is a connected component of ${\cal G}(v) \setminus \cup_{v_1 \in U(v)} W_{v_1}$.
\end{enumerate}
\end{defn}

\begin{NB}
For $E \in \overline{\cal M}_{(f,H)}^G(v)$, we take the Harder-Narasimhan filtration
in Definition \ref{defn:HNF}.
Then $\langle v(F_s/F_0)^2 \rangle \geq -r^2 \chi({\cal O}_X)$ and
$\langle e^{sH+\alpha},v(F_0) \rangle=0$.
Hence if $F_0 \ne 0$, then the Mukai vector $u$ of a stable factor of $F_0$
satisfies $u \in U(v)$.
Assume that $F_0=0$.
If $\Hom(E,F) \ne 0$ with $\langle e^{sH+\alpha},v(F) \rangle=0$, then
$E \to F$ is surjective and
$\ker(E \to F)$ satisfies Bogomolov inequality.
So the Mukai vector $u$ of a stable factor of $F$ satisfies
$u \in U(v)$.
\end{NB}

\begin{rem}
We usually regard ${\cal G}(v)$ as a subset of
$\NS(X)_{\Bbb R}$ by the correspondence
$(s,\alpha) \mapsto s H+\alpha$. 
\end{rem}

\begin{rem}\label{rem:U(v)}
By the conditions (a), (b) in (1), 
$\{D(v_1) \mid v_1 \in {\cal U}(v) \}$ is a finite set. 
By the proof of Lemma \ref{lem:wall1},
$\{(\langle v_1^2 \rangle, D(v_1)) \mid v_1 \in U(v) \}$ is also a finite set.
\end{rem}

\begin{defn}
Let $\overline{\cal M}_{(f,H)}^G(v)^{ss}$ be the stack of coherent sheaves
$E$ with $v(E)=v$ such that
$E_{|\pi^{-1}(\eta)}$ is a semi-stable vector bundle
and
\begin{equation}\label{eq:mu-ss}
\chi(G,E_1) \leq \rk E_1 \frac{\chi(G,E)}{\rk E}
\end{equation}
for all subsheaf $E_1$ of $E$ with
$$
(c_1(E_1) \cdot f)=\rk E_1 \frac{(c_1(E) \cdot f)}{\rk E}.
$$
\end{defn}
 
\begin{rem}\label{rem:mu-ss}
Every object $E \in \overline{\cal M}_{(f,H)}^G(v)^{ss}$ satisfies the following properties.
\begin{enumerate}
\item
$\Hom(A,E)=0$ for any $G$-twisted semistable fiber sheaf $A$ with $\chi(G,A)>0$.
\item
$\Hom(E,A)=0$ for any $G$-twisted semistable fiber sheaf $A$ with $\chi(G,A)<0$.
\end{enumerate}
\end{rem}

\begin{lem}\label{lem:general}
If $(s,\alpha)$ belongs to a chamber, then
$\overline{\cal M}_{(f,H)}^G(v)^{ss}={\cal M}_{(f,H)}^G(v)^{ss}$,
where $c_1(G)=\rk G (sH+\alpha)$.
\end{lem}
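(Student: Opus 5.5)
The inclusion ${\cal M}_{(f,H)}^G(v)^{ss} \subset \overline{\cal M}_{(f,H)}^G(v)^{ss}$ is immediate from Definition \ref{defn:fGH}. For the reverse inclusion, take $E \in \overline{\cal M}_{(f,H)}^G(v)^{ss}$. The case $\mu_f(E)<\mu_f(G)$ reduces to the sheaf case through Definition \ref{defn:fGH2}, so I assume $\mu_f(E)>\mu_f(G)$. The plan is to show that, for $(s,\alpha)$ in a chamber, equality in \eqref{eq:mu-ss} never occurs for a proper subsheaf $E_1$ with $(c_1(E_1)\cdot f)=\rk E_1 \mu_f(E)$ and $v(E_1)$ not proportional to $v$. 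Once this holds, the tie-breaking condition (2) of Definition \ref{defn:fGH} is only needed in the proportional case, where it holds automatically, and condition (1) is exactly \eqref{eq:mu-ss} with strict inequality. So $E$ is $(f,G,H)$-semistable.

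First I reduce to good subsheaves. Given $E_1 \subset E$ with equality $\chi(G,E_1)=\frac{\rk E_1}{\rk E}\chi(G,E)$, I use Remark \ref{rem:mu-ss} to get $E\in{\cal C}_G$. Then Lemma \ref{lem:C_G} replaces $E_1$ by $F$ with $F,E/F\in{\cal C}_G$, the same generic fiber, and $\chi(G,F)\ge\chi(G,E_1)$. Semistability \eqref{eq:mu-ss} forces equality again, so $(F,E)\in{\cal S}(v)$. Write $v_1=v(F)$ and $r_1=\rk F$.

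If $0<r_1<r$, both $F$ and $E/F$ lie in ${\cal C}_G$, so Lemma \ref{lem:Bogomolov} gives the Bogomolov inequalities in condition (c). The slope condition gives $d_1=r_1 d/r$. Thus $v_1\in U(v)$. The equality $\chi(G,F)=\frac{r_1}{r}\chi(G,E)$ is, via $\chi(G,\cdot)=-\rk G\langle e^{sH+\alpha}(\ldots),\cdot\rangle$ and Lemma \ref{lem:fGH-indep} (which removes the $f$-component and the $K_X/2$ shift), exactly $\langle e^{sH+\alpha}, rv_1-r_1v\rangle=0$. Hence $(s,\alpha)\in W_{v_1}$, contradicting that $(s,\alpha)$ lies in a chamber, unless $rv_1=r_1v$.

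If $r_1=0$ or $r_1=r$, the problem concerns a torsion fiber sheaf $T$, namely a subsheaf or a quotient $E/F$, with $\chi(G,T)=0$. By Remark \ref{rem:fGH} and the purity of the torsion, $T$ is $G$-twisted semistable with $\chi(G,T)=0$. I take a stable factor $A$ of $T$, so $u=v(A)$ satisfies $\langle e^{sH+\alpha},u\rangle=0$. For a stable fiber sheaf, $\langle u^2\rangle\in\{0,-2\}$, because all fibers are irreducible. I then need $u\in{\cal U}(v)$, using $u'=v(T)-u$ (or the complementary torsion piece) and the Bogomolov inequality for the torsion-free part $v-u-u'$ from Lemma \ref{lem:Bogomolov}. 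With that, $(s,\alpha)\in W_u$, which is again a contradiction.

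The main obstacle is this last step, the $r_1=0$ and $r_1=r$ cases. I must check that the torsion pieces indeed produce a vector satisfying condition (c) of Definition \ref{defn:wall}(1), in particular that the complementary $u'$ has effective class orthogonal to $f$ and is $\chi(G,\cdot)$-neutral. I would try first to take $u'$ to be the sum of the Mukai vectors of the remaining $\chi(G,\cdot)=0$ Jordan–Hölder factors of $T$. The non-strict condition \eqref{eq:mu-ss} then forces all these factors to have $\chi(G,\cdot)=0$ once the total does.
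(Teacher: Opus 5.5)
Your route is the paper's: split by the rank of an equality subsheaf, use Lemmas \ref{lem:C_G} and \ref{lem:Bogomolov} to put $v(F)$ in $U(v)$ when $0<r_1<r$, and use a stable $\chi(G,\cdot)=0$ fiber factor for the extreme ranks. However, several steps fail as written.

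First, Remark \ref{rem:mu-ss} does not give $E\in{\cal C}_G$. The class ${\cal F}_G^+$ also contains stable fiber sheaves with $\chi(G,A)=0$, and the remark does not exclude these. You must treat the rank-zero case \emph{first}, as the paper does. Ruling out rank-zero equality subsheaves is exactly what gives $\Hom(A,E)=0$ when $\chi(G,A)=0$, and only after that is Lemma \ref{lem:C_G} available.

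Second, for $r_1=r$ Lemma \ref{lem:C_G} is useless. A nonzero fiber sheaf never satisfies both $\Hom$-vanishings defining ${\cal C}_G$, so the lemma returns $F=E$ and there is no quotient $E/F$ left to work with. You do not need this case at all: if $\rk E_1=\rk E$, then $(c_1(E/E_1)\cdot H)\geq 0$, so condition (2) of Definition \ref{defn:fGH} holds automatically.

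Third, in the middle case you only exclude non-proportionality for $F$, not for $E_1$. To transfer it, use the proof of Lemma \ref{lem:C_G}: there $\chi(G,F)=\chi(G,E_1)-\chi(G,E_1/E_1')+\chi(G,T)$, so equality forces $E_1'=E_1$. Hence $E_1\subset F$ with a fiber-sheaf quotient. Then $rv(F)=r_1v$ gives $(c_1(E_1)\cdot H)\leq (c_1(F)\cdot H)=\frac{r_1}{r}(c_1(E)\cdot H)$, which is condition (2) for $E_1$.

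In the rank-zero case, your $u'$ (the other Jordan--H\"older factors of $E_1$) does satisfy $\langle e^{sH+\alpha},u'\rangle=0$. But then $v-u-u'=v(E/E_1)$, which is not the torsion-free part, so Lemma \ref{lem:Bogomolov} does not apply directly. This is the inequality missing from your ``main obstacle'', and it is short. The torsion of $E/E_1$ is $T_E/E_1$, where $T_E$ is the torsion of $E$. By \eqref{eq:mu-ss}, $\chi(G,T_E/E_1)=\chi(G,T_E)\leq 0$. Since fibers are irreducible, $v(T_E/E_1)=(0,\lambda f,c)$ with $\lambda\geq 0$ and $c\leq \lambda s$. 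Therefore
\[
\langle v(E/E_1)^2\rangle=\langle v(E/T_E)^2\rangle+2(\lambda d-rc)\geq \langle v(E/T_E)^2\rangle\geq -r^2\chi({\cal O}_X),
\]
using $s<d/r$ and Bogomolov for the torsion-free sheaf $E/T_E$.

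The paper's one-line verification uses the full torsion $T_E$, for which $\langle e^{sH+\alpha},v(T_E)-u\rangle$ need not vanish. Your choice of $u'$ is the one that matches condition (c) of Definition \ref{defn:wall} literally, once this inequality is supplied.
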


\begin{proof}
For $E \in \overline{\cal M}_{(f,H)}^G(v)^{ss}$ and a subsheaf $E_1$ of $E$ such that
\begin{equation}\label{eq:general}
(c_1(E_1) \cdot f)=\rk E_1 \frac{(c_1(E) \cdot f)}{\rk E},\;
\chi(G,E_1) = \rk E_1 \frac{\chi(G,E)}{\rk E},
\end{equation}
we first assume that $\rk E_1=0$.
Then $E_1$ is a $G$-twisted semistable fiber sheaf with $\chi(G,E_1)=0$.
Let $T$ be the torsion subsheaf of $E$. Then
$E_1 \subset T$ and $\langle v(E/T)^2 \rangle \geq -r^2 \chi({\cal O}_X)$.
Replacing $E_1$ by its stable factor, we may assume that $\langle v(E_1)^2 \rangle=0,-2$.
Then $v_1:=v(E_1)$ belongs to $U(v)$ and $\langle e^{sH+\alpha},v_1 \rangle=0$.
Therefore this case does not occur.
By this argument, we also get $\Hom(A,E)=0$ for all
$G$-twisted semistable fiber sheaf $A$ with $\chi(G,A)=0$.
In particular $E \in {\cal C}_G$.
We next assume that $\rk E_1=\rk E$.
Then $E/E_1$ is a $G$-twisted semistable fiber sheaf with $\chi(G,E/E_1)=0$.
In particular $E_1 \in {\cal C}_G$ and $v(E/E_1) \in {\cal T}_G(v)$.
Replacing $E/E_1$ by its quotient we may assume that $E/E_1$ is $G$-twisted stable.
Then $v(E/E_1) \in U(v)$ and $\langle e^{sH+\alpha},v(E/E_1) \rangle=0$, which is a contradiction.
Therefore this case does not occur either.  
In particular $E \in {\cal C}_G$.

We finally assume that 
$0<\rk E_1<\rk E$.
By Lemma \ref{lem:C_G}, we have a subsheaf $F$ of $\rk F=\rk E_1$ such that
\begin{enumerate}
\item
$F \in {\cal C}_G$ and
$E/F \in {\cal C}_G$.
\item
${E_1}_{|\pi^{-1}(\eta)}=F_{|\pi^{-1}(\eta)}$.
\item
$\chi(G,E_1) \leq \chi(G,F)$.
\end{enumerate}
By \eqref{eq:general} and $E \in \overline{\cal M}_{(f,H)}^G(v)^{ss}$,
$F$ also satisfies \eqref{eq:general} (i.e., $\chi(G,E_1) = \chi(G,F)$).
We set 
$v_1:=v(F)$. Then $v_1 \in U(v)$ and $\langle e^{sH+\alpha},rv_1-r_1 v \rangle=0$.
Since $v_1$ does not define a wall, we get
$r v_1-r_1 v=0$. 
Therefore $E$ is $(f,G,H)$-semistable. 
\end{proof}

\begin{NB}
\begin{defn}\label{defn:weak-wall}

\begin{enumerate}
\item[(1)]
$s=s_0$ is a wall for $v$ if there is a Mukai vector 
$$
v_1:=r_1+d_1 H+k_1 f+D_1+a_1 \varrho_X,\; D_1 \in \NS(X)_{\Bbb Q} \cap f^\perp \cap H^\perp
$$
such that 
\begin{enumerate}
\item
$(r_1,k_1,a_1) \ne \frac{r_1}{r}(r,k,a)$.
\item
$0 \leq r_1 \leq r$, $d_1=r_1 \frac{d}{r}$, 
\item
$\langle v_1^2 \rangle \geq -r_1^2 \chi({\cal O}_X)$,
$\langle (v-v_1)^2 \rangle \geq -(r-r_1)^2 \chi({\cal O}_X)$,
\item
$\langle v_1,v-v_1 \rangle>0$ if $r_1=0,r$
and 
\item
$
s_0=\frac{r_1 a-r a_1}{r_1 k-r k_1}.
$
\end{enumerate}
\item[(2)]
A chamber ${\cal C}$ is a connected component of the compliment of walls:
Let $\{ s_\lambda \mid \lambda \in \Lambda \}$ be the set of walls.
then a connected component of
${\Bbb R} \setminus \{x_\lambda \mid \lambda \in \Lambda \}$
is a chamber.
\end{enumerate}
\end{defn}
\end{NB}

For $v_1 \in U(v)$ with $r_1>0$, we note that
$$
\frac{(D_1^2)-\langle v_1^2 \rangle}{2r_1}=a_1-\frac{d_1}{r_1}k_1=a_1-\frac{d}{r}k_1.
$$
Hence we see that

\begin{equation}
\begin{split}
& -r_1 \langle e^{s H+\alpha},v \rangle +r \langle e^{s H+\alpha},v_1 \rangle=
r_1 (a-k s-(\alpha \cdot D))-r (a_1- k_1 s-(\alpha \cdot D_1))=0 \\
\iff & s=\frac{(r_1 a-r a_1)-(\alpha \cdot(r_1 D-r D_1))}{r_1 k-r k_1}\\
\iff & \left(s-\frac{d}{r} \right)(r_1 k-r k_1)+(\alpha \cdot(r_1 D-r D_1))=
-r_1\frac{\langle v^2 \rangle-(D^2)}{2r}+ r \frac{\langle v_1^2 \rangle-(D_1^2)}{2r_1}. 
\end{split}
\end{equation}

\begin{NB}
\begin{equation}
\left(s-\frac{d}{r} \right)(r_1 k-r k_1)+(\alpha \cdot(r_1 D-r D_1))=
-r_1\frac{\langle v^2 \rangle-(D^2)}{2r}+ r \frac{\langle v_1^2 \rangle-(D_1^2)}{2r_1}.
\end{equation}
\end{NB}

\begin{NB}
$\lambda((r_1 k-r k_1),r_1 D-r D_1)=(r_2 k-r k_2,r_2 D-r D_2)$ implies
$(r_2,k_2,D_2)=(\frac{r_2}{r}-\lambda \frac{r_1}{r})(r,k,D)+\lambda (r_1,k_1,D_1)$.
\end{NB}

\begin{lem}
Let $B$ be a compact subset of ${\cal G}(v)$.
Then 
$$
\# \{ v_1 \in U(v) \mid W_{v_1} \cap B \ne \emptyset \}<\infty.
$$
\end{lem}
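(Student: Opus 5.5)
We treat the cases $r_1>0$ and $r_1=0$ separately. Throughout, fix a compact $B\subset{\cal G}(v)$. Since ${\cal G}(v)$ excludes $s=d/r$, $B$ splits into at most two compact pieces, one with $s<d/r$ and one with $s>d/r$. On each piece, $|s-\tfrac{d}{r}|\geq \epsilon>0$, and $|s|$ and $\|\alpha\|$ are bounded.

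\emph{Case $r_1>0$.} By Remark \ref{rem:U(v)}, which rests on the proof of Lemma \ref{lem:wall1} and the bound following it, the pair $(\langle v_1^2\rangle, D_1)$ ranges over a finite set. The rank $r_1\in\{1,\dots,r-1\}$ is also finite, and then $d_1=r_1 d/r$ is determined. The remaining free parameter is $k_1$, because
$$
a_1=\frac{d}{r}k_1+\frac{(D_1^2)-\langle v_1^2\rangle}{2r_1}.
$$
Now suppose $(s,\alpha)\in W_{v_1}\cap B$. By the displayed equivalence preceding the lemma,
$$
\left(s-\tfrac{d}{r}\right)(r_1k-rk_1)=-(\alpha\cdot(r_1D-rD_1))-r_1\frac{\langle v^2\rangle-(D^2)}{2r}+r\frac{\langle v_1^2\rangle-(D_1^2)}{2r_1}.
$$
The right-hand side is bounded uniformly on $B$, since $D_1$ and $\langle v_1^2\rangle$ take finitely many values and $\alpha$ is bounded. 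Dividing by $|s-\tfrac{d}{r}|\geq\epsilon$ shows that $|r_1k-rk_1|$ is bounded. Moreover $k_1=(\xi_1\cdot H)$ lies in the discrete set $\frac{1}{\mu}{\Bbb Z}$ of Remark \ref{rem:mu}. Hence $k_1$ takes finitely many values, so $a_1$ does too, and $v_1$ is finite in number.

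\emph{Case $r_1=0$.} Here $v_1=u\in{\cal U}(v)$, and the wall is $\langle e^{sH+\alpha},u\rangle=0$. Write $u=(0,\eta,b)$ with $\eta=k_1f+D_1$. The remark already gives finitely many $D_1$, but $k_1$ must be bounded using condition (c). We use the auxiliary $u'$ together with $\langle (v-u-u')^2\rangle\geq -r^2\chi({\cal O}_X)$ and $\langle e^{s'H+\alpha'},u\rangle=\langle e^{s'H+\alpha'},u'\rangle=0$ at some $(s',\alpha')$. The plan is to imitate the computation in Lemma \ref{lem:bddv} applied to $v_1:=u+u'$. That inequality bounds $(D_1+r\alpha'-D)$ in the negative definite lattice $N_H$, and it bounds $dk_1-ra_1$. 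With $a_1=s'k_1+(\alpha'\cdot D_1)$, this gives $(d-rs')k_1$ bounded, hence $k_1$ bounded.

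The main obstacle is that $(s',\alpha')$ need not lie in $B$, and $s'$ may exceed $d/r$, so Lemma \ref{lem:bddv} does not apply directly. The first thing to try is to note that the wall itself passes through $B$. Since $\langle e^{sH+\alpha},u\rangle=b-sk_1-(\alpha\cdot D_1)=0$ for some $(s,\alpha)\in B$, we can replace $b$ by $sk_1+(\alpha\cdot D_1)$ and bound $k_1$ using only the Bogomolov-type constraint and effectivity ($k_1\geq 0$). Effectivity of $\eta,\eta'$ with $(\eta\cdot f)=0$ also makes $\eta,\eta'$ supported on fibers. Then $(\eta^2)\leq 0$, and $\langle u^2\rangle\in\{0,-2\}$ restricts $D_1$ to finitely many classes. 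If this does not suffice, we would instead use that $u$ is a class of an actual stable fiber sheaf, so $u$ lies in the finitely many isotropic or $(-2)$ classes of Lemmas \ref{lem:isotropic-dim} and \ref{lem:spherical2} up to multiples of $f+(\cdot)\varrho_X$. The wall condition restricted to $B$ then pins down the multiple, using $k_1\geq 0$ and the lower bound on $\langle (v-u-u')^2\rangle$.

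Once both cases yield finitely many $v_1$ with $W_{v_1}\cap B\neq\emptyset$, the lemma follows.
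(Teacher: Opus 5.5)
Your case $r_1>0$ is the paper's argument and is fine: finiteness of $(r_1,D_1,\langle v_1^2\rangle)$ from Remark~\ref{rem:U(v)}, boundedness of $r_1k-rk_1$ from the wall equation because $|s-\tfrac dr|$ is bounded below on $B$, then discreteness. The case $r_1=0$, however, is not proved. You list strategies, but none of them bounds $k_1$. Substituting $b=sk_1+(\alpha\cdot D_1)$ with $(s,\alpha)\in B$ does nothing for the auxiliary vector $u'$, whose $b'$ is tied to the point $(s',\alpha')$ of condition (c). So $\langle (v-u-u')^2\rangle\geq -r^2\chi({\cal O}_X)$ still contains the uncontrolled terms $2r(\alpha'\cdot D_1')$, $(D_1'^2)$ and $2(rs'-d)k_1'$. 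The fallback is unavailable, since membership in ${\cal U}(v)$ is purely numerical and does not make $u$ the class of a stable sheaf.

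The missing idea is the standing hypothesis of Section~\ref{sect:stability} that all fibers are irreducible. Then an effective $\eta$ with $(\eta\cdot f)=0$ is numerically $k_1f$ with $k_1\geq 0$, so $D_1=D_1'=0$, not merely finitely many $D_1$. Moreover $k_1>0$, because $u\neq 0$ lies on a wall, and $W_u=\{s=b/k_1\}$, $W_{u'}=\{s=b'/k_1'\}$. Hence the point in (c) has $s'=b/k_1$, which is the $s$-coordinate of the point of $B$ on $W_u$. Your obstacle disappears, and
\[
\langle (v-u-u')^2\rangle=\langle v^2\rangle-2(d-rs')(k_1+k_1').
\]
If $s\leq s_+<\tfrac dr$ on $B$, this gives $k_1\leq k_1+k_1'\leq (\langle v^2\rangle+r^2\chi({\cal O}_X))/(2(d-rs_+))$. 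This is just Lemma~\ref{lem:bddv} applied to $u+u'$, and then $b=s'k_1$ is bounded and discrete.

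Your symmetric treatment of the piece $s>\tfrac dr$ cannot work for $r_1=0$. There $d-rs'<0$, so (c) holds for all large $k_1$ (already with $u'=0$), and every $s=b/k_1>\tfrac dr$ is a wall. You must instead use the reduction to $s<\tfrac dr$ via Definition~\ref{defn:fGH2}, which the paper makes at the start of the subsection on walls and chambers. The paper's own three-line proof also covers $r_1=0$ only through the phrase that $\{r_1k-rk_1\}$ is bounded; the computation above is what that phrase requires.
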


\begin{proof}
We note that the choice of $r_1,D_1$ and $\langle v_1^2 \rangle$ is finite
(see Remark \ref{rem:U(v)}).
Hence $\{r_1 k-r k_1 \}$ is a bounded set.
Therefore the choice of $v_1$ is finite.
\end{proof}

\begin{NB}
Let $B_1$ be a compact subset of $(-\infty, \frac{d}{r})$ and
$B_2$ be a compact subset of $\NS(X)_{\Bbb R} \cap H^\perp \cap f^\perp$.
Then $\{r_1 k-r k_1 \}$ is a bounded set.
Hence the choice of $v_1$ is finite.
Therefore the set of walls is locally finite and .
wall  
\end{NB}

\begin{rem}
Let $X$ be an abelian surface.
For a pair of Mukai vectors 
$v_i=r_i+\xi_i+a_i \varrho_X \in U(v)$ ($i=1,2$) with $v=v_1+v_2$,
if $v_1$ defines a wall, then 
$\langle v_1,v_2 \rangle>0$:

We set $d_i:=(\xi_i \cdot f)$ $(i=1,2)$.
We note that 
$2r_1 r_2 \langle v_1,v_2 \rangle=-((r_2\xi_1-r_1 \xi_2)^2)+
r_2^2 \langle v_1^2 \rangle+r_1^2 \langle v_2^2 \rangle \geq 0$.
If $r_1,r_2>0$, then
$\langle v_1,v_2 \rangle>0$ unless $\langle v_1^2 \rangle=\langle v_2^2 \rangle$.
If $\langle v_1^2 \rangle=\langle v_2^2 \rangle=\langle v_1,v_2 \rangle=0$ and
$\langle e^{sH+\alpha},r_2v_1-r_1 v_2 \rangle=0$
$(s<\frac{d}{r})$, then we see that
$rs=d$. Hence it is not a wall.
If $r_1=0$, then $v_1=r_1 f+d_1 \varrho_X$.
By $s=\frac{d_1}{r_1}<\frac{d}{r}$,
$\langle v_1,v_2 \rangle=\langle v_1,v \rangle=r_1(\xi \cdot f)-rd_1>0$.
\end{rem}

\begin{lem}\label{lem:Gieseker-chamber}
For $v_1 \in U(v)$ with $r_1>0$,
$$
\frac{d}{r}-\frac{ \mu(-(D^2)+\langle v^2 \rangle+r^2 \chi({\cal O}_X))}{2}
<\frac{r_1 a-r a_1}{r_1 k-r k_1},
$$
where $\mu H \in \NS(X)$ (see Remark \ref{rem:mu}).
For $v_1=k_1 f+D_1+a_1 \varrho_X$ with $D_1 \in N_H$ and
$\langle (v-v_1)^2 \rangle \geq -r^2 \chi({\cal O}_X)$,
we also have the same inequality, where $r_1=0$.
\end{lem}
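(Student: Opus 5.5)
The plan is to read the wall $W_{v_1}$ along $\alpha=0$, where it sits at $s=\frac{r_1a-ra_1}{r_1k-rk_1}$. The key identity is the one displayed just before the statement:
$$
\left(s-\tfrac{d}{r}\right)(r_1k-rk_1)=-r_1\tfrac{\langle v^2\rangle-(D^2)}{2r}+r\tfrac{\langle v_1^2\rangle-(D_1^2)}{2r_1}=:R .
$$
I would bound $R$ from both sides and bound $|r_1k-rk_1|$ from below. To simplify notation, set
$$
\Delta:=-(D^2)+\langle v^2\rangle+r^2\chi({\cal O}_X),\qquad \Delta_i:=-(D_i^2)+\langle v_i^2\rangle+r_i^2\chi({\cal O}_X)\quad (i=1,2).
$$
Since $N_H$ is negative definite, $-(D_i^2)\ge 0$. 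The Bogomolov-type condition in $U(v)$ (equivalently Lemma \ref{lem:Bogomolov}) then gives $\Delta_i\geq 0$, and identity \eqref{eq:DD_1} gives $\Delta=r(\Delta_1/r_1+\Delta_2/r_2)$.

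First step: substituting $\langle v_1^2\rangle-(D_1^2)=\Delta_1-r_1^2\chi({\cal O}_X)$ and the analogous expression for $v$, the $\chi({\cal O}_X)$ terms cancel and
$$
R=\tfrac12\left(\tfrac{r}{r_1}\Delta_1-\tfrac{r_1}{r}\Delta\right).
$$
Since $0\le \frac{r}{r_1}\Delta_1\le\Delta$, this gives $-\frac{r_1}{2r}\Delta\le R\le \frac{\Delta}{2}$. Second step: by Remark \ref{rem:mu}, $\mu k$ and $\mu k_1$ are integers. The wall condition $rv_1-r_1v\neq 0$, with $d_1=r_1d/r$ and $D_1$ fixed on the wall, should force $r_1k-rk_1\neq 0$, since otherwise the wall equation degenerates. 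Hence $|r_1k-rk_1|\ge 1/\mu$.

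Third step: split by the sign of $r_1k-rk_1$.
\begin{itemize}
\item If it is positive, then $s-\frac dr=R/(r_1k-rk_1)\ge -\frac{r_1}{2r}\Delta\cdot\mu>-\frac{\mu\Delta}{2}$, using $r_1<r$.
\item If it is negative, then $s-\frac dr\ge -\frac{\Delta}{2}\cdot\frac{1}{|r_1k-rk_1|}\ge -\frac{\mu\Delta}{2}$.
\end{itemize}

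The main obstacle is strictness in the negative case. Equality would need $|r_1k-rk_1|=1/\mu$, $\Delta_2=0$ and $\Delta>0$. I would rule this out by checking that these conditions force $R\ge 0$ or $v_2$ proportional to $v$. Alternatively I would accept a non-strict bound, which suffices to place all walls to the right of some $s_0$. The case $\Delta=0$ is trivial, since then $R=0$ and $s=d/r$.

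For the case $r_1=0$, write $v_1=k_1f+D_1+a_1\varrho_X$; the wall lies at $s=a_1/k_1$. I would expand $\langle (v-v_1)^2\rangle\ge -r^2\chi({\cal O}_X)$ exactly as in the proof of Lemma \ref{lem:bddv}:
$$
2r\left(a_1-\tfrac dr k_1\right)\ge -r^2\chi({\cal O}_X)-\langle v^2\rangle-(D_1^2)+2(D\cdot D_1)\ge -\Delta,
$$
where the last step uses $-(D_1^2)+2(D\cdot D_1)=-((D_1-D)^2)+(D^2)\ge (D^2)$. Here $k_1=(\xi_1\cdot H)>0$ because the divisor is effective, vertical and nonzero, and $\mu k_1\in{\Bbb Z}$ gives $k_1\ge 1/\mu$. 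Hence
$$
\tfrac{a_1}{k_1}-\tfrac dr\ge -\tfrac{\Delta}{2rk_1}\ge-\tfrac{\mu\Delta}{2r}.
$$
This is $>-\mu\Delta/2$ when $r>1$ or $\Delta>0$, and the remaining case $r=1$, $\Delta=0$ needs a separate check.
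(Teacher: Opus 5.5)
Your argument is the paper's. It uses the same identity for $\frac{r_1a-ra_1}{r_1k-rk_1}-\frac dr$ and the same input $\langle v_1^2\rangle\ge -r_1^2\chi({\cal O}_X)$, $(D_1^2)\le 0$; your bound $R\ge -\frac{r_1}{2r}\Delta$ is exactly the paper's displayed estimate. It also uses the same integrality $\mu(r_1k-rk_1)\in{\Bbb Z}$ together with $r_1<r$, and your $r_1=0$ computation is the paper's line for line. The one difference is that the paper writes out only the case $r_1k-rk_1>0$. It silently relies on $v-v_1\in U(v)$ defining the same wall with the opposite sign. The negative sign is in fact the one used in the first paragraph of the proof of Proposition \ref{prop:Gieseker-chamber} (the case $k_1>r_1k/r$). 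You instead treat both signs directly.

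Your ``main obstacle'' is not an obstacle; you discarded the term $-\frac{r_1}{2r}\Delta$. Your own formula gives $R\le \frac12(\Delta-\frac{r_1}{r}\Delta)=\frac{r-r_1}{2r}\Delta$. So in the negative case $s-\frac dr\ge -\frac{(r-r_1)\mu\Delta}{2r}>-\frac{\mu\Delta}{2}$ as soon as $\Delta>0$; this is just the positive case applied to $v-v_1$. Separately, $rv_1-r_1v\ne 0$ does \emph{not} force $r_1k\ne rk_1$. One can have $k_1=\frac{r_1}{r}k$ with $rD_1\ne r_1D$ or $ra_1\ne r_1a$, and then $W_{v_1}$ is empty or independent of $s$. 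Nonvanishing of $r_1k-rk_1$ is simply an implicit hypothesis, needed for the quotient to be defined.

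In the edge cases the strict inequality genuinely fails, so ``trivial'' and ``a separate check'' are both wrong. Take an elliptic K3 with section $\sigma$ and $H=\sigma+f$.
\begin{itemize}
\item If $r_1>0$ and $\Delta=0$, you get equality $s=\frac dr$. For example, $v=2+kf+2\varrho_X$ and $v_1=1+k_1f+\varrho_X$ with $k\ne 2k_1$.
\item For $r_1=0$, the inequality $-\frac{\mu\Delta}{2r}>-\frac{\mu\Delta}{2}$ needs $r>1$ \emph{and} $\Delta>0$, not ``or''. For $r=1$ equality occurs: with $\mu=1$, $v=1+a\varrho_X$ ($a<1$, so $\Delta=2-2a>0$) and $v_1=f+(a-1)\varrho_X$, we have $v-v_1=v({\cal O}_X(-f))$ and $\frac{a_1}{k_1}=a-1=\frac dr-\frac{\mu\Delta}{2}$.
\end{itemize}
What is true in these cases is the non-strict inequality, your fallback. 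That is all Proposition \ref{prop:Gieseker-chamber} needs, since its hypothesis on $s$ is strict. The paper's proof has the same slack. Finally, in the $r_1=0$ step $-\frac{\Delta}{2rk_1}\ge -\frac{\mu\Delta}{2r}$, both you and the paper tacitly use $\Delta\ge 0$, i.e.\ the Bogomolov bound for $v$ itself. This holds in the application by Lemma \ref{lem:Bogomolov}.
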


\begin{NB}
It seems that 
$$
\frac{d}{r}-\frac{ \mu (-(D^2)+\langle v^2 \rangle+r^2 \chi({\cal O}_X))}{2}
<\frac{r_1 a-r a_1}{r_1 k-r k_1},
$$
\end{NB}

\begin{proof}
If $r_1>0$, then since $\langle v_1^2 \rangle \geq -r_1^2 \chi({\cal O}_X)$ and $(D_1^2) \leq 0$, 
we see that
\begin{equation}
\begin{split}
\frac{r_1 a-r a_1}{r_1 k-r k_1}=&
\frac{d}{r}+\frac{r_1^2 ((D^2)-\langle v^2 \rangle)-
r^2((D_1^2)-\langle v_1^2 \rangle)}{2rr_1 (r_1 k-r k_1)}\\
\geq & \frac{d}{r}+\frac{r_1^2 ((D^2)-\langle v^2 \rangle)-r^2 r_1^2 \chi({\cal O}_X)}{2rr_1 (r_1 k-r k_1)}.
\end{split}
\end{equation}
Since $\mu (r_1 k-rk_1) \geq 1$ and $r_1<r$, we get the inequality.

If $r_1=0$, then 
Since 
$\langle (v-v_1)^2 \rangle =\langle v^2 \rangle-2(d k_1+(D \cdot D_1)-r a_1)+(D_1^2) 
\geq -r^2 \chi({\cal O}_X)$,
we get 
\begin{equation}
\begin{split}
\frac{r_1 a-r a_1}{r_1 k-r k_1}=
\frac{a_1}{k_1} \geq & \frac{d}{r}-\frac{((D-D_1)^2)-(D^2)+\langle v^2 \rangle+r^2 \chi({\cal O}_X)}{2rk_1}\\
\geq & \frac{d}{r}-\frac{\mu (-(D^2)+\langle v^2 \rangle+r^2 \chi({\cal O}_X))}{2r}.
\end{split}
\end{equation}
\end{proof}

\begin{prop}\label{prop:Gieseker-chamber}
We take a compact subset $B^* \subset (N_H) \otimes_{\Bbb Q} {\Bbb R}$.
Under the notation in Lemma \ref{lem:Gieseker-chamber},
we set
\begin{equation}
A:=\mu \max_{v_1 \in U(v), \alpha \in B_1} |(\alpha \cdot (r_1 D-r D_1)) |.
\end{equation}
Assume that $\alpha \in B^*$ and
$$
s<\frac{d}{r}-\frac{\mu (-(D^2)+\langle v^2 \rangle+r^2 \chi({\cal O}_X))}{2}
-A.
$$
Then $(f,{\cal O}_X(s H+\alpha),H)$-semistability
is $(sH+\alpha)$-twisted semistability with respect to 
$H_f$.
\end{prop}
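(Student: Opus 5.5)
We must show: for $s$ far below $d/r$, a sheaf $E$ with $v(E)=v$ is $(f,G,H)$-semistable (with $G={\cal O}_X(sH+\alpha)$) if and only if it is $(sH+\alpha)$-twisted semistable with respect to $H+nf$, $n\gg0$. The plan is to compare the two orderings on subsheaves. For $H+nf$ with $n\gg0$, twisted Gieseker semistability amounts to three successive conditions on a subsheaf $E_1\subset E$. The leading condition is $\mu_f$-semistability, i.e.\ slope semistability on the generic fibre. The next condition is a comparison of $\chi(G,E_1)/\rk E_1$, and the last is a comparison of $(c_1\cdot H)$. I will also have to handle the torsion part.

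The key point is the following. When $(c_1(E_1)\cdot f)=\rk E_1\,\mu_f(E)$ and $0<r_1<r$, the sign of $\chi(G,E_1)-\frac{r_1}{r}\chi(G,E)$ equals, up to positive factors, $-\langle e^{sH+\alpha},rv_1-r_1v\rangle$. By the wall equation computed just before Lemma \ref{lem:Gieseker-chamber}, this quantity vanishes exactly at
$$s=\frac{(r_1a-ra_1)-(\alpha\cdot(r_1D-rD_1))}{r_1k-rk_1}.$$
It is affine in $s$ with slope $\pm(r_1k-rk_1)$. By Lemma \ref{lem:Gieseker-chamber} and the definition of $A$, every such zero satisfies $s_{v_1}>s^*:=\frac dr-\frac{\mu(\cdots)}2-A$. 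Here I use $\mu|r_1k-rk_1|\ge1$ to absorb the $\alpha$-term. Hence for $s<s^*$ the sign is constant and equal to its sign as $s\to-\infty$.

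As $s\to-\infty$ the dominant term is $s\cdot\rk G\,(r_1k-rk_1)$, up to sign. So $\chi(G,E_1)/r_1\le\chi(G,E)/r$ is equivalent to $(r_1k-rk_1)\ge0$, i.e.\ $(c_1(E_1)\cdot H)/r_1\le(c_1(E)\cdot H)/r$, with strictness matching. This is the twisted Gieseker condition for $H+nf$ once $(c_1\cdot f)$-slopes are equal, since $(c_1\cdot(H+nf))$ then differs only through $(c_1\cdot H)$. Equality $r_1k=rk_1$ forces, off walls, $rv_1=r_1v$ in the relevant components, and then both conditions reduce to the same $\chi$-comparison.

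For rank-zero subsheaves, i.e.\ torsion fibre sheaves $E_1=T$, I use the second half of Lemma \ref{lem:Gieseker-chamber}. For $v_1=k_1f+D_1+a_1\varrho_X$ with $\langle(v-v_1)^2\rangle\ge-r^2\chi({\cal O}_X)$, which holds by Lemma \ref{lem:Bogomolov}, we get $a_1/k_1>s^*>s$. Then $\chi(G,T)=-\langle e^{sH+\alpha},v_1\rangle$ has the sign of $a_1-sk_1>0$, so every torsion fibre sheaf appearing has $\chi(G,T)>0$. This shows that an $(f,G,H)$-semistable $E$ is torsion free, by Remark \ref{rem:fGH}(1). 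Conversely, a Gieseker semistable sheaf is torsion free. The same bound applied to quotients shows that $\Hom(E,B)=0$ for $B\in{\cal F}_G^-$ is automatic for torsion-free $E$ satisfying Bogomolov. For twisted Gieseker semistable $E$ (which is $\mu_f$-semistable generically), this gives $E\in{\cal C}_G$. Conversely, the rank-$r$ quotient case $E\to E/E_1$ with fibre-sheaf quotient is controlled the same way.

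Finally, I prove the two implications. First, suppose $E$ is $(f,G,H)$-semistable and $E_1$ is a destabilizing subsheaf for $H_f$. By $\mu_f$-semistability it has equal $f$-slope. By Lemma \ref{lem:C_G} we may replace it by $F$ with $F,E/F\in{\cal C}_G$, so that $v(F)\in U(v)$ (Bogomolov via Lemma \ref{lem:Bogomolov}). The sign analysis then gives a contradiction. The reverse implication is symmetric.

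I expect the main obstacle to be the uniform bound. I must check that every $v_1$ arising from subsheaves really lies in $U(v)$, or in the $r_1=0$ family. I must also check that the constant $A$, defined with a maximum over the possibly infinite set $U(v)$, is finite. The latter uses the finiteness of $\{D_1\}$ from Remark \ref{rem:U(v)}, together with $r_1<r$ and compactness of $B^*$.
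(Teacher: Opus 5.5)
Your route is the paper's. You compare the two orderings through the affine dependence of $\chi(G,E_1)-\frac{r_1}{r}\chi(G,E)$ on $s$, and push every wall position above $s^*$ using Lemma~\ref{lem:Gieseker-chamber}, $\mu|r_1k-rk_1|\ge 1$ and $A$. You kill rank-zero subsheaves with the second half of that lemma, and in the converse you get $E\in{\cal C}_G$ by bounding the kernel of $E\to B$, $B\in{\cal F}_G^-$. There are three small slips. First, your opening lists the Gieseker conditions in the $(f,G,H)$ order; for $H+nf$ the $(c_1\cdot H)$-comparison comes before the $\chi(G,\cdot)$-comparison, and your later paragraphs do use that order. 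Second, in the rank-zero case the relevant sign is that of $a_1-sk_1-(\alpha\cdot D_1)$, which needs the same $A$-absorption. Third, when $r_1k=rk_1$ no ``off walls'' argument is needed (the proposition assumes no genericity), since both conditions are then the same $\chi$-inequality.

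The step that fails is the forward reduction ``by Lemma~\ref{lem:C_G} we may replace $E_1$ by $F$''. The sheaf $F$ keeps the generic fibre and has $\chi(G,F)\ge\chi(G,E_1)$. However, it is obtained by discarding a quotient $E_1/E_1'\in\langle{\cal F}_G^-\rangle$, so $(c_1(F)\cdot H)$ can be strictly smaller than $(c_1(E_1)\cdot H)$. Suppose $E_1$ is Gieseker-destabilizing because $(c_1(E_1)\cdot H)/r_1>(c_1(E)\cdot H)/r$. Then $F$ need not be destabilizing, and the sign analysis for $v(F)$ says nothing about $v(E_1)$. Indeed, for torsion-free $E$ and saturated $E_1$, the $\langle{\cal F}_G^+\rangle$-part in Lemma~\ref{lem:C_G} vanishes, so $F=E_1'\subset E_1$ and $H$-degree can only drop. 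Lemma~\ref{lem:C_G} preserves $(f,G,H)$-type destabilization, which ranks $\chi(G,\cdot)$ first, so it suits the converse direction but not this one.

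The fix is to use the torsion-freeness you already proved and replace $E_1$ by its saturation in $E$. This keeps $r_1$ and the $f$-slope, because $\mu_f(E_1^{\mathrm{sat}})\le\mu_f(E)$ by generic semistability. It does not decrease $(c_1\cdot H)$, since the added torsion is fibre-supported. It also makes $E_1$ and $E/E_1$ torsion free with semistable generic fibre, so both satisfy Bogomolov, as in the proof of Lemma~\ref{lem:Bogomolov}. Hence $v(E_1)\in U(v)$ and your sign argument applies. The paper applies Lemma~\ref{lem:Gieseker-chamber} to $v(E_1)$ without comment, so this is exactly the point you rightly flagged as needing justification.
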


\begin{proof}
Let $E$ be a coherent sheaf with $v(E)=v$.
We first assume that $E$ is $(f,{\cal O}_X(s H+\alpha),H)$-semistable.
Let $E_1$ be a subsheaf of $E$ such that
$(c_1(E_1) \cdot f)=\rk E_1 \frac{d}{r}$.
We set
$$
v(E_1)=r_1+d_1 H+k_1 f+D_1+a_1 \varrho_X, D_1 \in N_H.
$$
We first assume that $r_1>0$.
By the $(f,{\cal O}_X(s H+\alpha),H)$-semistability of $E$,
$$
a_1-(\alpha \cdot D_1)-k_1 s \leq r_1 \frac{a-(\alpha \cdot D)-ks}{r}.
$$
If $k_1 > r_1 \frac{k}{r}$, then
$$
s \geq \frac{a_1 r-a r_1}{k_1 r-k r_1}+\frac{(\alpha \cdot (r_1 D-r D_1))}{k_1 r-k r_1 } \geq 
\frac{a_1 r-a r_1}{k_1 r-k r_1}-A.
$$
By Lemma \ref{lem:Gieseker-chamber},
$$
s>\frac{d}{r}-\frac{\mu (-(D^2)+\langle v^2 \rangle+r^2 \chi({\cal O}_X))}{2}-A.
$$
Hence $k_1 \leq  r_1 \frac{k}{r}$.
Then we get
$$
a_1-(\alpha \cdot D_1) \leq r_1 \frac{a-(\alpha \cdot D)}{r}.
$$
We next assume that $r_1=0$.
Then we may assume that $\langle v(E_1)^2 \rangle=0,-2$ and
$E/E_1 \in {\cal C}_G$, and hence
$v(E_1) \in U(v)$.
If $k_1>0$, then 
$$
s \geq \frac{a_1}{k_1}-\frac{(\alpha \cdot D_1)}{k_1}>
\frac{d}{r}-\frac{\mu (-(D^2)+\langle v^2 \rangle+r^2 \chi({\cal O}_X))}{2}-A.
$$
Hence we get $k_1=0$.
In this case, $E_1$ is 0-dimensional, which implies
$D_1=0$ and $a_1>0$. 
Then $E$ is not $(f,{\cal O}_X(s H+\alpha),H)$-semistable.
Therefore $E$ is $(sH+\alpha)$-twisted semistable with respect to
$H_f$.

Conversely assume that 
$E$ is $(sH+\alpha)$-twisted semistable with respect to $H_f$.
We first prove that $E \in {\cal C}_G$.
If $\Hom(E,A) \ne 0$ ($A \in {\cal F}_G^-$), then
there is a subsheaf $E_1$ of $E$ such that $E/E_1 \in {\cal F}_G^-$.
Since $E_1$ is torsion free, $\langle v(E_1)^2 \rangle \geq -r^2 \chi({\cal O}_X)$.
We set
$$
v(E_1)=r+dH+k_1 f+D_1+a_1 \varrho_X.
$$
Then $k_1<k$ and
$$
a_1-(\alpha \cdot D_1)-k_1 s > a-(\alpha \cdot D)-ks.
$$
Hence
$$
s > \frac{a-a_1}{k-k_1}+\frac{(\alpha \cdot (D_1-D))}{k-k_1 } \geq 
\frac{a-a_1}{k-k_1}-A,
$$
which implies 
$$
s>\frac{d}{r}-\frac{\mu (-(D^2)+\langle v^2 \rangle+r^2 \chi({\cal O}_X))}{2}-A.
$$
Therefore $E \in {\cal C}_G$.

Let $E_1$ be a subsheaf of $E$ such that
$(c_1(E_1) \cdot f)=\rk E_1 \frac{d}{r}$ and $0<r_1<r$.
Then $k_1 \leq  r_1 \frac{k}{r}$.
If $k_1 =  r_1 \frac{k}{r}$, then
the stability of $E$ implies 
$$
a_1-(\alpha \cdot D_1) \leq r_1 \frac{a-(\alpha \cdot D)}{r}.
$$
Assume that $k_1 < r_1 \frac{k}{r}$.
If 
$$
a_1-(\alpha \cdot D_1)-k_1 s > r_1 \frac{a-(\alpha \cdot D)-ks}{r},
$$
then
$$
s > \frac{a_1 r-a r_1}{k_1 r-k r_1}+\frac{(\alpha \cdot (r_1 D-r D_1))}{k_1 r-k r_1 } \geq 
\frac{a_1 r-a r_1}{k_1 r-k r_1}-A.
$$
In the same way as the first paragraph, we get a contradiction.
Hence 
$$
a_1-(\alpha \cdot D_1)-k_1 s \leq  r_1 \frac{a-(\alpha \cdot D)-ks}{r}.
$$
Therefore $E$ is $(f,{\cal O}_X(s H+\alpha),H)$-semistable.
\end{proof}

\begin{rem}\label{rem:dual-Gieseker-chamber}
Assume that
$$
s>-\frac{d}{r}+\frac{ \mu (-(D^2)+\langle v^2 \rangle+r^2 \chi({\cal O}_X))}{2}
+A.
$$
By Definition \ref{defn:fGH2},
the $(f,{\cal O}_X(s H+\alpha),H)$-semistability of $E$
is the same as the $-(sH+\alpha)$-twisted semistability of $E^{\vee}$ with respect to 
$H_f$.
\end{rem}

\begin{NB}
A general wall:

$\langle e^{sH+\alpha},r_1 v-rv_1 \rangle=0$ if and only if
$(rk_1-r_1 k)s+((rD_1-r_1 D) \cdot \alpha)-(ra_1-r_1 a)=0$.

$\langle e^{sH+\alpha},r_1 v-rv_1 \rangle=0$ and 
$\langle e^{sH+\alpha},r_2 v-rv_2 \rangle=0$ define the same wall if and only if
$r_1 v-rv_1=\lambda (r_2 v-rv_2)$.
Hence $v_1=\frac{(r_1-\lambda r_2)}{r}+\lambda v_2$.

Assume that $v_1$ and $v_2$ define the same wall.
Then $\langle e^{sH+\alpha},r_1 v_2-r_2 v_1 \rangle >0$ iff
$\langle e^{sH+\alpha},r_1 (xv_2+yv_1)-(xr_2+yr_1) v_1 \rangle >0$
$(x,y>0)$.

Assume that $X$ is a K3 surface.
Then $({\Bbb Q}v+{\Bbb Q}v_1) \cap H^*(X,{\Bbb Z})$ contains infinitely many $(-2)$-vectors
or contains at most one $(-2)$-vector

\end{NB}

\section{Some properties of walls}\label{sect:wall-properties}

\subsection{The case where $(K_X \cdot H) \leq 0$.}

Let $W$ be a wall in ${\cal G}(v)$.
We shall study wall crossing. 
We take $(sH,\alpha) \in W$ and $(s_\pm H,\alpha_\pm)$ from adjacent chambers.
Let $G$ and $G_\pm$ be locally free sheaves with
\begin{equation}\label{eq:G_pm}
c_1(G)=\rk G (sH+\alpha),\;
c_1(G_\pm)=\rk G_\pm (s_\pm H+\alpha_\pm).
\end{equation}

Let ${\cal F}^\pm (v_1,v_2,\dots,v_m)$ (cf. \cite[Prop. 2.4]{Y:twist2})
be the stack of 
Harder-Narasimhan filtrations
$$
0 \subset F_1 \subset F_2 \subset \cdots \subset F_m=E
$$
of $E \in \overline{\cal M}_{(f,H)}^G(v)^{ss}$ 
with respect to $(f, G_\pm,H)$, where $v(F_i/F_{i-1})=v_i$.
\begin{rem}
If $\rk v_i=0$, then $i=1$ or $m$.
\end{rem}
By Lemma \ref{lem:EF2},
$$
\Ext^2(F_i/F_{i-1},F_j/F_{j-1})=0
$$
for $i>j$.
Since $(c_1(F_i \otimes E^{\vee}) \cdot f)=0$,
$$
\chi(F_i/F_{i-1},F_j/F_{j-1})=-\langle v(F_i/F_{i-1}),v(F_j/F_{j-1}(\tfrac{1}{2}K_X)) \rangle
=-\langle v_i,v_j \rangle.
$$
Hence by \cite[Prop. 2.5]{Y:twist2}, we get
\begin{equation}\label{eq:HNF-dim}
\dim {\cal F}^{\pm}(v_1,v_2,\dots,v_m)=\sum_{i<j} \langle v_i,v_j \rangle+
\sum_i \dim {\cal M}_{(f,H)}^{G_\pm}(v_i)^{ss}.
\end{equation}


\begin{defn}
A wall $W$ is totally semistable if 
${\cal M}_{(f,H)}^{G_+}(v)^{ss} \cap {\cal M}_{(f,H)}^{G_-}(v)^{ss}=\emptyset$,
where $G_\pm$ satisfy \eqref{eq:G_pm}.
\end{defn}

By using \eqref{eq:HNF-dim},
 we can prove the following classification of totally semistable walls.
\begin{prop}\label{prop:sswall}
Let $X$ be an elliptic K3 surface or an elliptic abelian surface. Then
a wall $W$ is totally semistable if $W$ is defined by $u \in U(v)$ satisfying
\begin{enumerate}
\item
$\langle u^2 \rangle=-2$ and $\langle v,u \rangle<0$ or
\item
$\langle u^2 \rangle=0$ and $\langle v,u \rangle=1$.
\end{enumerate}
\end{prop}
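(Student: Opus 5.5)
We argue by a dimension count based on \eqref{eq:HNF-dim}. Since $X$ is a K3 or abelian surface, $K_X=0$, so Lemma \ref{lem:EF2} applies only with strict inequality of the reduced Euler characteristics. For consecutive Harder--Narasimhan factors this holds by Proposition \ref{prop:HNF}, so the vanishing of $\Ext^2$ and \eqref{eq:HNF-dim} are available. Moreover, for every $G$-twisted stable object $F$ we have $\dim{\cal M}(v(F))^{ss}=\langle v(F)^2\rangle+\epsilon$, with $\epsilon=1$ in the K3 case and $\epsilon=2$ in the abelian case (smoothness from $\Ext^2(F,F)=\Hom(F,F)^\vee$ being one-dimensional). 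For a fiber class we use Lemma \ref{lem:isotropic-dim}.

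For (1), suppose $\langle u^2\rangle=-2$ and $\langle v,u\rangle<0$, and let $(s,\alpha)\in W$, so that $\langle e^{sH+\alpha},u\rangle=0$. The class $u$ is realized by a rigid object $A$ (Lemma \ref{lem:spherical2}); since $\chi(G,A)=0$ on the wall, $A$ lies in ${\cal F}_{G_+}^+$ on one side and in ${\cal F}_{G_-}^-$ on the other. Let $E$ be a point of the intersection ${\cal M}^{G_+}\cap{\cal M}^{G_-}$. Then $\Hom(A,E)=0$ from one side and $\Hom(E,A)=\Ext^2(A,E)^\vee=0$ from the other. Hence $\chi(A,E)=-\langle u,v\rangle\le 0$, which contradicts $\langle v,u\rangle<0$. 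Therefore the intersection is empty.

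For (2), suppose $\langle u^2\rangle=0$ and $\langle v,u\rangle=1$. We compare the stack of semistable objects for $G_-$ with the loci on which the $G_+$ Harder--Narasimhan filtration has factors of type $u$. Take $E$ that is $G_-$-semistable, and let $n=\dim\Hom(A,E)$ for $A$ running over the one-parameter family of stable fiber sheaves of class $u$. Since $\Ext^2(A,E)=0$, we get $n\ge\chi(A,E)=-\langle u,v\rangle$. The sign must be arranged using the convention $\langle v,u\rangle>0$ for $\langle u^2\rangle=0$ from the remark after ${\cal U}_B(v)$. For the side on which $\Hom(E,A)$ is the relevant group, $\dim\Hom(E,A)\ge \langle v,u\rangle=1$, so every $G_-$-semistable $E$ admits a nonzero map to some $A$ with $\chi(G_+,A)<0$. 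This destabilizes $E$ for $G_+$, which forces the intersection to be empty.

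The main obstacle is the case where $\Hom(E,A)\neq0$ for only some $A$ in the family. Here we must use the family dimension: the family of $A$ has dimension $1$ by Lemma \ref{lem:isotropic-dim}, and the expected dimension of the locus where $\Hom(E,A)\neq0$ is $\langle v,u\rangle-1+1$. Equivalently, we use \eqref{eq:HNF-dim} to show
\[
\dim{\cal F}^-(u,v-u)=\langle v-u,u\rangle+\dim{\cal M}(u)+\dim{\cal M}(v-u)=\dim{\cal M}(v),
\]
which shows that this stratum fills the whole stack. This step needs care with the multiple-fiber and $\epsilon$ bookkeeping.
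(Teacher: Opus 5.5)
Your argument for (1) is correct when $u$ has rank $0$, but the proposal has two genuine gaps: part (2) does not work as written, and neither part covers $u$ of positive rank.

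\textbf{Part (2).} The inequality $\dim\Hom(E,A)\ge\langle v,u\rangle=1$ has the wrong sign. Since $K_X=0$, $\chi(E,A)=-\langle v,u\rangle=-1$, which only forces $\Ext^1(E,A)\neq 0$. In fact there is no pointwise contradiction. If $E$ is semistable on both sides, then $\Hom(A,E)=0$ and $\Ext^2(A,E)=\Hom(E,A)^\vee=0$, so $\dim\Ext^1(A,E)=1$ for every $A$, which is consistent. The identity $\dim{\cal F}^-(u,v-u)=\dim{\cal M}(v)$ does not rescue this. A locally closed substack of full dimension need not be dense, and the semistable stack on the wall has several top-dimensional components here. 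In the Hilbert--Chow picture below, $L\otimes I_Z$ has HN type $(nu,v-nu)$ with respect to the other side, where $n$ is the length of $Z$, not type $(u,v-u)$. Counts via \eqref{eq:HNF-dim} serve to show that a wall is \emph{not} totally semistable.

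The missing idea is global. Both vanishings hold for every $A={\cal P}_{|X\times\{x'\}}$ with $x'\in X'=M^\beta_{H_f}(u)$, and $X'$ is fine because $\langle v,u\rangle=1$. By base change, $\Phi_{X\to X'}^{{\cal P}^\vee[1]}(E)$ is then locally free of rank $\langle v,u\rangle=1$. Proposition \ref{prop:isometry} then gives $\langle v^2\rangle=-\chi({\cal O}_X)$. This contradicts the tacit hypothesis $\langle v^2\rangle>-\chi({\cal O}_X)$, which is genuinely needed: otherwise $\Phi(E)$ can be a line bundle, which is semistable on both sides. Equivalently, Theorem \ref{thm:FMpreserve} and Remark \ref{rem:Gieseker} identify the two adjacent chambers with $\{L\otimes I_Z\}$ and their derived duals. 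This is the Hilbert--Chow description of \cite{BM:2} behind the paper's citation.

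\textbf{Rank of $u$.} Both parts only treat $u$ of rank $0$ (Lemmas \ref{lem:spherical2}, \ref{lem:isotropic-dim}). However, $U(v)$ allows $0<r_1<r$, and the paper's own totally semistable walls are of this type. Examples are $W_{-3}$ and $W_{-1}$, defined by $v({\cal O}_X)$ and $v({\cal O}_X(-f))$, and the abelian walls $v=lv_1+v_2$. For a positive-rank spherical $u$, no object of class $u$ need be semistable on both sides (compare $T_i^+\neq T_i^-$). You need instead a spherical object $T$ that is $(f,G,H)$-stable on the wall with $\langle v,v(T)\rangle<0$. Such a $T$ exists because, as in \cite{BM:2}, effective spherical classes of ${\cal H}$ lie in the cone spanned by the classes of the wall-stable objects $T_0,T_1$. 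Then $T$ is stable on both sides. Lemma \ref{lem:EF} (or your ${\cal F}^\pm$ argument if $\rk T=0$) gives $\Hom(T,E)=\Hom(E,T)=0$, and your Euler characteristic contradiction goes through. For isotropic $u$ of positive rank, run the global argument above with the Fourier--Mukai transform attached to the fine moduli space of wall-stable objects of class $u$.

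A minor point: the stack dimension is $\langle w^2\rangle+1$ on abelian surfaces as well, since $\dim\Hom(F,F)=\dim\Ext^2(F,F)=1$ there too.
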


\begin{rem}
For an Enriques surface,
(i) is replaced by $u \in U(v)$ with $\langle v,u \rangle<0$ satisfying  
$\langle u^2 \rangle=-1$ or $\langle u^2 \rangle=-2$ and $c_1(u) \equiv D \mod 2$, $D$ is nodal.
\end{rem}

\begin{proof}
For the proof, see \cite{BM:2} and \cite{movable}.
\end{proof}
\begin{lem}
Assume that $(K_X \cdot H)<0$.
If $\rk v>0$, then
 ${\cal M}_{(f,H)}^G(v)^{ss}$ is a smooth stack of  
$\dim {\cal M}_{(f,H)}^G(v)^{ss}=\langle v^2 \rangle$.
\end{lem}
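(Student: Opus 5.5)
The plan is to show that the obstruction space vanishes at every point of the stack and then compute the dimension from the Euler characteristic. By Definition \ref{defn:fGH2} we may replace $E$ by $E^{\vee}(K_X)$ and $G$ by $G^{\vee}$. So it suffices to treat the case $\mu_f(E)>\mu_f(G)$, where $E$ is an $(f,G,H)$-semistable coherent sheaf. The boundary case $\mu_f(E)=\mu_f(G)$ is excluded by ${\cal G}(v)$ ($rs-d\ne 0$). The deformation theory of a sheaf $E$ on $X$ has tangent space $\Ext^1(E,E)$, automorphisms $\Hom(E,E)$, and obstructions in $\Ext^2(E,E)$ (or its traceless part). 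Once $\Ext^2(E,E)=0$ for all $E\in{\cal M}_{(f,H)}^G(v)^{ss}$, the stack is smooth. Its dimension is then $\dim\Ext^1(E,E)-\dim\Hom(E,E)=-\chi(E,E)$.

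The key step is $\Ext^2(E,E)=0$. By Serre duality, $\Ext^2(E,E)=\Hom(E,E(K_X))^{\vee}$. I would apply Lemma \ref{lem:EF2} with $F=E$: both sheaves lie in ${\cal C}_G$ by Remark \ref{rem:fGH}(3), have positive rank and equal $\mu_f$ with $\mu_f(E)>\mu_f(G)$, and the slope-ratio inequality is an equality. Since $(K_X\cdot H)<0$ by hypothesis, the lemma gives $\Ext^2(E,E)=0$.

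The underlying mechanism deserves to be checked directly, since this is where care is needed. $E(K_X)$ is again $(f,G,H)$-semistable, because $(K_X\cdot f)=0$ and twisting by $K_X$ preserves the conditions on subsheaves, with $\chi(G,\cdot)$ shifting uniformly. Moreover,
\[
\frac{\chi(G,E(K_X))}{\rk E}<\frac{\chi(G,E)}{\rk E},
\]
by the computation in the proof of Lemma \ref{lem:EF2}. This uses $K_X\equiv \lambda f$ numerically with $\lambda(\ldots)$, or more precisely that $(c_1(E)/\rk E-c_1(G)/\rk G)\cdot K_X<0$, which follows from $(K_X\cdot H)<0$, $(K_X\cdot f)=0$, and $K_X$ being a rational multiple of $f$ up to torsion on a minimal elliptic surface. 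Lemma \ref{lem:EF} then kills $\Hom(E,E(K_X))$.

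For the dual case, $E$ is a two-term complex with $E^{\vee}(K_X)$ a semistable sheaf. Then $\Ext^2(E,E)\cong\Ext^2(E^{\vee}(K_X),E^{\vee}(K_X))$, since $D_X$ is an anti-equivalence and twisting preserves Ext groups. This again vanishes by the first case, applied with $G^{\vee}$.

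Finally, compute the dimension. By Lemma \ref{lem:RR}, $-\chi(E,E)=\langle v(E),v(E(-\tfrac12K_X))\rangle$. Since $K_X$ is numerically a multiple of $f$ and $(c_1(E\otimes E^{\vee})\cdot f)=0$, twisting does not change the pairing, exactly as in the computation preceding \eqref{eq:HNF-dim}. Hence $\dim{\cal M}_{(f,H)}^G(v)^{ss}=\langle v^2\rangle$. The main obstacle is justifying that the stack of $(f,G,H)$-semistable objects is an open substack of the stack of coherent sheaves (respectively complexes), so that standard deformation theory applies. I expect this to follow from the boundedness of Lemma \ref{lem:wall1} and Lemma \ref{lem:bddv}, which make semistability an open condition.
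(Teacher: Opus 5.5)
Your proposal is correct and follows the paper's proof. The paper simply invokes Lemma \ref{lem:EF2} with $F=E$ and $(K_X\cdot H)<0$ to get $\Ext^2(E,E)=0$ for every $E$ in the stack, and concludes smoothness of dimension $-\chi(E,E)=\langle v^2\rangle$; your treatment of the dual case via $D_X$ and the Riemann--Roch check (which uses $(K_X^2)=0$) only make explicit what the paper leaves implicit.
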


\begin{proof}
By Lemma \ref{lem:EF2},
$\Ext^2(E,E)=0$ for all $E \in {\cal M}_{(f,H)}^G(v)^{ss}$.
Hence our claim holds.
\end{proof}

Assume that $(K_X \cdot H)<0$, that is, $X$ is a rational elliptic surface or an elliptic ruled surface.
If $\rk v_1,\rk v_m>0$, then
$$
\dim {\cal M}_{(f,H)}^{G_\pm}(v) -\dim {\cal F}^\pm (v_1,v_2,...,v_m)=
\sum_{i>j} \langle v_i,v_j \rangle.
$$
Moreover if $X$ is an elliptic ruled surface, then
$\langle v_i^2 \rangle \geq 0$.
Therefore totally semistable walls are defined by an isotropic vector $u \in U(v)$ with 
$\langle u,v \rangle=1$.

\begin{prop}\label{prop:indep}
Assume that $(K_X \cdot H) \leq 0$.
If $\rk v>0$, then the virtual Hodge number $e({\cal M}_{(f,H)}^G(v)^{ss})$
does not depend on the choice of chambers.
\end{prop}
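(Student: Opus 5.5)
The plan is the standard wall-crossing argument via Harder--Narasimhan stratifications, as in \cite[Prop. 2.4, 2.5]{Y:twist2}. Since chambers are connected components of ${\cal G}(v)$ minus a locally finite union of walls, it suffices to compare two adjacent chambers across a single wall $W$. Take $(s,\alpha)\in W$ general, and $(s_\pm,\alpha_\pm)$ in the two adjacent chambers with $G,G_\pm$ as in \eqref{eq:G_pm}. By Lemma \ref{lem:general} and openness of semistability, both ${\cal M}_{(f,H)}^{G_\pm}(v)^{ss}$ are open substacks of $\overline{\cal M}_{(f,H)}^G(v)^{ss}$. Every $E\in\overline{\cal M}_{(f,H)}^G(v)^{ss}$ then has a unique Harder--Narasimhan filtration with respect to $(f,G_\pm,H)$ (Proposition \ref{prop:HNF} and Definition \ref{defn:HNF}). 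Its factors have Mukai vectors $v_i$ with $\langle e^{sH+\alpha}, r v_i - r_i v\rangle=0$. This gives two stratifications
$$
\overline{\cal M}_{(f,H)}^G(v)^{ss}=\coprod {\cal F}^{\pm}(v_1,\dots,v_m),
$$
where the $m=1$ stratum is ${\cal M}_{(f,H)}^{G_\pm}(v)^{ss}$. By Lemma \ref{lem:wall1} and Remark \ref{rem:U(v)}, only finitely many types $(v_1,\dots,v_m)$ occur.

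The key steps, in order, are as follows.

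\emph{(1) Vanishing of $\Ext^2$.} Show that $\Ext^2(F_i/F_{i-1},F_j/F_{j-1})=0$ for $i>j$ using Lemma \ref{lem:EF2}. The hypothesis $(K_X\cdot H)\le 0$ is used here, together with the strict inequality of reduced $\chi(G_\pm,\cdot)$ between consecutive factors. For torsion factors at the ends ($i=1$ or $m$), verify the vanishing directly via Serre duality: $A(K_X)$ has the same $\chi(G,\cdot)$ sign as $A$ for a fiber sheaf $A$.

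\emph{(2) Motivic identity for the strata.} The stack ${\cal F}^\pm(v_1,\dots,v_m)$ is a fibration over $\prod_i{\cal M}^{G_\pm}_{(f,H)}(v_i)^{ss}$ whose fibers are iterated extension stacks. Since $\Hom$ from later to earlier factors vanishes (Lemma \ref{lem:EF}) and $\Ext^2$ vanishes by step (1), each fiber is a quotient of the affine space $\Ext^1$ by the vector group $\Hom$, of relative dimension $\sum_{i<j}\langle v_i,v_j\rangle$. Hence
$$
e({\cal F}^\pm(v_1,\dots,v_m))=\Bigl(\prod_i e({\cal M}_{(f,H)}^{G_\pm}(v_i)^{ss})\Bigr)\,(uv)^{\sum_{i<j}\langle v_i,v_j\rangle}.
$$

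\emph{(3) Inversion and induction.} The wall-side condition is symmetric under reversal: a $G_+$-HN type $(v_1,\dots,v_m)$ corresponds to the $G_-$-HN type $(v_m,\dots,v_1)$. Summing over strata, write $e(\overline{\cal M}_{(f,H)}^G(v)^{ss})$ in two ways. Then invert these identities, as in Reineke's or Joyce's inversion formula, by induction on $\rk v$ and on the finitely many admissible Mukai vectors. The inductive hypothesis gives $e({\cal M}^{G_+}(v_i)^{ss})=e({\cal M}^{G_-}(v_i)^{ss})$ for the smaller $v_i$. For $\rk v_i=0$ the stacks are moduli of fiber sheaves, which are independent of the side by Definition \ref{defn:fGHstack}. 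Concretely, order the types so that each identity is triangular, and deduce $e({\cal M}^{G_+}(v)^{ss})=e({\cal M}^{G_-}(v)^{ss})$.

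I expect the main obstacle to be step (3). The exponents $\sum_{i<j}\langle v_i,v_j\rangle$ for $+$ and $\sum_{i>j}\langle v_i,v_j\rangle$ for $-$ differ, so naive comparison fails. The resolution is the symmetry of the Mukai pairing: after reversing the order, $\sum_{i<j}\langle v_{m+1-i},v_{m+1-j}\rangle=\sum_{i<j}\langle v_i,v_j\rangle$. The two inversion formulas therefore have identical coefficients, provided the integrand is symmetric. I would first check this in the rank-one-factor case $m=2$ and then proceed by induction on $m$.
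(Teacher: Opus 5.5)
This is essentially the paper's argument. You stratify $\overline{\cal M}_{(f,H)}^G(v)^{ss}$ by $(f,G_\pm,H)$-Harder--Narasimhan type, use $\Ext^2=0$ from Lemma \ref{lem:EF2} to get $e({\cal F}^\pm(v_1,\dots,v_m))=(xy)^{\sum_{i<j}\langle v_i,v_j\rangle}\prod_i e({\cal M}_{(f,H)}^{G_\pm}(v_i)^{ss})$, and conclude by induction, treating fiber-sheaf factors via their $G$-independence. Two slips need fixing:

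\begin{itemize}
\item Lemma \ref{lem:EF} kills $\Hom$ from earlier to later factors, which gives uniqueness of the filtration. The $\Hom$ from later to earlier factors is the vector group you quotient $\Ext^1$ by, and it need not vanish.
\item The Mukai pairing is symmetric, so the two exponents are literally equal. Your ``main obstacle'' does not exist and no Reineke/Joyce inversion is needed: the $m\ge 2$ terms agree by induction, and you simply subtract.
\end{itemize}
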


\begin{proof}
We note that $e({\cal M}_{(f,H)}^G(v)^{ss})$ is independent of the choice of $G$ if $v$ is the Mukai vector of a fiber sheaf.
We have
\begin{equation}
e({\cal F}^\pm(v_1,v_2,...,v_m))=
(xy)^{\sum_{i<j} \langle v_i,v_j \rangle} \prod_{i=1}^m e({\cal M}_{(f,H)}^{G_\pm}(v_i)^{ss}).
\end{equation}
By the induction on the rank, we get our claim.
\end{proof}

\begin{cor}\label{cor:ruled}
Let $X$ be an elliptic ruled surface.
Then ${\cal M}_{(f,H)}^G(v)^{ss}$ is smooth and
$e({\cal M}_{(f,H)}^G(v)^{ss})$ does not depend on the choice of chambers.

\end{cor}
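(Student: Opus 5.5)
The plan is to treat smoothness and chamber independence separately, following the same route as for the lemma on $(K_X\cdot H)<0$ and Proposition \ref{prop:indep}.

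\emph{Smoothness.} An elliptic ruled surface $\pi:X\to C$ has $K_X\equiv -\lambda f$ numerically for some rational $\lambda\ge 0$. Here $X$ is a ruled surface over a curve $B$ of genus $g$, and $C\cong{\Bbb P}^1$ when $g\ge1$. We have $\chi({\cal O}_X)=0$, $K_X^2=0$, and $K_X$ is numerically a rational multiple of $f$ up to sign. The first step is to check that $(K_X\cdot H)<0$. Since $(H\cdot f)=1$ and $-K_X$ is nef and numerically a positive multiple of $f$, we get $(K_X\cdot H)=-\lambda<0$ whenever $-K_X\not\equiv 0$, which holds because $X$ is ruled.

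With this in hand, the preceding lemma applies directly when $\rk v>0$: Lemma \ref{lem:EF2}, with $E=F$ and strict inequality coming from $(K_X\cdot H)<0$, gives $\Ext^2(E,E)=0$. So ${\cal M}_{(f,H)}^G(v)^{ss}$ is smooth of dimension $\langle v^2\rangle$.

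For $\rk v<0$, or the dual case $\mu_f(E)<\mu_f(G)$, Definition \ref{defn:fGH2} reduces the problem to $E^\vee(K_X)$, and $\Ext^2$ is preserved by this duality. For fiber sheaves, ${\cal M}^G_{H_f}(v)^{ss}$ is smooth because $\Ext^2(A,A)=\Hom(A,A(K_X))^\vee$. The key point is that $K_X|_f$ is a numerically negative-degree twist, or the fiber is a multiple fiber whose $K_X$-twist stays twisted-stable with the same $\chi(G,\cdot)$. A nonzero map $A\to A(K_X)$ between stable sheaves with $\chi(G,A(K_X))\le\chi(G,A)$ then has to be analysed. I expect this fiber-sheaf case, where the inequality can be an equality, to be the main obstacle. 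There I would use Lemma \ref{lem:isotropic-dim} and the structure of multiple fibers from subsection \ref{subsect:multiple} to see that the stack is smooth of the expected dimension.

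\emph{Independence.} This follows from Proposition \ref{prop:indep}, since $(K_X\cdot H)\le 0$. For any $v$ we also recall $\langle v_i^2\rangle\ge -r_i^2\chi({\cal O}_X)=0$ for the Harder--Narasimhan factors, by Lemma \ref{lem:Bogomolov}. This is consistent with the stratification formula
\begin{equation*}
e({\cal F}^\pm(v_1,\dots,v_m))=(xy)^{\sum_{i<j}\langle v_i,v_j\rangle}\prod_i e({\cal M}^{G_\pm}_{(f,H)}(v_i)^{ss}).
\end{equation*}
Summing over strata on both sides of a wall and inducting on rank then gives equality of $e$ across each wall. Since the walls are locally finite, this gives independence over all chambers.
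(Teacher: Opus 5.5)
For $\rk v>0$ your argument is exactly the paper's. An elliptic ruled surface has $(K_X\cdot H)<0$, so Lemma \ref{lem:EF2} with $E=F$ gives $\Ext^2(E,E)=0$; in the case $\mu_f(E)<\mu_f(G)$ you apply it to $E^{\vee}(K_X)$. Chamber independence is then Proposition \ref{prop:indep}. The corollary is nothing more than these two facts. Like them, it is a statement about $\rk v>0$, even though Definition \ref{defn:fGHstack} also attaches a stack to fiber classes.

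Your paragraph on fiber sheaves, however, starts from a false premise and aims at a false conclusion, so it must be dropped rather than ``analysed''.

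\emph{The premise is false.} $K_X|_f$ does not have negative degree: $(K_X\cdot f)=0$. On a smooth non-multiple fiber, adjunction gives $K_X|_f\cong\omega_f\cong{\cal O}_f$. A stable $A$ on such a fiber is necessarily an ${\cal O}_f$-module, so $A(K_X)\cong A$ and $\Ext^2(A,A)\cong\Hom(A,A)^{\vee}\neq 0$.

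\emph{The conclusion is false.} For primitive $v_0=r'f+d'\varrho_X$, Lemma \ref{lem:isotropic-dim} says the stack has dimension $1$, not $\langle v_0^2\rangle=0$. For $2v_0$ the stack is not smooth. Take $F=A\oplus A$. Since $\chi(A,A)=0$, we get $\dim\Ext^1(A,A)=2$. A smooth stack would then have dimension $\dim\Ext^1(F,F)-\dim\Hom(F,F)=8-4=4$ at $F$, whereas Lemma \ref{lem:isotropic-dim} gives $\dim{\cal M}_{H_f}^G(2v_0)^{ss}=2$. So the lemma you intended to use actually refutes that step.

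Nothing is lost by dropping it. In the proof of Proposition \ref{prop:indep}, the fiber-class stacks enter only through their virtual Hodge polynomials, which do not depend on $G$. Their smoothness is never needed.
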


For an elliptic ruled surface, we studied the non-emptiness of ${\cal M}_{H_f}(v)$
in \cite[Cor. 0.2]{Y:EllipticRuled2}. By Corollary \ref{cor:ruled}, we get the following result.
\begin{prop}
Let $\pi:X \to {\Bbb P}^1$ be an elliptic ruled surface with an elliptic fibration
and $g$ a fiber of the ${\Bbb P}^1$-fibration $X \to C$, where $C$ is an elliptic curve.
For a Mukai vector $v=r+\xi+a \varrho_X$,
${\cal M}_{(f,H)}^G(v) \ne \emptyset$ if and only if $\langle v^2 \rangle > 0$
or $r \mid (\xi \cdot g)$ and $\langle v^2 \rangle=0$.
\end{prop}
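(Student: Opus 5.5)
The plan is to reduce non-emptiness for an arbitrary chamber to non-emptiness in the Gieseker chamber, using the chamber independence of Corollary \ref{cor:ruled}, and then to quote \cite[Cor. 0.2]{Y:EllipticRuled2}.

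First I need non-emptiness to be detected by the virtual Hodge polynomial. By Corollary \ref{cor:ruled}, ${\cal M}_{(f,H)}^G(v)^{ss}$ is a smooth stack, of dimension $\langle v^2 \rangle$ by the preceding lemma, since $(K_X \cdot H)<0$. It is a quotient stack $[Q/GL(N)]$ of a smooth quasi-projective variety $Q$. Hence $e({\cal M}_{(f,H)}^G(v)^{ss})=e(Q)/e(GL(N))$. If the stack is non-empty, $e(Q)$ has top-degree coefficient equal to the number of top-dimensional components of $Q$, which is positive. So the stack is non-empty if and only if $e({\cal M}_{(f,H)}^G(v)^{ss}) \neq 0$. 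By Corollary \ref{cor:ruled} this quantity does not depend on the chamber. For $G$ on a wall, the semistable locus contains the semistable loci of the adjacent chambers; it is non-empty if an adjacent chamber is, and conversely any object on the wall deforms, or equivalently its Harder--Narasimhan factors for $G_\pm$ are non-empty, so non-emptiness on walls also reduces to the chamber case.

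Second, I move $G$ into the Gieseker chamber. Suppose $\mu_f(E)>\mu_f(G)$, i.e.\ $s<d/r$. By Proposition \ref{prop:Gieseker-chamber}, for $\alpha$ in a compact set and $s \ll 0$, $(f,{\cal O}_X(sH+\alpha),H)$-semistability coincides with $(sH+\alpha)$-twisted semistability with respect to $H_f$. Such $(s,\alpha)$ lie in the same connected region as $G$ (the side $rs-d<0$ of ${\cal G}(v)$). So non-emptiness is equivalent to ${\cal M}_{H_f}^{sH+\alpha}(v)^{ss}\neq\emptyset$. Since $\alpha$ may be chosen generically and $s$ very negative, I will pick $\alpha$ so that twisted semistability for $H_f$ has no strictly semistable jumping relative to ordinary Gieseker semistability. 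Alternatively I will use again that $e$ of twisted moduli is independent of the twist for $H_f$, by the same wall-crossing argument as Proposition \ref{prop:indep}. This reduces the claim to ${\cal M}_{H_f}(v)\neq\emptyset$, which by \cite[Cor. 0.2]{Y:EllipticRuled2} is equivalent to $\langle v^2\rangle>0$, or to $r\mid(\xi\cdot g)$ together with $\langle v^2\rangle=0$.

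Third, the case $\mu_f(E)<\mu_f(G)$ is handled by Definition \ref{defn:fGH2} and Remark \ref{rem:dual-Gieseker-chamber}. There the stack is identified with that of $(f,G^\vee,H)$-semistable sheaves with Mukai vector $v(E^\vee(K_X))$. Write this vector as $r+\xi'+a'\varrho_X$ with $\xi'=-\xi+rK_X$. Then $\langle v^2\rangle$ is unchanged, and $(\xi'\cdot g)=-(\xi\cdot g)+r(K_X\cdot g)$, so the divisibility condition $r\mid(\xi\cdot g)$ is preserved. Applying the previous step to the dual vector gives the claim.

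The main obstacle is the first step together with the wall case: justifying rigorously that $e\neq 0$ exactly when the stack is non-empty, and that twisted and untwisted Gieseker moduli with respect to $H_f$ have the same non-emptiness. For the latter I would first try to choose $\alpha$ so that $(sH+\alpha)$ is general with respect to $v$ for $H_f$. If that is not possible, I would run the Harder--Narasimhan stratification argument of Proposition \ref{prop:indep} in the $\beta$-twisted setting.
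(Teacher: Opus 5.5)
Your proposal follows the paper's own route. Non-emptiness in any chamber is transported from the Gieseker chamber of Proposition \ref{prop:Gieseker-chamber} by the chamber-independence of $e({\cal M}_{(f,H)}^G(v)^{ss})$ (Corollary \ref{cor:ruled}), and then \cite[Cor. 0.2]{Y:EllipticRuled2} is quoted. Your ``$e\neq 0$ iff non-empty'' step and your reduction of the side $\mu_f(E)<\mu_f(G)$ via $E\mapsto E^{\vee}(K_X)$, $\xi'=-\xi+rK_X$, correctly spell out what the paper leaves implicit.

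Two minor remarks. First, on an elliptic ruled surface $\rho(X)=2$, so $N_H=0$ (there is no $\alpha$ to choose) and $H_f$ is general for $v$; hence the $sH$-twist is irrelevant and your worry about twisted versus untwisted Gieseker moduli disappears. Second, your aside on walls is unnecessary, since the paper takes $G$ in a chamber, and it is not accurate as stated: the $H$-tie-breaker makes the semistable locus on a wall contain that of the adjacent chamber with smaller $s$, but not in general that of the other one, and Harder--Narasimhan factors only give objects of the classes $v_i$, not of $v$.
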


\subsection{A rational elliptic surface with a section.}
Let $X$ be a rational elliptic surface.
Assume that there is a section $\sigma$.
For a Mukai vector $v$, we have an expression
\begin{equation}
v=r+d\sigma+kf+D+a \varrho,\;
 d,k \in {\Bbb Z}, D \in ({\Bbb Z}\sigma+{\Bbb Z}f)^\perp.
\end{equation}
We take $r_0 \in {\Bbb Z}_{>0}$ and $d_0 \in {\Bbb Z}$ such that
$\gcd(r_0,d_0)=1$ and $r_0 d-rd_0=0$.
We set
\begin{equation}
{\cal E}(r,d)=\{ u \mid u=r_0+d_0 \sigma+kf+D+a \varrho_X, \langle v^2 \rangle=-1 \}. 
\end{equation}

\begin{thm}[{\cite[Thm. 7.3]{Y2010}}]\label{thm:K3/2}
Let $v$ be a Mukai vector and $G$ is general with respect to $v$.
\begin{enumerate}
\item[(1)]
Assume that $\langle v^2 \rangle \geq 0$.
Then
${\cal M}_{(f,H)}^G(v) \ne \emptyset$ if and only if
$\langle v,w \rangle \geq 0$ for all $w \in {\cal E}(r,d)$. 
\item[(2)]
Assume that $\langle v^2 \rangle=-1$.
If ${\cal M}_{(f,H)}^G(v) \ne \emptyset$, then
$\gcd(r,d)=1$.
\end{enumerate}
\end{thm}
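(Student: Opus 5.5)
The plan is to reduce the statement to Gieseker semistability, where the rational elliptic surface case is already known, and then use chamber-independence. The statement is cited from \cite[Thm. 7.3]{Y2010} for Gieseker semistable sheaves with respect to $H_f$. So the real content is to transport it to an arbitrary general $G$.

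First I will move to the Gieseker chamber. Since $X$ is a rational elliptic surface, $(K_X\cdot H)=-(f\cdot H)<0$. Hence Proposition \ref{prop:indep} applies: $e({\cal M}_{(f,H)}^G(v)^{ss})$ does not depend on the chamber containing $(s,\alpha)$. For $s$ sufficiently negative, Proposition \ref{prop:Gieseker-chamber} identifies $(f,G,H)$-semistability with $(sH+\alpha)$-twisted semistability with respect to $H_f$. Under the hypothesis $\gcd$/general $H$, this coincides with the moduli in \cite[Thm. 7.3]{Y2010}. Non-emptiness is detected by the virtual Hodge polynomial, since a nonempty stack has nonzero $e$. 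Therefore ${\cal M}_{(f,H)}^G(v)^{ss}\neq\emptyset$ if and only if the Gieseker moduli is nonempty, and both (1) and (2) follow from the cited theorem.

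I have to take care with $\mu_f(v)<\mu_f(G)$, i.e.\ $s>d/r$. In that case Definition \ref{defn:fGH2} and Remark \ref{rem:dual-Gieseker-chamber} reduce the question to $E^\vee(K_X)$ with $G^\vee$. I then check that the conditions ($\langle v^2\rangle$, $\langle v,w\rangle\ge0$ for $w\in{\cal E}(r,d)$, $\gcd(r,d)=1$) are invariant under $v\mapsto v^\vee e^{K_X}$. This holds because dualizing and twisting by the line bundle $K_X=-f$ is an isometry that maps ${\cal E}(r,d)$ to ${\cal E}(r,-d)$ up to the $-f$ twist.

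The main obstacle is the non-emptiness step via $e$. Virtual Hodge polynomials of stacks could in principle cancel, but here the stacks are smooth of dimension $\langle v^2\rangle$ by the smoothness lemma when $(K_X\cdot H)<0$. The top-degree coefficient of $e$ therefore counts irreducible components of maximal dimension and is positive whenever the stack is nonempty, so no cancellation occurs.
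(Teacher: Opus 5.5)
Your proof of (1) is the paper's proof. You identify a chamber with twisted Gieseker semistability via Proposition~\ref{prop:Gieseker-chamber}, apply \cite[Thm.~7.3]{Y2010} there, and transport the result to all chambers with Proposition~\ref{prop:indep}. Your additions are correct and only make explicit what the paper leaves implicit:
\begin{itemize}
\item $e\neq 0$ for a nonempty stack, since the leading term counts top-dimensional components, so (non)emptiness is chamber-independent;
\item the region $s>d/r$ is handled by $E\mapsto E^{\vee}(K_X)$, which preserves $\langle v^2\rangle$ and $\gcd(r,d)$ and sends ${\cal E}(r,d)$ to ${\cal E}(r,-d)$.
\end{itemize}

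For (2) your argument is complete only if the Gieseker-chamber case of (2) is already part of \cite[Thm.~7.3]{Y2010}. The paper cites that reference only for (1) and proves (2) separately, which suggests it cannot simply be read off from the citation. The paper's argument runs as follows.
\begin{itemize}
\item Put $g=\gcd(r,d)$ and $v_0=\frac{r}{g}f+\frac{d}{g}\varrho_X$, so that $\langle v,v_0\rangle=0$.
\item By Theorem~\ref{thm:FMpreserve0}, the relative Fourier--Mukai transform sends an $(f,G,H)$-semistable $E$ to a twisted semistable $1$-dimensional sheaf $F$ with $v(F)=(0,D,a)$ and $(D^2)=\langle v^2\rangle=-1$.
\item By Lemma~\ref{lem:coh-FM}, the transform acts unimodularly on (rank, fibre degree), so $(D\cdot f')=\pm g$.
\item Lemma~\ref{lem:(-1)} shows $F\cong{\cal O}_C(n)$ for a $(-1)$-curve $C$, hence $g=-(D\cdot K_{X'})=1$.
\end{itemize}
This works directly for every general $G$, uses neither Proposition~\ref{prop:indep} nor any Gieseker-chamber input, and explains the coprimality: after the transform the object is a line bundle on an exceptional curve meeting $f'$ once.

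Your chamber-transport step is a valid reduction of (2) to the Gieseker chamber. Unless you can point to the Gieseker statement of (2) in the cited source, though, the key step is missing: why $\langle v^2\rangle=-1$ together with nonemptiness forces $\gcd(r,d)=1$. The reduction to rank $0$ plus Lemma~\ref{lem:(-1)} is what supplies it.
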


\begin{proof}
(1)
The claim holds if ${\cal M}_{(f,H)}^G(v)={\cal M}_{H_f}^G(v)$ by \cite[Thm. 7.3]{Y2010}.
By Proposition \ref{prop:indep}, our claim holds in general. 

(2)
By using a relative Fourier-Mukai transform, we can reduce to the case where 
$r=0$.
For a stable sheaf $E$, Lemma \ref{lem:(-1)} implies
$\gcd(r,d)=-(c_1(E) \cdot K_X)=1$.
\begin{NB}$\Div(E)$ contains a reduced and irreducible curve $C$ with $(C^2)<0$.
Since $(K_X \cdot C) \leq 0$, we get $(C^2)=(C \cdot K_X)=-1$ or
$(C^2)=-2$ and $(C \cdot K_X)=0$.
\end{NB}
\end{proof}

\begin{lem}\label{lem:(-1)}
For an effective divisor $D$ with $(D^2)=-1$, assume that 
${\cal M}_H^G(0,D,a) \ne \emptyset$ where $G$ is general with respect to $(0,D,a)$.
Then $(D \cdot K_X)=-1$. 
\end{lem}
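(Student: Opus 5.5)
The plan is to pin down the support of a stable sheaf $E \in {\cal M}_H^G(0,D,a)$ and then compute $(D\cdot K_X)$ with adjunction on the rational elliptic surface $X$. Since $G$ is general with respect to $(0,D,a)$, a nonempty moduli space contains a $G$-twisted stable sheaf $E$, which is purely $1$-dimensional with $c_1(E)=D$. Because $\langle v(E)^2\rangle=(D^2)=-1<0$ and Serre duality gives $\Ext^2(E,E)\cong\Hom(E,E(K_X))^{\vee}$, I want to show the support cannot be vertical, and then reduce to the case $D$ reduced and irreducible.

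\textbf{Step 1 (horizontal component).} Since $X$ is rational elliptic, $-K_X=f$ is nef. Every component $C$ of $D$ satisfies $(C\cdot K_X)=-(C\cdot f)\le 0$. If all components of $D$ lie in fibers, then $D$ is a fiber-supported class. Fibers are irreducible by the standing assumption, so $D=qf$ and $(D^2)=0\neq-1$. Hence $D$ has a horizontal component with $(C\cdot f)\ge1$, and therefore $(D\cdot K_X)=-(D\cdot f)\le -1$.

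\textbf{Step 2 (upper bound).} I would prove $(D\cdot K_X)\ge -1$ from the existence of $E$. Riemann--Roch (Lemma \ref{lem:RR}) gives
$$\chi(E,E)=-\langle v^2\rangle=1.$$
Twisted stability of $E$ gives $\Hom(E,E)=\Bbb C$. Since $-K_X=f$ is effective, there is an injection $E(K_X)\hookrightarrow E$, which is not an isomorphism because $(D\cdot f)>0$. Stability then forces $\Hom(E,E(K_X))=0$, so $\Ext^2(E,E)=0$. Consequently $E$ is rigid, $\Ext^1(E,E)=0$, and $E$ is an exceptional object. The task is now to turn rigidity into the bound $(D\cdot f)\le1$. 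I would first try deformation of the support: the sheaves $E\otimes L$ for $L\in\Pic^0$ of the support, together with moving $D$ in its linear system, contribute tangent directions to $\Ext^1(E,E)$. For an integral curve $C$ with $C\cdot(C-K_X)$ and arithmetic genus $p_a(C)=1+\tfrac{(C^2)+(C\cdot K_X)}{2}$, vanishing of $\Ext^1$ should force $p_a=0$ and $h^0({\cal O}_C(C))=0$. With $(C^2)=-1$ this gives $(C\cdot K_X)=-1$.

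\textbf{Step 3 (reducible $D$).} For non-integral $D$, I would use genericity of $G$ differently. Filter $E$ by its restrictions to the components, and use the Mukai pairing decomposition $\langle v^2\rangle=\sum\langle v_i^2\rangle+2\sum\langle v_i,v_j\rangle$. Each $\langle v_i^2\rangle=(C_i^2)\ge -2$ or $\ge0$, and rigidity forces $\langle v_i,v_j\rangle\ge0$ in the appropriate order. This should show there is exactly one horizontal component, appearing with multiplicity one, and all other components are vertical $(-2)$-curves with $(C\cdot K_X)=0$. Summing then gives $(D\cdot K_X)=-1$.

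I expect this reducible case to be the main obstacle, specifically excluding horizontal components of higher degree or multiplicity. If the case analysis becomes unwieldy, my fallback is a relative Fourier--Mukai transform that makes $(D\cdot f)$ minimal; the statement there follows since $\gcd$ of rank and fiber degree is preserved.
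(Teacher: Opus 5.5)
Your Steps 1 and 2 are correct as far as they go. The argument via $E(K_X)\hookrightarrow E$ does give $\Ext^2(E,E)=0$, which is the same vanishing the paper quotes to justify deforming to irreducible fibers. But these steps only settle the case where $D$ is integral, and there rigidity is not even needed: $p_a(C)\geq 0$ gives $(C^2)+(C\cdot K_X)\geq -2$, hence $(C\cdot K_X)\geq -1$, and Step 1 finishes.

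All of the content of the lemma is in the non-integral case, and Step 3 contains no argument for it. The claim that rigidity forces $\langle v_i,v_j\rangle\geq 0$ ``in the appropriate order'' is unsupported. The bookkeeping $\langle v_i^2\rangle=(C_i^2)$ also ignores multiplicities. Connected non-reduced configurations genuinely occur. If $\sigma,\sigma'$ are sections with $(\sigma\cdot\sigma')=1$, then $D=2\sigma+\sigma'$ has $(D^2)=-1$ and $(D\cdot K_X)=-3$. Two factors of a component filtration supported on $\sigma$ pair to $(\sigma^2)=-1<0$, and nothing in your sketch rules out a stable sheaf with this $c_1$. Moreover, with irreducible fibers (which you use in Step 1) there are no vertical $(-2)$-curves, so the shape of $D$ predicted in Step 3 does not fit your own setting.

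The fallback is circular. Relative Fourier--Mukai transforms preserve $\gcd(\rk, (c_1\cdot f))$, which for a rank-$0$ vector is $(D\cdot f)=-(D\cdot K_X)$ itself, so no transform can decrease it. This is exactly why the paper invokes the present lemma to get $\gcd(r,d)=1$ in Theorem~\ref{thm:K3/2}(2).

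The missing idea is to find a rigid curve inside the support and compare $E$ with a sheaf on it, instead of decomposing $E$.
\begin{itemize}
\item Since $(D^2)<0$, some reduced irreducible component $C$ of $D$ has $(D\cdot C)<0$, so $(C^2)<0$.
\item As $(C\cdot K_X)\leq 0$ and $(C^2)+(C\cdot K_X)\geq -2$, $C$ is either a $(-1)$-curve with $(C\cdot K_X)=-1$ or a $(-2)$-curve with $(C\cdot K_X)=0$. The latter lies in a fiber, which is excluded once all fibers are irreducible.
\item Then $\chi({\cal O}_C(k),E)=-(C\cdot D)>0$ for every $k$. Since $K_X|_C\cong{\cal O}_C(-1)$, Serre duality gives $\Ext^2({\cal O}_C(k+1),E)\cong\Hom(E,{\cal O}_C(k))^{\vee}$.
\item Take $n$ maximal with $\Hom({\cal O}_C(n),E)\neq 0$. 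This set is bounded above. It is nonempty, since otherwise $\Hom(E,{\cal O}_C(k))\neq 0$ for all $k$, which fails for $k\ll 0$.
\item Then $\Hom({\cal O}_C(n+1),E)=0$ together with $\chi({\cal O}_C(n+1),E)>0$ forces $\Hom(E,{\cal O}_C(n))\neq 0$.
\item Nonzero maps ${\cal O}_C(n)\to E\to{\cal O}_C(n)$ between $G$-twisted stable sheaves force equal twisted slopes and hence $E\cong{\cal O}_C(n)$. So $D=C$ and $(D\cdot K_X)=-1$.
\end{itemize}
This handles every non-integral $D$ at once (in the example above, $C=\sigma$) without any component-by-component analysis.
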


\begin{proof}
We note that $\Ext^2(E,E)=0$ for $E \in {\cal M}_H^G(0,D,a)$
by \cite[Prop. 2.7]{Y:EllipticRuled}.
By deforming $X \to {\Bbb P}^1$, we may assume that
every fiber is irreducible.
We take $E \in {\cal M}_H^G(0,D,a)$.
Then $E$ is $G$-twisted stable.
Since $(D^2)<0$, there is an irreducible and reduced component $C$ such that
$(D \cdot C)<0$.
Then we get $(C^2)<0$.
Since $(C \cdot K_X) \leq 0$ and $(C^2)+(C \cdot K_X) \geq -2$,
$(C^2)=(C \cdot K_X)=-1$ or $(C^2)=-2$ and $(C \cdot K_X)=0$.
By our assumption, the second case does not hold.
We note that $\chi({\cal O}_X(k),E)=-(C \cdot D)>0$ for all $k$.
We set $n:=\max \{ k \mid \Hom({\cal O}_C(k),E) \ne 0 \}$.
Then $\Hom({\cal O}_X(n),E)) \ne 0$ and $\Hom({\cal O}_X(n+1),E)) =0$.
Hence $\Hom(E,{\cal O}_X(n))=\Ext^2({\cal O}_X(n),E)^{\vee} \ne 0$.
Since $E$ is $G$-twisted stable, we get $E \cong {\cal O}_C(n)$.
Therefore $(D \cdot K_X)=-1$.
\end{proof}

\begin{NB}

\begin{equation}
{\cal H}={\Bbb Z}u+{\Bbb Z}w,\; \langle u^2 \rangle=-1,\; \langle u,w \rangle=0,\; \langle w^2 \rangle>0.
\end{equation}

We set
\begin{equation}
R_r:=\{ w \in {\cal H}^+ \mid \langle w^2 \rangle=-1, \rk w=r \}.
\end{equation}
Then $\# R_r \leq 2$.

Assume that 
\begin{equation}
\phi^-(E_1)>\phi^-(E_2)>\cdots >\phi^-(E_s).
\end{equation}

$v_i:=v(E_i)$.
$v_1=l_1 u_1$, $v_s=l_s u_s$.

$v_i \in {\Bbb Q}_{>0} w_1+{\Bbb Q}_{>0} w_s$ for $1<i<s$.

\begin{lem}
Assume $R_r=\{w_1,w_2 \}$.
For $v=xw_1+yw_2$ such that $x,y \geq 0$, if
$\langle v,w_1 \rangle \leq 0$ and $\langle v,w_2 \rangle \leq 0$, then
$x=y=0$. 
\end{lem}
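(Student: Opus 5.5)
The plan is to turn the two sign conditions into a linear system in $(x,y)$ whose only nonnegative solution is $(0,0)$. The whole argument rests on the single inequality $c:=\langle w_1,w_2\rangle>1$. Since $\langle w_1^2\rangle=\langle w_2^2\rangle=-1$, for $v=xw_1+yw_2$ we have
\begin{equation*}
\langle v,w_1\rangle=-x+cy,\qquad \langle v,w_2\rangle=cx-y .
\end{equation*}
The hypotheses $\langle v,w_1\rangle\le 0$ and $\langle v,w_2\rangle\le 0$ then read $cy\le x$ and $cx\le y$.

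Granting $c>1$, the conclusion is immediate. With $x,y\ge 0$ we get $x\ge cy\ge c^2x$, so $(c^2-1)x\le 0$, which forces $x=0$. Then $0\le y\le cx=0$ gives $y=0$.

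The main step is proving $c>1$, which I will do in two stages.

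First, I show $|c|>1$.
\begin{itemize}
\item $w_1$ and $w_2$ are linearly independent. If $w_2=\lambda w_1$, then $\langle w_2^2\rangle=-1$ gives $\lambda=\pm1$. Since $\rk w_1=\rk w_2=r>0$, we get $\lambda=1$, contradicting $w_1\ne w_2$.
\item Hence $w_1,w_2$ span the rank-two lattice ${\cal H}\otimes{\Bbb Q}$.
\item This lattice has signature $(1,1)$, because of the orthogonal basis $u,w$ with $\langle u^2\rangle=-1<0<\langle w^2\rangle$. So the Gram determinant of $w_1,w_2$ is negative: $1-c^2<0$.
\end{itemize}

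Second, I rule out $c<-1$ using the rank-zero vector $z:=w_1-w_2$.
\begin{itemize}
\item Both $w_i$ have rank $r$ and, by the construction of $R_r$ (they come from ${\cal E}(r,d)$, with first Chern class of the form $r_0$-proportional to $d_0\sigma+\cdots$), the same fiber degree $(c_1(w_i)\cdot f)$.
\item Therefore $z=(0,\zeta,b)$ with $(\zeta\cdot f)=0$.
\item Since $f$ is nef, $f\neq 0$ and $(f^2)=0$, the Hodge index theorem gives $(\zeta^2)\le 0$, so $\langle z^2\rangle=(\zeta^2)\le 0$.
\item On the other hand $\langle z^2\rangle=-2-2c$, so $c\ge -1$.
\end{itemize}
Combined with $|c|>1$, this gives $c>1$.

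The point I expect to need the most care is confirming that the two elements of $R_r$ really have the same fiber degree, so that $z$ lies in $f^\perp$. If that is not built into the definition of ${\cal H}^+$, I would instead argue that $\rk$ vanishes on a nonzero vector of ${\cal H}$ only along a negative (or isotropic) direction, using the positive-cone condition defining ${\cal H}^+$.
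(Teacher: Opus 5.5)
Your proof is correct and follows the paper's argument. Both rewrite the hypotheses as $cy\le x$ and $cx\le y$ with $c=\langle w_1,w_2\rangle$ and conclude from $c>1$; the paper adds the two inequalities to get $(x+y)(c-1)\le 0$ and asserts $c\ge 2$ ``since ${\cal H}$ is hyperbolic'', which by itself only bounds $|c|$. Your Gram-determinant bound together with the sign argument via $w_1-w_2\in f^\perp$ (valid because ${\cal H}$ is a wall lattice on which $(c_1\cdot f)$ is proportional to the rank) supplies the sign the paper leaves implicit. One slip: your last step should read $0\le cy\le x=0$, not $y\le cx$.
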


\begin{proof}
Since 
$-x+y \langle w_2,w_1 \rangle \leq 0$ and $x \langle w_1,w_2 \rangle-y \leq 0$, we get
$(x+y)(\langle w_1,w_2 \rangle-1) \leq 0$.
Since ${\cal H}$ is hyperbolic, $\langle w_1,w_2 \rangle \geq 2$.
Hence $x=y=0$.
\end{proof}

If $s=2$, then obviously $\langle v_1,v_2 \rangle \geq 2$.
Assume that $s \geq 3$.

We set $w:=\sum_{i=2}^{s-1} v_i$.
$\langle w,v_1 \rangle>0$ or $\langle w,v_s \rangle>0$.

\begin{equation}
\begin{split}
\langle v_1,v_s \rangle
+\langle v_1, w \rangle=& \langle v,v_1 \rangle-\langle v_1^2 \rangle\\
\langle v_1,v_s \rangle
+\langle v_s, w \rangle=& \langle v,v_s \rangle-\langle v_s^2 \rangle
\end{split}
\end{equation}

Hence
\begin{equation}
\sum_{i<j} \langle v_i,v_j \rangle=\sum_{1<i<j<s} \langle v_i,v_j \rangle+\langle v_1,v_s \rangle
+\langle v_1, w \rangle+\langle v_s,w \rangle \geq 2.
\end{equation}
\end{NB}

By the proof of \cite[Lem. 6.2]{NY}, we get the following proposition.
\begin{prop}
Let $W$ be  a wall. 
Assume that $\langle v,u \rangle \geq 0$ for all
$u=D+a\varrho_X \in {\cal H}$ with
$(D^2)=-2$ and $(D \cdot f)=0$. 
Then 
$$
\dim {\cal M}_{(f,H)}^{G_\pm}(v) -\dim {\cal F}^\pm (v_1,v_2,...,v_m)\geq 1.
$$
Moreover if the equality holds, then
there is $u \in W$ with  
$\langle v,u \rangle=0$, where $u \in {\cal E}(r,d) \cap {\cal H}$ or 
$u=D+a\varrho_X \in {\cal H}$ with
$(D^2)=-2$ and $(D \cdot f)=0$.
\end{prop}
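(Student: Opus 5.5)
We reduce the codimension estimate to a lattice inequality following \cite[Lem. 6.2]{NY}. Here $X$ is a rational elliptic surface, so $(K_X\cdot H)<0$. By the lemma above, every stack ${\cal M}_{(f,H)}^{G_\pm}(v_i)^{ss}$ with $\rk v_i>0$ is smooth of dimension $\langle v_i^2\rangle$. Combining this with \eqref{eq:HNF-dim}, when $\rk v_1,\rk v_m>0$ we get
$$
\dim {\cal M}_{(f,H)}^{G_\pm}(v) -\dim {\cal F}^\pm (v_1,v_2,\dots,v_m)=\sum_{i>j} \langle v_i,v_j \rangle .
$$
If $\rk v_1=0$ or $\rk v_m=0$, the corresponding factor is a stack of fiber sheaves. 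By Lemma \ref{lem:isotropic-dim} and Lemma \ref{lem:spherical2}, its dimension is at most $\langle v_i^2\rangle+1$ in the isotropic case, and it equals $\langle v_i^2\rangle=-2$ (a spherical object) in the $(-2)$ case. So the same formula holds up to a correction of at most $1$ per end term, and we keep track of this correction. The claim thus becomes $\sum_{i<j}\langle v_i,v_j\rangle\ge 1$, with equality only in the listed cases.

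The lattice input is that all $v_i$ lie in the rank two lattice ${\cal H}$ spanned by the Mukai vectors defining $W$. Indeed, $W$ is cut out by $\langle e^{sH+\alpha}, r v_i - r_i v\rangle = 0$, so $r v_i-r_iv$ lies on a fixed line and ${\cal H}={\Bbb Q}v+{\Bbb Q}v_1$. This ${\cal H}$ is hyperbolic: it contains $v$ with $\langle v^2\rangle\ge 0$ after the Bogomolov reduction (Lemma \ref{lem:Bogomolov}), and it contains $u$ with negative square. Each $v_i$ has $\langle v_i^2\rangle\ge -r_i^2\chi({\cal O}_X)=-r_i^2$, and semistable factors of rank $r_0$ with $\langle v_i^2\rangle<0$ are multiples of $(-1)$-vectors in ${\cal E}(r,d)\cap{\cal H}$, by Theorem \ref{thm:K3/2}(2). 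Rank-zero factors are $(-2)$ or isotropic fiber classes, which we treat as $u=D+a\varrho_X$ with $(D^2)=-2$, $(D\cdot f)=0$.

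Next we run the combinatorial argument of the NB sketch. The HN factors are ordered by phase. Only the extremal ones $v_1,v_m$ can have negative square, and the interior ones lie in the open cone ${\Bbb Q}_{>0}w_1+{\Bbb Q}_{>0}w_s$ between the two negative classes, so they have positive pairing with each other and with the ends. For $m=2$ we use $\langle v_1,v_2\rangle=\langle v,v_1\rangle-\langle v_1^2\rangle$. The hypothesis $\langle v,u\rangle\ge 0$ for spherical $u$, together with the nonemptiness criterion Theorem \ref{thm:K3/2}(1) ($\langle v,w\rangle\ge0$ for $w\in{\cal E}(r,d)$), then gives $\langle v_1,v_2\rangle\ge -\langle v_1^2\rangle\ge 1$. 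Equality forces $\langle v,v_1\rangle=0$ with $v_1$ a $(-1)$-class or a $(-2)$ fiber class, which is the stated $u$. For $m\ge3$ we set $w=\sum_{1<i<m}v_i$ and expand
$$
\sum_{i<j}\langle v_i,v_j\rangle=\sum_{1<i<j<m}\langle v_i,v_j\rangle+\langle v_1,v_m\rangle+\langle v_1,w\rangle+\langle v_m,w\rangle .
$$
By hyperbolicity, $\langle w,v_1\rangle>0$ or $\langle w,v_m\rangle>0$, and the lemma on $R_r=\{w_1,w_2\}$ excludes both being $\le0$. This gives $\ge1$, and equality again forces some end $u$ with $\langle v,u\rangle=0$.

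The main obstacle is the rank-zero end terms. There the excess dimension $+1$ of the isotropic fiber stacks (Lemma \ref{lem:isotropic-dim} with $l=1$) eats into the inequality. We must show that for an isotropic end $u$ one has $\langle v,u\rangle\ge 2$ on a genuine wall, or that such walls reduce to the $(-2)$/$(-1)$ cases. We would first try the rank-zero computation $\langle v,u\rangle=r_1d-rd_1>0$ from Lemma \ref{lem:Bogomolov}, sharpened by integrality and $\gcd$ considerations.
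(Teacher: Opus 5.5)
Your proposal has a genuine gap at the step you flag as the main obstacle, and the route you suggest for it cannot work. An isotropic fiber end $v_m=nu$, with $u=r'f+d'\varrho_X$ primitive, contributes $\langle v_m^2\rangle-\dim{\cal M}(v_m)=-n$ by Lemma \ref{lem:isotropic-dim}. That is not a correction of at most $1$.

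Walls of this type with $\langle v,u\rangle=1$ actually occur. Take $v=v(I_Z(kf))$ with $Z$ of length $n\ge 2$, so $\langle v^2\rangle=2n-1$, and $u=f-\varrho_X=v({\cal O}_{f_x}(-x))$. Then $\langle v,u\rangle=1$ and $u$ gives the wall $s=-1$. For $Z$ reduced and lying in distinct smooth fibers, the HN filtration on the side $s>-1$ is $0\to{\cal O}_X((k-n)f)\to I_Z(kf)\to\bigoplus_{x\in Z}{\cal O}_{f_x}(-x)\to 0$. Its type is $(v_1,v_2)=(v({\cal O}_X((k-n)f)),nu)$, and its codimension is
$\langle v_2,v_1\rangle+(\langle v_1^2\rangle-\dim{\cal M}(v_1))+(\langle v_2^2\rangle-\dim{\cal M}(v_2))=n+0-n=0.$
The only rank-zero vectors in ${\cal H}={\Bbb Q}v+{\Bbb Q}u$ are multiples of $u$, so the $(-2)$ hypothesis holds vacuously.

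So no sharper estimate will rescue this case. It needs an extra assumption, for example $\rk v_1,\rk v_m>0$, or excluding isotropic $u\in{\cal H}$ with small $\langle v,u\rangle$. The paper's proof does not cover it either. After reducing to a single negative end $v_1$, it silently takes $\langle v_i^2\rangle-\dim{\cal M}(v_i)=0$ for all $i>1$.

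Your handling of the negative ends also needs correcting. A $(-2)$ fiber end $v_1=pu$ is $E_0^{\oplus p}$ with $E_0$ spherical. Its stack has dimension $-p^2$, not $\langle v_1^2\rangle=-2p^2$. With your count, a $(-2)$ end would give codimension $\langle v,v_1\rangle+2p^2\ge 2$, which contradicts your own equality analysis listing $(-2)$ fiber classes.

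The paper instead uses the identity $\sum_{i>1}\langle v_i,v_1\rangle+\langle v_1^2\rangle=\langle v,v_1\rangle\ge 0$. This gives codimension $\sum_{i>j>1}\langle v_i,v_j\rangle+\langle v,v_1\rangle-\dim{\cal M}(v_1)$. The needed $+1$ is $-\dim{\cal M}(v_1)=p^2$, uniformly for positive-rank $(-1)$-classes and $(-2)$ fiber classes. Equality then forces $m=2$, $p=1$ and $\langle v,v_1\rangle=0$.

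The paper also splits by whether ${\cal H}$ contains an isotropic vector. If it does not, it invokes \cite[Lem. 6.2]{NY}. If it does, at most one end ($v_1$ or $v_m$) can have negative square. So the picture of interior factors lying between two negative classes, which you use for $m\ge 3$, is only available in the first case.
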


\begin{proof}
We note that
$\langle v,u \rangle \geq 0$ for $u \in {\cal E}(r,d)$ (Theorem \ref{thm:K3/2})
and
$$
\dim {\cal M}_{(f,H)}^{G_\pm}(v) -\dim {\cal F}^\pm (v_1,v_2,...,v_m)=
\sum_{i>j} \langle v_i,v_j \rangle +
\sum_i \left(\langle v_i^2 \rangle-\dim {\cal M}_{(f,H)}^{G_\pm}(v_i)\right).
$$
If ${\cal H}$ does not contain an isotropic vector, then
the proof of \cite[Lem. 6.2]{NY} implies the claim.
If ${\cal H}$ contains an isotropic vector,
then we get
$$
\{ i \mid \langle v_i^2 \rangle < 0 \}=\emptyset, \{1 \}, \{m \}.
$$
If $\langle v_1^2 \rangle<0$, then $\langle v,v_1 \rangle \geq 0$ and
\begin{equation}
\begin{split}
\sum_{i>j} \langle v_i,v_j \rangle +
\sum_i \left(\langle v_i^2 \rangle-\dim {\cal M}_{(f,H)}^{G_\pm}(v_i)\right)=&
\sum_{i>j>1} \langle v_i,v_j \rangle +
\sum_{i>1} \langle v_i,v_1 \rangle+
\langle v_1^2 \rangle-\dim {\cal M}_{(f,H)}^{G_\pm}(v_1)\\
=& \sum_{i>j>1} \langle v_i,v_j \rangle +
\langle v,v_1 \rangle-\dim {\cal M}_{(f,H)}^{G_\pm}(v_1) \geq 1.
\end{split}
\end{equation}
Assume that the equality holds. Then $m=2$,
$\langle v,v_1 \rangle=0$ and $\langle v_1^2 \rangle=-1,-2$.
\begin{NB}
If $v_1=p u$ with $\langle u^2 \rangle=-2$, then
$\dim {\cal M}_H^G(pu)=-p^2=\langle v_1^2 \rangle+p^2$. 
\end{NB}
If $\langle v_m^2 \rangle<0$, then similarly we get the same estimate.
\end{proof}

\begin{NB}
$$
I:=\{ i \mid \langle v_i^2 \rangle>0 \}.
$$
If $I \ne \emptyset$, then the proof pf \cite[Lem. 6.2]{NY} implies 
there is a non-negative integer $k$ with 
\begin{equation}
\sum_{i<j,(i,j) \in (I^2)^c} \langle v_i,v_j \rangle=\frac{k-\sum_{i \in I^c}\langle v_i^2 \rangle}{2}.
\end{equation}
Moreover $k>0$ if $\#I<s$. Hence $\sum_{i>j} \langle v_i,v_j \rangle \geq 2$ unless $\# I=1$
and $\# I^c=0,1$.

If $I=\emptyset$, then 
$$
\sum_{i<j} \langle v_i,v_j \rangle=\frac{\langle v^2 \rangle-\sum_i \langle v_i^2 \rangle}{2} >1.
$$
\end{NB}

\begin{NB}
Assume that ${\cal H}$ contains an isotropic vector.
Then the intersection matrix is
$\begin{pmatrix}
0 & b \\
b & c
\end{pmatrix}$.
If $2bxy+cy^2=-1$ has an integral solution, then
$2 \nmid c$.
If $2bxy+cy^2=-2$ has an integral solution, then
$2 \mid cy^2$.
If $2 \mid y$, then $bx(y/2)+2c(y/2)^2=-2$. Hence $y=\pm 1$.
Therefore $2 \mid c$.

Hence ${\cal H}$ contains at most one $(-1)$ or $(-2)$-vectors. 

\end{NB}

Let $E_0$ be a stable sheaf with $\langle v(E_0)^2 \rangle=-1$.
\begin{prop}
Let $\pi:X \to {\Bbb P}^1$ be a rational elliptic surface with a multiple fiber $mf_0$.
If $\langle v, v(E_0) \rangle<0$ for a stable sheaf $E_0$ with $\mu_f(E)=\mu_f(E_0)$.
Then there is no stable sheaf $E$ with $v(E)=v$.   
\end{prop}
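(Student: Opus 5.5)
We argue by contradiction, using the standard Mukai-vector argument. Suppose $E$ is a stable sheaf with $v(E)=v$, and let $E_0$ be stable with $\langle v(E_0)^2\rangle=-1$, $\mu_f(E)=\mu_f(E_0)$ and $\langle v,v(E_0)\rangle<0$. Here stable means $(f,G,H)$-stable, or Gieseker stable with respect to $H_f$, for a general $G$. By Lemma \ref{lem:RR} and $\mu_f(E)=\mu_f(E_0)$ we have
$$
\chi(E_0,E)=-\langle v(E_0),v(E(-\tfrac12K_X))\rangle .
$$
On a rational elliptic surface $K_X\equiv -f$ numerically, up to the multiple fiber contribution. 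Since $(v(E)\cdot f)$-type terms cancel by the slope equality, $\twistK$-twisting by a fiber class does not change the pairing (cf. Lemma \ref{lem:fiber}). Hence $\chi(E_0,E)=-\langle v(E_0),v\rangle>0$. So $\hom(E_0,E)>0$ or $\ext^2(E_0,E)=\hom(E,E_0(K_X))>0$.

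Next we kill each term using stability. If $E\not\cong E_0$ and the normalized values $\chi(G,\cdot)/\rk$ are compared, Lemma \ref{lem:EF} forbids a nonzero map in one direction. For the other direction we use Lemma \ref{lem:EF2}, or its proof: twisting by $K_X$ lowers $\chi(G,E_0)$ because $(K_X\cdot H)<0$. Concretely:
\begin{enumerate}
\item If $\chi(G,E_0)/\rk E_0\ge \chi(G,E)/\rk E$, then $\Hom(E,E_0(K_X))=0$ by Lemma \ref{lem:EF2}.
\item Otherwise $\Hom(E_0,E)=0$ by Lemma \ref{lem:EF}.
\end{enumerate}
In either case one of the two groups vanishes. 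The other term must then be positive, which is a contradiction only once we also show the remaining group vanishes. This is where the real input is needed.

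The main obstacle is exactly this: we need both $\Hom(E_0,E)=0$ and $\Hom(E,E_0(K_X))=0$, and stability only gives one inequality. This is where the multiple fiber enters. Since $K_X=-f+(m-1)f_0$, we have $E_0(K_X)\not\cong E_0$ in general, but $v(E_0(K_X))$ and $v(E_0)$ have equal twisted Hilbert data. I would pass to $E_0(pK_X)$, $p\in\mathbb Z$, all of which are stable with the same Mukai vector. Choosing $G$ general gives a strict ordering with $E$, so the two conditions become
$$
\chi(G,E_0)/\rk E_0<\chi(G,E)/\rk E \quad\text{or}\quad \chi(G,E_0)/\rk E_0>\chi(G,E)/\rk E ,
$$
and each kills one $\Hom$ via Lemma \ref{lem:EF}. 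In the first case Lemma \ref{lem:EF} applied to the pair $(E,E_0(K_X))$, after noting $\chi(G,E_0(K_X))\le\chi(G,E_0)$, also kills $\Hom(E,E_0(K_X))$. The delicate point is verifying the equality case. If $\chi$'s coincide then $E\cong E_0(pK_X)$ by stability, and this forces $\langle v,v(E_0)\rangle=-1<0$ consistently. So we must check that $\hom(E_0,E)-\hom(E_0,E(K_X))$ still gives a contradiction, as in Lemma \ref{lem:(-1)}.

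Alternatively, as in Theorem \ref{thm:K3/2}(2), first reduce via a relative Fourier–Mukai transform to rank $0$. There the rigid $E_0$ is ${\cal O}_C(n)$ for a $(-1)$-curve $C$ (Lemma \ref{lem:(-1)}), and the argument becomes the one in that lemma.
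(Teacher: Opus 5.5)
Your proposal never reaches a contradiction, as you partly acknowledge. With only $E_0$ and $E_0(K_X)$, stability kills at most one of $\Hom(E_0,E)$ and $\Hom(E,E_0(K_X))=\Ext^2(E_0,E)^{\vee}$, and $\chi(E_0,E)>0$ is then perfectly consistent. Your case split is also reversed. With $\phi(F):=\chi(G,F)/\rk F$, Lemma \ref{lem:EF} gives $\Hom(E_0,E)=0$ when $\phi(E_0)>\phi(E)$, and Lemma \ref{lem:EF2} gives $\Hom(E,E_0(K_X))=0$ when $\phi(E)\ge\phi(E_0)$.

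The missing idea is to use the whole ${\Bbb Z}$-family of twists by the reduced fiber. Here $K_X=-f+(m-1)f_0=-f_0$ exactly, so $E_0(K_X)$ is simply the next member of the family $E_0(kf_0)$, $k\in{\Bbb Z}$. All of these are stable, since $f_0\in{\Bbb Q}f$ (cf. Lemma \ref{lem:fGH-indep}). Contrary to what you write, these twists share neither the Mukai vector nor the twisted Hilbert data. What is true is the following:
\begin{itemize}
\item Because $\mu_f(E)=\mu_f(E_0)$ and $(f_0^2)=(K_X\cdot f_0)=0$, one has $\chi(E_0(kf_0),E)=\chi(E_0,E)=-\langle v,v(E_0)\rangle>0$ for every $k$.
\item Assuming $\mu_f(E)>\mu_f(G)$ (the other case is dual), $\phi(E_0(kf_0))=\phi(E_0)+k\rk G(\mu_f(E_0)-\mu_f(G))/m$ increases strictly and without bound in both directions.
\item Serre duality reads $\Ext^2(E_0(kf_0),E)^{\vee}=\Hom(E,E_0((k-1)f_0))$.
\end{itemize}
So if $\phi(E_0((k-1)f_0))<\phi(E)<\phi(E_0(kf_0))$ for some $k$, Lemma \ref{lem:EF} kills both $\Hom(E_0(kf_0),E)$ and $\Hom(E,E_0((k-1)f_0))$. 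Then $\chi(E_0(kf_0),E)=-\dim\Ext^1(E_0(kf_0),E)\le 0$, a contradiction. The paper says the same thing by taking the extremal $k$ with $\Hom(E_0(kf_0),E)\ne 0$; such a $k$ exists because homs vanish for extreme twists while $\chi>0$ for all $k$. Then $\chi(E_0((k+1)f_0),E)>0$ forces $\Hom(E,E_0(kf_0))\ne 0$ as well.

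The remaining case $\phi(E)=\phi(E_0(jf_0))$ cannot be excluded by genericity of $G$ (e.g.\ when $v=v(E_0(jf_0))$). Your plan to still extract a contradiction there is aimed at something false. Applying $\chi>0$ at $k=j$ and $k=j+1$ gives nonzero maps $E_0(jf_0)\to E$ and $E\to E_0(jf_0)$; equality of $\phi$ alone would not suffice. Hence $E\cong E_0(jf_0)$. But this sheaf is stable, has the same $\mu_f$, and satisfies $\langle v(E_0(jf_0)),v(E_0)\rangle=\langle v(E_0)^2\rangle=-1<0$. It is therefore a genuine exception to the literal statement, and the paper's proof accordingly concludes that $E$ is not stable unless $E\cong E_0$ (up to these twists). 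In particular, no stable $E$ exists when $\langle v,v(E_0)\rangle\le -2$.

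Finally, your Fourier--Mukai detour is unnecessary, and as stated it is unjustified. The proof of Lemma \ref{lem:(-1)} takes the maximal $n$ with $\Hom({\cal O}_C(n),E)\ne 0$ and uses $(C\cdot K_X)=-1$, so that ${\cal O}_C(n+1)\otimes{\cal O}_X(K_X)\cong{\cal O}_C(n)$. That is exactly the twisting argument above with ${\cal O}_C(n)$ in place of $E_0(kf_0)$, and it can be run directly on $X$.
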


\begin{proof}
We note that $\chi(E_0 (kf_0),E)=\chi(E_0,E)>0$ and $K_X=-f_0$.
If $\Hom(E_0 (kf_0),E)=0$, then 
$$
\Hom(E,E_0 (-(k+1)f_0))=\Ext^2(E_0(k f_0),E)^{\vee} \ne 0.
$$
Since $\Hom(E,E_0(-(k+1)f_0))=0$ for $k \gg 0$,
there is $k$ such that $\Hom(E_0(kf_0),E) \ne 0$.
We take a minimal $k$. Then $\Hom(E_0(kf_0),E) \ne 0$
and $\Hom(E,E_0(k f_0)) \ne 0$.
Therefore $E$ is not stable unless $E \cong E_0$.
\end{proof}

\subsection{Totally semistable walls}
Let $X$ be an elliptic K3 surface or an Enriques surface.
The totally semistable walls on a K3 surfaces
are studied by Bayer and Macri \cite{BM:2} and is generalized to an Enriques surfaces
by Nuer and the author in \cite[sect. 5]{NY}.
We can apply the results to our situation.
In this subsection, we shall apply the most complicated case, that is,
the wall associated to stable rigid sheaves. 
For a totally semistable wall $W$, we take $(sH,\alpha) \in W$ and
$(s_\pm H,\alpha_\pm)$ from adjacent chambers.
Let $G$ and $G_\pm$ be locally free sheaves with \eqref{eq:G_pm}. 
Let ${\cal H}$ be a primitive hyperbolic sublattice of $H^*(X,{\Bbb Z})$ containing $v$ 
and assume that
${\cal H}$ contains at least two $(-2)$-vectors or $(-1)$-vectors.
Let ${\cal H}^+$ be the effective cone, that is, the cone spanned by Mukai vectors
$v(E) \in {\cal H}$ of $(f,G,H)$-semistable sheaves $E$.
Let $R^+$ be the set of effective $u$ with $\langle u^2 \rangle=-1,-2$.
Then there are $u_0,u_1 \in R^+$ such that $R^+=\{ u_n \mid n \in {\Bbb Z} \}$ with
\begin{equation}
u_{i+1}=
\begin{cases}
-R_{u_i}(u_{i-1}) & i \ne 0,1\\
R_{u_i}(u_{i-1}) & i=0,1
\end{cases}
\end{equation} 
and there are $(f,G,H)$-stable sheaves $T_i$ $(i=0,1)$
with $v(T_i)=u_i$.
We note that $\rk T_0>0$ or $\rk T_1>0$.
For simplicity, we assume that $\rk T_0>0$.

\begin{defn}
For $E \in \Coh(X)$ with $\rk E>0$, we set
$$
\phi^\pm(E)=\frac{\chi(G_\pm,E)}{\rk E}.
$$ 
If $\rk T_1=0$, then we set
$\phi^+(T_1)=\infty$ and $\phi^-(T_1)=-\infty$.
\end{defn}

Let $T^\pm_i$ be $(f,G_\pm,H)$-stable sheaves with $v(T_i^\pm)=u_i$.
$T_i^\pm$ are uniquely determined by $u_i$ up to $\otimes {\cal O}_X(K_X)$.
We may assume that

\begin{equation}
\phi^+(T_1)>\phi^+(T_2^+)>\cdots>\phi^+(E)>\cdots>\phi^+(T_{-1}^+)>\phi^+(T_0),
\end{equation} 
  
\begin{equation}
\phi^-(T_1)<\phi^-(T_2^-)<\cdots<\phi^-(E)<\cdots<\phi^-(T_{-1}^-)<\phi^-(T_0),
\end{equation} 
where $E$ is a $(f,G,H)$-semistable sheaf satisfying $\langle v(E)^2 \rangle \geq 0$.
By \cite[Lem. 6.18]{NY},
\begin{equation}
R_{T_i^+}(T_{i-1}^+)=
\begin{cases}
T_{i+1}^+[1] & i \ne 0,1\\
T_{i+1}^+ & i = 0,1
\end{cases}
\end{equation}
and

\begin{equation}
R_{T_i^-}(T_{i+1}^-)=
\begin{cases}
T_{i-1}^-[1] & i \ne 0,1\\
T_{i-1}^- & i = 0,1.
\end{cases}
\end{equation}

\begin{NB}
$\Hom(T_2^+,T_1^+[k])=\Hom(T_0,T_1[k+1])$.

For $i \geq 2$,
$\Hom(T_{i+1}^+,T_i^+[k])=\Hom(T_{i-1}^+[-1],T_i^+[k+1])=\Hom(T_i^+[k+1],T_{i-1}^+[1])^{\vee}
=\Hom(T_i^+[k],T_{i-1}^+)^{\vee}$.
\end{NB}

\begin{NB}
\begin{lem}
Let $E$ be a $(f,G_+,H)$-stable sheaf with $\phi(E)=\phi({\cal O}_X)=\chi(G_0,{\cal O}_X)$.
\begin{enumerate}
\item[(1)]
If $\langle v(E)^2 \rangle \geq 0$, then 
$$
\phi^+(T_i)>\phi^+(E)>\phi^+(T_j)
$$
for any $i>0$ and $j \leq 0$.
\item[(2)]
If $\phi^+(E)\geq \phi^+(T_i)$, $i>0$, then
$E=T_p^+$ $(0<p \leq i)$.
\item[(3)]
If $\phi^+(E) \leq \phi^+(T_j)$, $j \leq 0$, then
$E=T_p^+$ $(0 \geq p \geq j)$.
\end{enumerate}
\end{lem}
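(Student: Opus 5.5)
We prove (2) first, by induction on $i$, then (3) by the symmetric argument, and deduce (1) from both. The tools are the slope comparison for $(f,G_+,H)$-semistable sheaves (Lemma \ref{lem:EF}), the vanishing of $\Ext^2$ (Lemma \ref{lem:EF2}), the formula $\chi(F,E)=-\langle v(F),v(E)\rangle$ for sheaves with the same $\mu_f$, and the reflection relations $R_{T_i^+}(T_{i-1}^+)=T_{i+1}^+[1]$ (resp.\ $T_{i+1}^+$ for $i=0,1$).

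\emph{Homological input.} Since $X$ is K3 or Enriques, $K_X\equiv 0$, so $(K_X\cdot H)=0$ and Lemma \ref{lem:EF2} applies in its strict-inequality form. For $(f,G_+,H)$-stable sheaves $E$ and $T_p^+$ with $\phi^+(T_p^+)>\phi^+(E)$, we get $\Hom(T_p^+,E)=0$ and $\Ext^2(E,T_p^+)=0$. Serre duality then gives $\Ext^2(T_p^+,E)=\Hom(E,T_p^+(K_X))^\vee$. Since $E$ and $T_p^+(K_X)$ are stable and $T_p^+$ is determined by $u_p$ only up to $\otimes{\cal O}_X(K_X)$, this group vanishes unless $E\cong T_p^+(K_X)$. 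Hence in the generic situation
\[
\langle u_p,v(E)\rangle=-\chi(T_p^+,E)=\dim\Ext^1(T_p^+,E)\ge 0 .
\]

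\emph{Part (2).} Suppose $\phi^+(E)\ge\phi^+(T_i^+)$ with $i>0$, and $E\not\cong T_p^+$ for all $0<p\le i$. Write $v(E)=x u_{i-1}+y u_i$ in the effective cone of ${\cal H}$; the hyperbolicity of ${\cal H}$, together with $v(E)\in{\cal H}^+$, places $v(E)$ in one of the cones spanned by consecutive $u_p$'s. Using $\langle u_i,u_{i-1}\rangle\ge 2$ (resp.\ $\ge 1$ in the $(-1)$-case) and $\langle u_p^2\rangle<0$, the inequalities $\langle u_p,v(E)\rangle\ge 0$ for the relevant $p$ can hold only if $x=y=0$. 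This is the same computation as the $R_r$ lemma in the NB above. Alternatively, one applies $R_{T_i^+}$: because $\Hom(T_i^+,E)=0=\Ext^2(T_i^+,E)$, the object $R_{T_i^+}(E)$ is a sheaf, an extension of $\Ext^1(T_i^+,E)\otimes T_i^+$ by $E$. Its Mukai vector is the reflection of $v(E)$, and one can then apply the induction hypothesis for $i-1$ to the reflected sheaf. Part (3) is the mirror argument, using the $\phi^-$ ordering and the relations $R_{T_i^-}(T_{i+1}^-)=T_{i-1}^-[1]$ (resp.\ $T_{i-1}^-$ for $i=0,1$).

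\emph{Part (1).} If $\langle v(E)^2\rangle\ge0$ and, say, $\phi^+(E)\ge\phi^+(T_i^+)$ for some $i>0$, then (2) forces $E=T_p^+$, which has $\langle u_p^2\rangle<0$, a contradiction. Equality of $\phi^+$ with a stable $T_p^+$ is ruled out the same way, so the inequalities are strict. The case $j\le 0$ follows from (3).

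\emph{Main obstacle.} The hard step is the lattice argument: showing that a stable $E$ distinct from every $T_p^+$ that sits on the wrong side of $T_i^+$ would have $v(E)$ pairing non-negatively with the two adjacent rigid classes, which is impossible in the hyperbolic plane ${\cal H}$. I would first try to make this precise via the two-vector lemma $\langle w_1,w_2\rangle\ge2 \Rightarrow x=y=0$. A secondary issue is that the reflection step must also be checked to preserve $(f,G_+,H)$-stability.
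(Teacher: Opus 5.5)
There is a genuine gap in your proof of (2) and (3). It sits in both the homological input and the lattice step.

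\textbf{What fails.} You claim $\Ext^2(T_p^+,E)=\Hom(E,T_p^+(K_X))^\vee=0$ whenever $\phi^+(T_p^+)>\phi^+(E)$. Lemma \ref{lem:EF} only forbids morphisms from the object with larger $\phi^+$ to the one with smaller $\phi^+$. A morphism $E\to T_p^+(K_X)$ goes from smaller to larger $\phi^+$ and is not excluded; the ``stable, hence isomorphic'' argument needs equal $\phi^+$. So $\langle u_p,v(E)\rangle\ge 0$ is not established.

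Even granting it, the lattice step is wrong. From $u_{i+1}=-R_{u_i}(u_{i-1})$ one gets $\langle u_{i+1},u_i\rangle=\langle u_{i-1},u_i\rangle$ for $i\ne 0,1$, and from $u_2=R_{u_1}(u_0)$ one gets $\langle u_1,u_2\rangle=-\langle u_0,u_1\rangle$. Hence every consecutive pair other than $(u_0,u_1)$ pairs \emph{negatively}, not $\ge 2$; in the K3 example, $\langle u_2,u_3\rangle=-3$. Moreover, the two-vector lemma in the NB needs $\langle v,w_j\rangle\le 0$. Non-negativity gives no contradiction: take $x=y=1$ and $\langle w_1,w_2\rangle=3$.

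The reflection alternative does not rescue this. It needs $\Hom(T_i^+,E)=0$, but the hypothesis $\phi^+(E)\ge\phi^+(T_i^+)$ only gives $\Hom(E,T_i^+)=0$. So $R_{T_i^+}(E)$ need not be a sheaf, and its stability is exactly what remains unproved.

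\textbf{The missing idea.} It is simpler, and it is what the paper's one-line proof (``$v(E)\in{\cal H}$'') relies on. You should first note that $v(E)\in{\cal H}$; this comes from $\phi(E)=\phi({\cal O}_X)$, and you never address it. Then $\phi^+$ is a monotone function of the ray of $v(E)$ in ${\cal H}\otimes{\Bbb R}$.

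\begin{itemize}
\item The hypothesis of (2) puts $v(E)$ in the closed sector spanned by $u_1,\dots,u_i$, which is the union of the cones ${\Bbb R}_{\ge 0}u_{p-1}+{\Bbb R}_{\ge 0}u_p$ for $2\le p\le i$.
\item By the negativity of $\langle u_{p-1},u_p\rangle$ just noted, every nonzero vector in that sector has negative square.
\item $E$ is $(f,G_+,H)$-stable, hence simple, so $\langle v(E)^2\rangle=-\chi(E,E)\ge -2$.
\item Therefore $v(E)$ is an effective $(-1)$- or $(-2)$-class, i.e.\ $v(E)=u_p$ with $1\le p\le i$.
\item Then $\chi(E,T_p^+)>0$ gives a nonzero morphism between $E$ and $T_p^+$ (or $T_p^+(K_X)$) in one direction. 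Both are stable with the same $\phi^+$, so it is an isomorphism.
\end{itemize}

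Part (3) is symmetric. Part (1) is immediate, because the closed positive cone lies strictly between the rays of $\{u_n\}_{n>0}$ and $\{u_n\}_{n\le 0}$. Your deduction of (1) from (2) and (3) is fine as written.
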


\begin{proof}
We note that $v(E) \in {\cal H}$.
\end{proof}

\begin{lem}
Assume that $\Hom(T_i^+,T_{i-1}^+[k])=0$ for $k \ne 0$. For the evaluation map 
\begin{equation}
\varphi:\Hom(T_i^+,T_{i-1}^+) \otimes T_i^+ \to T_{i-1}^+,
\end{equation}
we get 
$\varphi$ is surjective and $\ker \varphi=R_{T_i^+}(T_{i-1}^+)$ is $(f,G_+,H)$-stable.
In particular $T_{i+1}^+=R_{T_i^+}(T_{i-1}^+)$ and
$\Hom(T_{i+1}^+,T_i^+[k])=0$ for $k \ne 0$.
\end{lem}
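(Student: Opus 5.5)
The plan is to check the Ext-vanishing hypothesis of the spherical-twist exact sequence, and then prove stability of the kernel by comparing slopes $\phi^+$ along the chain of rigid objects. Throughout, the standard facts about the rigid stable objects $T_j^+$ on a totally semistable wall are used.

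\emph{Step 1: $\varphi$ is surjective.} Set $n:=\dim \Hom(T_i^+,T_{i-1}^+)$. By hypothesis $\Hom(T_i^+,T_{i-1}^+[k])=0$ for $k\neq 0$, so
\[
{\bf R}\Hom(T_i^+,T_{i-1}^+)=\Hom(T_i^+,T_{i-1}^+)
\]
and $n=\chi(T_i^+,T_{i-1}^+)=-\langle u_i,u_{i-1}\rangle$. Since $\Ext^2(T_i^+,T_{i-1}^+)=0$, the exact sequence after the definition of $R_{E_0}$ applies with $E_0=T_i^+$ and $E=T_{i-1}^+$. Because $\Ext^1$ also vanishes, it reduces to a triangle
\[
\Hom(T_i^+,T_{i-1}^+)\otimes T_i^+ \xrightarrow{\varphi} T_{i-1}^+ \to R_{T_i^+}(T_{i-1}^+).
\]
To see that $\varphi$ is surjective, I would look at $\im\varphi\subset T_{i-1}^+$. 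It is a quotient of $(T_i^+)^{\oplus n}$, so by $(f,G_+,H)$-semistability of $T_i^+$ (Lemma \ref{lem:EF}) we get $\phi^+(\im\varphi)\geq\phi^+(T_i^+)$. It is also a subsheaf of the stable sheaf $T_{i-1}^+$, so its slope is at most $\phi^+(T_{i-1}^+)$, with equality only if $\im\varphi=T_{i-1}^+$. Using the ordering $\phi^+(T_{i-1}^+)<\phi^+(T_i^+)$ (for $i\geq 2$), a proper image would give a contradiction. The fiber-sheaf cases ($\rk=0$) would be handled separately via Remark \ref{rem:fGH}. Hence $\coker\varphi=0$, and the cone is $(\ker\varphi)[1]$.

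\emph{Step 2: stability of $\ker\varphi$.} Set $K:=\ker\varphi$. Then $v(K)=n u_i-u_{i-1}$, which matches $\pm u_{i+1}$ by the reflection formula. Hence $K$ is a sheaf with $\langle v(K)^2\rangle=\langle u_{i+1}^2\rangle<0$. For $(f,G_+,H)$-stability, take a destabilizing subsheaf $K_1\subset K$. Any such $K_1$ would have $\Hom(T_i^+,K_1)$ constrained, because applying $\Hom(T_i^+,-)$ to $0\to K\to (T_i^+)^{\oplus n}\to T_{i-1}^+\to 0$ gives $\Hom(T_i^+,K)=0$. This uses that the induced map $\Hom(T_i^+,T_i^+)^{n}\to\Hom(T_i^+,T_{i-1}^+)$ is an isomorphism. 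The Harder--Narasimhan factors of $K$ (Proposition \ref{prop:HNF}) have Mukai vectors in ${\cal H}$, since the wall is defined by ${\cal H}$. Their slopes must lie strictly between $\phi^+(T_{i+1}^+)$ and $\phi^+(T_i^+)$ or equal them.

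Since the chain $T_j^+$ exhausts the rigid effective classes, I would show the following. Factors with $\langle v^2\rangle\geq 0$ have slope strictly between those of all $T_j^+$, as in the ordered chain displayed before the lemma. Rigid factors must therefore be among the $T_j^+$. Combining this with $\Hom(T_i^+,K)=0$ and $\Hom(K,T_i^+)$ computations should force $K$ to have a single stable factor, giving $K\cong T_{i+1}^+$ up to $\otimes{\cal O}_X(K_X)$.

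\emph{Step 3: the Ext-vanishing for the next pair.} Finally, $\Hom(T_{i+1}^+,T_i^+[k])=0$ for $k\neq 0$. This follows by applying $\Hom(-,T_i^+)$ to the defining triangle and using that $T_i^+$ is spherical or exceptional with ${\bf R}\Hom(T_i^+,T_i^+)$ known. Equivalently, it follows from the computation in the NB above relating it to $\Hom(T_i^+[k],T_{i-1}^+)$ via Serre duality.

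The main obstacle is Step 2, the stability of the kernel. I would first try to mimic \cite[Lem. 6.18]{NY}: run the argument in the hyperbolic lattice ${\cal H}$ with the slope ordering, and use Lemma \ref{lem:EF2} to kill $\Ext^2$ between factors.
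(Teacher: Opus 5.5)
Your Step 1 has a genuine gap: it uses the slope ordering the wrong way round. You take $\phi^+(T_{i-1}^+)<\phi^+(T_i^+)$, but the chain fixed before the lemma is $\phi^+(T_1)>\phi^+(T_2^+)>\cdots$, so $\phi^+(T_{i-1}^+)>\phi^+(T_i^+)$ for $i\geq 2$. Your ordering cannot hold, because by Lemma \ref{lem:EF} it would force $\Hom(T_i^+,T_{i-1}^+)=0$. Indeed, your own sandwich would then exclude $\im\varphi=T_{i-1}^+$ as well as proper images, and would prove $\varphi=0$. With the correct ordering, the bounds $\phi^+(T_i^+)\le\phi^+_{\min}(\im\varphi)\le\phi^+_{\max}(\im\varphi)\le\phi^+(T_{i-1}^+)$ are perfectly consistent. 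Slopes alone therefore give no contradiction, and surjectivity is not yet proved.

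The missing step is the one the paper uses, and it relies on the gap property you only invoke in Step 2. The HN factors of $\im\varphi$ have Mukai vectors in ${\cal H}$, and no $(f,G_+,H)$-stable sheaf with vector in ${\cal H}$ has $\phi^+$ strictly between $\phi^+(T_i^+)$ and $\phi^+(T_{i-1}^+)$. This leaves two cases.
\begin{enumerate}
\item If $\phi^+_{\max}(\im\varphi)=\phi^+(T_{i-1}^+)$, stability of $T_{i-1}^+$ gives $\im\varphi=T_{i-1}^+$.
\item Otherwise $\im\varphi$ is semistable of slope $\phi^+(T_i^+)$, so $\im\varphi\cong T_i^+\otimes U$. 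The evaluation map's universal property, together with $\im\varphi\subset T_{i-1}^+$, makes $\Hom(T_i^+,T_{i-1}^+)\to\Hom(T_i^+,\im\varphi)$ an isomorphism. Hence $\varphi$ is injective, which contradicts $n\rk T_i^+>\rk T_{i-1}^+$ (that is, $\rk u_{i+1}>0$).
\end{enumerate}

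Once surjectivity is in place, your Step 2 is essentially the paper's argument, and it is cleanest phrased via the maximal slope. Since $K\subset (T_i^+)^{\oplus n}$, we have $\phi^+_{\max}(K)\le\phi^+(T_i^+)$, and equality is excluded by $\Hom(T_i^+,K)=0$. The gap property then forces $\phi^+_{\max}(K)=\phi^+(T_{i+1}^+)=\phi^+(K)$. So $K$ is semistable with the primitive rigid vector $u_{i+1}$, hence $K=T_{i+1}^+$; no $\Hom(K,T_i^+)$ computation is needed. Your Step 3, via the autoequivalence $R_{T_i^+}$ and Serre duality, is fine.
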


\begin{NB2}
$v(\im \varphi) \in {\Bbb Z}u_0+{\Bbb Z}u_1$.
\end{NB2}

\begin{proof}
We get
$$
\phi^+(T_{i-1}^+) \geq \phi_{\max}^+(\im \varphi) \geq \phi_{\min}^+(\im \varphi) \geq \phi^+(T_i^+).
$$
If $\phi^+(T_{i-1}^+) = \phi_{\max}^+(\im \varphi)$, then $T_{i-1}^+=\im \varphi$.
If $\phi^+(T_{i-1}^+)> \phi_{\max}^+(\im \varphi)$, then
$$
\phi_{\max}^+(\im \varphi)=\phi_{\min}^+(\im \varphi)=\phi^+(T_i^+)
$$
and $\im \varphi=T_i^+ \otimes U$.
Since $\Hom(T_i^+,T_{i-1}^+) \to \Hom(T_i^+,\im \varphi)$ is isomorphic,
$\varphi$ is injective.
By $\rk \Hom(T_i^+,T_{i-1}^+) \otimes T_i^+ > \rk T_{i-1}^+$,
this case does not occur. Hence $\varphi$ is surjective.

We note that $v(\ker \varphi)=u_{i+1}$. Assume that $\ker \varphi$ is not stable.
Then
$$
\phi^+(T_i^+) \geq \phi_{\max}^+(\ker \varphi) \geq \phi^+(T_{i+1}^+).
$$
If $\phi^+(T_i^+) = \phi_{\max}^+(\ker \varphi)$, then
$\ker \varphi$ contains $T_i^+$, which is a contradiction.
Hence $\phi^+(T_i^+) > \phi_{\max}^+(\ker \varphi)$.
In this case, we get $\phi_{\max}^+(\ker \varphi)=  \phi^+(T_{i+1}^+)$.
Hence $\ker \varphi$ is semistable.
By $v(\ker \varphi)=u_{i+1}$, $\ker \varphi=T_{i+1}^+$.
\end{proof}

\begin{lem}
Assume that $\Hom(T_i^+,T_{i-1}^+[k])=0$ for $k \ne 0$. 
We set
$$
\psi:T_i^+ \to \Hom(T_i^+,T_{i-1}^+)^{\vee} \otimes T_{i-1}^+.
$$
Then
$\psi$ is injective and $\coker \psi=R_{T_{i-1}^+}^{-1}(T_i^+)[1]=
T_{i-2}^+$.
In particular $\Hom(T_{i-1}^+,T_{i-2}^+[k])=0$ for $k \ne 0$.
\end{lem}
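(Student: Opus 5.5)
The plan is to deduce everything from the spherical twist $R_{T_{i-1}^+}$ and the relation from \cite[Lem. 6.18]{NY} quoted above, namely $R_{T_{i-1}^+}(T_{i-2}^+)=T_i^+[1]$ (or $T_i^+$ when $i-1\in\{0,1\}$). This is the dual of the preceding lemma: there $\varphi$ was the evaluation map and its kernel was identified with $R_{T_i^+}(T_{i-1}^+)$. Here I would argue on the level of complexes rather than with the phase estimates used there.

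First, I would compute the inverse twist. By definition
$$
R_{T_{i-1}^+}^{-1}(T_i^+)=\mathrm{cone}\bigl(T_i^+\to {\bf R}\Hom(T_i^+,T_{i-1}^+)^{\vee}\otimes T_{i-1}^+\bigr)[-1].
$$
By the hypothesis $\Hom(T_i^+,T_{i-1}^+[k])=0$ for $k\neq 0$, we have ${\bf R}\Hom(T_i^+,T_{i-1}^+)=\Hom(T_i^+,T_{i-1}^+)$ in degree $0$. Hence this coevaluation morphism is exactly $\psi$, and $R_{T_{i-1}^+}^{-1}(T_i^+)[1]=\mathrm{cone}(\psi)$.

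Second, I would apply $R_{T_{i-1}^+}^{-1}$ to the relation above to get $R_{T_{i-1}^+}^{-1}(T_i^+)[1]=T_{i-2}^+$ in the generic case $i-1\neq 0,1$. So $\mathrm{cone}(\psi)\cong T_{i-2}^+$ is a sheaf concentrated in degree $0$. The long exact cohomology sequence of the triangle $T_i^+\to V^{\vee}\otimes T_{i-1}^+\to\mathrm{cone}(\psi)$, with $V=\Hom(T_i^+,T_{i-1}^+)$, then gives $\ker\psi=H^{-1}(\mathrm{cone}\,\psi)=0$ and $\coker\psi=H^0(\mathrm{cone}\,\psi)\cong T_{i-2}^+$. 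The exceptional indices $i-1\in\{0,1\}$ carry no shift in \cite[Lem. 6.18]{NY}. I expect this bookkeeping to be the main obstacle: one must check that the shift conventions in the displayed formulas for $R_{T_i^\pm}$ match the shape of the cone, perhaps treating those indices separately via the sign change $u_{i+1}=R_{u_i}(u_{i-1})$ versus $-R_{u_i}(u_{i-1})$. In the K3 case $R_{T_{i-1}^+}$ is the spherical twist. In the Enriques case with $\langle u_{i-1}^2\rangle=-1$, the twist also involves $T_{i-1}^+(K_X)$, so ${\bf R}\Hom(T_i^+,T_{i-1}^+(K_X))$ must be dealt with as well. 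For this I would use that $T_{i-1}^+$ is determined only up to $\otimes{\cal O}_X(K_X)$, and that stability kills the extra term.

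Third, for the Ext-vanishing, both $T_{i-1}^+$ and $T_{i-2}^+$ are sheaves on a surface, so only $k=1,2$ need checking. For $k=2$, Serre duality gives $\Ext^2(T_{i-1}^+,T_{i-2}^+)\cong\Hom(T_{i-2}^+,T_{i-1}^+(K_X))^{\vee}$. This vanishes by Lemma \ref{lem:EF}, since $T_{i-2}^+$ and $T_{i-1}^+(K_X)$ are $(f,G_+,H)$-stable with $\phi^+(T_{i-2}^+)>\phi^+(T_{i-1}^+)$ by the ordering of the $\phi^+$. If one of them has rank $0$, I would use instead the convention $\phi^+(T_1)=\infty$ and Remark \ref{rem:fGH}. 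For $k=1$, I would use the autoequivalence $R_{T_{i-1}^+}^{-1}$, which sends $T_{i-1}^+$ to $T_{i-1}^+[1]$ and $T_{i-2}^+$ to $T_i^+[-1]$ (from the previous step). This gives
$$
\Hom(T_{i-1}^+,T_{i-2}^+[1])\cong\Hom(T_{i-1}^+[1],T_i^+)=\Hom(T_{i-1}^+,T_i^+[-1]).
$$
By Serre duality the right-hand side is dual to $\Hom(T_i^+,T_{i-1}^+(K_X)[3])$, which is $0$ since both are sheaves on a surface. Alternatively, the right-hand side vanishes directly because it is a negative Ext between sheaves. This would finish the proof.
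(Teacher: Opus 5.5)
There is a genuine gap: you do not prove the lemma's substantive claim, you import it.

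\textbf{The main gap.} The relation $R_{T_{i-1}^+}(T_{i-2}^+)=T_i^+[1]$ that you take from \cite[Lem. 6.18]{NY} is literally the identity $R^{-1}_{T_{i-1}^+}(T_i^+)[1]=T_{i-2}^+$ asserted here. Once it is granted, the rest of your argument is formal (the coevaluation is $\psi$, then take cohomology of the cone), and it is correct for $i\ne 1,2$.

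The same relation at index $i$ (for $i\neq 0,1$) also already forces the lemma's hypothesis. Taking cohomology sheaves of the triangle ${\bf R}\Hom(T_i^+,T_{i-1}^+)\otimes T_i^+\to T_{i-1}^+\to T_{i+1}^+[1]$ gives $\Ext^1(T_i^+,T_{i-1}^+)=\Ext^2(T_i^+,T_{i-1}^+)=0$. So with NY as a black box the lemma has no content. It is evidently the downward step of an induction (the lemma on the evaluation map $\varphi$ is the upward step) that \emph{derives} those relations in the $(f,G_+,H)$-setting from the Ext-vanishing. That derivation is what you must supply.

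\textbf{How the paper supplies it.} The paper argues by stability. Since $T_i^+$ and $T_{i-1}^+$ are $(f,G_+,H)$-stable, $\phi^+(T_i^+)\le\phi^+_{\min}(\im\psi)\le\phi^+_{\max}(\im\psi)\le\phi^+(T_{i-1}^+)$.
\begin{itemize}
\item If $\psi$ is not injective, the first inequality is strict. Then $\im\psi$ is semistable of phase $\phi^+(T_{i-1}^+)$, i.e.\ $\im\psi=T_{i-1}^+\otimes U$.
\item Universality of the coevaluation then makes $\psi$ surjective. The Mukai vector rules this out, since $\ker\psi$ would have $v=-u_{i-2}$.
\item Next, $\phi^+(T_{i-1}^+)\le\phi^+_{\min}(\coker\psi)\le\phi^+_{\max}(\coker\psi)\le\phi^+(T_{i-2}^+)$. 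The cokernel is not $T_{i-1}^+\otimes U$, because $v(\coker\psi)=u_{i-2}$.
\item So $\coker\psi$ is semistable of phase $\phi^+(T_{i-2}^+)$ with Mukai vector $u_{i-2}$, hence $\coker\psi=T_{i-2}^+$.
\end{itemize}
Only after this does the hypothesis identify $\mathrm{cone}(\psi)$ with $R^{-1}_{T_{i-1}^+}(T_i^+)[1]$.

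\textbf{Smaller points.}
\begin{itemize}
\item The exceptional indices are not a bookkeeping issue. At $i=2$ the relation is unshifted, $R_{T_1}(T_0)=T_2^+$, so $\mathrm{cone}(\psi)=T_0[1]$. Then $\psi$ is \emph{surjective} with kernel $T_0$: this is the sequence $0\to T_0\to T_2^+\to\Ext^1(T_1,T_0)\otimes T_1\to 0$. So the lemma has to be read for $i\le 0$ or $i\ge 3$. That is precisely the range where $\phi^+(T_i^+)<\phi^+(T_{i-1}^+)<\phi^+(T_{i-2}^+)$, which the argument above needs.
\item In your Ext step the twist goes the wrong way. $R^{-1}_{T_{i-1}^+}$ sends $T_i^+$ to $T_{i-2}^+[-1]$; it does not send $T_{i-2}^+$ to $T_i^+[-1]$. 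Use $R_{T_{i-1}^+}$ instead, which sends $T_{i-1}^+\mapsto T_{i-1}^+[-1]$ and $T_{i-2}^+\mapsto T_i^+[1]$. Then $\Hom(T_{i-1}^+,T_{i-2}^+[k])\cong\Hom(T_{i-1}^+,T_i^+[k+2])$, which vanishes for $k\ge1$ for degree reasons. This handles $k=1,2$ together, and Lemma \ref{lem:EF} is not needed.
\item For an exceptional $T_{i-1}^+$ on an Enriques surface, the extra term ${\bf R}\Hom(T_i^+,T_{i-1}^+(K_X))$ cannot be killed by stability. Since $K_X$ is numerically trivial, its Euler characteristic is $\chi(T_i^+,T_{i-1}^+)=\dim\Hom(T_i^+,T_{i-1}^+)$, which is nonzero as soon as $\psi\ne0$. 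So your claim there is false, and the lemma as written concerns spherical $T_{i-1}^+$.
\end{itemize}
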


\begin{proof}
$$
\phi^+(T_i^+) \leq \phi_{\min}^+(\im \psi) \leq \phi_{\max}^+(\im \psi) \leq \phi^+(T_{i-1}^+).
$$
 
If $\phi^+(T_i^+) = \phi_{\min}^+(\im \psi)$, then $\im \psi=T_i^+$ and $\psi$ is injective.
If $\phi^+(T_i^+) < \phi_{\min}^+(\im \psi)$, then 
$$
\phi_{\min}^+(\im \psi) = \phi_{\max}^+(\im \psi) = \phi^+(T_{i-1}^+).
$$
Hence $\im \psi=T_{i-1}^+ \otimes U$.
Since $U \to \Hom(T_i^+,T_{i-1}^+)^{\vee}$ is isomorphic,
$\psi$ is surjective.
Since $\rk \ker \psi <0$, $\psi$ is injective.  
We note that 
$$
\phi^+(T_{i-1}^+) \leq \phi_{\min}^+(\coker \psi) \leq \phi_{\max}^+(\coker \psi) \leq \phi^+(T_{i-2}^+).
$$
If $\phi^+(T_{i-1}^+) = \phi_{\min}^+(\coker \psi)$, then
$\coker \psi=T_{i-1}^+ \otimes U$, which is a contradiction.
Then 
$\phi_{\min}^+(\coker \psi) = \phi_{\max}^+(\coker \psi) = \phi^+(T_{i-2}^+)$.
Since $v(\coker \psi)=u_{i-2}$, $\coker \psi=T_{i-2}^+$.
\end{proof}

\begin{lem}
Let $E$ be the universal extension
\begin{equation}
0 \to T_0 \to T_2^+ \to \Ext^1(T_1,T_0) \otimes T_1 \to 0.
\end{equation}
Then $E=T_2^+$.
\end{lem}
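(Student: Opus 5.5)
The plan is to identify the universal extension with the spherical twist $R_{T_1}(T_0)$, and then to check directly that it is $(f,G_+,H)$-stable with Mukai vector $u_2$. Uniqueness of $T_2^+$ (up to $\otimes {\cal O}_X(K_X)$) then gives $E=T_2^+$.

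\emph{Step 1: vanishing of $\Hom$ and $\Ext^2$.} First I show $\Hom(T_1,T_0)=0$ and $\Ext^2(T_1,T_0)=0$.
\begin{enumerate}
\item[(a)] $T_0$ and $T_1$ are non-isomorphic $(f,G,H)$-stable sheaves lying on the same wall, i.e. with the same value of $\chi(G,\;)/\rk$, or with the fiber-sheaf convention when $\rk T_1=0$. The argument of Lemma \ref{lem:EF} therefore gives $\Hom(T_1,T_0)=0$.
\item[(b)] By Serre duality $\Ext^2(T_1,T_0)=\Hom(T_0,T_1(K_X))^{\vee}$. Here $T_1(K_X)$ is again $(f,G,H)$-stable, and the argument of Lemma \ref{lem:EF2} applies because $(K_X\cdot H)\le 0$. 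The inequality is strict, or the two sheaves are distinct stable objects of the same phase, so this $\Hom$ also vanishes.
\end{enumerate}
Consequently ${\bf R}\Hom(T_1,T_0)=\Ext^1(T_1,T_0)[-1]$. The cone defining $R_{T_1}(T_0)$ is then precisely the universal extension, so $E\cong R_{T_1}(T_0)$ and $v(E)=u_0+\dim\Ext^1(T_1,T_0)\,u_1$. Since $\dim \Ext^1(T_1,T_0)=-\chi(T_1,T_0)=\langle u_1,u_0\rangle$, this equals $R_{u_1}(u_0)=u_2$, matching the recursion for $i=1$.

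For the Enriques $(-1)$-case the same computation has to be run with both $T_1$ and $T_1(K_X)$, using the Enriques version of $R_{E_0}$. I expect this to go through verbatim.

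\emph{Step 2: $\Hom(T_1,E)=0$.} Apply $\Hom(T_1,\;)$ to the universal extension. The connecting map
$$\Hom(T_1,\Ext^1(T_1,T_0)\otimes T_1)=\Ext^1(T_1,T_0)\to \Ext^1(T_1,T_0)$$
is the identity. Together with $\Hom(T_1,T_0)=0$ this gives $\Hom(T_1,E)=0$.

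\emph{Step 3: stability.} Here I prove that $E$ is $(f,G_+,H)$-stable. $E$ is an extension of $(f,G,H)$-semistable sheaves of the same phase on the wall, so it is $(f,G,H)$-semistable. Hence every $(f,G_+,H)$-destabilizing subsheaf $F$, which I may take in ${\cal C}_{G_+}$ by Lemma \ref{lem:C_G}, is $(f,G,H)$-semistable of the same phase. Its Jordan--H\"older factors have Mukai vectors in ${\cal H}$ and, by the description of $R^+$ on this wall, they are only $T_0$ and $T_1$.

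Now intersect with the filtration $T_0\subset E$. The subsheaf $F\cap T_0$ is $0$ or $T_0$ by stability. The image of $F$ in $T_1^{\oplus n}$ is $T_1\otimes U'$ for a subspace $U'$.
\begin{enumerate}
\item[(a)] If $F\cap T_0=0$, then $F\cong T_1\otimes U'$ embeds in $E$, contradicting Step 2 unless $F=0$.
\item[(b)] If $F\supset T_0$, then $F$ is the pullback of $T_1\otimes U'$, i.e. the extension classified by $U'\subset\Ext^1(T_1,T_0)$. Then $\phi^+(F)>\phi^+(E)$ together with $\phi^+(T_1)>\phi^+(T_0)$ forces $\dim U'/v$-ratio comparisons. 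Since $v(F)=u_0+(\dim U')u_1$ with $\dim U'<\dim\Ext^1$, the linear dependence of $\phi^+$ on $v$ gives $\phi^+(F)<\phi^+(E)$, a contradiction.
\end{enumerate}
Thus $E$ is $(f,G_+,H)$-stable with $v(E)=u_2$, and $E=T_2^+$, choosing the $K_X$-twist normalization of $T_2^+$ accordingly.

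The main obstacle is the claim in Step 3 that the only stable factors on the wall are $T_0$ and $T_1$. For this I would invoke the structure of $R^+$ and the hyperbolic lattice ${\cal H}$ recalled above, following \cite[sect. 5]{NY}.
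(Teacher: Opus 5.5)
Your proof is correct within the framework the paper uses, but it follows a different route from the paper's.

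\textbf{The paper's route.} It argues entirely on the $G_+$ side, using the Harder--Narasimhan filtration of $E$.
\begin{enumerate}
\item[(1)] Since $v(E)=u_2$, one has $\phi^+(T_2^+)=\phi^+(E)\le\phi^+_{\max}(E)\le\phi^+(T_1)$.
\item[(2)] If $\phi^+_{\max}(E)=\phi^+(T_1)$, then $T_1$ would be a direct summand of $E$, which is impossible for the universal extension.
\item[(3)] The paper then uses the assumed ordering $\phi^+(T_1)>\phi^+(T_2^+)>\cdots$: no stable object with Mukai vector in ${\cal H}$ has $\phi^+$ strictly between $\phi^+(T_2^+)$ and $\phi^+(T_1)$. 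This forces $\phi^+_{\max}(E)=\phi^+(E)$.
\item[(4)] Hence $E$ is semistable with $v(E)=u_2$, so $E=T_2^+$.
\end{enumerate}

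\textbf{Your route.} You work at the wall instead. $E$ lies in $\overline{\cal M}_{(f,H)}^G(u_2)^{ss}$ with factors $T_0,T_1$, so any $G_+$-destabilizer is built from $T_0$ and $T_1$. Your Step 2, $\Hom(T_1,E)=0$, is the precise form of the paper's ``no $T_1$ summand'', and it kills the subobjects that miss $T_0$. Monotonicity of $\phi^+(u_0+mu_1)$ in $m$ kills the subobjects that contain $T_0$.

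\textbf{Comparison.} The paper's argument is shorter, but it presupposes the full ordering of $\phi^+$-values on ${\cal H}$, which is itself a consequence of the Bayer--Macr\`\i/Nuer--Yoshioka classification. Yours needs only local finiteness of walls and the simplicity of $T_0,T_1$ at the wall. It also gives stability directly, not just semistability. Finally, it makes explicit two facts the paper leaves implicit: $E\cong R_{T_1}(T_0)$ and $v(E)=u_2$.

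\textbf{Remarks on your write-up.}
\begin{itemize}
\item[(a)] The ``main obstacle'' you flag is not really one. A same-phase subobject $F\subset E$ has its factors among those of $E$ by Jordan--H\"older uniqueness, so you do not need the structure of $R^+$.
\item[(b)] What does need a sentence is that $T_0$ and $T_1$ have no proper subobjects with the same value of $\chi(G,\;)/\rk$ at $G$. Definition~\ref{defn:fGH} has the $H$-slope tie-breaker, so ``$(f,G,H)$-stable'' alone does not give $F\cap T_0\in\{0,T_0\}$. In the paper's BM/NY-style setup this is how $T_0$ and $T_1$ are meant.
\item[(c)] For the same reason, ``semistable at $G$'' should be read at this level: $\overline{\cal M}^G$ is extension-closed, whereas full three-level semistability need not be.
\item[(d)] In Step 1, Lemma~\ref{lem:EF} needs a strict inequality, so use the two sides of the wall. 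From $\phi^+(T_1)>\phi^+(T_0)$ you get $\Hom(T_1,T_0)=0$. From $\phi^-(T_1)<\phi^-(T_0)$ you get $\Hom(T_0,T_1(K_X))=0$.
\item[(e)] Your Enriques caveat is right. $v(E)=u_0+\langle u_0,u_1\rangle u_1$ equals $R_{u_1}(u_0)$ only when $\langle u_1^2\rangle=-2$. So the lemma as written is the spherical case.
\end{itemize}
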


\begin{proof}
Let $E$ be the universal extension. We note that
$\phi^+(T_2^+) \leq \phi_{\max}^+(E) \leq \phi^+(T_1)$.
If $\phi_{\max}^+(E) = \phi^+(T_1)$, then $T_1$ is a direct summand of $E$.
So $\phi^+(T_2^+) = \phi_{\max}^+(E)$. Hence $E=T_2^+$.
\end{proof}

\begin{lem}
Let $E$ be the universal extension
\begin{equation}
0 \to \Ext^1(T_1,T_0)^{\vee} \otimes T_0 \to T_{-1}^+ \to T_1 \to 0.
\end{equation}
Then $E=E_{-1}^+$. 
\end{lem}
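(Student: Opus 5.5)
We aim to show that the universal extension $E$ is $(f,G_+,H)$-semistable with $v(E)=u_{-1}$. The identification $E\cong T_{-1}^+$ then follows from the uniqueness of $T_{-1}^+$ (up to $\otimes{\cal O}_X(K_X)$). This is the dual of the argument for the previous lemma ($T_2^+$ as a universal extension): there we controlled the maximal $\phi^+$-factor, and here we control the minimal one.

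\emph{Step 1 (Mukai vector).} Since $\phi^+(T_1)>\phi^+(T_0)$ and both are $(f,G_+,H)$-stable, Lemma \ref{lem:EF} gives $\Hom(T_1,T_0)=0$. Lemma \ref{lem:EF2} gives $\Ext^2(T_1,T_0)=0$; if the rank hypotheses there fail because $\rk T_1=0$, we use Serre duality together with the twisted-stability inequalities instead. Hence $n:=\dim\Ext^1(T_1,T_0)=\langle u_0,u_1\rangle$, up to the normalisation $\langle u_0^2\rangle\in\{-1,-2\}$. Therefore $v(E)=nu_0+u_1=R_{u_0}(u_1)$. From the recursion $u_1=R_{u_0}(u_{-1})$ and the fact that $R_{u_0}$ is an involution, we get $v(E)=u_{-1}$. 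In particular $\phi^+(E)=\phi^+(T_{-1}^+)$.

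\emph{Step 2 (no quotient to $T_0$).} Apply $\Hom(-,T_0)$ to $0\to \Ext^1(T_1,T_0)^{\vee}\otimes T_0\to E\to T_1\to 0$. This gives
\begin{equation*}
0=\Hom(T_1,T_0)\to\Hom(E,T_0)\to \Ext^1(T_1,T_0)\otimes\Hom(T_0,T_0)\xrightarrow{\delta}\Ext^1(T_1,T_0).
\end{equation*}
By universality of the extension, $\delta$ is the identity, so $\Hom(E,T_0)=0$ (and likewise for $T_0(K_X)$, if that twist is relevant).

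\emph{Step 3 (semistability).} Take the Harder--Narasimhan filtration of $E$ with respect to $(f,G_+,H)$ (Proposition \ref{prop:HNF}) and let $F$ be its last factor. Since $E$ is filtered by copies of $T_0$ and $T_1$, we have $\phi^+(F)\ge\phi^+(T_0)$. If equality held, $F$ would have phase $\phi^+(T_0)$; because $T_0$ is rigid, its Jordan--H\"older factors would be copies of $T_0$, producing a nonzero map $E\to T_0$ and contradicting Step 2. So $\phi^+(F)>\phi^+(T_0)$. The factors have Mukai vectors in ${\cal H}^+$, and by the displayed ordering of phases every stable object in ${\cal H}^+$ with phase strictly above $\phi^+(T_0)$ is either some $T_p^+$ with $p\le -1$ or satisfies $\langle v^2\rangle\ge0$. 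In every case its phase is $\ge\phi^+(T_{-1}^+)$. Hence $\phi^+_{\min}(E)\ge\phi^+(T_{-1}^+)=\phi^+(E)$. The minimal slope cannot exceed the average, so $E$ is $(f,G_+,H)$-semistable. Finally, $\langle u_{-1}^2\rangle<0$ forces all Jordan--H\"older factors to be proportional to the primitive vector $u_{-1}$, so $E$ is stable and $E\cong T_{-1}^+$.

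The main obstacle is the claim in Step 3 that no stable object in ${\cal H}^+$ has phase strictly between $\phi^+(T_0)$ and $\phi^+(T_{-1}^+)$. I would derive it from the classification of effective classes in ${\cal H}$ (the chain $R^+=\{u_n\}$ together with positive classes), as in \cite[sect.\ 6]{NY}, and I would check separately the degenerate case $\rk T_1=0$, where $\phi^+(T_1)=\infty$.
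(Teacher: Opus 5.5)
Your proposal is correct and follows the paper's own proof. The paper likewise squeezes $\phi^+_{\min}(E)$ between $\phi^+(T_0)$ and $\phi^+(T_{-1}^+)$, and rules out $\phi^+_{\min}(E)=\phi^+(T_0)$ because $T_0$ would then split off the universal extension (the same fact as your $\Hom(E,T_0)=0$). It then reads off $\phi^+_{\min}(E)=\phi^+(T_{-1}^+)$, and hence $E\cong T_{-1}^+$, from the displayed phase ordering; your Steps 1 and 3 only make explicit what the paper leaves implicit.

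One small slip: Lemma \ref{lem:EF2} gives $\Ext^2(T_1,T_0)=0$ only when applied with $G_-$, where $\phi^-(T_0)>\phi^-(T_1)$, not with $G_+$. Alternatively, it follows directly from the $(f,G,H)$-stability of $T_0$ and $T_1(K_X)$ at the wall.
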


\begin{proof}
Let $E$ be the universal extension.

$\phi^+(T_0) \leq \phi_{\min}^+(E) \leq \phi^+(T_{-1}^+)$.
If $\phi^+(T_0) = \phi_{\min}^+(E)$, then $T_0$ is a direct summand of $E$.
Hence $\phi^+(T_0) < \phi_{\min}^+(E)$. Then $\phi_{\min}^+(E) = \phi^+(T_{-1}^+)$ and.
$E=T_{-1}^+$. 
\end{proof}

\end{NB}
We set
\begin{equation}
\begin{split}
{\cal C}_0:=&\{ v \in {\cal H} \mid \langle v, u_0 \rangle>0, \langle v,u_1 \rangle>0 \}\\
{\cal C}_n:=& \left\{v \in {\cal H} \left| \langle v,u_n \rangle<0, \langle v,u_{n+1} \rangle>0 \right. \right\},\; n>0\\
{\cal C}_n:=& \left\{v \in {\cal H} \left| \langle v,u_n \rangle>0, \langle v,u_{n+1} \rangle<0 \right. \right\},\; n<0
\end{split}
\end{equation}
Then $\{{\cal C}_n \mid n \in {\Bbb Z} \}$ is the chamber decomposition
of ${\cal H}$ by $u \in R^+$ and 
$$
{\cal C}_0=\{ v \in {\cal H} \mid \langle v, u_n \rangle>0, n \in {\Bbb Z} \}.
$$
For $v_0 \in {\cal C}_0$, 
we set 
\begin{equation}
v_n:= 
\begin{cases}
R_{u_n} \circ R_{u_{n-1}} \circ \cdots \circ R_{u_1}(v_0), & n>0\\
R_{u_{n+1}}^{-1} \circ R_{u_{n+2}}^{-1} 
\circ \cdots \circ R_{u_0}^{-1}(v_0), & n \leq 0.
\end{cases}
\end{equation}
Then for $v \in {\cal C}_n$, there is $v_0 \in {\cal C}_0$ such that
$v_n=v$.

\begin{NB}
\subsubsection{}

\begin{equation}
0 \to \Ext^1(T_1,T_0)^{\vee} \otimes T_1 \to T_2^- \to T_0 \to 0.
\end{equation}

$T_3^-:=R_{T_2^-}^{-1}(T_1^-[1])$. Then  
\begin{equation}
0 \to T_1^- \to \Ext^2(T_2^-,T_1) \otimes T_2^-  \to T_3^- \to 0.
\end{equation}

$R_{T_1} \circ R_{T_2}:{\cal A}_2^* \to {\cal A}_1^* \to {\cal A}_0$.

$T_2^+=R_{T_1}(T_0)$.
$$
0 \to T_0 \to T_2^+ \to \Ext^1(T_1,T_0) \otimes T_1 \to 0. 
$$

$T_2^+ \ne {\cal O}_X(3C)$.

Indeed $\Hom({\cal O}_C(-5),T_1^{\oplus 3}) \to \Hom({\cal O}_C(-5),T_0[1])={\Bbb C}^4$ is not injective.
Hence $\Hom({\cal O}_C(-5),T_2^+) \ne 0$.

$T_3^+[1]=R_{T_2^+}(T_1)$.

\begin{equation}
0 \to T_3^+ \to \Hom(T_2^+,T_1) \otimes T_2^+ \to T_1 \to 0.
\end{equation}

$R_{T_0}^{-1}(T_1)=T_{-1}^+$.

$$
0 \to \Ext^1(T_1,T_0) \otimes T_0 \to T_{-1}^+ \to T_1 \to 0.
$$

$T_{-1}^+$ is locally free.

$R_{T_0}(T_1)=T_{-1}^-$.

\begin{equation}
0 \to T_1 \to T_{-1}^- \to \Ext^1(T_0,T_1) \otimes T_0 \to 0.
\end{equation}

$R_{T_{-1}^+}(T_{-2}^+)=T_0[1]$.

\begin{equation}
0 \to T_0 \to \Hom(T_0,T_{-1}^+)^{\vee} \otimes T_{-1}^+ \to T_{-2}^+ \to 0.  
\end{equation}

\begin{equation}
0 \to T_{-1}^+ \to \Hom(T_{-1}^+,T_{-2}^+)^{\vee} \otimes T_{-2}^+ \to T_{-3}^+ \to 0.  
\end{equation}

$R_{T_{-1}^-}(T_0)=T_{-2}^- [1]$.

\begin{equation}
0 \to T_{-2}^- \to \Hom(T_{-1}^-,T_0) \otimes T_{-1}^- \to T_0 \to 0.
\end{equation}

$T_i^+$ $(i<0)$ are locally free.
$T_i^-$ $(i \geq 2, i<0)$ are not torsion free.

For $i<0$, 
\begin{equation}
0 \to T_{i-1}^- \to \Hom(T_{i}^-,T_{i+1}^-) \otimes T_{i}^- \to T_{i+1}^- \to 0.
\end{equation}
\end{NB}

\begin{NB}

$$
\varphi:\Hom(T_i^-,T_{i+1}^-) \otimes T_i^- \to T_{i+1}^-
$$

$$
\phi^-(T_{i+1}^-) \geq \phi_{\max}^-(\im \varphi) \geq \phi_{\min}^-(\im \varphi) \geq \phi^-(T_i^-).
$$

If $\phi^-(T_{i+1}^-) = \phi_{\max}^-(\im \varphi)$, then $T_{i+1}^-=\im \varphi$.

If $\phi^-(T_{i+1}^-)> \phi_{\max}^-(\im \varphi)$, then
$\phi_{\max}^-(\im \varphi)=\phi_{\min}^-(\im \varphi)=\phi^-(T_i^-)$
and $\im \varphi=T_i^- \otimes U$.
Since $\Hom(T_i^-,T_{i+1}^-) \to \Hom(T_i^-,\im \varphi)$ is isomorphic,
$\varphi$ is injective.

$v(\ker \varphi)=u_{i-1}$. Assume that $\ker \varphi$ is not stable.
Then
$$
\phi^-(T_i^-) \geq \phi_{\max}^-(\ker \varphi) \geq \phi^-(T_{i-1}^-).
$$
If $\phi^-(T_i^-) = \phi_{\max}^-(\ker \varphi)$, then
$\ker \varphi$ contains $T_i^-$, which is a contradiction.
Hence $\phi^-(T_i^-) > \phi_{\max}^-(\ker \varphi)$.
In this case, we get $\phi_{\max}^-(\ker \varphi)=  \phi^-(T_{i-1}^-)$.
Hence $\ker \varphi$ is semistable.
By $v(\ker \varphi)=u_{i-1}$, $\ker \varphi$ is stable.

\end{NB}

We set
\begin{equation}
R_-:=R_{T_0} \circ R_{T_1},\; R_+:=R_{T_1} \circ R_{T_0}.
\end{equation}
Then
\begin{equation}
R_-=R_{T_i^-} \circ R_{T_{i+1}^-},\; R_+=R_{T_i^+} \circ R_{T_{i-1}^+}
\end{equation}
(cf. \cite[Lem. 6.18]{NY}).

\begin{prop}[{\cite[Prop. 6.20]{NY}}]\label{Prop:NonMinimalIsomorphism}
\begin{enumerate}
\item[(1)]
Assume that $n$ is even and $v_n \in {\cal C}_n$.
Then $R_+^{\frac{n}{2}} \circ R_-^{\frac{n}{2}}$ induces a birational map
\begin{equation}
{\cal M}_{(f,H)}^{G_-}(v_n) \cong {\cal M}_{(f,H)}^{G_-}(v_0) \cdots \to {\cal M}_{(f,H)}^{G_+}(v_0)
\cong {\cal M}_{(f,H)}^{G_+}(v_n).
\end{equation}
\item[(2)]
Assume that $n$ is odd and $v_n \in {\cal C}_n$.
Then $R_+^{\frac{n-1}{2}} \circ R_{T_1} \circ R_{T_1} \circ R_-^{\frac{n-1}{2}}$
induces a birational map
\begin{equation}
{\cal M}_{(f,H)}^{G_-}(v_n) \cong {\cal M}_{(f,H)}^{G_-}(v_1) \cong {\cal M}_{(f,H)}^{G_+}(v_0)  
\cdots \to {\cal M}_{(f,H)}^{G_-}(v_0) \cong 
{\cal M}_{(f,H)}^{G_+}(v_1)
\cong {\cal M}_{(f,H)}^{G_+}(v_n).
\end{equation}
\end{enumerate}
\end{prop}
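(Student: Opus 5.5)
The plan is to follow the strategy of \cite[Prop. 6.20]{NY} (and of Bayer--Macr\`{\i} \cite{BM:2} for Brauer--Nikulin type walls). First reduce to the two base cases $n=0$ and $n=1$, using the spherical (or exceptional) twists $R_\pm$ as isomorphisms of moduli stacks. The two ingredients are:
\begin{enumerate}
\item[(a)] an isomorphism ${\cal M}^{G_\pm}_{(f,H)}(v_{n}) \cong {\cal M}^{G_\pm}_{(f,H)}(v_{n-2})$ induced by $R_\pm^{\pm 1}$, together with ${\cal M}^{G_-}_{(f,H)}(v_1)\cong {\cal M}^{G_+}_{(f,H)}(v_0)$ induced by $R_{T_1}$;
\item[(b)] the fact that, for $v_0 \in {\cal C}_0$, the wall $W$ is not totally semistable, so that ${\cal M}^{G_-}_{(f,H)}(v_0)$ and ${\cal M}^{G_+}_{(f,H)}(v_0)$ share a dense open substack and are therefore birational via the identity.
\end{enumerate}
Composing these gives the displayed chains in (1) and (2).

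For (a), fix $E$ which is $(f,G_-,H)$-stable with $v(E)=v_n$, $n>0$.
\begin{enumerate}
\item[(i)] The inequalities $\phi^-(T_i^-)<\phi^-(E)<\phi^-(T_{i+1}^-)$ for the appropriate index, combined with Lemma \ref{lem:EF} and Serre duality via Lemma \ref{lem:EF2} (using $(K_X\cdot H)\le 0$ in our setting), will give vanishing of $\Hom(T_{n+1}^-,E)$ and $\Ext^2(T_{n+1}^-,E)$, or of the dual pair.
\item[(ii)] Hence ${\bf R}\Hom(T^-_{n},E)$ is concentrated in one degree, and the spherical twist $R_{T_n^-}(E)$ is a sheaf, given by a universal extension or an evaluation kernel. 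This is exactly as in the NB computations with $\phi^\pm_{\max}$ and $\phi^\pm_{\min}$ above.
\item[(iii)] To show this sheaf is $(f,G_-,H)$-stable with Mukai vector $R_{u_n}(v_n)=v_{n-1}$, take its Harder--Narasimhan filtration (Proposition \ref{prop:HNF}). Note that on the wall all relevant Mukai vectors lie in ${\cal H}$, and use the ordering of the $\phi^-(T_i^-)$: any destabilizing factor would have to be some $T_i^-$, and this contradicts the vanishing from (i).
\item[(iv)] Iterating and using $R_-=R_{T_i^-}\circ R_{T_{i+1}^-}$ (\cite[Lem. 6.18]{NY}) yields the isomorphism with $v_{n-2}$, with inverse given by $R_-^{-1}$.
\end{enumerate}
The negative-index case and the $+$ side are symmetric, with the roles of the inequalities reversed.

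For (b), use the dimension estimate \eqref{eq:HNF-dim}, together with the codimension computation
$\dim {\cal M}^{G_\pm}_{(f,H)}(v)-\dim{\cal F}^\pm(v_1,\dots,v_m)=\sum_{i>j}\langle v_i,v_j\rangle+\sum_i(\langle v_i^2\rangle-\dim{\cal M}(v_i))$.
If $v_0\in{\cal C}_0$, we have $\langle v_0,u_n\rangle>0$ for all $n$. By the argument of \cite[Lem. 6.2]{NY}, already used in the preceding proposition, every Harder--Narasimhan stratum then has positive codimension. So the $G_+$-stable and $G_-$-stable loci are both dense and meet.

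The odd case (2) will additionally use $R_{T_1}\circ R_{T_1}$ to pass across the wall. Here $R_{T_1}$ sends $(f,G_-,H)$-stable objects with vector $v_1$ to $(f,G_+,H)$-stable ones with vector $v_0$, since $\phi^\pm(T_1)$ swap sides relative to $\phi^\pm(E)$.

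The main obstacle is step (iii): stability of the twisted sheaf. In particular, one must check that it is an honest sheaf in ${\cal C}_{G_\pm}$, with no torsion fiber subsheaf from ${\cal F}^+$, when $\rk T_1=0$, where $\phi^\pm(T_1)=\pm\infty$. In that case I would argue separately via Remark \ref{rem:fGH} and the torsion analysis of Lemma \ref{lem:C_G}.
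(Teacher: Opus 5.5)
\paragraph{A gap in the reduction step.}
The paper gives no argument of its own; it transports \cite[Prop. 6.20]{NY}. Your overall architecture matches that source: move $v_n$ to $v_0$ by twists, then use that $W$ is not totally semistable for $v_0\in{\cal C}_0$. The reduction step (a)(iii)--(iv), however, is wrong as stated.

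You claim that the single twist $R_{T_n^-}$ sends $(f,G_-,H)$-stable sheaves of class $v_n$ to $(f,G_-,H)$-stable sheaves of class $v_{n-1}$. You also let $R_{T_1}$ send ${\cal M}^{G_-}_{(f,H)}(v_1)$ to ${\cal M}^{G_+}_{(f,H)}(v_0)$, which is correct. Now take $n=2$ and $R_-=R_{T_1}\circ R_{T_2^-}$. Your two claims put $R_-(E)$ in ${\cal M}^{G_+}_{(f,H)}(v_0)$ for every $E\in{\cal M}^{G_-}_{(f,H)}(v_2)$. So $R_-$ could be onto ${\cal M}^{G_-}_{(f,H)}(v_0)$, as (a) and the statement require, only if ${\cal M}^{G_-}_{(f,H)}(v_0)\subset{\cal M}^{G_+}_{(f,H)}(v_0)$. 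That fails as soon as $W$ is an actual wall for $v_0$, for instance for non-split extensions $0\to F'\to E''\to T_0\to 0$.

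Concretely, for such $E''$ put $E:=R_-^{-1}(E'')$. Then $R_{T_2^-}(E)=R_{T_1}^{-1}(E'')$, and since $R_{T_1}(T_2^-)=T_0$,
\[
\Hom(R_{T_2^-}(E),T_2^-)=\Hom(R_{T_1}^{-1}(E''),R_{T_1}^{-1}(T_0))=\Hom(E'',T_0)\neq 0 .
\]
By the displayed ordering, $\phi^-(T_2^-)$ is smaller than $\phi^-$ of class $v_1$, so $R_{T_2^-}(E)$ is not $(f,G_-,H)$-semistable.

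Steps (i)--(ii) do not hold up either. Lemmas \ref{lem:EF} and \ref{lem:EF2} only give $\Hom(E,T_n^-)=\Ext^2(T_n^-,E)=0$. Nothing forces $\Ext^1(T_n^-,E)=0$. Likewise $\Hom(T_{n+1}^-,E)$ need not vanish, because $\phi^-(T_{n+1}^-)<\phi^-(E)$. So ${\bf R}\Hom(T_n^-,E)$ is not shown to be concentrated in one degree.

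\paragraph{What the argument needs instead.}
The missing idea is the bookkeeping of sides. A twist by one of the $(f,G_0,H)$-stable rigid objects $T_0$, $T_1$ exchanges $G_-$ and $G_+$; that is exactly the parity pattern in the statement.

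Work in the finite-length abelian category ${\cal A}$ of $(f,G_0,H)$-semistable objects with Mukai vector in ${\cal H}$, where $T_0$ and $T_1$ are simple. There the evaluation map $\Hom(T_i,E)\otimes T_i\to E$ is injective, so $R_{T_i}(E)\in{\cal A}$ whenever $\Hom(E,T_i)=0$, with no concentration of ${\bf R}\Hom(T_i,E)$ needed. Moreover, $\Hom(T_i,R_{T_i}(E))=\Hom(T_i[1],E)=0$. The twist $R_{T_i}$ turns $G_-$-stability into $G_+$-stability or conversely, because $R_{u_i}$ reverses the orientation of the rank-two lattice ${\cal H}$, which is what distinguishes the $\phi^+$-ordering from the $\phi^-$-ordering. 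Consequently $R_-=R_{T_0}\circ R_{T_1}$ and $R_+=R_{T_1}\circ R_{T_0}$ preserve sides and act by $v_n\mapsto v_{n-2}$ and $v_n\mapsto v_{n+2}$ respectively.

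Equivalently, as in \cite{NY}, the single twists $R_{T_k^\pm}$ are treated as equivalences between the hearts obtained from ${\cal A}$ by twisting, in which $T_k^\pm$ are irreducible. The intermediate objects $R_{T_n^-}(E)$ live in those hearts, not in ${\cal M}^{G_-}_{(f,H)}(v_{n-1})$.

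Your step (b) is acceptable in outline. ${\cal H}$ has no isotropic vector because it carries infinitely many $(-1)$- or $(-2)$-classes, and then the classification of totally semistable walls in \cite{BM:2} and \cite{NY} applies to $v_0\in{\cal C}_0$. Use the K3/Enriques dimension count there, not the $\Ext^2=0$ count from the rational elliptic case. The reduction step, however, has to be rebuilt as above.
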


\begin{ex}
Let $X$ be a K3 surface and
$C$ a $(-2)$-curve in a fiber.
$u_0=v({\cal O}_X)$ and $u_1=(0,C,-3)$.
$$
{\cal H}:=({\Bbb Q}v_0+{\Bbb Q}v_1) \cap H^*(X,{\Bbb Z})={\Bbb Z}u_0+{\Bbb Z}u_1
$$ 
is a hyperbolic lattice.
$u=x u_0+y u_1$ is a $(-2)$-vector if and only if $x^2-3xy+y^2=1$.
$u_2=u_0+\langle u_0,u_1 \rangle u_1=u_0+3u_1$.
$u_3=-(u_1+\langle u_1,u_2 \rangle u_2)=3u_0+8u_1$.
$u_4=-(u_2+\langle u_2,u_3 \rangle u_3)=7u_0+17 u_1$.
$u_{-1}=u_1+\langle u_1,u_0 \rangle u_0=3u_0+u_1$.
$u_{-2}=-(u_0+\langle u_0,u_{-1} \rangle u_{-1})=8u_0+3u_1$.
$u_{-3}=-(u_{-1}+\langle u_{-1},u_{-2} \rangle u_{-2})=21u_0+8u_1$.

We take $G_0$ with $\langle v(G_0),u_1 \rangle=0$.
We set $T_0:={\cal O}_X$.
Let $T_1$ be a $G_0$-twisted stable fiber sheaf with $v(T_1)=u_1$.
Let ${\cal A}$ be the full subcategory of $\Coh(X)$ generated by $E \in \overline{\cal M}_{(f,H)}^{G_0}(v')$ 
with $v' \in {\cal H}$.
Thus $\phi(E)=\frac{\chi(G_0,E)}{\rk E}=\chi(G_0,{\cal O}_X)$.
\end{ex}

\begin{NB}
Let $f=C_1+C_2$ be a singular fiber.
Assume that there is a section $\sigma$ with $(\sigma \cdot C_1)=1$.
We set $H=\sigma+\frac{1}{4}C_1+kf$, where $(H^2)=0$.
We set $\alpha=\frac{1}{2}(C_1-C_2)$.
Then $(\alpha \cdot C_1)=-2$, $(\alpha \cdot C_2)=2$,
$(\alpha^2)=-2$ and $(H \cdot \alpha)=0$.
$(H \cdot C_i)=\frac{1}{2}$.

$\chi({\cal O}_{C_1}(-sH-\alpha))=-\frac{1}{2}s+3$ and
$\chi({\cal O}_{C_2}(-sH-\alpha))=-\frac{1}{2}s-1$.
$\chi(T_1(-sH-\alpha))=\chi({\cal O}_{C_2}(-sH-\alpha-4))=-\frac{1}{2}s-5$

If $s=-10$, then ${\cal O}_{C_2}(-4)$ is $G_0$-twisted stable with
$\chi(G_0,{\cal O}_{C_2}(-4))=0$.
${\cal O}_X$ is $(f,G_0,H)$-stable sheaf with $v({\cal O}_X)=v_0$.
So $T_0={\cal O}_X$ and $T_1={\cal O}_{C_2}(-4)$.

Assume that $s=-2$.
Then $\chi(G_0,{\cal O}_X)=\chi(G_0,{\cal O}_{C_2})=0$.
In this case ${\cal O}_X(-C_2)$ is $(f,G_0,H)$-stable.
Indeed $\chi(G_0,{\cal O}_X(-C_2)_{|C_1})=2$ and
$\chi(G_0,{\cal O}_X(-C_2)_{|C_2})=6$.

\end{NB}

We set $v:=7u_0+17u_1$. Then $\langle v^2 \rangle=38$ and $v \in {\cal C}_2$.
\begin{NB}
$\langle v,u_2 \rangle=-2$ and $\langle v,u_3 \rangle=7$.
\end{NB}
Thus 
$v=v_2=7u_0+17u_1$,
$v_1=R_{u_2}(v_2)=5u_0+11u_1$ and
$v_0=R_{u_1}(v_1)=5u_0+4u_1 \in {\cal C}_0$.
\begin{NB}
$v_3=28 u_0+73 u_1$.
$v_{-1}=7u_0+4u_1$.
\end{NB}

Let $F$ be a $(f,G_0,H)$-stable sheaf with $v(F)=v_0$.

\begin{equation}
0 \to \Ext^1(F,T_1)^{\vee} \otimes T_1 \to F_1^- \to F \to 0.
\end{equation}

\begin{equation}
0 \to \Ext^1(F_1^-,T_2^-)^{\vee} \otimes T_2^- \to F_2^- \to F_1^- \to 0.
\end{equation}

\begin{NB}
$R_{T_1} \circ R_{T_2^-}:{\cal A}_2^* \to {\cal A}_1^* \to {\cal A}$.
$T_2^- \in {\cal A}_1^*$ is irreducible. Hence $\Hom(T_2^-,F_1^-)=\Hom(F_1^-,T_2^-)=0$.

Or since $R_{T_1}(T_2^-)=T_0$ and $R_{T_1}(F_1^-)=F$, $\Hom(F_1^-,T_2^-[k])=\Hom(F,T_0[k])$.

$F_i^- \in {\cal M}_{(f,H)}^{G^-}(v_i)$.
\end{NB}

\begin{equation}
0 \to F \to F_1^+ \to \Ext^1(T_1,F) \otimes T_1  \to 0.
\end{equation}

\begin{equation}
0 \to F_1^+ \to F_2^+ \to \Ext^1(T_2^+,F_1^+) \otimes T_2^+  \to 0.
\end{equation}

\begin{NB}
$R_{T_2^+} \circ R_{T_1}:{\cal A} \to {\cal A}_1 \to {\cal A}_2$.
$T_2^+ \in {\cal A}_1$ is irreducible. Hence $\Hom(T_2^+,F_1^+)=\Hom(F_1^+,T_2^+)=0$.
\end{NB}

\begin{equation}
{\cal M}_{(f,H)}^{G_-}(v) \overset{R_-}{\to} {\cal M}_{(f,H)}^{G_-}(v_0) \cdots \to {\cal M}_{(f,H)}^{G_+}(v_0) 
\overset{R_+}{\to}  {\cal M}_{(f,H)}^{G_+}(v)
\end{equation}

\begin{NB}
Other example.

$v=5u_0+13u_1$, $v_1=2u_0+5u_1$, $v_2=u_0+2u_1$ and $v_3=u_0+u_1$.
$\langle v^2 \rangle=2$.
\end{NB}

\begin{ex}
Let $X$ be an Enriques surface and $C$ a $(-2)$-curve.
We set $u_0:=v({\cal O}_X)$ and $u_1:=(0,C,-2)$.
Then
$$
{\cal H}:=({\Bbb Q}v_0+{\Bbb Q}v_1) \cap H^*(X,{\Bbb Z})={\Bbb Z}u_0+{\Bbb Z}u_1
$$ 
is a hyperbolic lattice with
$\langle u_0^2 \rangle=-1, \langle u_0,u_1 \rangle=2, \langle u_1^2 \rangle=-2$.
We see that
$u_2:=u_0+\langle u_0,u_1 \rangle u_1=u_0+2u_1$,
$u_3:=-(u_1+2\langle u_1,u_2 \rangle u_2)=4u_0+7u_1$,
$u_{-1}:=u_1+2\langle u_1,u_0 \rangle u_0=4u_0+u_1$,
$u_{-2}:=-(u_0+\langle u_0,u_{-1} \rangle u_{-1})=7u_0+2u_1$.

\begin{NB}
$(-1+\sqrt{2})^n=a_n+b_n \sqrt{2}$.

If $2 \mid u$, then
$u_n=-(a_n+2b_n)u_0-b_n u_1=-a_n u_0-b_n (u_1+2u_0)$.
If $2 \nmid n$, then
$u_n=-(2(a_n+b_n)u_0+a_n u_1)=-a_n (2u_0+u_1)-2b_n u_0$.
\end{NB}
For $v:=5u_0+8u_1$, we see that
$v \in {\cal C}_2$ with $\langle v^2 \rangle=7$.
Hence we have
\begin{equation}
{\cal M}_{(f,H)}^{G_-}(v) \overset{R_-}{\to} {\cal M}_{(f,H)}^{G_-}(v_0) \cdots \to {\cal M}_{(f,H)}^{G_+}(v_0) 
\overset{R_+}{\to}  {\cal M}_{(f,H)}^{G_+}(v),
\end{equation}
where 
$v_1=R_{u_2}(v)=3u_0+4u_1$ and
$v_0=R_{u_1}(v_1)=3u_0+2u_1$.
\begin{NB}
$v_0=3u_0+2u_1$. $\langle v_0^2 \rangle=7$.
$v_{-1}=5u_0+2u_1$.
$v_1=3u_0+4u_1$.
$v_2=5u_0+8u_1$.

$\langle v,u_2 \rangle=-1$ and $\langle v,u_3 \rangle=2$.

$xu_0+yu_1 \in {\cal C}_0 \iff y<x<2y$.
 \end{NB}
\end{ex}

\subsection{The dependence on $H$.}
Assume that all fibers of $\pi$ are irreducible.
For $\eta \in N_H$, 
we set $H_\eta:=H+\eta-\frac{(\eta^2)}{2}f$. 
It is a $\pi$-ample ${\Bbb Q}$-divisor. 
Hence we have a decomposition 
$\NS(X)_{\Bbb Q}=({\Bbb Q}H_\eta+{\Bbb Q}f)+N_{H_\eta}$.
\begin{NB}
$sH+\alpha=sH_\eta+((\alpha \cdot \eta)-\tfrac{(\eta^2)}{2})f+(((s(\eta^2)-(\alpha \cdot \eta))f-s\eta+\alpha)$.
\end{NB}
Let us consider the projective space
${\Bbb P}:={\Bbb P}({\Bbb R} \oplus \NS(X)_{\Bbb R}/{\Bbb R}f)$.
It is a compactification of $\NS(X)_{\Bbb R}/{\Bbb R}f$.
Let $L:={\Bbb P} \setminus \NS(X)_{\Bbb R}/{\Bbb R}f$ be the hyperplane at infinity.
We have an embedding ${\cal G}(v) \hookrightarrow {\Bbb P}$
by $(s,\alpha) \to {\Bbb R}^{\times}(1+sH+\alpha)$.

\begin{prop}\label{prop:unbounded}
Let ${\cal C}$ be a chamber which is adjacent to $L$, that is,
the closure $\overline{\cal C}$ intersects $L$.
Assume that we can take a general point ${\Bbb R}^{\times }(H+\eta)$ of $\overline{\cal C} \cap L$, where
$\eta \in N_H$. Then
${\cal M}_{(f,H)}^G(v)={\cal M}_{(H_\eta)_f}^G(v)$
or
${\cal M}_{(f,H)}^G(v)=\{E \mid E^{\vee} \in {\cal M}_{(H_\eta)_f}^{G^{\vee}}(v^{\vee}) \}$.
\end{prop}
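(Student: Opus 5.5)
We view the statement as a generalization of Proposition~\ref{prop:Gieseker-chamber}, with the direction $s\to -\infty$ replaced by an arbitrary direction to infinity in $\NS(X)_{\Bbb R}/{\Bbb R}f$. The point of $L$ given by ${\Bbb R}^\times(H+\eta)$ is approached by points $1+sH+\alpha$ with $\alpha=t\eta+\alpha_0$, $s=t+O(1)$, $|t|\to\infty$. First we rewrite everything in the decomposition attached to $H_\eta$. We write $sH+\alpha=s'H_\eta+k'f+\alpha'$ with $\alpha'\in N_{H_\eta}$, using the formula in the NB: $s'=s$, $k'=(\alpha\cdot\eta)-\tfrac{(\eta^2)}{2}$, $\alpha'=\alpha-s\eta+(s(\eta^2)-(\alpha\cdot\eta))f$. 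The $f$-component is irrelevant by Lemma~\ref{lem:fGH-indep}. Along a path to the point of $L$, $\alpha-s\eta$ stays bounded, so $\alpha'$ stays in a compact subset $B^*$ of $N_{H_\eta}\otimes{\Bbb R}$ while $s'\to\pm\infty$.

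Second, we check that $(f,G,H)$-semistability coincides with $(f,G,H_\eta)$-semistability for $G$ in the chamber. Both definitions impose the same primary condition on $\chi(G,E_1)$ for subsheaves with the correct fiber degree. They differ only in the tie-break $(c_1(E_1)\cdot H)$ versus $(c_1(E_1)\cdot H_\eta)$, which matters only when $\chi(G,E_1)=\tfrac{\rk E_1}{\rk E}\chi(G,E)$. By Lemma~\ref{lem:general}, for $(s,\alpha)$ in a chamber such equalities force $rv_1-r_1v=0$, so the tie-break never matters. The same applies to fiber subsheaves, by the argument of Remark~\ref{rem:fGH}. Hence ${\cal M}_{(f,H)}^G(v)^{ss}={\cal M}_{(f,H_\eta)}^G(v)^{ss}$. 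We also need the walls and chambers for $H$ and $H_\eta$ to agree under the coordinate change. This holds because $W_{v_1}$ is defined intrinsically by $\langle e^{sH+\alpha},rv_1-r_1v\rangle=0$ and $e^{kf}$ does not affect it (Lemma~\ref{lem:fiber}-type computation). So ${\cal C}$ is also a chamber for $H_\eta$.

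Third, we apply Proposition~\ref{prop:Gieseker-chamber} with $H_\eta$ in place of $H$ and the compact set $B^*$. If $s'\to-\infty$ along ${\cal C}$ (the case $\mu_f(E)>\mu_f(G)$), then for $s'$ below the bound there, semistability equals $(s'H_\eta+\alpha')$-twisted Gieseker semistability with respect to $(H_\eta)_f$. Since ${\cal C}$ contains such points and the moduli stack is constant on ${\cal C}$, we get ${\cal M}^G_{(f,H)}(v)={\cal M}^G_{(H_\eta)_f}(v)$. If instead $s'\to+\infty$, then Remark~\ref{rem:dual-Gieseker-chamber} gives the dual description $E^\vee\in{\cal M}^{G^\vee}_{(H_\eta)_f}(v^\vee)$.

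The main obstacle is that Proposition~\ref{prop:Gieseker-chamber} assumes all fibers irreducible and uses a bound depending on $\mu$ with $\mu H_\eta$ integral, on $(D^2)$ taken in the $H_\eta$-decomposition, and on $A$ computed over $U(v)$ and $B^*$. We need these constants to be finite and uniform along the path. Here the genericity of the point of $\overline{\cal C}\cap L$ is used: it lets us take $\eta$ rational, so $\mu$ exists, and it lets us take the approach path inside ${\cal C}$ with $\alpha'$ in a compact set avoiding the finitely many walls meeting it (local finiteness lemma). Choosing this path carefully, so that the rational points with bounded $\alpha'$ remain in ${\cal C}$ all the way to the limit, is the step we expect to require the most care.
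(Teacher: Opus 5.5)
Your proposal follows the paper's route: the paper's one-line proof asserts that ${\cal C}$ is unbounded in the $sH_\eta$-direction and then invokes Proposition~\ref{prop:Gieseker-chamber} for $H_\eta$ together with Definition~\ref{defn:fGH2}, and your extra checks (walls are intrinsic, and $(f,G,H)$- and $(f,G,H_\eta)$-semistability agree on chambers via Lemma~\ref{lem:general}) only make explicit what the paper leaves implicit. The step you flag is exactly the one the paper takes for granted, and it is settled by convexity rather than by local finiteness, which does not apply because the region near $L$ is not compact in ${\cal G}(v)$: since walls are affine hyperplanes, ${\cal C}$ is open and convex, so the asymptotic direction $\pm(H+\eta)$ is a recession direction and ${\cal C}$ contains a whole ray $q_0\pm t(H+\eta)$, $t\geq 0$, along which $\alpha'$ is constant.
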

\begin{NB}
Since ${\cal C}$ defines an unbounded chamber in $\{s H_\eta \mid rs \ne d \}$,
${\cal M}_{(f,H)}^G(v)$ parameterizing Gieseker semistable sheaves $E$ or its dual, where the polarization is 
$H_\eta+nf$ $(n \gg 0)$
\end{NB} 

\begin{proof}
Since ${\cal C}$ defines an unbounded chamber in $\{s H_\eta \mid rs \ne d \}$,
the claim follows from Proposition \ref{prop:Gieseker-chamber} and Definition \ref{defn:fGH2}.
\end{proof}

\section{Relative Fourier-Mukai transforms}\label{sect:FM}

\subsection{Fourier-Mukai transforms by the moduli spaces of stable 1-dimensional sheaves.}\label{subsect:FM}

Let $\pi:X \to C$ be an elliptic surface over a curve $C$
and $f$ a fiber of $\pi$.
Let $H$ be a relatively ample ${\Bbb Q}$-divisor such that
$(H \cdot f)=1$ and $(H^2)=0$.
Let 
$$
v_0=r_0 f+b \varrho_X, \; r_0, b \in {\Bbb Z}
$$
be a primitive and isotropic Mukai vector 
such that $r_0>0$.
We shall consider $\beta$-twisted stability with respect to
$H_f$.
\begin{lem}
${\cal M}_{H_f}^\beta(v_0)^{ss}$ depends only on
$H$ and
$\beta \mod {\Bbb Q}f+{\Bbb Q}H$.
In particular ${\cal M}_{H_f}^{\beta-\frac{1}{2}K_X}(v_0)^{ss}={\cal M}_{H_f}^\beta(v_0)^{ss}$.
\end{lem}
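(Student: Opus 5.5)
The plan is to compare $\chi(G,A)$ for different twists $G$ with $c_1(G)/\rk G=\beta$, where $A$ is a sheaf with $v(A)=v_0$ or any subsheaf or quotient of such a sheaf. Twisted semistability of a sheaf $E$ with $v(E)=v_0$ is tested on saturated subsheaves $A\subset E$, comparing $\chi(G,A)/(c_1(A)\cdot H_f)$ with $\chi(G,E)/(c_1(E)\cdot H_f)$. Since $v_0=r_0f+b\varrho_X$ is the Mukai vector of a fiber sheaf, all subsheaves and quotients are again fiber sheaves. So it suffices to understand how $\chi(G,A)$ changes when $\beta$ is modified by multiples of $f$ and $H$, for $A$ a sheaf supported on fibers.

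\textbf{Step 1.} By Riemann--Roch, for a purely one-dimensional $A$ with $c_1(A)=\xi_A$,
$$\chi(G,A)=\rk G\bigl(\chi(A)-(\beta\cdot\xi_A)\bigr).$$
Here $\xi_A$ is a class of an effective divisor contained in fibers.

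\textbf{Step 2.} Replace $\beta$ by $\beta+xf+yH$. Then $(f\cdot\xi_A)=0$, because $\xi_A$ is supported in fibers. Moreover $(H\cdot\xi_A)=(H_f\cdot\xi_A)$, since $(f\cdot\xi_A)=0$. Hence
$$\chi(G',A)=\chi(G,A)-\rk G\, y\,(\xi_A\cdot H_f).$$
The shift is therefore proportional to the $H_f$-degree of $A$, with the same constant $-\rk G\,y$ for every fiber sheaf $A$. It follows that the reduced twisted Hilbert slope $\chi(G,A)/(\xi_A\cdot H_f)$ is shifted by the same constant for all $A$. In particular the inequalities defining $\beta$-twisted semistability are unchanged, and the ordering of slopes is preserved.

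\textbf{Step 3.} To be careful, I would check the definition of twisted semistability for one-dimensional sheaves. Since $\dim A=1$, the twisted Hilbert polynomial $\chi(G,A(nH_f))$ has leading coefficient $\rk G(\xi_A\cdot H_f)$ and constant term $\chi(G,A)$. Semistability compares the ratio of the constant term to the leading coefficient, so Step 2 applies directly. The dependence on $H$ stays, since the leading coefficient involves $H_f$.

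\textbf{Step 4.} For the last assertion, $K_X$ is numerically a rational multiple of $f$ by the canonical bundle formula for elliptic surfaces; multiple fibers contribute $(m_i-1)f_i$, and $f_i$ is numerically $f/m_i$. Hence $-\tfrac12 K_X\in{\Bbb Q}f$, and Step 2 with $y=0$ applies.

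The main obstacle is ensuring that every subsheaf entering the stability test is supported on fibers, so that $(f\cdot\xi_A)=0$. This holds because $v_0$ has zero rank and its $c_1$ is a multiple of $f$: any subsheaf $A$ has support in the support of $E$, which is a union of fibers. One should also confirm that $\rk G$ is positive and fixed, which is harmless here.
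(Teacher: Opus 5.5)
Your proposal is correct and follows the paper's argument. The paper likewise notes that for a fiber sheaf $E$ one has $\langle e^{\beta+xH+yf},v(E)\rangle=\langle e^\beta,v(E)\rangle+x(c_1(E)\cdot(H+nf))$, so the change is a fixed multiple of the $H_f$-degree; this is your Step 2, written with the Mukai pairing instead of Riemann--Roch for $\chi(G,A)$.
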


\begin{proof}
For a coherent sheaf $E$ supported on fibers,
$$
\langle e^{\beta+xH+yf},v(E) \rangle=\langle e^\beta, v(E) \rangle+(c_1(E) \cdot (xH+yf))=
\langle e^\beta, v(E) \rangle+x(c_1(E) \cdot (H+nf)).
$$
Hence the claim holds.  
\end{proof}
Replacing $\beta$ by $\beta-\frac{\langle e^\beta,v_0 \rangle}{r_0} H$,
we assume that
\begin{equation}\label{eq:beta}
\langle e^\beta,v_0 \rangle=0 \iff \beta \equiv \frac{b}{r_0}H+\beta_0 \mod {\Bbb Q}f\,\; (\beta_0 \in N_H).
\end{equation}
\begin{NB}
We can take $e^\beta$ such that $\langle e^\beta,v_0 \rangle=0$.
Indeed we can choose $y$ 
satisfying
$\langle e^{\beta+yH_n},v_0 \rangle=\langle e^\beta,v_0 \rangle+y(H \cdot r_0 f)=0$. 
Then $v_0=r_0 f e^\beta$.
\end{NB}
Assume that $X':=\Mod_{H_n}^\beta(v_0)$ is a fine moduli
scheme consisting of $\beta$-twisted stable sheaves. Then
$X'$ is an elliptic surface with a fibration 
$\pi':X' \to C$. We denote a fiber of $X' \to C$ by $f'$.

\begin{rem}
$X'$ is fine if and only if there is a divisor $\xi$ such that
$\gcd(r_0 (\xi \cdot f),b)=1$.
\end{rem}

For a universal family ${\cal P}$,
we have a Fourier-Mukai transform
\begin{equation}\label{eq:relativeFM}
\Phi_{X \to X'}^{{\cal P}^{\vee}[1]}:{\bf D}(X) \to {\bf D}(X').
\end{equation}
For simplicity, we set $\overline{\Phi}:=\overline{\Phi}_{X \to X'}^{{\cal P}^{\vee}[1]}$.  
We set
\begin{equation}
\begin{split}
v_0':=&v({\cal P}_{|\{ x \} \times X'}^{\vee}[1])=
\overline{\Phi}(\varrho_X)\\
H':=&-c_1(\overline{\Phi}(r_0 e^\beta)).
\end{split}
\end{equation}
Then $c_1(v_0')=r_0 f'$, $({H'}^2)=0$ and $(H' \cdot f')=1$.
\begin{NB}
$r_0 (H',f')=(H',c_1(v_0'))=\langle r_0 e^\beta,-\varrho_X \rangle=r_0$.
\end{NB}
Since $\langle \overline{\Phi}(He^\beta)^2 \rangle=0$ and
$\langle \overline{\Phi}(He^\beta),\varrho_{X'} \rangle=-(H \cdot r_0 f)=-r_0$,
there is $\beta' \in \NS(X')_{\Bbb Q}$ with
$\overline{\Phi}(He^\beta)=r_0 e^{\beta'}$.
Then we have 
\begin{equation}\label{eq:H'}
\begin{split}
\overline{\Phi}(r_0 e^\beta)&=-H' e^{\beta'}\\
\overline{\Phi}(He^\beta)&=r_0 e^{\beta'}\\
\overline{\Phi}(v_0)&=-\varrho_{X'}\\
\overline{\Phi}(\varrho_X)&=v_0'.
\end{split}
\end{equation}
By $\langle \overline{\Phi}(He^\beta),\overline{\Phi}(\varrho_X) \rangle=0$, we get
$v_0'=e^{\beta'}(r_0 f')$.
\begin{NB}
$\Phi({\cal P}_{|X \times \{ x' \}})={\Bbb C}_{x'}$.
$\Phi({\Bbb C}_x)=({\cal P}_{|\{x \} \times X'}^{\vee}[1])[-1]$.
For a torsion free sheaf $E$ on an irreducible curve $C \in |nH|$,
$\Phi(E)[-1] \in \Coh(X')$.
\end{NB}
Hence we obtain the following lemma.
\begin{lem}\label{lem:coh-FM}
For 
$$
v=e^\beta(r+\xi+a \varrho_X)=e^\beta(r+pH+qf+D+a \varrho_X), 
D \in N_H,
$$
we have 
$$
\overline{\Phi}(v)=e^{\beta'}(pr_0-\tfrac{r}{r_0}H'+ar_0 f'-D'-\frac{q}{r_0}\varrho_{X'}),\; 
D' \in N_{H'}.
$$
\end{lem}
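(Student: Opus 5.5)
The plan is to prove the formula by linearity of $\overline{\Phi}$, writing $v$ in an adapted basis on which the values of $\overline{\Phi}$ are already given by \eqref{eq:H'}. Expanding,
$$
v=r\,e^\beta+p\,He^\beta+q\,fe^\beta+De^\beta+a\,\varrho_X e^\beta .
$$
Two of these basis vectors simplify. First, $\varrho_X e^\beta=\varrho_X$, since $\varrho_X$ is top degree. Second, $fe^\beta=\tfrac{1}{r_0}v_0$. To see this, note $fe^\beta=f+(\beta\cdot f)\varrho_X$, since $(f^2)=0$. By \eqref{eq:beta} we have $\langle e^\beta,v_0\rangle=r_0(\beta\cdot f)-b=0$, so $b=r_0(\beta\cdot f)$, and therefore $v_0=r_0fe^\beta$.

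With these identities, \eqref{eq:H'} gives the images of four of the five pieces:
$$
\overline{\Phi}(re^\beta)=-\tfrac{r}{r_0}H'e^{\beta'},\quad
\overline{\Phi}(pHe^\beta)=pr_0e^{\beta'},\quad
\overline{\Phi}(qfe^\beta)=-\tfrac{q}{r_0}\varrho_{X'},\quad
\overline{\Phi}(a\varrho_X)=ar_0f'e^{\beta'}.
$$
Since $\varrho_{X'}=e^{\beta'}\varrho_{X'}$, all of these can be written as $e^{\beta'}$ times the corresponding term in the claimed formula.

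It remains to handle $De^\beta$ with $D\in N_H$, which I expect to be the only step needing an argument. We write $\overline{\Phi}(De^\beta)=e^{\beta'}w$ and show that $w=-D'$ for some $D'\in N_{H'}$. Multiplication by $e^{\beta}$ preserves the Mukai pairing, so $De^\beta$ is orthogonal to $e^\beta$, $He^\beta$, $fe^\beta$ and $\varrho_X$, because $D$ is orthogonal to $1$, $H$, $f$, $\varrho_X$. By Proposition \ref{prop:isometry}, $\overline{\Phi}$ is an isometry. Hence $e^{\beta'}w$ is orthogonal to the images of these four vectors, which are nonzero multiples of $H'e^{\beta'}$, $e^{\beta'}$, $\varrho_{X'}$ and $f'e^{\beta'}$. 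Untwisting by $e^{-\beta'}$, $w$ is orthogonal to $H'$, $1$, $\varrho_{X'}$ and $f'$.

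Now we unpack these four conditions for $w=w_0+w_1+w_2\varrho_{X'}$. Orthogonality to $\varrho_{X'}$ gives $w_0=0$. Given that, orthogonality to $1$ gives $w_2=0$. The remaining two give $(w_1\cdot H')=(w_1\cdot f')=0$. Thus $w_1\in N_{H'}$, and we set $D':=-w_1$.

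Summing the five images yields the stated expression. As a consistency check, isometry also gives $({D'}^2)=(D^2)$, though this is not needed for the statement.
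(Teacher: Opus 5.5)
Your proof is correct and follows the paper's route: the lemma is obtained by linearity from the four values in \eqref{eq:H'} together with $v_0'=e^{\beta'}(r_0f')$, after observing $v_0=r_0fe^\beta$ and $\varrho_Xe^\beta=\varrho_X$. Your orthogonality argument placing $e^{-\beta'}\overline{\Phi}(De^\beta)$ in $N_{H'}$ makes explicit a step the paper leaves implicit. It uses Proposition~\ref{prop:isometry}, which applies because $\overline{\Phi}$ differs from $\overline{\Phi}_{X\to X'}^{{\cal P}^{\vee}}$ only by a sign.
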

We set ${\cal Q}:={\cal P}^{\vee}[1]$.
The coherent sheaves ${\cal P}$ and ${\cal Q}$ on $X \times X'$ are 
the universal families in the following sense.
\begin{equation}
M_{H'}^{\beta'}(e^{\beta'}(r_0 f'))=\{ {\cal Q}_{|\{ x \} \times X'} \mid x \in X\},\;
M_{H'}^{-\beta'}(e^{-\beta'}(r_0 f'))= \{{\cal P}_{|\{ x \} \times X'} \mid x \in X\}.
\end{equation}

\begin{equation}
M_{H}^{\beta}(e^{\beta}(r_0 f))=\{ {\cal P}_{|X \times \{ x' \}} \mid x' \in X' \},\;
M_{H}^{-\beta}(e^{-\beta}(r_0 f))= \{{\cal Q}_{|X \times \{ x' \}} \mid x' \in X' \}.
\end{equation}
$\Phi_{X' \to X}^{{\cal Q}^{\vee}[1]}:{\bf D}(X') \to {\bf D}(X)$
satisfies
$$
\Phi_{X' \to X}^{{\cal Q}^{\vee}[1]} \circ \Phi_{X \to X'}^{{\cal P}^{\vee}[1]}=\otimes {\cal O}_X(-K_X)[-1].
$$

We take $l,l' \in {\Bbb Z}$ such that 
\begin{equation}\label{eq:l}
lH \in \NS(X), l' H' \in \NS(X').
\end{equation}

\begin{NB}
\begin{lem}
For 
$$
v=e^\beta(r+\xi+a \varrho_X)=e^\beta(r+pH+qf+D+a \varrho_X), 
D \in N_H,
$$
we have 
$$
-\overline{\Phi}(v)^{\vee}=e^{-\beta'}(\tfrac{r}{r_0}H'+pr_0-\frac{q}{r_0}\varrho_{X'}+D'-ar_0 f'),\; 
D' \in N_{H'}.
$$
\end{lem}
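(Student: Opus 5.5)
The plan is to derive the formula directly from Lemma~\ref{lem:coh-FM} by applying the involution $u \mapsto u^\vee$ on $H^*(X',{\Bbb Q})_{\alg}$. This is purely a cohomological computation; no stability argument is involved. I write $v=e^\beta(r+pH+qf+D+a\varrho_X)$ with $D \in N_H$, as in the statement.

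First, Lemma~\ref{lem:coh-FM} gives
\begin{equation*}
\overline{\Phi}(v)=e^{\beta'}w,\qquad w:=pr_0-\tfrac{r}{r_0}H'+ar_0 f'-D'-\tfrac{q}{r_0}\varrho_{X'},
\end{equation*}
with $D' \in N_{H'}$ determined by $D$.

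Second, I use that $(\;)^\vee$ is a ring automorphism of $H^*(X',{\Bbb Q})_{\alg}$: it acts by $(-1)^i$ in degree $2i$, and the product respects degree. Hence $(e^{\beta'}w)^\vee=(e^{\beta'})^\vee w^\vee=e^{-\beta'}w^\vee$. Componentwise, $w^\vee$ flips the signs of the $H'$, $f'$ and $D'$ terms and fixes the rank and $\varrho_{X'}$ terms, so
\begin{equation*}
w^\vee=pr_0+\tfrac{r}{r_0}H'-ar_0f'+D'-\tfrac{q}{r_0}\varrho_{X'}.
\end{equation*}
The $N_{H'}$-component is again in $N_{H'}$, since that space is a linear subspace.

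Third, I compare with the right-hand side of the statement. Its bracket is exactly $w^\vee$ with the terms reordered. So the statement reduces to checking $-\overline{\Phi}(v)^\vee=e^{-\beta'}w^\vee$, that is, to matching the overall sign in front of $\overline{\Phi}(v)^\vee$.

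This sign bookkeeping is the step I expect to be delicate. The ring-automorphism computation above produces $\overline{\Phi}(v)^\vee=e^{-\beta'}w^\vee$ with a plus sign. So to obtain the stated minus sign I must locate an extra factor of $-1$, and I would look for it in the conventions. Concretely, I would re-derive the four identities \eqref{eq:H'} with the shift $[1]$ in $\Phi_{X\to X'}^{{\cal P}^\vee[1]}$. A shift by $[1]$ multiplies Mukai vectors by $-1$. I would then check whether the intended object is $\Phi^{{\cal P}^\vee}$ or ${\cal Q}={\cal P}^\vee[1]$ after dualizing, i.e.\ whether $-\overline{\Phi}(v)^\vee$ is meant as $v\bigl(D_{X'}(\Phi_{X\to X'}^{{\cal P}^\vee}(E))\bigr)$ with the unshifted transform.

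I would test the resulting sign on $v=\varrho_X$. There $\overline{\Phi}(\varrho_X)=v_0'=e^{\beta'}r_0f'$. The general formula at $r=p=q=0$, $D=0$, $a=1$ should reproduce this, which it does since $w=r_0f'$. Dualizing gives $-e^{-\beta'}r_0f'$ with a plus convention. This fixes which convention makes the leading minus sign consistent.

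If no convention yields the minus sign, the identity as stated would hold only after absorbing the sign into the shift, using $\overline{\Phi}_{X\to X'}^{{\cal P}^\vee[2]}=-\overline{\Phi}$. In that case I would state the result in that form, since the bracket itself is forced to be $w^\vee$.
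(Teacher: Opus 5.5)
Your dualization of Lemma~\ref{lem:coh-FM} is correct, and it is how this display is meant to be obtained. The paper records it without proof, as a companion to that lemma. Since $u\mapsto u^\vee$ is a ring automorphism with $(e^{\beta'})^\vee=e^{-\beta'}$, you get
\[
\overline{\Phi}(v)^{\vee}=e^{-\beta'}\bigl(\tfrac{r}{r_0}H'+pr_0-\tfrac{q}{r_0}\varrho_{X'}+D'-ar_0 f'\bigr),
\]
with the same $D'$ as in Lemma~\ref{lem:coh-FM} and a plus sign on the left.

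What is missing is a definite conclusion about the leading minus sign, and there is nothing left to search for: the printed identity is false under the paper's definitions ($u^\vee=x_0-x_1+x_2\varrho_{X'}$ and $\overline{\Phi}=\overline{\Phi}_{X\to X'}^{{\cal P}^\vee[1]}$). First, dualizing preserves rank and $\rk\overline{\Phi}(v)=pr_0$, as is also used in Proposition~\ref{prop:FMstability1}. So the left side has rank $-pr_0$ and the right side has rank $pr_0$, and they differ whenever $p\neq 0$. Second, your own test $v=\varrho_X$ already decides it: $-\overline{\Phi}(\varrho_X)^\vee=e^{-\beta'}r_0f'$, while the right side is $-e^{-\beta'}r_0f'$.

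The conventions you propose to revisit cannot change this. The shift $[1]$ is already built into $\overline{\Phi}$ and into \eqref{eq:H'}; for example $\overline{\Phi}(v_0)=-\varrho_{X'}$. Rereading the left side as $v(D_{X'}(\Phi_{X\to X'}^{{\cal P}^\vee}(E)))$ only rewrites the same vector, because $D_{X'}(F[-1])=D_{X'}(F)[1]$. A twist by a class such as $e^{-K_{X'}}$ also cannot help, since it preserves rank.

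So you should state plainly that the minus sign is a misprint. What is true, and what your argument proves, is the identity displayed above, or equivalently $-\overline{\Phi}_{X\to X'}^{{\cal P}^\vee[2]}(v)^\vee=e^{-\beta'}(\cdots)$, using $\overline{\Phi}_{X\to X'}^{{\cal P}^\vee[2]}=-\overline{\Phi}$.
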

\end{NB}

\subsection{Stability and Fourier-Mukai transforms.}

\begin{NB}
\begin{prop}\label{prop:relation}
Assume that $p<0$.
$\Phi_{X \to X'}^{{\cal P}^{\vee}[2]}$ induces an isomorphism
$$
{\cal M}_{H_f}^\beta (v)^{ss} \to {\cal M}_{(f',H')}^{\beta'+\epsilon H'}(v')^{ss},
$$
where $v'=\Phi_{X \to X'}^{{\cal P}^{\vee}[2]}(v)$ and $\epsilon>0$ is sufficiently small.
\end{prop}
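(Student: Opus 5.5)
The plan is to show that every $E \in {\cal M}_{H_f}^\beta(v)^{ss}$ is WIT$_2$ with respect to $\Phi_{X \to X'}^{{\cal P}^{\vee}}$. Then I check that $F:=\Phi_{X \to X'}^{{\cal P}^{\vee}[2]}(E)$ is a coherent sheaf which is $(f',G',H')$-semistable for $c_1(G')/\rk G'=\beta'+\epsilon H'$, and run the same argument backwards with $\Phi_{X' \to X}^{{\cal Q}^{\vee}[1]}$ to obtain the inverse.

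\textbf{Step 1: WIT$_2$.} By Grothendieck duality the cohomology sheaves of $\Phi_{X \to X'}^{{\cal P}^{\vee}}(E)$ are controlled by $\Ext^i({\cal P}_{|X \times \{x'\}},E)$. Since $\rk v>0$ and $E$ is Gieseker semistable, $E$ is torsion free, so $\Hom({\cal P}_{|X\times\{x'\}},E)=0$ for all $x'$. By (\ref{eq:beta}) the fiber sheaves ${\cal P}_{|X\times\{x'\}}$ have $\beta$-twisted relative slope $0$, while $E_{|\pi^{-1}(\eta)}$ is semistable of relative slope $p/r<0$. Hence $\Ext^1({\cal P}_{|X\times\{x'\}},E)=\Hom(E,{\cal P}_{|X\times\{x'\}}(K_X))^{\vee}$-duality and Riemann--Roch give that $\Ext^1$ vanishes for general $x'$. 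Consequently the degree $-1$ cohomology of $\Phi^{{\cal P}^{\vee}[2]}(E)$ is a torsion free sheaf that is generically zero, hence zero. So $F$ is a sheaf.

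\textbf{Step 2: numerical invariants of $F$.} By Lemma \ref{lem:coh-FM}, $v':=v(F)=-\overline{\Phi}(v)=e^{\beta'}(-pr_0+\tfrac{r}{r_0}H'-\cdots)$. Thus $\rk F=-pr_0>0$, and $\mu_{f'}(F(-\beta'))=r/(-pr_0^2)\cdot r_0>0$. Therefore $\mu_{f'}(F)>\mu_{f'}(G')$ once $\epsilon$ is small, and Definition \ref{defn:fGH} (not \ref{defn:fGH2}) applies. The generic fiber $F_{|\pi'^{-1}(\eta)}$ is semistable because relative Fourier--Mukai on a smooth elliptic curve preserves semistability.

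\textbf{Step 3: the stability comparison.} Using (\ref{eq:H'}), $e^{\beta'}(1+\epsilon H')$ corresponds under $\overline{\Phi}$ (up to sign and a positive scalar) to $e^\beta(H-\epsilon r_0^2)$. By Proposition \ref{prop:isometry}, for any $F_1$ that is WIT, $\chi(G',F_1)$ becomes a positive multiple of
\[
(c_1(E_1(-\beta))\cdot H)-\epsilon r_0^2\,\chi(E_1(-\beta)),
\]
where $E_1$ is the inverse transform of $F_1$. Hence, among subobjects with equal relative slope, the order given by $\chi(G',\cdot)$ is the lexicographic order given first by $H$-degree and then by $\chi$. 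That is exactly the $\beta$-twisted Gieseker order for $H+nf$ with $n \gg 0$.

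The conditions $\Hom(A',F)=0$ for $A' \in {\cal F}_{G'}^+$ and $\Hom(F,B')=0$ for $B'\in{\cal F}_{G'}^-$ translate as follows. Stable fiber sheaves on $X'$ are transforms, up to shift, of stable fiber sheaves $A$ on $X$, and the sign of $\chi(G',A')$ matches the sign of $\chi$ computed in the formula above. The two conditions therefore become $\Hom(A,E)=0$ for $A$ of nonnegative $\beta$-twisted Euler characteristic, and the analogous statement for $E$. These follow from the purity and Gieseker semistability of $E$, using Remark \ref{rem:mu-ss}.

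For a subsheaf $F_1 \subset F$ with the same relative slope, I first replace it using Lemma \ref{lem:C_G} so that $F_1, F/F_1 \in {\cal C}_{G'}$. Both are then WIT$_0$ for the inverse transform, giving an exact sequence $0\to E_1\to E\to E_2\to 0$ to which the inequality above applies.

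\textbf{Step 4: the inverse and uniformity in $\epsilon$.} Conversely, for $F \in {\cal M}_{(f',H')}^{\beta'+\epsilon H'}(v')^{ss}$, I apply $\Phi_{X'\to X}^{{\cal Q}^{\vee}[1]}$ and the same steps to get a Gieseker semistable sheaf. Together these give the isomorphism of stacks, functorially in families.

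\textbf{Main obstacle.} The hardest point is making a single $\epsilon$ work for all $E$ simultaneously, and handling subsheaves whose transforms are not WIT (torsion fiber parts). For uniformity I would invoke Lemma \ref{lem:wall1}, Lemma \ref{lem:bddv} and the local finiteness of walls. These give only finitely many relevant $v(F_1)$, so $\epsilon$ can be chosen below all of them. For the non-WIT pieces I would use the Harder--Narasimhan type decomposition of Lemmas \ref{lem:F^-} and \ref{lem:F^+} to split off fiber pieces before transforming.
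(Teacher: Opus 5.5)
Your Steps 1 and 2 are essentially right, with two small slips. The generic vanishing of $\Ext^1({\cal P}_{|X\times\{x'\}},E)$ comes from $\Ext^1_X({\cal P}_{|X\times\{x'\}},E)\cong\Hom({\cal P}_{|X\times\{x'\}},E_{|f})=0$ on a smooth fibre $f$, not from a $\Hom$-duality. Also $\mu_{\beta',f'}(F)=-r/(pr_0^2)$.

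The gap is in Step 3. First, a sign error. Put $q_1:=(c_1(E_1(-\beta))\cdot H)$ and $a_1:=-\langle e^\beta,v(E_1)\rangle$. Then $\chi(G',F_1)=\frac{\rk G'}{r_0}\langle e^\beta(H-\epsilon r_0^2),v(E_1)\rangle=\frac{\rk G'}{r_0}(q_1+\epsilon r_0^2a_1)$, with a plus sign (compare $\chi(G,E)=\rk G(a-qs)$ with $s=-1/(\epsilon r_0^2)$). With your minus sign the tie-break would be the reverse of Gieseker's.

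More seriously, you have the fibre-sheaf conditions wrong. Let $A$ be a stable fibre sheaf with $a_A<0$. Then $B'=\Phi^{{\cal P}^{\vee}[2]}_{X\to X'}(A)$ is a sheaf, $\chi(G',B')$ is a positive multiple of $q_A+\epsilon r_0^2a_A$, and $\Hom(F,B')=\Hom(E,A)$. So the requirement $\Hom(F,B')=0$ for $B'\in{\cal F}^-_{G'}$ says: $\Hom(E,A)=0$ for every stable fibre sheaf $A$ of $\beta$-slope $a_A/q_A<-1/(\epsilon r_0^2)$. This depends on $\epsilon$ and does not follow from purity plus Gieseker semistability. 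Remark \ref{rem:mu-ss} presupposes $E\in\overline{\cal M}^{G}_{(f,H)}(v)^{ss}$ with $c_1(G)/\rk G=\beta-\frac{1}{\epsilon r_0^2}H$, which is essentially the conclusion.

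Gieseker semistability gives $a_A/q_A\geq p/r$ only on fibres where $E$ restricts to a semistable bundle. On the finitely many other fibres, $a_A/q_A$ is bounded below only through the Bogomolov inequality for $\ker(E\to A)$, by a constant depending on $\langle v^2\rangle$ (the case $r_1=0$ of Lemma \ref{lem:Gieseker-chamber}).

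The same problem hits the converse. Torsion $A\subset \Phi^{{\cal Q}^{\vee}[1]}_{X'\to X}(F)(K_X)$ with $a_A<0$ gives a nonzero map $\Phi^{{\cal P}^{\vee}[2]}_{X\to X'}(A)\to F$. $(f',G',H')$-semistability rules this out only when $q_A+\epsilon r_0^2a_A\geq 0$, so ``the same steps'' do not give torsion-freeness. Likewise, the lexicographic comparison of subsheaves needs $a_1/r_1-a/r$ bounded by a fixed multiple of $q/r-q_1/r_1>0$, uniformly in $E$ (the case $r_1>0$ of Lemma \ref{lem:Gieseker-chamber}).

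Your proposed remedy does not close this. Lemma \ref{lem:bddv} and local finiteness of walls concern compact parameter sets, but on the $X$ side $\epsilon\to 0$ means $s=-1/(\epsilon r_0^2)\to-\infty$. What is finite is the set of walls meeting a compact set, not the set of relevant classes. The missing input is the explicit threshold of Lemma \ref{lem:Gieseker-chamber}, used as the paper indicates (cf. Remark \ref{rem:Gieseker} for $p>0$):
\begin{itemize}
\item Take $G$ with $c_1(G)/\rk G=\beta+sH$ and $s$ below that threshold. By Proposition \ref{prop:Gieseker-chamber}, $(f,G,H)$-semistability is then $\beta$-twisted Gieseker semistability with respect to $H_f$.
\item Since $s<p/r<0$, Proposition \ref{prop:FMstability2} converts this into $(f',G'',H')$-semistability, with $c_1(G'')/\rk G''\equiv\beta'+\epsilon H' \bmod {\Bbb Q}f'$ and $\epsilon=-1/(sr_0^2)$. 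For $\alpha=0$ the genericity hypothesis can be dropped, cf. Lemma \ref{lem:special}.
\end{itemize}
This makes a hand translation of the fibre-sheaf conditions unnecessary.

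The paper's own sketch of the converse uses the limit $\epsilon\to 0$. First, $\Hom(F,{\cal Q}_{|\{x\}\times X'})=0$ since $\chi(G',{\cal Q}_{|\{x\}\times X'})<0$. For small $\epsilon$, $F\in\overline{\cal M}^{\beta'}_{(f',H')}(v')$, so $\Hom({\cal Q}_{|\{x\}\times X'},F)=0$ off finitely many $x$, and the inverse transform is torsion free.

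If you keep your direct route, you can obtain uniformity from local finiteness of walls on the $X'$ side, since $\{\beta'+\epsilon H'\mid 0\leq\epsilon\leq\epsilon_0\}$ is compact in ${\cal G}(v')$. You would then still have to prove constancy of the moduli on the resulting chamber and supply the per-object bounds above.
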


We take $F \in  {\cal M}_{(f',H')}^{\beta'+\epsilon H'}(v')^{ss}$.
Since 
$\chi({\cal Q}_{|\{ x \} \times X'}(-\beta'-\epsilon H'))<0$,
$\Hom(F,{\cal Q}_{|\{ x \} \times X'})=0$.
Since $\epsilon$ is sufficiently small,
$F \in \overline{\cal M}_{(f',H')}^{\beta'}(v')$.
Hence $\Hom({\cal Q}_{|\{ x \} \times X'},F)=0$ except finitely many points $x \in X$.
Therefore $\Phi_{X' \to X}^{{\cal Q}^{\vee}[1]}(F)$ is torsion free. 
\end{NB}

\begin{lem}\label{lem:f-slope}
For $E_1, E_2 \in {\bf D}(X)$,
we set $F_1:=\Phi_{X \to X'}^{{\cal P}^{\vee}[1]}(E_1),
F_2:=\Phi_{X \to X'}^{{\cal P}^{\vee}[1]}(E_2)$.
Then
\begin{equation}
\begin{split}
\frac{1}{r_0}(\rk E_2 \rk F_1-\rk E_1 \rk F_2)=&
\rk E_2 (c_1(E_1) \cdot f)-\rk E_1 (c_1(E_2) \cdot f)\\
=&
\rk F_2 (c_1(F_1) \cdot f')-\rk F_1  (c_1(F_2) \cdot f').
\end{split}
\end{equation}
\end{lem}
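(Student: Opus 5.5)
The plan is to reduce the statement to a computation with Mukai vectors, using the explicit form of the cohomological transform in Lemma \ref{lem:coh-FM}. The ranks and the first Chern classes of $E_i$ and $F_i$ can be read off from $v(E_i)$ and $v(F_i)$. Indeed, $v=\ch(1+\tfrac{\chi({\cal O})}{2}\varrho)$ only changes the $\varrho$-component, so the degree $0$ and degree $2$ parts of the Mukai vector are $\rk$ and $c_1$. By Lemma \ref{lem:cohFM} we also have $v(F_i)=\overline{\Phi}(v(E_i))$, where $\overline{\Phi}=\overline{\Phi}_{X\to X'}^{{\cal P}^{\vee}[1]}$ already incorporates the shift. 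Both sides of the claimed identities are bilinear expressions in these invariants, so it suffices to compute them for the Mukai vectors.

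First I will write, over ${\Bbb Q}$,
$$
v(E_i)=e^{\beta}(r_i+p_iH+q_if+D_i+a_i\varrho_X),\quad D_i\in N_H .
$$
Expanding $e^\beta$ gives $\rk E_i=r_i$ and $c_1(E_i)=r_i\beta+p_iH+q_if+D_i$. Hence $(c_1(E_i)\cdot f)=r_i(\beta\cdot f)+p_i$, using $(H\cdot f)=1$, $(f^2)=0$ and $(D_i\cdot f)=0$. It follows that
$$
\rk E_2(c_1(E_1)\cdot f)-\rk E_1(c_1(E_2)\cdot f)=r_2p_1-r_1p_2,
$$
because the $\beta$-terms cancel.

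Next, Lemma \ref{lem:coh-FM} gives
$$
v(F_i)=e^{\beta'}\bigl(p_ir_0-\tfrac{r_i}{r_0}H'+a_ir_0f'-D_i'-\tfrac{q_i}{r_0}\varrho_{X'}\bigr).
$$
So $\rk F_i=p_ir_0$ and $c_1(F_i)=p_ir_0\beta'-\tfrac{r_i}{r_0}H'+a_ir_0f'-D_i'$. Using $(H'\cdot f')=1$ and $D_i'\in N_{H'}$, this yields $(c_1(F_i)\cdot f')=p_ir_0(\beta'\cdot f')-\tfrac{r_i}{r_0}$.

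Substituting, $\frac1{r_0}(\rk E_2\rk F_1-\rk E_1\rk F_2)=r_2p_1-r_1p_2$, which is the first equality. For the last expression,
$$
\rk F_2(c_1(F_1)\cdot f')-\rk F_1(c_1(F_2)\cdot f')=p_2r_0\bigl(p_1r_0(\beta'\cdot f')-\tfrac{r_1}{r_0}\bigr)-p_1r_0\bigl(p_2r_0(\beta'\cdot f')-\tfrac{r_2}{r_0}\bigr)=p_1r_2-p_2r_1 ,
$$
again because the $\beta'$-terms cancel. This gives the second equality.

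There is no real obstacle. The only point needing care is that the decomposition of $e^{-\beta}v(E_i)$ is taken with rational coefficients. In particular the $\varrho$-coefficient and the passage from $v$ to $(\rk,c_1)$ do not affect the degree $\le 2$ parts used above, so the computation is legitimate for arbitrary objects of ${\bf D}(X)$.
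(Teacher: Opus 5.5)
Your proof is correct and is essentially the paper's argument: it also writes $v(E_i)=e^\beta(r_i+p_iH+q_if+D_i+a_i\varrho_X)$. It then reads off $\rk F_i=p_ir_0$ and $(c_1(F_i)\cdot f')=p_ir_0(\beta'\cdot f')-\tfrac{r_i}{r_0}$ from Lemma \ref{lem:coh-FM}, after which both identities reduce to the cancellation you carried out.
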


\begin{NB}
\begin{equation}
\begin{split}
& e^{-(xH+yf+\delta)}(r+pH+qf+D+a^\beta \varrho_X)\\
=&
r+(p-rx)H+(q-ry)f+(D-r \delta)+
\left(a^\beta-qx-py-(\delta \cdot D)+rxy+r\frac{(\delta^2)}{2} \right)\varrho_X.
\end{split}
\end{equation}

\begin{equation}
\begin{split}
& e^{\frac{1}{xr_0^2}(H'-xyr_0^2 f'+r_0 \widehat{\delta}-\frac{(\delta^2)r_0^2 }{2} f')}
(pr_0-\frac{r}{r_0}H'+a^\beta r_0 f'-\widehat{D}-\frac{q}{r_0} \varrho_{X'})\\
=&
pr_0+\frac{p-rx}{xr_0} H'+r_0 \left(a^\beta-p y-\frac{(\delta^2)p}{2x}\right)f'
+(-\widehat{D}+
\frac{p}{x}\widehat{\delta})+
\frac{1}{xr_0} \left(a^\beta-qx-py-(\delta \cdot D)+rxy+r\frac{(\delta^2)}{2} \right)\varrho_{X'}.
\end{split}
\end{equation}

\begin{NB2}
Assume that $x<0$. 
If $\frac{p}{r}=\frac{p_1}{r_1}$ and $\frac{a^\beta-qx}{r}=\frac{a_1^\beta-q_1 x}{r_1}$,
then
\begin{equation}
\frac{q}{r}-\frac{q_1}{r_1}>0 \iff
\frac{-a^\beta}{r}-\frac{-a_1^\beta}{r_1} >0
\end{equation}
\end{NB2}
\end{NB}

Let $G \in K(X)_{\Bbb Q}$ be an element with
\begin{equation}\label{eq:G}
v(G)=\rk G e^{\beta+sH+yf+\alpha},\; (\alpha \in N_H).
\end{equation}
We set
$G':=\Phi_{X \to X'}^{{\cal P}^{\vee}[1]}(G)$.
Then
$$
v(G')=sr_0 \rk G e^{\beta'-\frac{1}{sr_0^2}(H'-syr_0^2 f'
+r_0 \alpha'-\frac{(\alpha^2)r_0^2 }{2} f')},\; \alpha' \in N_{H'}.
$$
\begin{NB}
$\rk G'=r_0 s \rk G$. 
\end{NB}

$$
\chi(G',F)=\chi(G,E)=\rk G (a^\beta-qs-py+rsy).
$$

\begin{NB}
We set ${\cal Q}:={\cal P}^{\vee}[1]$. Then 
$\chi(G',{\cal Q}_{|\{ x \} \times X'})=\chi(G,{\Bbb C}_x)=\rk G>0$.
\end{NB}

\begin{NB}
\begin{lem}\label{lem:spectral}
Let $E$ be a coherent sheaf on $X$.
\begin{enumerate}
\item[(1)]
We have an exact sequence
\begin{equation}
0 \to E_1 \to E \to E_2 \to 0
\end{equation}
such that 
\begin{equation}
\begin{split}
\Phi_{X \to X'}^{{\cal P}^{\vee}[1]}(E_1)=& H^0(\Phi_{X \to X'}^{{\cal P}^{\vee}[1]}(E)) \in \Coh(X'),\\
\Phi_{X \to X'}^{{\cal P}^{\vee}[2]}(E_2)=& H^1(\Phi_{X \to X'}^{{\cal P}^{\vee}[1]}(E)) \in \Coh(X').
\end{split}
\end{equation}
\item[(2)]
If $E_{|f}$ is a semistable vector bundle with
$\chi(G,E_{|f})>0$ for a general fiber $f$, then
$E_2$ is a fiber sheaf with $\chi(G,E_2) \leq 0$.
\item[(3)]
If $E_{|f}$ is a semistable vector bundle with
$\chi(G,E_{|f})<0$ for a general fiber $f$, then
$E_1$ is a fiber sheaf with $\chi(G,E_1) \geq 0$.
\end{enumerate}
\end{lem}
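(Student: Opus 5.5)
The plan is to exploit the fact that the kernel ${\cal Q}={\cal P}^{\vee}[1]$ is a sheaf on $X \times X'$ that is flat over $X$ and has relative dimension one over $X'$. Write $\Phi:=\Phi_{X \to X'}^{{\cal P}^{\vee}[1]}$ and $\Psi:=\Phi_{X' \to X}^{{\cal Q}^{\vee}[1]}$, so that $\Psi \circ \Phi=\otimes {\cal O}_X(-K_X)[-1]$.

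\emph{Amplitude of the transforms.} First I would check that for a coherent sheaf $E$ on $X$, $\Phi(E)={\bf R}p_{X'*}({\cal Q}\otimes p_X^*E)$ has cohomology only in degrees $0,1$. The derived tensor product has no higher Tor because ${\cal Q}$ is flat over $X$. The support of ${\cal Q}$ has one-dimensional fibers over $X'$, so ${\bf R}p_{X'*}$ has amplitude $[0,1]$. The same argument applied to $\Psi$ shows that $\Psi$ of a sheaf on $X'$ also lives in degrees $0,1$.

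\emph{Part (1).} Apply $\Psi$ to the triangle
$H^0(\Phi(E)) \to \Phi(E) \to H^1(\Phi(E))[-1]$.
The middle term becomes $E(-K_X)[-1]$, which is concentrated in degree $1$. Taking the long exact cohomology sequence and using the amplitude bounds gives:
\begin{itemize}
\item $H^0\Psi(H^0)=0$ and $H^1\Psi(H^1)=0$;
\item a short exact sequence $0 \to H^1\Psi(H^0) \to E(-K_X) \to H^0\Psi(H^1) \to 0$.
\end{itemize}
Twisting by $K_X$ defines $E_1$ and $E_2$. Applying $\Phi$ to $\Psi(H^0)=E_1(-K_X)[-1]$ and $\Psi(H^1)=E_2(-K_X)$ should recover $\Phi(E_1)=H^0$ and $\Phi(E_2)[1]=H^1$. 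This needs the $K_X$-twist to be harmless. Since $K_X$ is numerically a rational multiple of $f$ (up to multiple-fiber contributions), twisting by it commutes with the relative transform, in the spirit of Lemma \ref{lem:fiber}; I would verify this on the level of ${\cal Q}$ restricted to fibers.

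\emph{Part (2).} Restrict to the generic fiber. There $E_{|f}$ is semistable and ${\cal P}_{|X\times\{x'\}}$ is stable, normalized so that $\chi(G,{\cal P}_{|X\times\{x'\}})=0$. The hypothesis $\chi(G,E_{|f})>0$ says the twisted slope of $E_{|f}$ is larger. Hence the relevant $\Ext^1$ on the fiber, which is dual to $\Hom(E_{|f},{\cal P}_{|f})$, vanishes. So $H^1(\Phi(E))$ vanishes over the generic point of $C$, and therefore $E_2$ is supported on fibers.

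Next, $E_2$ satisfies WIT in degree $1$, i.e.\ $H^0(\Phi(E_2))=0$. By base change this gives $\Hom({\cal P}_{|X\times\{x'\}},E_2)=0$ for every $x'$. I then want to deduce that every $G$-twisted stable factor $A$ of $E_2$ (in particular the first Harder--Narasimhan factor) has $\chi(G,A)\le 0$. For this I would compare $A$ with ${\cal P}_{|X\times\{x'\}}$ on its fiber: if $\chi(G,A)>0$, a Riemann--Roch computation on the fiber together with Serre duality (the Lemma \ref{lem:RR} pairing, with $\Hom(A,{\cal P}(K_X))=0$ for slope reasons) should produce a nonzero map ${\cal P}_{|X\times\{x'\}}\to A$ for a suitable $x'$. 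This gives $\chi(G,E_2)\le 0$.

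\emph{Part (3)} is the dual argument. Generically $\Phi(E)$ is concentrated in degree $1$, so $E_1$ is a fiber sheaf. WIT$_0$ gives $\Hom(E_1,{\cal P}_{|X\times\{x'\}}(K_X))=0$ via duality, which forces the last Harder--Narasimhan factor, and hence $E_1$, to have $\chi(G,\cdot)\ge 0$.

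The main obstacle I expect is the fiber-sheaf comparison in (2) and (3): producing a nonzero $\Hom$ between ${\cal P}_{|X\times\{x'\}}$ and a stable factor with $\chi(G,\cdot)>0$ on singular or multiple fibers. There I would fall back on the description of ${\cal M}_H^\alpha(\f)$ in Lemma \ref{lem:isotropic-dim}, together with the relative version of the $\chi$-inequalities used in the proof of Lemma \ref{lem:Bogomolov}.
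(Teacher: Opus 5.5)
Your part (1) is correct; it is the standard Bridgeland argument that the paper simply cites. The worry about the $K_X$-twist is moot: $\Psi$ is an equivalence, and $\Psi(\Phi(E_1))\cong E_1(-K_X)[-1]\cong\Psi(H^0(\Phi(E)))$ already forces $\Phi(E_1)\cong H^0(\Phi(E))$, and similarly for $E_2$. Do not try to commute $\otimes{\cal O}_X(K_X)$ with $\Phi$: Lemma \ref{lem:fiber} is only a statement about Mukai vectors, and with multiple fibers $K_X$ is not a pull-back from $C$.

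The genuine gap is in the second halves of (2) and (3). Write $P_{x'}:={\cal P}_{|X\times\{x'\}}$.

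First, WIT$_1$ for $E_2$ does not give $\Hom(P_{x'},E_2)=0$. Since $\Phi(P_{x'})\cong{\Bbb C}_{x'}[-1]$ (cf.\ $\overline{\Phi}(v_0)=-\varrho_{X'}$), part (1) gives $\Hom(P_{x'},E_2)\cong\Hom({\Bbb C}_{x'},H^1(\Phi(E)))$. This is nonzero whenever $H^1(\Phi(E))$ has a $0$-dimensional subsheaf. For example, $E=E'\oplus P_{x'}$ still satisfies the hypotheses of (2) and has $P_{x'}$ as a direct summand of $E_2$.

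Second, a nonzero map $P_{x'}\to A$ (or $A\to P_{x'}(K_X)$ in (3)) cannot come from Riemann--Roch. For every fiber sheaf $A$, Lemma \ref{lem:RR} gives $\chi(P_{x'},A)=-\langle v_0,v(A)e^{-K_X/2}\rangle=-r_0(f\cdot c_1(A))=0$. So after killing $\Ext^2$ you only learn $\dim\Hom(P_{x'},A)=\dim\Ext^1(P_{x'},A)$. You would need a fiberwise existence result for morphisms on singular and multiple fibers, which is exactly the step you leave open.

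The missing idea, which is the paper's argument, is to transform the destabilizing factor itself rather than look for morphisms to or from $P_{x'}$. This needs only the easy vanishing direction of stability that you already invoke.

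For (2), let $A$ be a $G$-twisted stable fiber sheaf with $\chi(G,A)>0$. Top-degree base change gives $H^1(\Phi(A))\otimes k(x')\cong\Ext^2(P_{x'},A)\cong\Hom(A,P_{x'}(K_X))^{\vee}=0$ for all $x'$, so $\Phi(A)$ is a sheaf in degree $0$. Full faithfulness of $\Phi$ then yields $\Hom(A,E_2)=\Hom(\Phi(A),H^1(\Phi(E))[-1])=0$. The same holds for ${\Bbb C}_x$, whose transform ${\cal Q}_{|\{x\}\times X'}$ is a sheaf. Hence every Harder--Narasimhan factor of $E_2$ has $\chi(G,\cdot)\le 0$.

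For (3), let $A$ be stable with $\chi(G,A)<0$, so $\Hom(P_{x'},A)=0$ for all $x'$. By the spectral sequence for $\Phi(A)\otimes^{\bf L}k(x')\cong{\bf R}\Hom(P_{x'},A)[1]$, $\mathrm{Tor}_1(H^0(\Phi(A)),k(x'))$ is a subquotient of $\Hom(P_{x'},A)$, hence zero for all $x'$. So $H^0(\Phi(A))$ is locally free and supported on a fiber, hence zero. Then $\Hom(E_1,A)=\Hom(H^0(\Phi(E)),\Phi(A))=0$, since $\Phi(A)$ sits in degree $1$.
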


\begin{proof}
We set 
\begin{equation}
\begin{split}
E_1:=& H^1(\Phi_{X' \to X}^{{\cal P}} (H^0(\Phi_{X \to X'}^{{\cal P}^{\vee}}(E)))),\\
E_2:=& H^0(\Phi_{X' \to X}^{{\cal P}}( H^1(\Phi_{X \to X'}^{{\cal P}^{\vee}}(E)))).
\end{split}
\end{equation} 
Then (1) follows from \cite{Br:1}.

(2)
If $E_{|f}$ is a semistable vector bundle with
$\chi(G,E_{|f})>0$ for a general fiber $f$, then
$E_2$
is a fiber sheaf.
For a $G$-twisted stable fiber sheaf $A$ with $\chi(G,A)>0$,
$$
\Hom(A,E_2)=\Hom(\Phi_{X \to X'}^{{\cal P}^{\vee}[1]}(A),\Phi_{X \to X'}^{{\cal P}^{\vee}[2]}(E_2)[-1])=0.
$$
\begin{NB2}
A 0-dimensional sheaf is generated by ${\Bbb C}_x$ and 
$\Phi_{X \to X'}^{{\cal P}^{\vee}[1]}({\Bbb C}_x)={\cal P}_{|\{ x \} \times X'}^{\vee}[1] \in \Coh(X')$.
\end{NB2}
Hence $\chi(G,E_2) \leq 0$.

(3)
If $E_{|f}$ is a semistable vector bundle with
$\chi(G,E_{|f})<0$ for a general fiber $f$, then
$E_1$
is a fiber sheaf.
For a $G$-twisted stable fiber sheaf $A$ with $\chi(G,A)<0$,
$$
\Hom(E_1,A)=\Hom(\Phi_{X \to X'}^{{\cal P}^{\vee}[1]}(E_1), 
\Phi_{X \to X'}^{{\cal P}^{\vee}[2]}(A)[-1])=0.
$$
\begin{NB2}
A 0-dimensional sheaf is generated by ${\Bbb C}_x$ and 
$\Phi_{X \to X'}^{{\cal P}^{\vee}[1]}({\Bbb C}_x)={\cal P}_{|\{ x \} \times X'}^{\vee}[1] \in \Coh(X')$.
\end{NB2}
Hence $\chi(G,E_1) \geq 0$.

\end{proof}
\end{NB}

\begin{NB}
\begin{proof}
We set 
$v(E_i):=e^\beta (r_i+p_i H+q_i f+D_i+a_i^\beta \varrho_X)$.
Then
\begin{equation}
(c_1(F_i) \cdot f')=-\frac{r_i}{r_0}+p_i r_0 (\beta' \cdot f'),\;
\rk F_i= p_i r_0= r_0 ((c_1(E_i)-(\rk E_i)\beta)\cdot f)
\end{equation}
Hence we get the claim.
\end{proof}
\end{NB}

\begin{prop}\label{prop:FMstability1}
Assume that 
$s>0$ and $p-rs>0$ and $G$ is general with respect to 
$v$.
Let $E \in {\bf D}(X)$ be an object with $v(E)=v$.
Then the following conditions are equivalent.
\begin{enumerate}
\item
$E$ is $(f,G,H)$-semistable.
\item
$F:=\Phi_{X \to X'}^{{\cal P}^{\vee}[1]}(E)$ is $(f',G',H')$-semistable.
\end{enumerate}
\end{prop}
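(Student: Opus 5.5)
The plan is to compare the two stabilities term by term through the transform, using that $\Phi:=\Phi_{X \to X'}^{{\cal P}^{\vee}[1]}$ preserves $\chi(G,\cdot)$ and, by Lemma \ref{lem:f-slope}, relative fiber slopes. Write $v=e^\beta(r+pH+qf+D+a\varrho_X)$. By Lemma \ref{lem:coh-FM}, $\rk F=pr_0>0$ and $\rk G'=sr_0\rk G>0$. From $(c_1(F e^{-\beta'})\cdot f')=-r/r_0$ and $(c_1(G' e^{-\beta'})\cdot f')/\rk G'=-1/(sr_0^2)$, the hypothesis $p>rs>0$ gives $\mu_{f'}(F)>\mu_{f'}(G')$. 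So both $E$ and $F$ fall under Definition \ref{defn:fGH}: both are honest coherent sheaves, and Definition \ref{defn:fGH2} does not enter.

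\textbf{Step 1: $\Phi$ sends ${\cal C}_G$ into $\Coh(X')$, and the fiber categories correspond.}
\begin{enumerate}
\item For $E\in{\cal C}_G$, the relative Fourier--Mukai transform $\Phi(E)$ has cohomology only in degrees $0,1$ (Bridgeland \cite{Br:1}).
\item The degree-$1$ sheaf is controlled by $\Hom(E,{\cal P}_{|X\times\{x'\}}(K_X))$. Since $v({\cal P}_{|X\times\{x'\}})=r_0fe^\beta$, we get $\chi(G,{\cal P}_{|X\times\{x'\}})=-\rk G\, sr_0<0$. Hence these fiber sheaves (also after $\otimes K_X$) lie in ${\cal F}_G^-$, and the degree-$1$ sheaf vanishes because $E\in{\cal C}_G$. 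So $F$ is a sheaf.
\item On the generic fiber, the transform is a Fourier--Mukai transform on an elliptic curve. It preserves semistability of bundles of positive degree, so $F_{|\pi'^{-1}(\eta)}$ is semistable.
\item $\Phi$ restricted to fiber sheaves sends $G$-twisted stable fiber sheaves $A$ to $G'$-twisted stable fiber sheaves, up to shift, with $\chi(G',\Phi(A))=\chi(G,A)$. Tracking the shift by the sign of $\chi$, one gets $\Phi({\cal F}_G^\pm)\subset{\cal F}_{G'}^\pm$ (possibly after shift). By adjunction $\Hom(A',F)=\Hom(\Phi^{-1}A',E)$, and the same for $\Hom(F,B')$, so $F\in{\cal C}_{G'}$.
\end{enumerate}

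\textbf{Step 2: compare destabilizing subsheaves.} Suppose $E_1\subset E$ has $\mu_f(E_1)=\mu_f(E)$. By Lemma \ref{lem:C_G} we may replace $E_1$ by a subsheaf $E_1'$ that has the same generic fiber and satisfies $\chi(G,E_1)\le\chi(G,E_1')$, with both $E_1'$ and $E/E_1'$ in ${\cal C}_G$. By Step 1, all three terms of $0\to E_1'\to E\to E/E_1'\to0$ are WIT$_0$, so the sequence is carried to a short exact sequence of sheaves $0\to F_1\to F\to F/F_1\to0$.

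Lemma \ref{lem:f-slope} gives $\mu_{f'}(F_1)=\mu_{f'}(F)$, and ranks scale by the factor $r_0\cdot$(degree), with the same proportionality as for $E$. Since $\chi(G',F_1)=\chi(G,E_1')$, the inequality $\chi(G,E_1')\le\frac{\rk E_1'}{\rk E}\chi(G,E)$ transfers to the corresponding inequality for $F_1$, because the rank ratios agree when the $f$-slopes agree. For equality cases, $G$ general means $v(E_1')$ is proportional to $v$, so the $H$-tie-breaker in Definition \ref{defn:fGH} never has to be used.

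\textbf{Step 3: the converse.} Apply the same argument to the inverse transform $\Phi_{X'\to X}^{{\cal Q}^{\vee}[1]}$, using $\Phi_{X' \to X}^{{\cal Q}^{\vee}[1]} \circ \Phi_{X \to X'}^{{\cal P}^{\vee}[1]}=\otimes {\cal O}_X(-K_X)[-1]$. Here I have to check that for $F\in{\cal C}_{G'}$ the inverse is concentrated in a single degree. The sign computation is the same as in Step 1, using $\chi(G',{\cal Q}_{|\{x\}\times X'})=\chi(G,{\Bbb C}_x)=\rk G>0$. The twist by $K_X$ is harmless because $(f,G,H)$-semistability is insensitive to $\otimes K_X$, which is a multiple of $f$ modulo torsion (Lemma \ref{lem:fGH-indep}).

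\textbf{Expected main obstacle.} The delicate point is Step 1(4), together with the exactness in Step 2. I must verify that $\Phi$ sends ${\cal F}_G^+$ into sheaves in ${\cal F}_{G'}^+$ and ${\cal F}_G^-$ into ${\cal F}_{G'}^-$ shifted by one, including the rank-$0$, $\chi=0$ boundary. This requires the generality of $G$ to exclude fiber sheaves with $\chi(G,A)=0$ relevant to $v$ (as in Lemma \ref{lem:general}). I would first check this on stable fiber sheaves by a phase argument on each fiber, via the Atiyah-type classification, and then extend to $\langle{\cal F}_G^\pm\rangle$ by extensions.
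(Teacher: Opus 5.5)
Your overall plan can be made to work: show that $\Phi:=\Phi_{X\to X'}^{{\cal P}^{\vee}[1]}$ carries ${\cal C}_G$ to sheaves in ${\cal C}_{G'}$ and back, then transport Lemma~\ref{lem:C_G}-normalized subsheaves in both directions. But the mechanism you give in Step~1(4) is false, and it is exactly the step you say you must verify, so that verification would fail.

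Since $\chi(G',\Phi(\cdot))=\chi(G,\cdot)$ and a shift by one reverses the sign of $\chi$, the shift is not decided by the sign of $\chi(G,A)$. It is decided by the position of $A$ relative to the kernel sheaves ${\cal P}_{|X\times\{x'\}}$, which themselves lie in ${\cal F}_G^-$. Two examples:
\begin{itemize}
\item ${\cal P}_{|X\times\{x'\}}\mapsto{\Bbb C}_{x'}[-1]$, and $\chi(G',{\Bbb C}_{x'})=\rk G'>0$.
\item $B:={\cal P}_{|X\times\{x'\}}(-lH)\in{\cal F}_G^-$ has $\Hom({\cal P}_{|X\times\{y\}},B)=0$ for all $y$. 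So $\Phi(B)=A'[-1]$ with $A'$ a stable fiber sheaf and $\chi(G',A')=-\chi(G,B)>0$, i.e.\ $A'\in{\cal F}^+_{G'}$.
\end{itemize}
Only the members of ${\cal F}^-_G$ whose $G$-twisted slope exceeds that of ${\cal P}_{|X\times\{x'\}}$ go, unshifted, into ${\cal F}^-_{G'}$. So $\Phi$ does not send ${\cal F}^-_G$ into shifts of ${\cal F}^-_{G'}$, and ${\cal F}^+_{G'}\not\subset\Phi({\cal F}^+_G)$. Your adjunction argument therefore breaks for $A'={\Bbb C}_{x'}$ and for $A'=\Phi(B)[1]$.

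This step is load-bearing. In each direction you invoke Lemma~\ref{lem:C_G}, and dispose of torsion and full-rank test subsheaves, on the side whose ${\cal C}$-membership comes from Step~1(4) or its mirror.

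The repair is to pull back the test objects from the target side:
\begin{itemize}
\item For Step~1(4): $\Phi^{-1}({\cal F}^-_{G'})\subset{\cal F}^-_G$ and $\Phi^{-1}({\cal F}^+_{G'})\subset{\cal F}^+_G\cup{\cal F}^-_G[1]$.
\item For the mirror in the converse: $\Phi({\cal F}^+_G)\subset{\cal F}^+_{G'}$ and $\Phi({\cal F}^-_G)\subset{\cal F}^-_{G'}\cup{\cal F}^+_{G'}[-1]$.
\end{itemize}
The shifted pieces give $\Hom(B[1],E)=0$, resp.\ $\Hom(F,A'[-1])=0$, for degree reasons. So $F\in{\cal C}_{G'}$ and $E\in{\cal C}_G$ do hold.

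Also, in Step~3 the sign is opposite to Step~1, not the same. Since ${\cal Q}_{|\{x\}\times X'}\in{\cal F}^+_{G'}$, it is $\Hom({\cal Q}_{|\{x\}\times X'},F)=0$ that makes $E=\Phi_{X'\to X}^{{\cal Q}^{\vee}[2]}(F)(K_X)$ a two-term complex of locally free sheaves. Its $H^{-1}$ is torsion free and generically zero, so $E$ is a sheaf.

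For comparison, the paper never proves $F\in{\cal C}_{G'}$ and never needs the action of $\Phi$ on all of ${\cal F}^\pm_G$. It uses only ${\cal P}_{|X\times\{x'\}}\in{\cal F}^-_G$ and ${\cal Q}_{|\{x\}\times X'}\in{\cal F}^+_{G'}$.
\begin{itemize}
\item Given a destabilizing quotient $F\to F_2$, it passes to a further quotient with $\Hom({\cal F}^+_{G'},F_2)=0$ (Lemma~\ref{lem:F^+}). Then $\Hom({\cal Q}_{|\{x\}\times X'},F_2)=0$, so $F_1$ and $F_2$ transform back to sheaves.
\item Conversely, it shrinks a destabilizing $E_1$ (Lemmas~\ref{lem:F^-}, \ref{lem:F^+}) until $\Hom(E_1,{\cal P}_{|X\times\{x'\}})=0$.
\end{itemize}
This is leaner and bypasses exactly the case analysis where your claim fails. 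Your route, once corrected, gives the stronger structural statement that $\Phi$ identifies the relevant slope ranges of ${\cal C}_G$ and ${\cal C}_{G'}$.
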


\begin{proof}
We note that
\begin{equation}\label{eq:condition1}
\chi({\cal P}_{|X \times \{ x' \}},E[1])=r_0 p>0,\; \chi({\cal Q}_{|\{ x \} \times X'},F)=-r<0
\end{equation}
where $x \in X$ and $x' \in X'$.
Assume that $E$ is $(f,G,H)$-semistable.
By our assumption on $s$ and $p$,
we see that 
\begin{equation}\label{eq:condition11}
\mu_{\beta',f'} (G')=-\frac{1}{sr_0^2}<-\frac{r}{r_0^2 p}=\mu_{\beta',f'}(F)<0.
\end{equation}
We shall check the conditions in Definition \ref{defn:fGH} to prove 
the $(f',G',H')$-semistability of $F$.
Since 
\begin{equation}\label{eq:condition12}
0<\mu_{\beta,f} (G)=s<\frac{p}{r}=\mu_{\beta,f} (E),
\end{equation}
$E \in \Coh(X)$ and $E_{|\pi^{-1}(\eta)}$ is a semistable vector bundle.
Since the Fourier-Mukai transforms on elliptic curves preserve semistability 
(up to shift) and $\rk F=pr_0>0$,
$F_{|{\pi'}^{-1}(\eta)}$ is a semistable vector bundle.
In particular $H^i(F)_{|{\pi'}^{-1}(\eta)}=0$ for $i \ne 0$.
Since $\chi(G,{\cal P}_{|X \times \{ x' \}})=-r_0 \rk G s<0$,
\begin{NB}
$\langle v(G),r_0 f e^\beta \rangle=r_0 \rk G s$.
\end{NB}
$(f,G,H)$-semistability of $E$ implies
$$
\Ext^2({\cal P}_{|X \times \{ x' \}},E)=\Hom(E,{\cal P}_{|X \times \{ x' \}})^{\vee}=0.
$$
Hence $F$ is a two-term complex $V_{-1} \to V_0$ of locally free sheaves.
In particular $H^{-1}(F)$ is torsion free.
Since $H^{-1}(F)_{|{\pi'}^{-1}(\eta)}=0$, we get $H^{-1}(F)=0$. Therefore
$F \in \Coh(X')$.

Assume that there is an exact sequence
\begin{equation}
0 \to F_1 \to F \to F_2 \to 0
\end{equation}
such that
\begin{equation}\label{eq:chi(F)}
(c_1(F_2) \cdot f') =\frac{\rk F_2}{\rk F}(c_1(F) \cdot f'),\; \chi(G',F_2) \leq \frac{\rk F_2}{\rk F} \chi(G',F).
\end{equation}
Replacing $F_2$ by its quotient, we may assume that
$\Hom(A,F_2)=0$ for all $G'$-semistable fiber sheaves $A$ with $\chi(G',A)>0$
(use Lemma \ref{lem:F^+}).
Then 
$\Hom({\cal Q}_{|\{ x \} \times X'},F_2)=0$ for all $x \in X$.
\begin{NB}
$\chi(G',{\cal Q}_{|\{ x \} \times X'})=\rk G' (\chi(e^{\beta'},v_0')+\frac{1}{sr_0^2}(H' \cdot f'))>0$.
\end{NB}
Since $(F_2)_{|{\pi'}^{-1}(\eta)}$ is a stable vector bundle with $\mu_{\beta',f'}(F_2)<0$
or $(F_2)_{|{\pi'}^{-1}(\eta)}=0$, we see that
$\Phi_{X' \to X}^{{\cal P}[1]}(F_2) \in \Coh(X)$. 
Therefore we have an exact sequence of coherent sheaves on $X$
\begin{equation}
0 \to E_1 \to E \to E_2 \to 0
\end{equation}
where $E_i:= \Phi_{X' \to X}^{{\cal P}[1]}(F_i)(K_X)$.
By \eqref{eq:chi(F)} and Lemma \ref{lem:f-slope},
we have
$$
\frac{\rk E_2}{\rk E}=\frac{\rk F_2}{\rk F},\;
\chi(G,E_2) \leq \frac{\rk E_2}{\rk E} \chi(G,E).
$$
By the $(f,G,H)$-semistability of $E$,
$\chi(G,E_2) = \frac{\rk E_2}{\rk E} \chi(G,E)$.
Since $G$ is general with respect to $v$, we get $v(E_2) \in {\Bbb Q}v$, and hence
$v(F_2) \in {\Bbb Q}v(F)$.
Therefore $F$ is $(f',G',H')$-semistable.

Conversely assume that $F$ is $(f',G',H')$-semistable.
By our assumption $F \in \Coh(X')$ and satisfies the conditions in Definition \ref{defn:fGH}. 
We shall prove that $E \in \Coh(X)$ and satisfies the conditions in Definition \ref{defn:fGH}. 
By the semistability of $F_{|{\pi'}^{-1}(\eta)}$,
$E_{|{\pi}^{-1}(\eta)}$ is a semistable vector bundle. 
In particular $H^i(E)_{|{\pi}^{-1}(\eta)}=0$ for $i \ne 0$.
Since ${\cal Q}_{|\{ x \} \times X'}=\Phi_{X \to X'}^{{\cal P}^{\vee}[1]}({\Bbb C}_x)$, 
$\chi(G', {\cal Q}_{|\{ x \} \times X'})=\chi(G,{\Bbb C}_x)>0$.
By the $(f',G',H')$-semistability of $F$,
$\Hom({\cal Q}_{|\{ x \} \times X'},F)=0$.
Hence $E=\Phi_{X' \to X}^{{\cal Q}^{\vee}[2]}(F)(K_X)$ 
is a two term complex $V_{-1} \to V_0$ of locally free sheaves.
\begin{NB}
Since $F_{|{\pi'}^{-1}(\eta)}$ is a semistable vector bundle,
there is a non-empty open subset $U$ of $C$ such that
$F_{|{\pi'}^{-1}(t)}$ are semistable for all $t \in U$. In particular 
$F_{|{\pi'}^{-1}(U)}$ is locally free.
Then we have $\Hom({\cal Q}_{| \{ x \} \times X'},F[1])=H^0(X',{\cal P}_{|\{ x \} \times X'} \otimes F)=0$
if $\pi(x) \in U$.
\end{NB}
Hence we get $E \in \Coh(X)$.
Assume that there is an exact sequence
\begin{equation}
0 \to E_1 \to E \to E_2 \to 0
\end{equation}
such that 
\begin{equation}\label{eq:chi(E)}
(c_1(E_1) \cdot f) =\frac{\rk E_1}{\rk E}(c_1(E) \cdot f), \;
\chi(G,E_1) \geq \frac{\rk E_1}{\rk E} \chi(G,E).
\end{equation}
Replacing $E_1$ by its subsheaf, we may assume that
$\Hom(E_1,A)=0$ for all $G$-semistable fiber sheaves $A$ with $\chi(G,A)<0$
(use Lemma \ref{lem:F^-} and Lemma \ref{lem:F^+}).
Then 
$\Hom(E_1,{\cal P}_{|X \times \{ x' \}})=0$ for all $x' \in X'$.
\begin{NB}
$\chi(G,{\cal P}_{|X \times \{ x' \}})=\rk G (\chi(e^{\beta},v_0)-s(H \cdot f))<0$.
\end{NB}
Since $(E_1)_{|\pi^{-1}(\eta)}$ is a stable vector bundle with $\mu_{\beta,f}(E_1)>0$
or $(E_1)_{|\pi^{-1}(\eta)}=0$, we see that
$\Phi_{X \to X'}^{{\cal P}^{\vee}[1]}(E_1) \in \Coh(X')$. 

Hence we have an exact sequence

\begin{equation}
0 \to F_1 \to F \to F_2 \to 0.
\end{equation}
where $F_i:=\Phi_{X \to X'}^{{\cal P}^{\vee}[1]}(E_i)$.
By \eqref{eq:chi(E)} and Lemma \ref{lem:f-slope},
we have
$$
\frac{\rk E_1}{\rk E}=\frac{\rk F_1}{\rk F},\;
\chi(G',F_1) \geq \frac{\rk F_1}{\rk F} \chi(G',F).
$$
By the $(f',G',H')$-semistability of $F$,
$\chi(G',F_1) = \frac{\rk F_1}{\rk F} \chi(G',F)$, which implies
$\chi(G,E_1) = \frac{\rk E_1}{\rk E} \chi(G,E)$.
Since $G$ is general with respect to $v$, $v(E_1) \in {\Bbb Q}v$.
Therefore $E$ is $(f,G,H)$-semistable.
\end{proof}

\begin{rem}\label{rem:Gieseker}
Assume that $0<s \ll 1$.
Since $-\frac{1}{r_0^2 s}$ is sufficiently small,
Proposition \ref{prop:FMstability1} and Proposition \ref{prop:Gieseker-chamber}
imply that
$E$ is $(f,G,H)$-semistable if and only if 
$\Phi_{X \to X'}^{{\cal P}^{\vee}[1]}(E)$ is $G'$-twisted semistable with respect to
$H'+mf'$ $(m \gg 0)$.
\end{rem}

\begin{rem}
If $\alpha=0$, then the equivalence still holds even if $G$ is not general (see Lemma \ref{lem:special}).
\end{rem}

\begin{lem}\label{lem:special}
Let $E_i$ $(i=1,2)$ be coherent sheaves on $X$ such that $\rk E_i >0$,
$\mu_{f}(E_1)=\mu_{f}(E_2)$ and $\frac{\chi(G,E_1)}{\rk E_1}=\frac{\chi(G,E_2)}{\rk E_2}$.
We set $F_i:=\Phi_{X \to X'}^{{\cal P}^{\vee}[1]}(E_i)$.
Assume that $s>0$ and $\alpha=0$. Then
$$
\frac{(c_1(F_1) \cdot H')}{\rk F_1}>\frac{(c_1(F_2) \cdot H')}{\rk F_2} \iff
\frac{(c_1(E_1) \cdot H')}{\rk E_1}>\frac{(c_1(E_2) \cdot H')}{\rk E_2}. 
$$
\end{lem}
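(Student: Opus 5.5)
The plan is a direct computation with the cohomological transform of Lemma \ref{lem:coh-FM}. I read the right-hand side of the equivalence as the $H$-slope $\frac{(c_1(E_i)\cdot H)}{\rk E_i}$; the $H'$ there appears to be a typo, since $c_1(E_i)$ lives on $X$. I assume, as in the setting of Proposition \ref{prop:FMstability1}, that $\mu_{\beta,f}(E_i)>0$, so that $\rk F_i>0$.

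First I set up notation. Write
$$
v(E_i)=e^\beta(r_i+p_iH+q_if+D_i+a_i\varrho_X),\quad D_i\in N_H .
$$
By Lemma \ref{lem:coh-FM},
$$
v(F_i)=e^{\beta'}\bigl(p_ir_0-\tfrac{r_i}{r_0}H'+a_ir_0f'-D_i'-\tfrac{q_i}{r_0}\varrho_{X'}\bigr),\quad D_i'\in N_{H'} .
$$

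Next I compute both slopes. Using $({H'}^2)=0$, $(H'\cdot f')=1$ and $D_i'\perp H'$, I get $\rk F_i=p_ir_0$ and
$$
(c_1(F_i)\cdot H')=p_ir_0(\beta'\cdot H')+a_ir_0 .
$$
Hence
$$
\frac{(c_1(F_i)\cdot H')}{\rk F_i}=(\beta'\cdot H')+\frac{a_i}{p_i}.
$$
On $X$, the relations $(H^2)=0$, $(H\cdot f)=1$ and $D_i\perp H$ give
$$
\frac{(c_1(E_i)\cdot H)}{\rk E_i}=(\beta\cdot H)+\frac{q_i}{r_i}.
$$
So the claim reduces to showing that $\frac{a_1}{p_1}>\frac{a_2}{p_2}$ if and only if $\frac{q_1}{r_1}>\frac{q_2}{r_2}$.

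Then I use the two hypotheses. The condition $\mu_f(E_1)=\mu_f(E_2)$ says exactly that $\frac{p_1}{r_1}=\frac{p_2}{r_2}=:c$, and $c>0$ because $\rk F_i=p_ir_0>0$ and $r_i>0$. Since $\alpha=0$, the formula for $\chi(G,\cdot)$ computed above (with $v(G)=\rk G\,e^{\beta+sH+yf}$) reads
$$
\chi(G,E_i)=\rk G\,(a_i-q_is-p_iy+r_isy).
$$
Dividing by $\rk E_i=r_i$, the terms $-\frac{p_i}{r_i}y+sy$ are common to $i=1,2$. The equality of $\frac{\chi(G,E_i)}{\rk E_i}$ therefore gives
$$
\frac{a_1}{r_1}-\frac{a_2}{r_2}=s\Bigl(\frac{q_1}{r_1}-\frac{q_2}{r_2}\Bigr).
$$
Consequently
$$
\frac{a_1}{p_1}-\frac{a_2}{p_2}=\frac1c\Bigl(\frac{a_1}{r_1}-\frac{a_2}{r_2}\Bigr)=\frac{s}{c}\Bigl(\frac{q_1}{r_1}-\frac{q_2}{r_2}\Bigr),
$$
and since $s/c>0$ the two inequalities are equivalent.

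The only delicate points are bookkeeping. The first is the sign of $c$, which is where positivity of $\rk F_i$ (equivalently $\mu_{\beta,f}(E_i)>0$) enters. The second is the role of $\alpha=0$: for $\alpha\ne 0$ an extra term $(\alpha\cdot D_i)$ appears in $\chi(G,E_i)$ and breaks the proportionality. I would double-check these against the conventions of Lemma \ref{lem:coh-FM}, in particular the sign of $-\frac{r_i}{r_0}H'$, before writing the proof out.
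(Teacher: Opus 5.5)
Your proof is correct and is essentially the paper's own argument. It uses the same parametrization $v(E_i)=e^\beta(r_i+p_iH+q_if+D_i+a_i\varrho_X)$, the same slope formulas read off from Lemma~\ref{lem:coh-FM}, and the same linear relation $s\bigl(\frac{q_1}{r_1}-\frac{q_2}{r_2}\bigr)=\frac{p_1}{r_1}\bigl(\frac{a_1}{p_1}-\frac{a_2}{p_2}\bigr)$, which follows from the equality of $\chi(G,\cdot)/\rk$. Both of your caveats are also right. The paper's proof does use the $H$-slope on $X$, so the $H'$ in the statement is a typo. It also tacitly needs $p_i/r_i>0$, which holds in the setting of Proposition~\ref{prop:FMstability1}, where $p>rs>0$.
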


\begin{proof}
We set 
$$
v(E_i)=e^\beta (r_i+p_i H+q_i f+D_i+a_i \varrho_X),\, D_i \in N_H.
$$ 
Then
\begin{equation}
\begin{split}
0=\mu_{f}(E_1)-\mu_{f}(E_2)=&\frac{p_1}{r_1}-\frac{p_2}{r_2},\\
0=\frac{\chi(G,E_1)}{\rk E_1}-\frac{\chi(G,E_2)}{\rk E_2}=& \rk G \left(\frac{a_1-q_1 s}{r_1}-\frac{a_2-q_2 s}{r_2} \right),\\
\frac{(c_1(E_1) \cdot H)}{\rk E_1}-\frac{(c_1(E_2) \cdot H)}{\rk E_2}=&\frac{q_1}{r_1}-\frac{q_2}{r_2},\\
 \frac{(c_1(F_1) \cdot H')}{\rk F_1}-\frac{(c_1(F_2) \cdot H')}{\rk F_2}=&\frac{a_1}{p_1}-\frac{a_2}{p_2}.
\end{split}
\end{equation}
Since
$$
s \left(\frac{q_1}{r_1}-\frac{q_2}{r_2} \right)=\frac{p_1}{r_1}\left(\frac{a_1}{p_1}-\frac{a_2}{p_2} \right)-
\left(\frac{a_1-q_1 s}{r_1}-\frac{a_2-q_2 s}{r_2} \right),
$$
we get our claim.
\end{proof}

\begin{prop}\label{prop:FMstability2}
Assume that 
$p<0$ and $p-rs>0$ and $G$ is general with respect to 
$v$.
Let $E \in {\bf D}(X)$ be an object with $v(E)=v$.
Then the following cnditions are equivalent.
\begin{enumerate}
\item
$E$ is $(f,G,H)$-semistable.
\item
$F:=\Phi_{X \to X'}^{{\cal P}^{\vee}[2]}(E)$ is $(f',G'',H')$-semistable, where
$G'':=\Phi_{X \to X'}^{{\cal P}^{\vee}[2]}(G)$.
\end{enumerate}
\end{prop}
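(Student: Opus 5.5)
Since $p<0$ and $p-rs>0$, we have $s<p/r<0$, so $\mu_{\beta,f}(G)=s<\mu_{\beta,f}(E)=p/r$, and $E$ is a sheaf in the sense of Definition \ref{defn:fGH}. On the $X'$ side, $\rk F=-pr_0>0$ (the extra shift flips the sign in Lemma \ref{lem:coh-FM}). From the formula for $v(G')$ we get $\mu_{\beta',f'}(G'')=-\frac{1}{sr_0^2}>0$, since $G''=G'[1]$ has the same slope as $G'$ when $s<0$. We also get $\mu_{\beta',f'}(F)=-\frac{r}{r_0^2p}>0$. The inequality $p-rs>0$ with $p<0$ gives $\mu_{\beta',f'}(F)>\mu_{\beta',f'}(G'')$, so $F$ should also be a sheaf in the sense of Definition \ref{defn:fGH}. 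The plan is to repeat the proof of Proposition \ref{prop:FMstability1} with the shift changed from $[1]$ to $[2]$. The key numbers, analogous to \eqref{eq:condition1}, are
$$\chi({\cal P}_{|X\times\{x'\}},E)=-r_0p>0,\qquad \chi(G,{\cal P}_{|X\times\{x'\}})=-r_0\rk G\, s>0.$$
The second says ${\cal P}_{|X\times\{x'\}}\in{\cal F}_G^+$, which replaces the condition ${\cal P}_{|X\times\{x'\}}\in{\cal F}_G^-$ used before. We also use $\chi(G'',{\cal Q}_{|\{x\}\times X'}[\pm1])=-\chi(G,{\Bbb C}_x)<0$.

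For (1)$\Rightarrow$(2), first note that on the generic fiber the transform of a semistable bundle of positive slope, shifted by $[1]$, is a semistable bundle. So $H^i(F)$ is generically zero for $i\neq0$. Since $E\in{\cal C}_G$ and ${\cal P}_{|X\times\{x'\}}\in{\cal F}_G^+$, we have $\Hom({\cal P}_{|X\times\{x'\}},E)=0$ for all $x'$. This should make $\Phi_{X\to X'}^{{\cal P}^\vee}(E)$ concentrated in degrees $1,2$, i.e. a two-term complex of locally free sheaves after the shift $[2]$. Then $H^{-1}(F)$ is torsion free and generically zero, hence $F\in\Coh(X')$. Next take a quotient $F\to F_2$ violating $(f',G'',H')$-semistability. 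By Lemma \ref{lem:F^-} and Lemma \ref{lem:F^+} we may assume $\Hom(F_2,A)=0$ for $A$ with $\chi(G'',A)<0$; here $A$ includes ${\cal Q}_{|\{x\}\times X'}$ up to shift. This should make the inverse transform $\Phi_{X'\to X}^{{\cal Q}^\vee}(F_2)$, suitably shifted and twisted by $K_X$, a sheaf. We then get an exact sequence $0\to E_1\to E\to E_2\to0$. Lemma \ref{lem:f-slope} preserves the rank ratios, and $\chi(G,\cdot)=\chi(G'',\cdot)$ under the transform. So the destabilizing inequality transfers to $E$, and the genericity of $G$ forces $v(E_2)\in{\Bbb Q}v$.

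The converse is symmetric. The condition $\Hom(F,{\cal Q}_{|\{x\}\times X'}[\ast])=0$ coming from $\chi(G'',{\cal Q})<0$ shows that $E=\Phi_{X'\to X}^{{\cal Q}^\vee[\ast]}(F)(K_X)$ is a two-term complex and hence a sheaf. A destabilizing subsheaf $E_1\subset E$ can be reduced, via Lemma \ref{lem:F^+}, to one with $\Hom({\cal P}_{|X\times\{x'\}},E_1)=0$. Its transform is then a sheaf, and we conclude as before.

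The main obstacle is the bookkeeping of shifts and of which fiber-sheaf vanishing is needed. With $s<0$ the roles of ${\cal F}^+$ and ${\cal F}^-$ swap, so in the exact sequences it is subobjects rather than quotients that must be modified, and vice versa. We must check carefully, using Serre duality $\Ext^2(A,E)=\Hom(E,A(K_X))^\vee$ and $\chi(G,A(K_X))=\chi(G,A)$ for fiber sheaves, that each required Hom or Ext vanishing follows from the membership of $E$ or $F$ in ${\cal C}$. I would first verify the degree concentration on a single fiber using Bridgeland's elliptic-curve description, and then globalize via base change.
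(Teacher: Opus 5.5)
Your overall strategy, re-running the proof of Proposition \ref{prop:FMstability1} with ${\cal F}^\pm$ interchanged, can work, and your arguments that $F$, resp.\ $E$, is a sheaf are fine. The gap is in the reduction step on destabilizing sequences: in both directions you apply it to the wrong term, so the exact sequences you then write down are not justified.

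In (1)$\Rightarrow$(2), the inverse functor $\Phi_{X'\to X}^{{\cal Q}^{\vee}[1]}(\,\cdot\,)(K_X)$ has fibre cohomology $\Ext^{i+1}({\cal Q}_{|\{x\}\times X'},\,\cdot\,)$ in degrees $i=-1,0,1$. For $0\to F_1\to F\to F_2\to 0$, the long exact sequence gives $H^{-1}(E_1)=H^1(E_2)=0$ for free. Exactness of $0\to E_1\to E\to E_2\to 0$ therefore needs $H^{-1}(E_2)=0$ and $H^1(E_1)=0$.
\begin{itemize}
\item The first is automatic. $\Ext^2({\cal Q}_{|\{x\}\times X'},F_2)$ is a quotient of $\Ext^2({\cal Q}_{|\{x\}\times X'},F)=\Ext^3({\Bbb C}_x,E)=0$, so $E_2$ is a two-term complex with torsion-free, generically zero $H^{-1}$. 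In particular, the vanishing $\Hom(F_2,{\cal Q}_{|\{x\}\times X'}(K_{X'}))=0$ that you arrange costs nothing.
\item The second needs $\Ext^2({\cal Q}_{|\{x\}\times X'},F_1)\cong\Hom(F_1,{\cal Q}_{|\{x\}\times X'}(K_{X'}))^{\vee}=0$ for the \emph{kernel}, and this is not automatic. Indeed $\Hom({\cal Q}_{|\{x\}\times X'},F)=\Ext^1({\Bbb C}_x,E)$ is nonzero wherever $E$ is not locally free, so $F$ can carry fibre torsion in ${\cal F}^-_{G''}$. Without this vanishing, $E\to E_2$ can have nonzero cokernel $H^1(E_1)$.
\end{itemize}
Moreover, ``we may assume $\Hom(F_2,A)=0$ for $A\in{\cal F}^-_{G''}$'' cannot come from Lemma \ref{lem:F^-}: that lemma produces a \emph{subsheaf} of $F_2$, which is no longer a quotient of $F$. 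The fix is to take a destabilizing subsheaf $F_1\subset F$ and shrink it by Lemma \ref{lem:F^-} so that $\Hom(F_1,A)=0$ for all $A\in{\cal F}^-_{G''}$. This only raises $\chi(G'',F_1)$, so $F_1$ stays destabilizing, and it gives the needed vanishing for ${\cal Q}_{|\{x\}\times X'}(K_{X'})$.

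The converse has the mirror-image problem. $\Hom({\cal P}_{|X\times\{x'\}},E_1)\subset\Hom({\cal P}_{|X\times\{x'\}},E)$, and applying $\Phi_{X'\to X}^{{\cal P}}$ identifies the latter with some $\Hom({\Bbb C}_{y'},F)=0$, so your reduction is vacuous. Also, Lemma \ref{lem:F^+} applied to $E_1$ yields a quotient of $E_1$, not a subsheaf of $E$. What is actually needed is $\Hom({\cal P}_{|X\times\{x'\}},E_2)=0$ for the \emph{quotient} $E_2$. This makes $\Phi_{X\to X'}^{{\cal P}^{\vee}[2]}(E_2)$ a two-term complex with torsion-free, generically zero $H^{-1}$, and then $0\to F_1\to F\to F_2\to0$ is exact. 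You get it by applying Lemma \ref{lem:F^+} to $E_2$, which lowers $\chi(G,E_2)$ and so keeps it destabilizing.

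With these corrections your argument is literally the proof of Proposition \ref{prop:FMstability1} run for the inverse transform. The paper does exactly this in one line: it applies Proposition \ref{prop:FMstability1} to $(X',G'',\otimes{\cal O}_X(K_X)\circ\Phi_{X'\to X}^{{\cal Q}^{\vee}[1]},F)$. It uses $\Phi_{X'\to X}^{{\cal Q}^{\vee}[1]}=\Phi_{X'\to X}^{{\cal P}}$ and checks $0<\mu_{\beta',f'}(G'')<\mu_{\beta',f'}(F)$ and $\mu_{\beta,f}(G)<\mu_{\beta,f}(E)<0$. This sidesteps the shift and role bookkeeping you identified as the main obstacle.

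Two minor slips: the generic-fibre slope $\mu_{\beta,f}(E)=p/r$ is negative, not positive. And it is the unshifted ${\cal Q}_{|\{x\}\times X'}$ that satisfies $\chi(G'',\cdot)=-\chi(G,{\Bbb C}_x)<0$, not ${\cal Q}_{|\{x\}\times X'}[\pm1]$.
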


\begin{NB}
Relation with Prop \ref{prop:relation}:

$x \ll 0$ corresponds to $0<\epsilon -\frac{1}{xr_0^2}$.

\end{NB}

\begin{proof}
We note that $\rk G''=-r_0 \rk G s>0$.
We shall apply Proposition \ref{prop:FMstability1} after replacing
the quadruplet $(X,G,\Phi_{X \to X'}^{{\cal P}^{\vee}[1]},E)$ by 
$(X',G'',\otimes {\cal O}_X(K_X) \circ \Phi_{X' \to X}^{{\cal Q}^{\vee}[1]},F)$.
We first note that 
$\Phi_{X' \to X}^{{\cal Q}^{\vee}[1]}=\Phi_{X' \to X}^{{\cal P}}$, and hence
$G=\Phi_{X' \to X}^{{\cal Q}^{\vee}[1]}(G'')(K_X)$ and
$E=\Phi_{X' \to X}^{{\cal Q}^{\vee}[1]}(F)(K_X)$.
By the conditions $p<0$ and $p-rs>0$, we have
\begin{equation}
\begin{split}
0<\mu_{\beta',f'}(G'')=-\frac{1}{r_0^2 s}<& -\frac{r}{r_0^2 p}=\mu_{\beta',f'}(F)\\
\mu_{\beta,f}(G)=s<& \frac{p}{r}=\mu_{\beta,f}(E)<0,
\end{split}
\end{equation}
which correspond to conditions \eqref{eq:condition11}, \eqref{eq:condition12}.
Therefore our claim follows from Proposition \ref{prop:FMstability1}.

\begin{NB}
We note that $\chi(G,{\cal P}_{|X \times \{ x' \}})>0$.
By the $(f,G,H)$-semistability of $E$,
we get $\Hom({\cal P}_{|X \times \{ x' \}},E)=0$.
Hence $F=\Phi_{X \to X'}^{{\cal P}^{\vee}[2]}(E)$ is a two-term complex of locally free sheaves.
Since $\Ext^1({\cal P}_{|X \times \{ x' \}},E)=0$ for a general $x' \in X'$, we get
$F \in \Coh(X')$.
Assume that there is an exact sequence
\begin{equation}
0 \to F_1 \to F \to F_2 \to 0
\end{equation}
such that
$$
(c_1(F_1) \cdot f') =\frac{\rk F_1}{\rk F}(c_1(F) \cdot f'), \;
\chi(G'',F_1) \leq \frac{\rk F_1}{\rk F} \chi(G'',F).
$$
Applying Lemma \ref{lem:spectral} to $\Phi_{X' \to X}^{{\cal Q}^{\vee}[1]}$,
we may assume that
$\Phi_{X' \to X}^{{\cal Q}^{\vee}[1]}(F_1) \in \Coh(X)$.
Therefore we have an exact sequence of coherent sheaves on $X$
\begin{equation}
0 \to E_1 \to E \to E_2 \to 0
\end{equation}
where 
$E_i=\Phi_{X' \to X}^{{\cal P}}(F_i)(K_X)$.
\begin{NB2}
$\chi(G'',F_i)=\chi(G,E_i)$.
\end{NB2}
Hence 
$$
\chi(G'',F_1) = \frac{\rk F_1}{\rk F} \chi(G'',F),\;
(c_1(F_1) \cdot H') \leq  \frac{\rk F_1}{\rk F} (c_1(F) \cdot H').
$$
Therefore $F$ is $(f',G'',H')$-semistable.

Conversely for a $(f',G'',H')$-semistable sheaf $F$,
$\chi(G'', {\cal Q}_{|\{ x \} \times X'})=-\chi(G,{\Bbb C}_x)<0$ implies 
$$
\Ext^2({\cal Q}_{|\{ x \} \times X'},F)=
\Hom(F,{\cal Q}_{|\{ x \} \times X'})^{\vee}=0.
$$
Hence $E:=\Phi_{X' \to X}^{{\cal P}}(F)(K_X) \in \Coh(X)$.
Assume that there is an exact sequence
\begin{equation}
0 \to E_1 \to E \to E_2 \to 0
\end{equation}
such that 
$$
(c_1(E_2) \cdot f) =\frac{\rk E_1}{\rk E}(c_1(E) \cdot f),\; \chi(G,E_2) \leq \frac{\rk E_2}{\rk E} \chi(G,E).
$$
By Lemma \ref{lem:spectral},
we may assume that $\Phi_{X \to X'}^{{\cal P}^{\vee}[2]}(E_2) \in \Coh(X')$.
Hence we have an exact sequence

\begin{equation}
0 \to \Phi_{X \to X'}^{{\cal P}^{\vee}[2]}(E_1)(K_X) \to F \to 
\Phi_{X \to X'}^{{\cal P}^{\vee}[2]}(E_2)(K_X) \to 0.
\end{equation}
\end{NB}
\end{proof}

\begin{NB}
$G^{\#}={\bf R}\Hom_{p_{X'}}(p_X^*(G(-K_X)),{\cal P})(-K_{X'})=
{\bf R}\Hom_{p_{X'}}(p_X^*(G),{\cal P})$.
\end{NB}
\begin{prop}\label{prop:FMstability3}
Assume that 
$s<0$ and $p>0$ and $G$ is general with respect to 
$v$.
Let $E \in {\bf D}(X)$ be an object with $v(E)=v$.
Then the following conditions are equivalent.
\begin{enumerate}
\item
$E$ is $(f,G,H)$-semistable.
\item
$F:={\bf R}\Hom_{p_{X'}}(p_X^*(E),{\cal P}[1])=
\Phi_{X \to X'}^{{\cal P}^{\vee}[1]}(E)^{\vee}(-K_{X'})$ is $(f',-{G'}^{\vee}(-2K_{X'}),H')$-semistable.
\item
$\Phi_{X \to X'}^{{\cal P}^{\vee}[1]}(E)$ is $(f',-G',H')$-semistable.
\end{enumerate}
\end{prop}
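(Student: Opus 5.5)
We prove (i)$\Leftrightarrow$(ii) by running the argument of Proposition \ref{prop:FMstability1} on the dual side. Then (ii)$\Leftrightarrow$(iii) follows formally from Definition \ref{defn:fGH2}.

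First the numerics. For $s<0$, $p>0$ we have $\mu_{\beta,f}(G)=s<0<p/r=\mu_{\beta,f}(E)$, so an $(f,G,H)$-semistable $E$ is a sheaf whose generic restriction is semistable. Moreover
$$\chi(G,{\cal P}_{|X \times \{x'\}})=-r_0\rk G\, s>0,$$
so ${\cal P}_{|X\times\{x'\}}\in{\cal F}_G^+$ and $\Hom({\cal P}_{|X\times\{x'\}},E)=0$. Hence $\Phi_{X\to X'}^{{\cal P}^\vee[1]}(E)$ has cohomology only in degrees $0$ and $1$, and its degree-$0$ part vanishes generically because $\rk F=pr_0>0$ and Fourier--Mukai on elliptic curves preserves semistability. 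So ${\cal H}^0$ is a torsion free sheaf that vanishes generically, hence zero. Thus $\Phi_{X\to X'}^{{\cal P}^{\vee}[1]}(E)$ is a two-term complex of locally free sheaves, and by Grothendieck duality (the Remark after the definition of $D_X$) its shifted dual $F={\bf R}\Hom_{p_{X'}}(p_X^*E,{\cal P}[1])$ is a coherent sheaf.

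Next we track the twisting class. We have $\rk G'=r_0 s\rk G<0$, so $-G'$ and $-{G'}^\vee(-2K_{X'})$ have positive rank, and $\chi(-{G'}^\vee(-2K_{X'}),F)=\chi(G,E)$ by Serre duality together with $\chi(G',\Phi(E))=\chi(G,E)$. Using Lemma \ref{lem:coh-FM} and the formula for $v(G')$, we then check $\mu_{f'}(F)>\mu_{f'}(-{G'}^\vee(-2K_{X'}))$, so Definition \ref{defn:fGH} applies to $F$. This is the same inequality \eqref{eq:condition11} with signs reversed by the dual. The slope relation between subobjects is supplied by Lemma \ref{lem:f-slope} together with dualization, which reverses the order of sub and quotient.

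For (i)$\Rightarrow$(ii), take a quotient $F\to F_2$ that destabilizes at equal $f'$-slope. Using Lemma \ref{lem:F^+}, we pass to a quotient with no maps from the relevant positive fiber sheaves; ${\cal P}_{|\{x\}\times X'}$-type objects are among these. Dualizing and applying the inverse transform then gives a subsheaf $E_1\subset E$, exactly as in the proof of Proposition \ref{prop:FMstability1}, with $\chi(G,E_1)\ge \frac{\rk E_1}{\rk E}\chi(G,E)$. Semistability of $E$ forces equality, and genericity of $G$ gives $v(E_1)\in{\Bbb Q}v$.

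The converse is symmetric, starting from a destabilizing subsheaf $E_1$ trimmed by Lemma \ref{lem:F^-}. Finally, (ii)$\Leftrightarrow$(iii) is immediate: $\mu_{f'}(\Phi(E))<\mu_{f'}(-G')$, and Definition \ref{defn:fGH2} says $\Phi(E)$ is $(f',-G',H')$-semistable iff $\Phi(E)^\vee(K_{X'})$ is $(f',(-G')^\vee,H')$-semistable. Since $F=\Phi(E)^\vee(-K_{X'})$, twisting by $2K_{X'}$ matches this with the class $-{G'}^\vee(-2K_{X'})$ by Lemma \ref{lem:fGH-indep}-type invariance. This needs a check that twisting by $K_{X'}\equiv$ a multiple of $f'$ modulo torsion does not change semistability.

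The main obstacle is the sign and dual bookkeeping. Specifically, one must verify that subobjects of $F$ with vanishing transform correspond to quotient fiber sheaves of $E$ with the correct sign of $\chi(G,\cdot)$, so that the cohomology vanishing used for the exact sequences holds in the dual setting.
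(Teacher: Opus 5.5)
Your overall route is the paper's. You prove (i)$\Leftrightarrow$(ii) by dualizing the argument of Proposition \ref{prop:FMstability1}, and get (ii)$\Leftrightarrow$(iii) from Definition \ref{defn:fGH2}. Your worry about the $2K_{X'}$ twist is harmless: $B$ is $(f,G,H)$-semistable iff $B\otimes L$ is $(f,G\otimes L,H)$-semistable for any line bundle $L$.

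\textbf{Sheafness of $F$.} This step fails as written. You claim that ${\cal H}^0(\Phi_{X\to X'}^{{\cal P}^\vee[1]}(E))$ vanishes generically and is therefore zero. That is false. Over a general fiber this object is the transform of the semistable bundle $E_{|f}$ of $\beta$-slope $p/r>0$, which is a vector bundle in degree $0$ of rank $pr_0$. Your own rank count $\rk=pr_0>0$ already rules out ${\cal H}^0=0$. What vanishes generically is ${\cal H}^1$, whose fiber is $\Ext^2({\cal P}_{|X\times\{x'\}},E)\cong\Hom(E,{\cal P}_{|X\times\{x'\}}(K_X))^\vee$. This is a cokernel, so it is torsion but in general nonzero: semistability does not forbid maps from $E$ to the positive fiber sheaves ${\cal P}_{|X\times\{x'\}}$. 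So $\Phi_{X\to X'}^{{\cal P}^\vee[1]}(E)$ is typically not a sheaf, which is why (iii) must go through Definition \ref{defn:fGH2}. Also, being a two-term complex of locally free sheaves does not by itself make the dual a sheaf.

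The fix, as in the paper, is to run the torsion-free argument on $F$ itself:
\begin{enumerate}
\item $\Ext^2(E,{\cal P}_{|X\times\{x'\}})=0$ puts $F$ in degrees $-1,0$.
\item ${\cal H}^{-1}(F)={\cal H}om_{p_{X'}}(p_X^*E,{\cal P})$ is torsion free, being a kernel.
\item It vanishes generically because $\Hom(E_{|f},{\cal P}_{|X\times\{x'\}})=0$ by slopes.
\end{enumerate}
Hence $F\in\Coh(X')$. The same argument is needed, and is missing, in the converse to show that the inverse transform of $F$ is a sheaf. There it uses $\Hom({\cal P}_{|\{x\}\times X'},F)=0$, which follows from $\chi(-{G'}^\vee(-2K_{X'}),{\cal P}_{|\{x\}\times X'})=\chi(G,{\Bbb C}_x)>0$.

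\textbf{Sign of $\chi$.} One has $\chi({G'}^\vee(-2K_{X'}),F)=\chi(F,{G'}^\vee(-K_{X'}))=\chi(G,E)$. Hence $\chi(-{G'}^\vee(-2K_{X'}),F)=-\chi(G,E)$, not $+\chi(G,E)$ as you state. This minus sign, together with the reversal of sub and quotient, is exactly what turns a destabilizing quotient $F\to F_2$ into a subsheaf $E_1$ with $\chi(G,E_1)\ge\frac{\rk E_1}{\rk E}\chi(G,E)$. With your sign the inequality would come out reversed. Your forward step is consistent only with the corrected identity.

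\textbf{Trimming in the converse.} Applying Lemma \ref{lem:F^-} to a destabilizing subsheaf is the wrong trimming. Here ${\cal P}_{|X\times\{x'\}}\in{\cal F}_G^+$, and the obstruction to the transform of $E/E_1$ being a sheaf is $\Hom({\cal P}_{|X\times\{x'\}},(E/E_1)(K_X))$. Lemma \ref{lem:F^-} does not control this. Instead, as the paper does, take a destabilizing quotient $E\to E_2$ and divide out its $\langle{\cal F}_G^+\rangle$-part as in Lemma \ref{lem:F^+}. This only lowers $\chi(G,E_2)$, so the quotient stays destabilizing.
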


\begin{NB}
$p_{X' !}(p_X^*(G^{\vee}) \otimes {\cal P})=-{G'}^{\vee}(-K_{X'})$.
\end{NB}

\begin{NB}
By the FM, the condition $x<0,p>0$ is preserved:
$(x,p) \mapsto (1/xr_0^2, r/r_0)$.
\end{NB}

\begin{proof}
(i) $\iff$ (ii).
Let $E$ be a $(f,G,H)$-semistable object with $v(E)=v$.
Our assumptions imply that
\begin{equation}
\mu_{\beta,f}(G)=s<0<\frac{p}{r}=\mu_{\beta,f}(E),
\end{equation}
In particular $E \in \Coh(X)$.
We shall prove that $F$ is $(f',-{G'}^{\vee}(-K_{X'}),H')$-semistable.
Since
\begin{equation}
\mu_{-\beta',f'}(-{G'}^{\vee}(-2K_{X'}))=\frac{1}{r_0^2 s}<0<
\frac{r}{r_0^2 p}=\mu_{-\beta',f'}(F),
\end{equation}
we need to prove that $F \in \Coh(X')$ and satisfies the conditions in Definition \ref{defn:fGH}.

Since $E_{|\pi^{-1}(\eta)}$ is a semistable vector bundle and $\rk F=pr_0>0$,
$F_{|{\pi'}^{-1}(\eta)}$ is a semistable vector bundle.
In particular 
\begin{equation}\label{eq:gen-fiber}
\Hom_{p_{X'}}(p_X^*(E),{\cal P}[i])_{|{\pi'}^{-1}(\eta)}=0\;\; (i \ne 1).
\end{equation}
Since $\chi(G,{\cal P}_{|X \times \{ x' \}})=-r_0 \rk G s>0$,
\begin{NB}
$\langle v(G),r_0 f e^\beta \rangle=r_0 \rk G s$.
\end{NB}
the $(f,G,H)$-semistability of $E$ implies
$$
\Ext^2(E,{\cal P}_{|X \times \{ x' \}})=\Hom({\cal P}_{|X \times \{ x' \}},E)^{\vee}=0.
$$
Hence $F$ is a two-term complex of locally free sheaves.
Since $\Hom_{p_{X'}}(p_X^*(E),{\cal P})$ is torsion free,  
\eqref{eq:gen-fiber} implies
$\Hom_{p_{X'}}(p_X^*(E),{\cal P})=0$. Therefore
$F \in \Coh(X')$.

Assume that there is an exact sequence
\begin{equation}
0 \to F_1 \to F \to F_2 \to 0
\end{equation}
such that
\begin{equation}\label{eq:chi(F)2}
(c_1(F_2) \cdot f') =\frac{\rk F_2}{\rk F}(c_1(F) \cdot f'),\; 
-\chi({G'}^{\vee}(-2K_{X'}),F_2) \leq -\frac{\rk F_2}{\rk F} \chi({G'}^{\vee}(-2K_{X'}),F).
\end{equation}

\begin{NB}
We note that $\chi(-{G'}^{\vee}(-2K_{X'}),{\cal P}_{| \{x \} \times X'})=\rk G>0$.
\end{NB}
Replacing $F_2$ by its quotient, we may assume that
$\Hom(A,F_2)=0$ for all $-{G'}^{\vee}$-semistable fiber sheaves $A$ with $\chi(-{G'}^{\vee},A)>0$.
Then $\Hom({\cal P}_{|\{ x \} \times X'},F_2)=0$ for all $x \in X$. Hence 
$E_2:={\bf R}\Hom_{p_{X}}(p_{X'}^*(F_2),{\cal P}[1]) \in \Coh(X)$.
Therefore we have an exact sequence of coherent sheaves on $X$
\begin{equation}
0 \to E_2 \to E \to E_1 \to 0
\end{equation}
where $E_1:={\bf R}\Hom_{p_{X}}(p_{X'}^*(F_1),{\cal P}[1])$.
We note that
${\bf R}\Hom_{p_{X'}}(p_X^*(G),{\cal P}[1])={G'}^{\vee}(-K_{X'})$.
Hence
\begin{equation}
\begin{split}
\chi(G,E)=& \chi(F,{G'}^{\vee}(-K_{X'}))=
\chi({G'}^{\vee}(-2K_{X'}),F),\\
\chi(G,E_2)=& \chi(F_2,{G'}^{\vee}(-K_{X'}))=
\chi({G'}^{\vee}(-2K_{X'}),F_2).
\end{split}
\end{equation}
By \eqref{eq:chi(F)2} and Lemma \ref{lem:f-slope},
we have
$$
\frac{\rk E_2}{\rk E}=\frac{\rk F_2}{\rk F},\;
-\chi(G,E_2) \leq -\frac{\rk E_2}{\rk E} \chi(G,E).
$$
Hence $\chi({G'}^{\vee},F_2) = \frac{\rk F_2}{\rk F} \chi({G'}^{\vee},F)$.
Since $G$ is general, 
$v(E_2) \in {\Bbb Q}v$, and hence $v(F_2) \in {\Bbb Q}v(F)$.
\begin{NB}
According to Definition \ref{defn:wall}, we have 
$v(F_2)= \frac{\rk F_2}{\rk F} v(F)$.
It seems to be sufficient to use Defn. \ref{defn:weak-wall}. 
\end{NB}
Therefore $F$ is $(f',-{G'}^{\vee}(-2K_{X'}),H')$-semistable.

Conversely for a $(f',-{G'}^{\vee}(-2K_{X'}),H')$-semistable sheaf $F$,
$-\chi({G'}^{\vee}(-2K_{X'}),{\cal P}_{|\{ x \} \times X'})=\chi(G,{\Bbb C}_x)>0$ implies
$\Hom({\cal P}_{|\{ x \} \times X'},F)=0$.
Hence 
$E:={\bf R}\Hom_{p_X}(p_{X'}^*(F),{\cal P}) \in \Coh(X)$.
Assume that there is an exact sequence
\begin{equation}
0 \to E_1 \to E \to E_2 \to 0
\end{equation}
such that 
\begin{equation}\label{eq:chi(E)2}
(c_1(E_2) \cdot f) =\frac{\rk E_2}{\rk E}(c_1(E) \cdot f), \;
\chi(G,E_2) \leq \frac{\rk E_2}{\rk E} \chi(G,E).
\end{equation}
Replacing $E_2$ by its quotient,
we may assume that $\Hom(A,E_2)=0$ for all $G$-twisted semistable fiber sheaves $A$
with $\chi(G,A)>0$.
Hence 
we may assume that 
${\bf R}\Hom_{p_{X'}}(p_X^*(E_2),{\cal P}) \in \Coh(X')$.
\begin{NB}
$\chi(G,{\cal P}_{|X \times \{ x' \}})=-\rk G sr_0>0$.
\end{NB}
Hence we have an exact sequence

\begin{equation}
0 \to F_2 \to F \to F_1 \to 0.
\end{equation}
where $F_i:={\bf R}\Hom_{p_{X'}}(p_X^*(E_i),{\cal P})$. 
By \eqref{eq:chi(E)2} and Lemma \ref{lem:f-slope},
we have
$$
\frac{\rk E_2}{\rk E}=\frac{\rk F_2}{\rk F},\;
\chi({G'}^{\vee}(-2K_{X'}),F_2) \leq 
\frac{\rk F_2}{\rk F} \chi({G'}^{\vee}(-2K_{X'}),F).
$$
In the same way as above, we see that $E$ is $(f,G,H)$-semistable.

(ii) $\iff$ (iii).
It easily follows from Definition \ref{defn:fGH2}.
\end{proof}

\begin{NB}
By the Grothendieck duality, we have a commutative diagram
\begin{equation}
\begin{CD}
{\bf D}(X) @>{\Phi_{X \to X'}^{{\cal P}^{\vee}[1]}}>> {\bf D}(X')\\
@V{(\otimes {\cal O}_X(K_X)) \circ D_X }VV @VV{D_{X'}}V\\
{\bf D}(X) @>>{\Phi_{X \to X'}^{{\cal Q}^{\vee}[2]}}> {\bf D}(X')
\end{CD}
\end{equation}

The following conditions are equivalent
\begin{enumerate}
\item
$E$ is $(f,G,H)$-semistable if and only if $\Phi_{X \to X'}^{{\cal P}^{\vee}[1]}(E)$ is
$(f',G',H')$-semistable
\item
$E^{\vee} (K_X)$ is $(f,G^{\vee},H)$-semistable if and only if 
$\Phi_{X \to X'}^{{\cal Q}^{\vee}[2]}(E^{\vee}(K_X))$ is
$(f',G^{\#},H')$-semistable, where 
$G^{\#}:=\Phi_{X \to X'}^{{\cal Q}^{\vee}[2]}(G^{\vee})=(G')^{\vee}(-K_{X'})$.
\end{enumerate}
\end{NB}

\begin{prop}\label{prop:FMstability4}
Assume that $p=0$ and $s<0$ and
$G$ is general with respect to $v$.
Let $E \in {\bf D}(X)$ be an object with $v(E)=v$.
Then the following conditions are equivalent.
\begin{enumerate}
\item
$E$ is $(f,G,H)$-semistable.
\item
$F:=\Phi_{X \to X'}^{{\cal P}^{\vee}[2]}(E)$ is $-G'$-twisted semistable
with respect to $H'+nf'$.
\end{enumerate}
\end{prop}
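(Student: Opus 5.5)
The argument runs parallel to Propositions \ref{prop:FMstability1}--\ref{prop:FMstability3}. The new feature is that $p=0$ gives $\rk F=pr_0=0$, so $F$ should be a purely 1-dimensional sheaf rather than a torsion free one.

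We first record the numerics via Lemma \ref{lem:coh-FM}. With $v=e^\beta(r+qf+D+a\varrho_X)$ we get
\[
v(F)=e^{\beta'}\bigl(\tfrac{r}{r_0}H'-ar_0f'+D'+\tfrac{q}{r_0}\varrho_{X'}\bigr)
\]
up to the sign coming from the extra shift. Hence $c_1(F)\cdot f'=r/r_0>0$, so $F$ has positive degree on fibers. Also $\mu_{\beta,f}(G)=s<0=\mu_{\beta,f}(E)$, so $E$ is a sheaf, $E_{|\pi^{-1}(\eta)}$ is semistable of degree $0$ with respect to $\beta$, and $\chi(G,\mathcal{P}_{|X\times\{x'\}})=-r_0\rk G\, s>0$. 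Moreover $\chi(-G',F)=-\chi(G',F)$ corresponds to $\chi(G,E)$ up to sign, so the twisted slope for $F$ matches $\chi(G,\cdot)$ on the $E$ side.

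For (i)$\Rightarrow$(ii), take $E$ to be $(f,G,H)$-semistable. Since $\chi(G,\mathcal{P}_{|X\times\{x'\}})>0$, we have $\Hom(\mathcal{P}_{|X\times\{x'\}},E)=0$, so $H^{-2}$ of $\Phi^{\mathcal{P}^\vee[2]}(E)$ vanishes. On the generic fiber, a semistable degree-$0$ bundle transforms to a torsion sheaf in degree $1$ of $\Phi^{\mathcal{P}^\vee}$, so $H^{-1}(F)$ is generically zero. It is torsion free by the two-term locally free resolution argument used in Proposition \ref{prop:FMstability1}, hence zero. Therefore $F\in\Coh(X')$, and $F$ is supported on finitely many fibers plus a curve finite over $C$.

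Next we show $F$ is pure. A 0-dimensional subsheaf $\mathbb{C}_{x'}\subset F$ would pull back to a nonzero map involving $\mathcal{P}_{|X\times\{x'\}}$, which is excluded by $\Ext$-vanishing and the condition $\Hom(E,B)=0$ for $B\in\mathcal{F}_G^-$. To check twisted semistability, take a quotient $F\to F_2$ with $F_2$ pure. We reduce, as before via Lemma \ref{lem:F^+}, to the case where $\Phi_{X'\to X}$ sends $F_2$ to a sheaf quotient $E_2$ of $E$ with $(c_1(E_2)\cdot f)=0$. Lemma \ref{lem:f-slope} and the $\chi$ identity then turn the twisted slope inequality for $F_2$ into $\chi(G,E_2)\le\frac{\rk E_2}{\rk E}\chi(G,E)$. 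The $(f,G,H)$-semistability of $E$ forces equality, and genericity of $G$ gives $v(E_2)\in\mathbb{Q}v$.

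The key point is that the twisted slope of a 1-dimensional sheaf on $X'$ is $\chi(-G',F_i)/(c_1(F_i)\cdot(H'+nf'))$. For $n\gg0$ this is governed by the denominator $c_1(F_i)\cdot f'=\rk E_i/r_0$, together with the $H'$ term, which is negligible for large $n$ except at ties. Fiber subsheaves of $F$, those with $c_1\cdot f'=0$, correspond to fiber sheaves in $E$ of type $\mathcal{F}_G^\pm$, and their sign conditions match Remark \ref{rem:fGH}.

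For (ii)$\Rightarrow$(i) we run the inverse transform $\Phi_{X'\to X}^{\mathcal{Q}^\vee[1]}$, twisted by $K_X$, exactly as in Proposition \ref{prop:FMstability2}. Purity of $F$ together with $\chi(-G',\mathcal{Q}_{|\{x\}\times X'})<0$ gives the needed $\Hom$-vanishing, which shows $E$ is a sheaf. Destabilizing subsheaves $E_1\subset E$ are then transported to subsheaves of $F$ as above.

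I expect the main obstacle to be the dictionary between the two notions of stability. On the $X'$ side the slope has denominator $c_1\cdot(H'+nf')$ instead of rank, and it is nontrivial to check that ties and the fiber-sheaf cases match the $(f,G,H)$ conditions, including condition (ii) of Definition \ref{defn:fGH} comparing $c_1\cdot H$. I would handle this by computing the slope difference explicitly, as in Lemma \ref{lem:special}, and using genericity of $G$ to rule out ties.
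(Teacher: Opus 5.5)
Your skeleton matches the paper's proof. The paper uses Hom-vanishing to make $F$ a two-term complex of locally free sheaves, hence a pure 1-dimensional sheaf. It then transports destabilizing pieces by $\Phi_{X'\to X}^{{\cal Q}^{\vee}[1]}$ and $\Phi_{X\to X'}^{{\cal P}^{\vee}[2]}$, and converts the inequalities via Lemma \ref{lem:f-slope}, $(c_1(F_i)\cdot f')=\rk E_i/r_0$ and $\chi(-G',F_i)=\chi(G,E_i)$. Semistability then forces equality and genericity forces proportionality. However, the Hom-vanishings that make the transported pieces sheaves are attached to the wrong sides, and the reduction in (i)$\Rightarrow$(ii) would fail as you state it. Write ${\cal Q}_x:={\cal Q}_{|\{x\}\times X'}$ and ${\cal P}_{x'}:={\cal P}_{|X\times\{x'\}}$.

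\textbf{The reduction in (i)$\Rightarrow$(ii).} You cannot reduce ``as before via Lemma \ref{lem:F^+}''. In Proposition \ref{prop:FMstability1} one had $\chi(G',{\cal Q}_x)>0$, but here $\chi(-G',{\cal Q}_x)=-\chi(G,{\Bbb C}_x)=-\rk G<0$. The sign flips, so ${\cal Q}_x$ lies in ${\cal F}^-_{-G'}$.

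Concretely, take $0\to F_1\to F\to F_2\to 0$. Then $\Ext^2({\cal Q}_x,F_2)=0$ holds automatically, because $\Hom(F_2,{\cal Q}_x(K_{X'}))\subset\Hom(F,{\cal Q}_x(K_{X'}))=\Ext^3({\Bbb C}_x,E)^{\vee}=0$. The only obstruction to an exact sequence of sheaves $0\to E_1\to E\to E_2\to 0$ is $H^1(\Phi_{X'\to X}^{{\cal Q}^{\vee}[1]}(F_1))$, that is, $\Ext^2({\cal Q}_x,F_1)=\Hom(F_1,{\cal Q}_x(K_{X'}))^{\vee}$. Quotienting $F_2$ further by a $\langle{\cal F}^+_{-G'}\rangle$-subsheaf enlarges $F_1$ and does not address this.

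The fix, which is what the paper does, is the analogue of Lemma \ref{lem:F^-} applied to the destabilizing subsheaf. Pass to $F_1'\subset F_1$ with $F_1/F_1'\in\langle{\cal F}^-_{-G'}\rangle$. This raises $\chi(-G',\cdot)$ without changing $(c_1\cdot f')$, so $F_1'$ still destabilizes, and now $E_1\in\Coh(X)$.

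A minor correction: the transported piece satisfies $\mu_{\beta,f}(E_2)=0$, not $(c_1(E_2)\cdot f)=0$. This is automatic from Lemma \ref{lem:f-slope}, since $\rk F_2=0$.

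\textbf{The Hom-vanishing in (ii)$\Rightarrow$(i).} Purity of $F$ does not give the vanishing that makes $E$ a sheaf. What is needed is $\Ext^2({\cal Q}_x,F)=\Hom(F,{\cal Q}_x(K_{X'}))^{\vee}=0$, and a pure sheaf such as ${\cal Q}_x\oplus F'$ surjects onto ${\cal Q}_x$.

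The vanishing comes from semistability. For $n\gg0$, every fiber-sheaf quotient of $F$ has $\chi(-G',\cdot)\geq 0$. On the other hand, every nonzero subsheaf of the stable sheaf ${\cal Q}_x$ has $\chi(-G',\cdot)<0$. Hence $E$ is a two-term complex in degrees $-1,0$, and semistability on $\pi^{-1}(\eta)$ kills $H^{-1}(E)$.

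Purity is what you need on the other side, for transporting subsheaves $E_1\subset E$. There you use $\Hom({\cal P}_{x'},E_1)\subset\Hom({\cal P}_{x'},E)=\Hom({\Bbb C}_{x'},F)=0$, together with Lemma \ref{lem:F^+} applied to $E/E_1$. That lemma is on the correct side there, since $\chi(G,{\cal P}_{x'})>0$.

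Similarly, purity in (i)$\Rightarrow$(ii) comes from $\Hom({\Bbb C}_{x'},F)=\Hom({\cal P}_{x'},E)=0$. This is an ${\cal F}^+_G$ condition, not the ${\cal F}^-_G$ one you cite. Alternatively, it follows directly from the two-term locally free resolution.
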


\begin{proof}
We note that $\Hom({\cal P}_{|X \times \{ x' \}},E)=0$ for all $x' \in X'$.
\begin{NB}
$\chi(G,{\cal P}_{|X \times \{ x' \}})=-sr_0 \rk G>0$.
\end{NB}
Hence $F$ is a two-term complex of locally free sheaves.
Since $\rk F=0$,
$F$ is a purely 1-dimensional sheaf.
For an exact sequence
$$
0 \to F_1 \to F \to F_2 \to 0,
$$
we assume that
$$
\chi(-G',F_1) \geq (c_1(F_1) \cdot f') \frac{\chi(-G',F)}{(c_1(F) \cdot f')}.
$$
We may assume that $E_1:=\Phi_{X' \to X}^{{\cal Q}^{\vee}[1]}(F_1) \in \Coh(X)$.
\begin{NB}
Since $\chi(-G',{\cal Q}_{|\{ x \} \times X'})=-\chi(G,{\Bbb C}_x)=-\rk G<0$,
$\Ext^2({\cal Q}_{|\{ x \} \times X'},E_1)=\Hom(E_1,{\cal Q}_{|\{ x \} \times X'})^{\vee}=0$
for all $x \in X$.
\end{NB}
Then $E_2:=\Phi_{X' \to X}^{{\cal Q}^{\vee}[1]}(F_2) \in \Coh(X)$ and 
we have an exact sequence
$$
0 \to E_1 \to E \to E_2 \to 0.
$$
We set 
$$
v(E_i):=e^\beta (r_i+q_i f+D_i+a_i \varrho_X),\; D_i \in N_H,\;\; (i=1,2).
$$
Then 
$$
\chi(G,E_1) \geq \frac{r_1}{r}\chi(G,E).
$$
By the $(f,G,H)$-semistability of $E$,
we get $\chi(G,E_1)= \frac{r_1}{r}\chi(G,E)$.
Since $G$ is general,
$v(E_1) \in {\Bbb Q}v$, which implies 
$F$ is $G'$-twisted semistable.

Conversely we assume that $F$ is $G'$-twisted semistable.
Then
$$
\frac{\chi(-G',F)}{(c_1(F) \cdot f')}(c_1(F_2) \cdot f') \leq \chi(-G',F_2)
$$
for all quotient $F \to F_2$.
In particular $\chi(-G',F_2) \geq 0$ if $F_2$ is a fiber sheaf.
Since $\chi(-G',{\cal Q}_{|\{ x \} \times X'})=-\rk G<0$,
$\Hom({\cal Q}_{|\{ x \} \times X'},F[2])=\Hom(F, {\cal Q}_{|\{ x \} \times X'})^{\vee}=0$.
Hence $E:=\Phi_{X' \to X}^{{\cal Q}^{\vee}[1]}(F)$ is a two-term complex of locally free sheaves.
Since $E_{\pi^{-1}(\eta)}$ is a semistable vector bundle,
$E \in \Coh(X)$.
Assume that there is an exact sequence
$$
0 \to E_1 \to E \to E_2 \to 0
$$
such that
$$
(c_1(E_2) \cdot f)=\rk E_2 \frac{(c_1(E) \cdot f)}{\rk E},\;
\chi(G,E_2) \leq \rk E_2 \frac{\chi(G,E)}{\rk E}.
$$
\begin{NB}
We note that $\chi(G,{\cal P}_{|X \times \{ x' \}})=-s\rk G>0$.
\end{NB}
Replacing $E_2$ by its quotient, we may assume that
$F_2:=\Phi_{X \to X'}^{{\cal P}^{\vee}[2]}(E_2) \in \Coh(X')$.
Then $F_1:=\Phi_{X \to X'}^{{\cal P}^{\vee}[2]}(E_1) \in \Coh(X')$ and
we have an exact sequence
$$
0 \to F_1 \to F \to F_2 \to 0
$$
with
$$
\chi(-G',F_2) \leq (c_1(F_2) \cdot f') \frac{\chi(-G',F)}{(c_1(F) \cdot f')}.
$$
Hence $\chi(G,E_2) = \rk E_2 \frac{\chi(G,E)}{\rk E}$.
Since $G$ is general, we get $v(E_2) \in {\Bbb Q}v(E)$. Therefore
$E$ is $(f,G,H)$-semistable.
\end{proof}

\begin{thm}\label{thm:FMpreserve}
Let $\Phi_{X \to X'}^{{\cal P}^{\vee}[1]}$ be the Fourier-Mukai transform in \eqref{eq:relativeFM}.
Let $v=r+\xi+a \varrho_X$ $(r>0, \xi \in \NS(X))$ be a Mukai vector 
such that $\langle v,v_0 \rangle \ne 0$.
We take $G \in K(X)_{\Bbb Q}$ such that
$\rk G>0$ and $(f,G,H)$ is general with respect to $v$.
For an object $E \in {\bf D}(X)$ with $v(E)=v$, the following conditions are equivalent.
\begin{enumerate}
\item
$E$ is $(f,G,H)$-semistable.
\item
$F:=\Phi_{X \to X'}^{{\cal P}^{\vee}[k]}(E)$ is $(f',\epsilon G',H')$-semistable,
 where $k=1$ or $2$ according as $\langle v,v_0 \rangle>0$ or $\langle v,v_0 \rangle<0$
and 
$\epsilon=1$ or $-1$ according as
$\rk G'>0$ or $\rk G'<0$.
\end{enumerate}
\end{thm}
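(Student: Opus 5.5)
The plan is to reduce Theorem \ref{thm:FMpreserve} to the four case-by-case statements Proposition \ref{prop:FMstability1}, Proposition \ref{prop:FMstability2}, Proposition \ref{prop:FMstability3} and Proposition \ref{prop:FMstability4}. First I normalize the data. Write $v=e^\beta(r+pH+qf+D+a\varrho_X)$ as in Lemma \ref{lem:coh-FM}, with $\beta$ normalized by \eqref{eq:beta}. By Lemma \ref{lem:fGH-indep} I may write $v(G)=\rk G\, e^{\beta+sH+yf+\alpha}$ as in \eqref{eq:G}. Then
\[
\langle v,v_0\rangle=\langle v, r_0 f e^\beta\rangle=r_0 p ,
\]
up to sign conventions, and $\rk G'=r_0 s\rk G$. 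Hence the sign of $\langle v,v_0\rangle$ is the sign of $p$, and $\epsilon$ is the sign of $s$. The hypothesis $\langle v,v_0\rangle\neq 0$ gives $p\neq 0$, and genericity of $G$ forces $rs-p\neq 0$ (we are in ${\cal G}(v)$). Also $s\neq0$, since $s=0$ lies on a wall defined by $v_0$-type isotropic classes, and after a small perturbation inside the chamber this does not change semistability.

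Next I split according to the signs of $p$, $s$ and $p-rs$, i.e.\ according to whether $\mu_{\beta,f}(E)>\mu_{\beta,f}(G)$. By Definition \ref{defn:fGH2}, the case $\mu_f(E)<\mu_f(G)$ is handled by passing to $E^\vee(K_X)$ with $G^\vee$. By the Grothendieck duality diagram, $D_{X'}\circ\Phi^{{\cal P}^\vee[1]}$ corresponds to a transform of the dual, and the dual of an $(f',\cdot,H')$-semistable object is again semistable for the dual twisting class. So it suffices to treat $p-rs>0$. The subcases are then:
\begin{itemize}
\item $s>0$ (so $p>0$, $k=1$, $\epsilon=1$): this is Proposition \ref{prop:FMstability1}.
\item $s<0$, $p>0$ ($k=1$, $\epsilon=-1$): this is Proposition \ref{prop:FMstability3}(iii).
\item $p<0$ (so $s<p/r<0$, $k=2$): this is Proposition \ref{prop:FMstability2}. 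Here I need $G''=\Phi^{{\cal P}^\vee[2]}(G)=-G'$ with $\rk G''=-r_0 s\rk G>0$, so $\epsilon G'=G''$ with $\epsilon=-1$, which is consistent.
\end{itemize}

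For the dual case $p-rs<0$ I would apply the above to $E^\vee(K_X)$, whose Mukai vector has $p$ replaced by $-p$, and then translate back. The translation uses the commutation $D_{X'}\circ \Phi_{X\to X'}^{{\cal P}^\vee[1]}\cong \Phi_{X\to X'}^{{\cal Q}^\vee[2]}\circ(\otimes{\cal O}_X(K_X))\circ D_X$. Here ${\cal Q}$ is a universal family of the same type (for $-\beta$), so the propositions apply to it as well. The shift changes from $1$ to $2$ exactly when the sign of $\langle v,v_0\rangle$ flips, which matches the rule for $k$.

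The main obstacle is this dual bookkeeping. One must check that the $G$ produced by dualizing, namely $(G')^\vee(-K_{X'})$, coincides modulo ${\Bbb Q}f'$ and positive scaling with $\epsilon G'$ on the dual side. One must also check that genericity of $G$ with respect to $v$ transfers to genericity of the dual class with respect to $v^\vee$. I would verify both via Lemma \ref{lem:coh-FM} and Lemma \ref{lem:fGH-indep}, comparing $\mu_{\beta',f'}$-slopes as in \eqref{eq:condition11}.
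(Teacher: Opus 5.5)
Your proposal is correct and is essentially the paper's proof. The paper splits by the signs of $p$, $s$ and $p-rs$ into six cases. It treats your three cases with $p-rs>0$ directly by Propositions \ref{prop:FMstability3}, \ref{prop:FMstability1} and \ref{prop:FMstability2}. It treats the three cases with $p-rs<0$ exactly as you propose: it applies Propositions \ref{prop:FMstability2}, \ref{prop:FMstability1}, \ref{prop:FMstability3} to $E^{\vee}(K_X)$ and $G^{\vee}$ via $\Phi_{X\to X'}^{{\cal Q}^{\vee}[2]}$ or $\Phi_{X\to X'}^{{\cal Q}^{\vee}[1]}$, then translates back with Definition \ref{defn:fGH2}, settling the twisting-class bookkeeping you flag via the duality identity $\Phi_{X \to X'}^{{\cal Q}^{\vee}[2]}(G^{\vee})={G'}^{\vee}(-K_{X'})$.

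Your aside on $s=0$ is unnecessary, since the statement already presupposes $\rk G'\neq 0$; its stated reason is also not right in general, because for $p>0$ the point $s=0$ need not lie on a wall.
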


\begin{proof}
As in subsection \ref{subsect:FM},
we may assume that 
$$
\beta=\frac{b}{r_0}H+\beta_0 \,\, (\beta_0 \in N_H).
$$
We set 
$$
c_1(G)=\rk G (\beta+sH+yf+\alpha),\; \alpha \in N_H
$$
and
$$
v=e^\beta (r+pH+qf+D+a \varrho_X),\; D \in N_H. 
$$
We note that 
\begin{equation}
k=1 \iff p>0,\quad
\epsilon=1 \iff s>0.   
\end{equation}

(I) Assume that $p>0$.
There are three cases.
\begin{equation*}
(a)\; s<0<\frac{p}{r}, \quad
(b)\; 0<s<\frac{p}{r},\quad 
(c)\; 0<\frac{p}{r}<s.
\end{equation*}

(a) We first assume that $s<0<\frac{p}{r}$.
In this case, $k=1$ and $\epsilon=-1$.
Then the claim follows from Proposition \ref{prop:FMstability3}.

(b) 
We next assume that $0<s<\frac{p}{r}$.
Then the claim follows from Proposition \ref{prop:FMstability1}.

(c) 
Finally we assume that $0<\frac{p}{r}<s$. 
In this case, $k=1$ and $\epsilon=1$.
We note that $E$ is $(f,G,H)$-semistable if and only if 
$E^{\vee}(K_X)$ is a $(f,G^{\vee},H)$-semistable (Definition \ref{defn:fGH2}).
Since $\frac{p}{r}<s$, we shall consider $E^{\vee}(K_X)$.
\begin{NB}
$0>\frac{-p}{r}>-s$.
\end{NB}
We have
\begin{equation}
\begin{split}
v(E^{\vee})=& e^{-\beta}(r-pH-qf-D+a \varrho_X)\\
c_1(G^{\vee})=& \rk G (-\beta-sH-yf-\alpha),
\end{split}
\end{equation}
and $-s<-\frac{p}{r}<0$.
We first apply $\Phi_{X \to X'}^{{\cal Q}^{\vee}[2]}:{\bf D}(X) \to {\bf D}(X')$
to $E^{\vee}(K_X)$. 
Since
\begin{equation}\label{eq:QG2}
\Phi_{X \to X'}^{{\cal Q}^{\vee}[2]}(G^\vee)
={\bf R}\Hom_{p_{X'}}(p_X^*(G),{\cal P}[1])={G'}^{\vee}(-K_{X'}),
\end{equation}
the following conditions are equivalent by
Proposition \ref{prop:FMstability2}.
\begin{enumerate}
\item
$E^{\vee}(K_X)$ is $(f,G^{\vee},H)$-semistable.
\item
$(\Phi_{X \to X'}^{{\cal P}^{\vee}[1]}(E))^{\vee}=
\Phi_{X \to X'}^{{\cal Q}^{\vee}[2]}(E^{\vee}(K_X))$
is $(f',{G'}^{\vee}(-K_{X'}),H')$-semistable.
\end{enumerate}
By Definition \ref{defn:fGH2}, the condition (ii) is equivalent to the $(f',G',H')$-semistability of
$F=\Phi_{X \to X'}^{{\cal P}^{\vee}[1]}(E)$.
Therefore our claim holds.


(II) Assume that $p<0$.
There are three cases.
\begin{equation*}
(d)\; s<\frac{p}{r}<0, \quad
(e)\; \frac{p}{r}<s<0,\quad 
(f)\; \frac{p}{r}<0<s.
\end{equation*}

\begin{NB}
Case (i) $\varphi(-\infty)=0<\varphi(s)<\varphi(p/r)<\infty=\varphi(0)$.

Case (ii) $\varphi(-\infty)=0<\varphi(p/r)<\varphi(s)<\infty=\varphi(0)$.

Case (iii) $\varphi(s)<\varphi(-\infty)=0<\varphi(p/r)<\infty=\varphi(0)$.
\end{NB}

(d) We first assume that $s<\frac{p}{r}<0$.
In this case $k=2$ and $\epsilon=-1$.
By Proposition \ref{prop:FMstability2},
$E$ is a $(f,G,H)$-semistable sheaf if and only if 
$F=\Phi_{X \to X'}^{{\cal P}^{\vee}[2]}(E)$ 
is a $(f',-G',H')$-semistable sheaf. Thus the claim holds.

(e) We next assume that $\frac{p}{r}<s<0$.
In this case $k=2$ and $\epsilon=-1$.
We shall apply $\Phi_{X \to X'}^{{\cal Q}^{\vee}[1]}$ to $E^{\vee}(K_X)$.
We note that 
\begin{equation}\label{eq:QG}
\Phi_{X \to X'}^{{\cal Q}^{\vee}[1]}(G^{\vee})={G'}^{\vee}(-K_{X'})[-1]
\end{equation}
by \eqref{eq:QG2}.
Then the following conditions are equivalent by Proppsition \ref{prop:FMstability1}.
\begin{enumerate}
\item
$E^{\vee}(K_X)$ is $(f,G^{\vee},H)$-semistable.
\item
$\Phi_{X \to X'}^{{\cal P}^{\vee}[2]}(E)^{\vee}=\Phi_{X \to X'}^{{\cal Q}^{\vee}[1]}(E^{\vee}(K_X))$
is $(f',-{G'}^{\vee}(-K_{X'}),H')$-semistable.
\end{enumerate}
By Definition \ref{defn:fGH2}, 
our claim holds.

\begin{NB}
$0<-s<\frac{-p}{r}$.
\end{NB}

(f) Assume that $\frac{p}{r}<0<s$.
\begin{NB}
$-s<0<\frac{-p}{r}$.
\end{NB}
In this case $k=2$ and $\epsilon=1$.
We shall apply $\Phi_{X \to X'}^{{\cal Q}^{\vee}[1]}$ to $E^{\vee}(K_X)$.
Then following conditions are equivalent by Proposition \ref{prop:FMstability3} and \eqref{eq:QG}.
\begin{enumerate}
\item
$E^{\vee}(K_X)$ is $(f,G^{\vee},H)$-semistable.
\item
$\Phi_{X \to X'}^{{\cal Q}^{\vee}[1]}(E^{\vee}(K_X))$
is $(f',{G'}^{\vee}(-K_{X'}),H')$-semistable.
\end{enumerate}
Since
$F=\Phi_{X \to X'}^{{\cal P}^{\vee}[2]}(E)=\Phi_{X \to X'}^{{\cal Q}^{\vee}[1]}(E^{\vee}(K_X))^{\vee}$,
the condition (ii) is equivalent to 
the $(f',G',H')$-semistability of $F$.
Therefore our claim holds in this case. 
\begin{NB}
$\Phi_{X \to X'}^{{\cal Q}^{\vee}[1]}(G^{\vee})={G'}^{\vee}(-K_{X'})[-1]$.

$(G')^{\vee}=(G^{\vee})'(K_{X'})[1]$.

$-(((G^{\vee})')^{\vee}(-K_{X'})=G'$.
\end{NB}
\end{proof}

If $\langle v,v_0 \rangle=0$, then we also have the following result.

\begin{thm}\label{thm:FMpreserve0}
Let $\Phi_{X \to X'}^{{\cal P}^{\vee}[1]}$ be the Fourier-Mukai transform in \eqref{eq:relativeFM}.
Let $v=r+\xi+a \varrho_X$ $(r>0, \xi \in \NS(X))$ be a Mukai vector 
such that $\langle v,v_0 \rangle = 0$.
We take $G \in K(X)_{\Bbb Q}$ such that
$\rk G>0$ and $(f,G,H)$ is general with respect to $v$.
For an object $E \in {\bf D}(X)$ with $v(E)=v$, the following conditions are equivalent.
\begin{enumerate}
\item
$E$ is $(f,G,H)$-semistable.
\item
$F:=\Phi_{X \to X'}^{{\cal P}^{\vee}[2]}(E)$ is a $-G'$-twisted semistable 1-dimensional
sheaf with respect to $H'_{f'}$.
\end{enumerate}
\end{thm}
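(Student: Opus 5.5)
We will follow the pattern of the proof of Theorem \ref{thm:FMpreserve}, with Proposition \ref{prop:FMstability4} playing the role that Propositions \ref{prop:FMstability1}--\ref{prop:FMstability3} played there. As in that proof, we normalize $\beta=\frac{b}{r_0}H+\beta_0$ $(\beta_0\in N_H)$ and write
$$
c_1(G)=\rk G(\beta+sH+yf+\alpha),\qquad v=e^\beta(r+pH+qf+D+a\varrho_X).
$$
Lemma \ref{lem:coh-FM} gives $\rk \overline{\Phi}(v)=pr_0$. Up to sign this equals $\langle v,v_0\rangle$ (Lemma \ref{lem:RR}), so the hypothesis $\langle v,v_0\rangle=0$ means exactly $p=0$, that is, $\mu_{\beta,f}(E)=0$. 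Since $(s,\alpha)\in{\cal G}(v)$ requires $rs\neq p$ in these coordinates, we have $s\neq 0$. There are therefore two cases.

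\emph{Case $s<0$.} Here $\mu_{\beta,f}(G)=s<0=\mu_{\beta,f}(E)$. All hypotheses of Proposition \ref{prop:FMstability4} hold: $p=0$, $s<0$, and $G$ is general. That proposition gives the equivalence directly, with $F=\Phi_{X\to X'}^{{\cal P}^{\vee}[2]}(E)$ being $-G'$-twisted semistable with respect to $H'+nf'$, i.e.\ with respect to $H'_{f'}$.

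\emph{Case $s>0$.} Now $\mu_f(E)<\mu_f(G)$. By Definition \ref{defn:fGH2}, $E$ is $(f,G,H)$-semistable if and only if $E^{\vee}(K_X)$ is $(f,G^{\vee},H)$-semistable. We have $v(E^\vee)=e^{-\beta}(r-qf-D+a\varrho_X)$, so $p$ is still $0$, while the parameter of $G^\vee$ is $-s<0$. We then apply Proposition \ref{prop:FMstability4} to $E^\vee(K_X)$, using the transform $\Phi_{X\to X'}^{{\cal Q}^{\vee}[1]}$ in place of $\Phi_{X\to X'}^{{\cal P}^{\vee}[1]}$. This is legitimate because ${\cal Q}$ is the universal family for $M_H^{-\beta}(e^{-\beta}r_0f)$, so the whole of subsection \ref{subsect:FM} applies with $\beta$ replaced by $-\beta$. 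By \eqref{eq:QG}, the associated twisting class is $-\Phi_{X\to X'}^{{\cal Q}^{\vee}[1]}(G^\vee)={G'}^{\vee}(-K_{X'})$. We conclude that $E^\vee(K_X)$ is semistable if and only if $\Phi_{X\to X'}^{{\cal Q}^{\vee}[2]}(E^\vee(K_X))$ is a ${G'}^{\vee}(-K_{X'})$-twisted semistable $1$-dimensional sheaf.

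By the Grothendieck duality used in case (e) of Theorem \ref{thm:FMpreserve},
$$
\Phi_{X\to X'}^{{\cal Q}^{\vee}[2]}(E^\vee(K_X))=\Phi_{X\to X'}^{{\cal P}^{\vee}[1]}(E)^\vee=F[-1]^\vee=F^\vee[1].
$$
Since $F$ is a purely $1$-dimensional sheaf, $F^\vee[1]={\cal E}xt^1(F,{\cal O}_{X'})=:F^D$ is again a purely $1$-dimensional sheaf, and $F\mapsto F^D$ exchanges subsheaves and quotients. It remains to show the following: $F^D$ is ${G'}^{\vee}(-K_{X'})$-twisted semistable if and only if $F$ is $-G'$-twisted semistable.

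For a $1$-dimensional sheaf $A$, Serre duality gives
$$
\chi({G'}^\vee(-K_{X'}),A^D)=\chi(A^D\otimes G'(K_{X'}))=-\chi(G',A),
$$
and $(c_1(A^D)\cdot f')=(c_1(A)\cdot f')$. Under this dictionary, a destabilizing quotient $F^D\to B$ corresponds to a destabilizing subsheaf $B^D\subset F$ with respect to the twist $-G'$, and conversely. This yields the equivalence.

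The main obstacle is this last step: the sign and twist bookkeeping. One must track the shift by $[1]$, the twist by $K_{X'}$, and the sign of $\rk G'$, so that the resulting twisting class is exactly $-G'$, and not $G'$ or $-G'(K_{X'})$. A twist of a $1$-dimensional sheaf by $K_{X'}$ does not change $\chi$ in ratio, because $K_{X'}$ is a multiple of $f'$ modulo torsion and $f'$ has degree $0$ on fibers. I would check this first, using $\chi(G',{\cal Q}_{|\{x\}\times X'})=\chi(G,{\Bbb C}_x)$ as a test object.
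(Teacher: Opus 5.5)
Your proposal is correct and is essentially the paper's proof: the same split by the sign of $s$, Proposition~\ref{prop:FMstability4} applied directly for $s<0$, and for $s>0$ passing to $E^{\vee}(K_X)$, applying Proposition~\ref{prop:FMstability4} with the ${\cal Q}$-transform, and concluding with the duality $A\mapsto A^{\vee}[1]$ for $1$-dimensional sheaves, which your Serre-duality computation establishes. The sign issue you flag is harmless, because twisted semistability depends only on $c_1(G)/\rk G$: your positive-rank class ${G'}^{\vee}(-K_{X'})$ and the paper's $-{G'}^{\vee}(-K_{X'})$ give the same condition, as do $G'$ and $-G'$; and your identity $\chi({G'}^{\vee}(-K_{X'}),A^{\vee}[1])=-\chi(G',A)$ absorbs $K_{X'}$ exactly, so the side remark about $K_{X'}$-twists is not needed.
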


\begin{proof}
In the notation of the proof of Theorem \ref{thm:FMpreserve},
we have two cases.
\begin{equation*}
(a)\; s<\frac{p}{r}=0, \quad
(b)\; \frac{p}{r}=0<s.
\end{equation*}
For case (a), the claim follows from Proposition \ref{prop:FMstability4}.
For case (b),
we note that $-s<-\frac{p}{r}=0$
and
 $E$ is $(f,G,H)$-semistable if and only if 
$E^{\vee}(K_X)$ is a $(f,G^{\vee},H)$-semistable (Definition \ref{defn:fGH2}).
We first apply $\Phi_{X \to X'}^{{\cal Q}^{\vee}[2]}:{\bf D}(X) \to {\bf D}(X')$
to $E^{\vee}(K_X)$. 
By \eqref{eq:QG2}, 
$\Phi_{X \to X'}^{{\cal Q}^{\vee}[2]}(G^\vee)={G'}^{\vee}(-K_{X'})$.
Hence the following conditions are equivalent by
Proposition \ref{prop:FMstability4}.
\begin{enumerate}
\item
$E^{\vee}(K_X)$ is $(f,G^{\vee},H)$-semistable.
\item
$(\Phi_{X \to X'}^{{\cal P}^{\vee}[2]}(E))^{\vee}[1]=
\Phi_{X \to X'}^{{\cal Q}^{\vee}[2]}(E^{\vee}(K_X))$
is $-{G'}^{\vee}(-K_{X'})$-twisted semistable.
\end{enumerate}
For a sheaf $F$ of dimension 1,
$F$ is $G$-twisted semistable if and only if
$F^{\vee}[1]$ is $G^{\vee}(-K_X)$-twisted semistable.
Therefore our claim holds.
\end{proof}

\begin{rem}\label{rem:reduce}
Assume that there is no multiple fiber.
Then for a primitive Mukai vector $v_0=rf+d\varrho_X$,
$X':=M_H^G(v_0)$ is a smooth projective surface and there is a universal family
${\cal P}$ as a $(1_X \times {\alpha'}^{-1})$-twisted sheaf on $X \times X'$, where
$\alpha'$ is a 2-cocycle of ${\cal O}_{X'}^{\times}$.
Then we have a Fourier-Mukai transform
$\Phi_{X \to X'}^{{\cal P}^{\vee}}:
{\bf D}(X) \to {\bf D}^{\alpha'}(X')$.
Hence we can find a suitable $v_0$
such that $\Phi_{X \to X'}^{{\cal P}^{\vee}}(E)$ is a $G'$-twisted semistable sheaf 
for any $E \in M_{(f,H)}^G(v)$. 
Thus our wall-crossing for $(f,G,H)$-semistability is reduced to a wall-crossing 
for the moduli of $G'$-twisted semistable 1-dimensional twisted sheaves.
On the other hand, if there is a multiple fiber, then
we do not have such a reduction in general.
For example, $r$ and the multiplicity of a multiple fiber is not relatively prime, then
we can not reduce to a wall-crossing 
for the moduli of $G'$-twisted semistable 1-dimensional twisted sheaves.
\end{rem}

\begin{cor}\label{cor:Gieseker}
Assume that $(f,G,H)$ is general with respect to $v=r+\xi+a\varrho_X$ $(r>0,\xi \in \NS(X))$.
Then there is a relative Fourier-Mukai transform
$\Phi_{X \to X'}^{{\cal P}^{\vee}[1]}:{\bf D}(X) \to {\bf D}(X')$ which induces an isomorphism
$$
{\cal M}_{(f,H)}^G(v)^{ss} \cong {\cal M}_{H'_{f'}}^{G'}(v')^{ss},
$$
 where $v'=\overline{\Phi}_{X \to X'}^{{\cal P}^{\vee}[1]}(v)$.
Moreover ${\cal M}_{H'_{f'}}^{G'}(v')^{ss}$ consists of $\mu$-stable vector bundles provided that
$v$ is primitive.
\end{cor}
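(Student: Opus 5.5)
The plan is to choose the relative Fourier-Mukai transform so that the $(f,G,H)$-chamber containing $G$ is carried to a Gieseker chamber on the other side. Write, as in the proof of Theorem \ref{thm:FMpreserve}, $v=e^\beta(r+pH+qf+D+a\varrho_X)$ and $c_1(G)=\rk G(\beta+sH+yf+\alpha)$ with $\alpha\in N_H$. The relevant quantities are $\mu_{\beta,f}(G)=s$ and $\mu_{\beta,f}(E)=p/r$. We are free to choose the primitive isotropic vector $v_0=r_0f+b\varrho_X$ with $r_0>0$, and hence $\beta$ modulo ${\Bbb Q}f+N_H$ through the normalization \eqref{eq:beta}. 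We may also assume that $X'=M_{H_f}^\beta(v_0)$ is fine; otherwise we allow a twisted universal family as in Remark \ref{rem:reduce}. Finally, by Definition \ref{defn:fGH2} we may replace $E$ by $E^\vee(K_X)$ when needed, so that $\mu_f(E)>\mu_f(G)$.

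The first step is to pick $b/r_0$ so that, in the new coordinates, $0<s<p/r$ and $s$ is as small as we like. Set $\lambda:=c_1(G)\cdot f/\rk G$ and $\mu:=\mu_f(E)$, so that $\lambda<\mu$. We need $b/r_0<\lambda$ with $\lambda-b/r_0$ small, while keeping $p>0$, that is, $b/r_0<\mu$. Since rationals with arbitrarily large denominators accumulate at $\lambda$, such $(r_0,b)$ exist with $\gcd(r_0,b)=1$. Note that $s=\lambda-b/r_0$ tends to $0$ only in a controlled way, because the walls near the Gieseker chamber depend on $r_0$ through Lemma \ref{lem:coh-FM}; this is the delicate point. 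Having arranged $0<s<p/r$, Proposition \ref{prop:FMstability1} shows that $E\mapsto F=\Phi_{X\to X'}^{{\cal P}^\vee[1]}(E)$ identifies $(f,G,H)$-semistable objects with $(f',G',H')$-semistable ones. By the formula for $v(G')$, the new parameter is $s'=-1/(sr_0^2)$. Remark \ref{rem:Gieseker} then says that once $s'$ lies below the bound of Proposition \ref{prop:Gieseker-chamber} for $v'$ and $\alpha'$, $(f',G',H')$-semistability coincides with $G'$-twisted Gieseker semistability for $H'_{f'}$. This gives the isomorphism of stacks ${\cal M}_{(f,H)}^G(v)^{ss}\cong{\cal M}_{H'_{f'}}^{G'}(v')^{ss}$.

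The main obstacle is making $s'=-1/(sr_0^2)$ sufficiently negative. The threshold in Proposition \ref{prop:Gieseker-chamber} involves $\langle v'^2\rangle=\langle v^2\rangle$, which is independent of $r_0$. It also involves $D'$, $\mu'$, and a bound $A'$ on $|(\alpha'\cdot(r_1'D'-r'D_1'))|$, where $r'=pr_0$ and $\alpha'$ is proportional to $\alpha/(sr_0)$, so all of these depend on $r_0$. I would first try a different route. Instead of shrinking $s$, use the genericity of $G$: the whole chamber containing $G$ moves with the transform, and a chamber is determined by which walls $W_{v_1}$ it lies on which side of. It then suffices to show that on the $X'$ side there is a point of the image chamber with $s'\to-\infty$. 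Deform $G$ inside its chamber by letting $s\downarrow 0$ along the segment at fixed $\alpha$. Then $\alpha'$ scales like $\alpha/s$, and the quantity to control becomes $s'$ against $A'\sim|\alpha|/(sr_0)$. Since $s'\sim -1/(sr_0^2)$, the ratio is governed by $r_0|\alpha|$, so I expect one must also choose $\beta_0\in N_H$ to absorb $\alpha$, making $\alpha=0$. This is allowed, since $\beta$ is determined modulo $N_H$ only through $\beta_0$, and by Lemma \ref{lem:special} the case $\alpha=0$ is harmless. With $\alpha=0$ we get $A'=0$, and $s'\to-\infty$ as $s\to 0^+$, provided no wall for $v$ accumulates at $s=0$. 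That holds because only finitely many walls meet a compact set (the lemma after Definition \ref{defn:wall}) and $G$ is general.

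For the last assertion, assume $v$ is primitive. Then $v'$ is primitive, and ${\rm rk}\,v'=pr_0$ with $\gcd$ conditions coming from $r_0$ large. I would show that a $G'$-twisted semistable sheaf in the chamber with $s'\ll0$ is $\mu$-stable with respect to $H'_{f'}$, for $s'$ near the boundary $(s',\alpha')$ adjacent to $L$ (compare Proposition \ref{prop:unbounded}). It is locally free because, by Proposition \ref{prop:FMstability1}, its restriction to the generic fiber is the transform of a semistable bundle whose $\mu_f$ makes $\gcd(\rk,\deg)$ small. Local freeness should follow from $\Hom({\cal Q}_{|\{x\}\times X'},F[1])$-vanishing together with the length-two locally free resolution in Remark \ref{rem:fGH}(3). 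I expect this final step to need the most care.
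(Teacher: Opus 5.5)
\paragraph{The isomorphism of stacks.} Once the detour is removed, your argument for this part is the paper's. Choose $v_0=r_0f+d_0\varrho_X$ so that $\beta=\frac{d_0}{r_0}H+\alpha$, with its $N_H$-part absorbing $\alpha$, lies in the chamber ${\cal C}$ of $G$, together with $\beta+sH$ for $0\le s\ll 1$. Since the stability is constant on ${\cal C}$, replace $G$ by $e^{\beta+sH}$. With $r_0$ already fixed, apply Proposition \ref{prop:FMstability1} and Remark \ref{rem:Gieseker} for $s$ small. Because $r_0$ is fixed before $s$ is shrunk, the ``delicate point'' you raise never occurs, and Lemma \ref{lem:special} is not needed.

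Do not fall back on twisted universal families. The statement needs a genuine ${\cal P}$ on $X\times X'$. You get one by taking $\gcd(r_0(\xi\cdot f),d_0)=1$ for some $\xi$ and $\alpha$ general with respect to $v_0$. This is possible because ${\cal C}$ is open and such rational points are dense.

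\paragraph{The gap: $\mu$-stability for primitive $v$.} Gieseker semistability already gives $\mu$-semistability. What must be excluded are subsheaves $F_1\subset F$ with the same $f'$-slope \emph{and} the same $H'$-slope, and none of your tools reaches them. The generic fibre only sees $f'$-slopes. A length-two locally free resolution exists for any sheaf without $0$-dimensional subsheaves, so it says nothing about local freeness. Finally, $\Hom({\cal Q}_{|\{x\}\times X'},F[1])=\Ext^1({\Bbb C}_x,E)$ concerns $E$ at $x$, not $F$.

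The missing idea is that $\beta$ itself lies in ${\cal C}$, and that $H'_{f'}$-slopes on $X'$ are pairings with $e^\beta$ on $X$. Write $\Phi:=\Phi_{X\to X'}^{{\cal P}^{\vee}[1]}$. By Lemma \ref{lem:coh-FM}, if $v(E_1)=e^\beta(r_1+p_1H+q_1f+D_1+a_1\varrho_X)$, then $\rk \Phi(E_1)=p_1r_0$ and $(c_1(\Phi(E_1)(-\beta'))\cdot(H'+nf'))=-\frac{r_1}{r_0}n+a_1r_0$. Equality of slopes for all $n\gg0$ therefore gives $\frac{r_1}{p_1}=\frac{r}{p}$ and $\frac{a_1}{r_1}=\frac{a}{r}$, i.e.\ $\langle e^\beta,r_1v-rv_1\rangle=0$. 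Here $v_1=v(E_1)\in U(v)$ for the subsheaf $E_1\subset E$ corresponding to $F_1$. Since $\beta$ lies on no wall, this forces $rv_1=r_1v$, which is impossible for primitive $v$ with $0<r_1<r$.

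The same point handles local freeness. The derived fibre of $F$ at $x'$ is ${\bf R}\Hom({\cal P}_{|X\times\{x'\}},E)[1]$. Since $\langle e^\beta,v_0\rangle=0$ and $\beta\in{\cal C}$, the argument of Lemma \ref{lem:general} gives $\Hom({\cal P}_{|X\times\{x'\}},E)=\Hom(E,{\cal P}_{|X\times\{x'\}}(K_X))=0$. By Serre duality this fibre is then concentrated in degree $0$ for every $x'$, so $F$ is locally free.
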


\begin{proof}
For a chamber ${\cal C}$,
we take a Mukai vector $v_0:=(0,r_0 f,d_0)$ such that
\begin{enumerate}
\item
there is $\xi \in \NS(X)$ with $\gcd(r_0 (\xi \cdot f),d_0)=1$ and
\item
$\beta:=\frac{d_0}{r_0 (f \cdot H)}H+\alpha$
$(\alpha \in N_H)$ 
satisfies $\beta+sH \in {\cal C}$ for all $0 \leq s \ll 1$.
\end{enumerate}
We may assume that $\alpha$ is general with respect to $v_0$.
Then $X':=M_{H_f}^\beta(v_0)$ is smooth and has a universal family ${\cal P}$.
By Theorem \ref{thm:FMpreserve} and Remark \ref{rem:Gieseker},
$\Phi_{X \to X'}^{{\cal P}^{\vee}[1]}:{\bf D}(X) \to {\bf D}(X')$ induces an isomorphism
${\cal M}_{(f,H)}^G(v)^{ss} \cong {\cal M}_{H'_{f'}}^{G'}(v')^{ss}$, where $v'=\overline{\Phi}_{X \to X'}^{{\cal P}^{\vee}[1]}(v)$.

Assume that $v$ is primitive. 
Since $\beta \in {\cal C}$, $\langle e^\beta,(r_1 v-r v_1) \rangle \ne 0$
for any Mukai vector $v_1 \in U(v)$.
Assume that there is a subsheaf $F_1$ of $F:=\Phi_{X \to X'}^{{\cal P}^{\vee}[1]}(E)$ such that
\begin{equation}\label{eq:pss}
\frac{(c_1(F_1) \cdot (H'+nf'))}{\rk F_1}=\frac{(c_1(F) \cdot (H'+nf'))}{\rk F}
\end{equation}
for all $n \gg 0$.
Then $F_1=\Phi_{X \to X'}^{{\cal P}^{\vee}[1]}(E_1)$
with $v(E_1) \in U(v)$.
By Lemma \ref{lem:coh-FM} and \eqref{eq:pss}, we see that
$$
-\frac{\langle e^\beta,v(E_1) \rangle}{\rk E_1}=\frac{\rk F_1}{\rk E_1} 
\frac{(c_1(F_1(-\beta')) \cdot H')}{\rk F_1}=
\frac{\rk F}{\rk E} 
\frac{(c_1(F(-\beta')) \cdot H')}{\rk F}=
-\frac{\langle e^\beta,v(E) \rangle}{\rk E}.
$$
Hence there is no subsheaf $F_1$ with \eqref{eq:pss}, which implies that 
$F$ is a $\mu$-stable vector bundle for any $E \in {\cal M}_{(f,H)}^G(v)^{ss}$.
\begin{NB}
We set $F:=\Phi_{X \to X'}^{{\cal P}^{\vee}[1]}(E)$.
Then $(c_1(F(-\beta')) \cdot H')/\rk F=\frac{r}{p} \langle e^\beta,v \rangle/r$.
\end{NB}
\end{proof}

\begin{thm}\label{thm:FM2}
Assume that $X':=M_{H\f}^\alpha(0,r_0 f,d_0)$ is a fine moduli space.
We take a compact neighborhood $B^*$ of $\alpha$.
For a Mukai vector $v=r+\xi+a \varrho_X$ ($\xi \in\NS(X)$),
we set $d:=(\xi \cdot f)$. 
In the notation of Proposition \ref{prop:Gieseker-chamber},
if
$$
\frac{d_0}{r_0}<\frac{d}{r}-\frac{\mu (-(D^2)+\langle v^2 \rangle+r^2 \chi({\cal O}_X))}{2}-A,
$$
then $\Phi_{X \to X'}^{{\cal P}^{\vee}[1]}$ preserves the Gieseker semistability.
In particular $\Phi_{X \to X'}^{{\cal P}^{\vee}[1]}(E(mH))$ is Gieseker semistable
for a semistable sheaf $E$ if $m$ is sufficiently large.  
\end{thm}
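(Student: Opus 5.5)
The plan is to combine Proposition \ref{prop:Gieseker-chamber} with Proposition \ref{prop:FMstability1} and Remark \ref{rem:Gieseker}. Take $\beta:=\frac{d_0}{r_0}H+\alpha$, normalized as in \eqref{eq:beta} so that $\langle e^\beta,v_0\rangle=0$ with $v_0=(0,r_0f,d_0)$. For a general $G$ we write $c_1(G)/\rk G=\beta+sH+\alpha_1$ with $0<s\ll 1$ and $\alpha_1$ small. In absolute coordinates this is the parameter $(s_{\rm abs},\alpha_{\rm abs})=(\frac{d_0}{r_0}+s,\alpha+\alpha_1)$, and we have $\alpha_{\rm abs}\in B^*$.

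\emph{Step 1.} The hypothesis is a strict inequality. So for $s$ small enough the point $s_{\rm abs}$ still lies below the bound of Proposition \ref{prop:Gieseker-chamber}. Hence $(f,G,H)$-semistability of $E$ coincides with $(s_{\rm abs}H+\alpha_{\rm abs})$-twisted Gieseker semistability with respect to $H_f$. After a generic perturbation this is the same as $H_f$-Gieseker semistability, since the twisting parameter can be chosen in the Gieseker chamber.

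\emph{Step 2.} Write $v=e^\beta(r+pH+qf+D+a\varrho_X)$. We have $p=d-r\frac{d_0}{r_0}$, and the hypothesis gives $\frac{d}{r}>\frac{d_0}{r_0}$, so $p>0$ and $p-rs>0$ for small $s>0$. Thus we are in case (b) of Theorem \ref{thm:FMpreserve}, i.e.\ Proposition \ref{prop:FMstability1} applies: $E$ is $(f,G,H)$-semistable if and only if $\Phi^{{\cal P}^\vee[1]}_{X\to X'}(E)$ is $(f',G',H')$-semistable. Moreover $\mu_{\beta',f'}(G')=-1/(sr_0^2)$ tends to $-\infty$ as $s\to 0$. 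By Remark \ref{rem:Gieseker}, i.e.\ Proposition \ref{prop:Gieseker-chamber} applied on $X'$, this is $G'$-twisted Gieseker semistability with respect to $H'_{f'}$. Since the image of the compact set of relevant walls is finite (Lemma \ref{lem:wall1}), $s$ can be chosen uniformly for the Mukai vector $v$. Chaining Steps 1 and 2 gives the first assertion. Genericity of $G$ is needed in Proposition \ref{prop:FMstability1}. I would handle it by noting that for $s$ tiny the twist only refines the ordering, and that semistability in a chamber adjacent to the Gieseker locus equals Gieseker semistability.

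\emph{Step 3.} For the last assertion, replace $E$ by $E(mH)$. Tensoring by ${\cal O}(mH)$ preserves $H_f$-Gieseker semistability, since $mH$ only changes the twisting parameter by a multiple of $H$ modulo $f$. It changes $d/r$ to $d/r+m$, while $\langle v^2\rangle$, $D$, $r$, and hence $A$ and the right-hand bound apart from the term $d/r$, are unchanged. This holds because $(H^2)=0$ and $(H\cdot D)=0$ make $e^{mH}$ an isometry fixing the $N_H$-component. So for $m\gg0$ the hypothesis holds and the first part applies.

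The main obstacle is the uniformity in Step 2. One must choose $s$ small enough to be in the Gieseker chamber of $X'$ simultaneously with staying in the Gieseker chamber of $X$. This also requires controlling the $N_{H'}$-component $\alpha'$ of $G'$ so that the constant $A$ on $X'$ stays bounded. I would check this via the explicit formula for $v(G')$ and the finiteness of walls near $B^*$.
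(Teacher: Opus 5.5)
Your Steps 1--2 are exactly the paper's proof: choose $s$ slightly above $\frac{d_0}{r_0}$ (possible because the hypothesis is a strict inequality), apply Proposition \ref{prop:Gieseker-chamber} on $X$, then apply Remark \ref{rem:Gieseker} (Proposition \ref{prop:FMstability1} combined with Proposition \ref{prop:Gieseker-chamber} on $X'$); your Step 3 correctly derives the $E(mH)$ statement, which the paper leaves implicit. The one difference is your perturbation $\alpha_1$, which is unnecessary and is exactly what causes the uniformity obstacle you flag, since it adds a term of size about $\alpha_1/(sr_0)$ to the $N_{H'}$-component of $G'$; the paper instead takes $G=e^{sH+\alpha}$ exactly, so the $N_H$-component of $c_1(G)/\rk G-\beta$ vanishes, Lemma \ref{lem:special} removes the genericity hypothesis of Proposition \ref{prop:FMstability1}, and the $N_{H'}$-part of $c_1(G')/\rk G'$ is that of $\beta'$, independent of $s$.
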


\begin{proof}
We take a rational number $s$ with 
$$
\frac{d_0}{r_0}<s \ll \frac{d}{r}-\frac{\mu (-(D^2)+\langle v^2 \rangle+r^2 \chi({\cal O}_X))}{2}-A
$$
and set
$G:=e^{sH+\alpha}$.
Then $(f,G,H)$-semistability is equivalent to the $\alpha$-twisted Gieseker semistability for $v$.
Since $s-\tfrac{d_0}{r_0}>0$ is sufficiently small, 
Remark \ref{rem:Gieseker} implies the claim. 
\end{proof}

\begin{NB}
\begin{equation}
\frac{\chi(G,E)}{\rk E}-\frac{\chi(G,E_1)}{\rk E_1}
\sim
\frac{\chi(G',F)}{(c_1(F(-\beta')) \cdot (\frac{1}{n'}H'+f'))}-
\frac{\chi(G',F_1)}{(c_1(F_1 (-\beta')) \cdot (\frac{1}{n'}H'+f'))}
\end{equation} 
\end{NB}

\subsection{Line bundles}

In this subsection, we shall study some algebro-geometric properties of line bundles on
$M_{(f,H)}^G(v)$, which are minor modifications of \cite[sect. 5.1]{Y:elliptic}.
For a Mukai vector $v=r+\xi+a \varrho_X$ $(r>0, \xi \in \NS(X))$,
we set 
$$
K(X)_v:=\{\alpha \in K(X) \mid \langle v(\alpha),ve^{-\frac{K_X}{2}} \rangle=0 \}.
$$
\begin{NB}
$\langle v(\alpha),v(E) e^{-\frac{K_X}{2}} \rangle=-\chi(\alpha,E)$.
\end{NB}
Let ${\cal E}$ be a universal family on $M_{(f,H)}^G(v) \times X$.
We have a homomorphism
\begin{equation}
\begin{matrix}
\theta_v:& K(X)_v & \to & \Pic(M_{(f,H)}^G(v))\\ 
& \alpha & \mapsto & \det p_{M_{(f,H)}^G(v) !}({\cal E} \otimes p_X^*(\alpha^{\vee}))
\end{matrix}
\end{equation}
%
which is independent of the choice of ${\cal E}$.
We define a homomorphism $D^*$ by 
\begin{equation}
\begin{matrix}
D^*:& K(X) & \to & K(X)\\
& \alpha & \mapsto & \alpha^{\vee}(-K_X).
\end{matrix}
\end{equation} 
$D^*$ induces an isomorphism
$K(X)_v \to K(X)_{D_X(v)}$, where
$D_X(v):=r-\xi+a \varrho_X$. 
\begin{NB}
$\langle v(\alpha), ve^{-K_X/2} \rangle=\langle v(\alpha^{\vee}(-K_X)),v^{\vee}e^{-K_X/2} \rangle$.
\end{NB}

For the Fourier-Mukai transform 
$\Phi_{X \to X'}^{{\cal P}^{\vee}[k]}$ in Theorem \ref{thm:FMpreserve},
we have a commutative diagram
\begin{equation}\label{eq:comm}
\begin{CD}
K(X)_v @>{\Phi_{X \to X'}^{{\cal P}^{\vee}[k]}}>> K(Y)_{v'}\\
@V{\theta_v}VV @VV{\theta_{v'}}V\\
\Pic(M_{(f,H)}^G(v)) @= \Pic(M_{(f',H')}^{G'}(v'))
\end{CD}
\end{equation}
where $v'=\overline{\Phi}_{X \to X'}^{{\cal P}^{\vee}[k]}(v)$ and we identify $\Pic(M_{(f,H)}^G(v))$ with 
$\Pic(M_{(f',H')}^{G'}(v'))$ by $(\Phi_{X \to X'}^{{\cal P}^{\vee}[k]})_*$. 
We also have
\begin{equation}\label{eq:D}
\theta_{v}(\alpha)=\det p_{M_{(f,H)}^G(v) !} ({\cal E}^{\vee} \otimes p_X^*(\alpha(K_X)))^{\vee}
=\theta_{D_X(v)}(D^*(\alpha))^{\vee}.
\end{equation}

\begin{NB}
Let $q:S \times X \times Y \to S$ be the projection, 
$q_X$ and $q_Y$ are projections from $S \times X \to Y$ to $X$ and $Y$ respectively. 
For ${\cal E}' \in {\bf D}(S \times Y)$,
\begin{equation}
\begin{split}
\det p_{!}(\Phi_{Y \to X}^{{\bf P}^{\vee}}({\cal E}') \otimes p_X^*(\alpha^{\vee}))=&
\det p_{!}({\bf R}q_{S \times X*}({\bf P}^{\vee} \otimes {\cal E}') \otimes p_X^*(\alpha^{\vee}))\\
=& \det q_{!}({\bf P}^{\vee} \otimes {\cal E}' \otimes q_X^*(\alpha^{\vee}))\\
=& \det q_{!}({\cal E}' \otimes q_{X \times Y}^*(({\bf P} \otimes p_X^*(\alpha))^{\vee}))\\
=& \det p'_{!}({\cal E}' \otimes {\bf R}p_{Y*}({\bf P} \otimes p_X^*(\alpha(K_X)))^{\vee})\\
=& \det p'_{!}({\cal E}' \otimes \Phi_{X \to Y}^{{\bf P}}(\alpha(K_X))^{\vee})
\end{split}
\end{equation}
\end{NB}

\begin{prop}[]
Assume that $G$ is general with respect to $v$.
\begin{enumerate}
\item[(1)]
For $\alpha \in K(X)$ with 
$v(\alpha)=r f+(\xi \cdot f) \varrho_X$, 
$\theta_v(m \alpha)$ $(m \gg 0)$ is base point free.
\item[(2)]
If the Kodaira dimension of $X$ is 1, then
$K_{M_{H_f}({\bf e})}^{\otimes m}$
defines a morphism $M_{(f,H)}^G(v) \to S^l C$.
\end{enumerate}
\end{prop}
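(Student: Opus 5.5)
The plan is to treat (1) and (2) separately. Both rest on the structure of $M_{(f,H)}^G(v)$ as a space fibred over configurations of points of $C$, via the support of the "fiber part" of the objects. The tool is the commutative diagram \eqref{eq:comm} together with Corollary \ref{cor:Gieseker}.

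\emph{Reduction.} Since $G$ is general, Corollary \ref{cor:Gieseker} gives a relative Fourier--Mukai transform $\Phi=\Phi_{X\to X'}^{{\cal P}^\vee[1]}$ and an isomorphism $M_{(f,H)}^G(v)\cong M_{H'_{f'}}^{G'}(v')$. The class $\alpha$ has $v(\alpha)=rf+(\xi\cdot f)\varrho_X$, so it is isotropic and proportional to a fiber class. It satisfies $\langle v(\alpha),ve^{-K_X/2}\rangle=0$, because $(f\cdot K_X)=0$ and $\langle rf+d\varrho_X,v\rangle=rd-rd=0$ for $d=(\xi\cdot f)$. By Lemma \ref{lem:fiber} and Lemma \ref{lem:coh-FM}, $\overline{\Phi}(v(\alpha))$ is again of the form $r'f'+d'\varrho_{X'}$ and is orthogonal to $v'$. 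So by \eqref{eq:comm} it suffices to prove (1) for Gieseker (twisted) semistable sheaves on $X'$ with respect to $H'_{f'}$. There I will use the classical argument of \cite[sect. 5.1]{Y:elliptic}, which in turn follows Le Potier and Jun Li for the Donaldson--Uhlenbeck-type line bundle.

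\emph{Step for (1).} Take the natural class $\alpha_0$ with $v(\alpha_0)=\rk v'\, f'+(c_1(v')\cdot f')\varrho_{X'}$; after replacing $\alpha$ by a multiple, its image is $m\alpha_0$. For a point $t\in C$ with $E'_{|{\pi'}^{-1}(t)}$ semistable, consider the restriction to the fiber ${\pi'}^{-1}(t)$, which is an elliptic curve (or an irreducible singular fiber). The line bundle $\theta_{v'}(\alpha_0)$ is the pullback, under the relative restriction map, of the theta (determinant) line bundle on the moduli of semistable sheaves on the fibers. For each $t$ this yields sections: choose a fiber sheaf $A$ of class $\alpha_0$ on ${\pi'}^{-1}(t)$ with $\chi(A,E')=0$. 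The section is $\det$ of ${\bf R}\Hom(A,{\cal E})$, and it vanishes exactly when $\Hom(A,E')\ne0$. For a given $E'$, restriction to a general fiber is semistable, so a generic choice of $t$ and of $A$ (degree-zero twists on the elliptic fiber) gives $\Hom(A,E'_{|f})=0$. Hence the sections of $\theta(m\alpha_0)$ obtained this way (products over several fibers as needed) have no common zero. The main obstacle is showing that these sections form a finite-dimensional linear system free of base points uniformly in $E'$, including at points where $E'$ has torsion fiber parts, i.e.\ sheaves in $\langle{\cal F}^\pm\rangle$. I would handle this by allowing $t$ to vary over all of $C$ and invoking boundedness: only finitely many fibers are "bad" for each $E'$.

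\emph{Step for (2).} When $\kappa(X)=1$, $K_X=\pi^*(\text{a divisor of positive degree } l)$ up to multiple fibers, so $K_X$ is numerically a positive multiple of $f$. By Grothendieck--Riemann--Roch and deformation theory, the canonical bundle of the moduli space is $\theta_v$ of a class with Mukai vector proportional to $v(\alpha)$. This is the standard formula $K_M=\theta_v(\text{class of }K_X\text{-restriction})$, which can be verified through \eqref{eq:D}. So $K^{\otimes m}$ is a power of the bundle in (1) and is base point free. The morphism it defines factors through the map $E\mapsto$ (divisor of points $t$ where $E_{|\pi^{-1}(t)}$ is unstable or has torsion, or where $\Hom(A,E_{|f_t})\ne0$), which lands in $S^lC$. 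To identify the image with $S^lC$, I would match section spaces with $H^0(S^lC,{\cal O}(m))$ via the fiberwise theta divisor construction.
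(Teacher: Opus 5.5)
Your route for (1) differs from the paper's, and as written it has a gap at the step that carries the proof.

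\textbf{Comparison of routes, and two side remarks.} The paper never passes through Corollary~\ref{cor:Gieseker}. After using \eqref{eq:D} and Definition~\ref{defn:fGH2} to assume $\mu_f(G)<(\xi\cdot f)/r$, it works directly with the $(f,G,H)$-semistable objects on $X$, which need not be torsion free. Your reduction needs that duality step too. The Gieseker conclusion of Corollary~\ref{cor:Gieseker} comes from Remark~\ref{rem:Gieseker}, i.e.\ Proposition~\ref{prop:FMstability1}, which assumes $\mu_f(G)<\mu_f(E)$. Also, orthogonality alone only makes $\overline{\Phi}(v(\alpha))$ proportional to the natural class. You should use Lemma~\ref{lem:coh-FM}, which gives $\overline{\Phi}(rf+(\xi\cdot f)\varrho_X)=\rk v'\, f'+(c_1(v')\cdot f')\varrho_{X'}$, to exclude a negative multiple.

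\textbf{The gap.} $\det {\bf R}\Hom_p(A,{\cal E})$ carries a canonical section only if ${\bf R}\Hom_p(A,{\cal E})$ is a two-term complex of vector bundles of equal rank. This requires $\Hom(A,E_y)=0$ (or the corresponding $\Ext^2$-vanishing) for \emph{every} point $y$ of the moduli space, where $E_y$, resp.\ $E'_y$, is the member of the universal family over $y$. You never establish this. The vanishing locus you state is also wrong. If $\Hom(A,E'_y)=0$ for all $y$, the section vanishes exactly where $\Ext^1(A,E'_y)\neq 0$, equivalently where $\Ext^2(A,E'_y)\cong\Hom(E'_y,A(K_{X'}))^{\vee}\neq 0$. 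This is the analogue of the paper's divisor $D$, not the locus $\Hom(A,E')\neq 0$.

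In your reduced setting the repair is immediate. Gieseker semistable sheaves are torsion free, so $\Hom(A,E'_y)=0$ automatically, and you then choose $A$ with $\Hom(E'_{|f_t},A)=0$. So the ``main obstacle'' you name, torsion fiber parts in $\langle{\cal F}_G^{\pm}\rangle$, does not occur after your reduction, and boundedness is not what resolves the actual issue.

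In the paper, this is precisely where $(f,G,H)$-semistability is used. The inequality $\chi(G,F_i)=\rk G\,(\xi\cdot f)-r(c_1(G)\cdot f)>0$ puts the semistable fiber bundles $F_i$ in $\langle{\cal F}_G^+\rangle$. Then $\Hom(F_i,E_y)=0$ for all $y$ by the defining property of ${\cal C}_G$, so no Fourier--Mukai transform and no torsion-freeness are needed.

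\textbf{Second issue: one fixed line bundle.} Sections obtained from different choices of $(t,A)$ are sections of $\theta$ of different classes in $K$. These differ by numerically trivial classes such as $[{\cal O}_{{\pi'}^{-1}(t)}]-[{\cal O}_{{\pi'}^{-1}(t')}]$, which is nonzero when $g(C)>0$, and $\theta$ of such a class need not be trivial. So ``letting $t$ vary over all of $C$'' does not by itself give sections of one line bundle $\theta(m\alpha)$. The paper takes the fibers over a divisor $\sum_i p_i$ in a fixed very ample linear system $|\delta|$ on $C$. Then $\sum_i[{\cal O}_{\pi^{-1}(p_i)}]$ does not depend on $E$, while the finitely many bad fibers of a given $E$ can still be avoided. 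You need a normalization of this kind, including for the determinants of the chosen fiber sheaves, before concluding base point freeness.

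\textbf{Part (2).} The paper gives no argument here and refers to \cite[Prop.~5.6]{Y:elliptic}. Your outline is a reasonable heuristic: for $m$ divisible by all multiplicities, $mK_X$ is pulled back from $C$, so $K_M^{\otimes m}$ is, up to torsion, $\theta_v$ of a fiber-type class and hence base point free by (1). However, identifying the target of the resulting morphism with $S^lC$ by ``matching section spaces'' is not yet an argument.
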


\begin{proof}
The proof is almost the same as in \cite[Prop. 5.6]{Y:elliptic}. So we only show (1).
By \eqref{eq:D} and Definition \ref{defn:fGH2}, we may assume that 
$\mu_f(G)<\frac{(\xi \cdot f)}{r}$.
Let $\delta$ be a very ample divisor on $C$.
For $E \in M_{(f,H)}^G(v)$, 
we take a divisor $\sum_{i=1}^N p_i \in |\delta|$ such that $\pi^{-1}(p_i)$
are smooth and $E_{|\pi^{-1}(p_i)}$ are semistable vector bundles.
We take semistable vector bundles $F_i$ of rank $r$ on $\pi^{-1}(p_i)$
such that $\chi(F_i)=(\xi \cdot f)$
and $\Hom(E,F_i(K_X))=0$ for all $i$.
\begin{NB}
Since $\Pic^0(C) \to \Pic^0(X)$ is an isomorphism,
$\Pic^0(X) \to \Pic^0(\pi^{-1}(p_i))$ are trivial.
If we choose $F_i$ to satisfy $\det F_i \not \cong \det E_{|\pi^{-1}(p_i)}$,
then $\det F_i \not \cong \det E'_{|\pi^{-1}(p_i)}$
for all $E' \in M_{H_f}({\bf e})$.
If $E_{|\pi^{-1}(p_i)}$ is not stable, then we have a quotient
$E \to G$ such that $G$ is a stable vector bundle of rank $r_1$ and $\chi(G)=d_1$
with $rd_1-r_1 (\xi \cdot f)<0$.
Then we get $\Hom(G,F_i) \ne 0$. 
\end{NB}
Since $\chi(G,F_i)=\rk G (\xi \cdot f)-r(c_1(G) \cdot f)>0$,
$\Hom(F_i,{\cal E}_{|\{y \} \times X})=0$ for all $y \in M_{(f,H)}^G(v)$.
Hence 
${\bf R}\Hom_{p_{M_{(f,H)}^G(v)}}(p_X^*(F_i),{\cal E}[2])$ is a two-term complex of locally free sheaves.  
We set
\begin{equation}
D:=\{ y \in M_{(f,H)}^G(v) \mid \Ext^2(\oplus_i F_i, {\cal E}_{|\{y \} \times X}) \ne 0 \}.
\end{equation}
Then 
${\cal O}_{M_{(f,H)}^G(v)}(D) \cong \theta_v(\sum_{i=1}^N F_i)$.
Since $E \not \in D$, $\theta_v(\sum_{i=1}^N F_i)$ is base point free. 
Therefore we get (1).

\begin{NB}
(2)
We note that $mrK_X=\pi^{-1}(\delta)$ $(\delta \in \Pic(C))$ if $m_i \mid m$
for all $i$.
Then $K_{M_{H_f}({\bf e})}^{\otimes m}={\cal L}_{\bf e}(\delta)$ up to torsion.
Hence the second claim holds.
\end{NB}
\end{proof}

\begin{rem}
Assume that there is a smooth moduli space $X':=M_{H_f}(0,r_0 f,d_0)$ such that
$r_0 (\xi \cdot f)-r d_0=0$.
\begin{NB}
For example, all fibers are irreducible.
\end{NB}
Let ${\cal P}$ be a (twisted) universal family on $X \times X'$.
By $\Phi_{X \to X'}^{{\cal P}^{\vee}}$,
$M_{(f,H)}^G(v)$
is isomorphic to a moduli space of stable twisted sheaves of dimensioon 1 on $X'$.
Then we have a morphism $M_{(f,H)}^G(v) \to \Hilb_{X'}^{\xi'}$,
where $\Hilb_{X'}^{\xi'}$ is the Hilbert scheme of divisors $D$ numerically equivalent to $\xi'$.
$\theta_v(u) =(\Phi_{X \to X'}^{{\cal P}^{\vee}[2]})^* (\theta_{v'}(\varrho_{X'}))$
 is the pull-back of a line bundle on $\Hilb_{X'}^{\xi'}$.
\end{rem}

Let $\psi:M_{(f,H)}^G(v) \to {\Bbb P}^N$ be the morphism defined by $\theta_v(m \alpha)$ $m \gg 0$.
The following proposition shows the relation between our chamber structure
and $\psi$-ample line bundles.

\begin{prop}\label{prop:rel-ample}
Let ${\cal C}$ be a chamber and 
$\beta:=s_0 H+\alpha \in {\cal C}$, where $s_0  \in {\Bbb Q}$ and
$\alpha \in N_H$. Then
$\theta_v (F)$ is $\psi$-ample, where
$F \in K(X)_v$ with
$v(F)=-n(1+\beta+b \varrho_X)$.
\end{prop}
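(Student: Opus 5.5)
The plan is to reduce to the Gieseker chamber, where $\psi$-ampleness of a determinant line bundle is classical. Then I track the line bundle through the relative Fourier--Mukai transform using the commutative diagram \eqref{eq:comm}.

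\textbf{Step 1 (choice of transform).} Given the chamber ${\cal C}$ and $\beta=s_0H+\alpha\in{\cal C}$, follow the proof of Corollary \ref{cor:Gieseker}. Choose $v_0=(0,r_0f,d_0)$ with $X'=M^{\beta}_{H_f}(v_0)$ fine and $\beta+sH\in{\cal C}$ for $0\le s\ll 1$. The transform $\Phi:=\Phi_{X\to X'}^{{\cal P}^{\vee}[1]}$ then gives
\[
M_{(f,H)}^G(v)\cong M_{H'_{f'}}^{G'}(v'),
\]
and the target is a moduli of twisted Gieseker semistable sheaves with respect to $H'+mf'$, $m\gg0$.

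\textbf{Step 2 (transport the line bundles).} By \eqref{eq:comm}, $\theta_v(F)$ corresponds to $\theta_{v'}(\Phi(F))$, and $\theta_v(\alpha)$ with $v(\alpha)=rf+(\xi\cdot f)\varrho_X$ corresponds to $\theta_{v'}(\Phi(\alpha))$. By Lemma \ref{lem:coh-FM}, $\overline{\Phi}$ sends fiber classes $r_0e^\beta f$-type vectors to multiples of $\varrho_{X'}$, and sends $e^\beta$, $He^\beta$ to $H'e^{\beta'}$, $r_0e^{\beta'}$. Hence $\Phi(\alpha)$ has Mukai vector proportional to $e^{\beta'}(\text{point}+\ldots)$ on $X'$, which is the class defining the ``Uhlenbeck/fibration'' morphism $\psi$ on the Gieseker side. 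Likewise $\Phi(F)$ has Mukai vector $-n\,\overline{\Phi}(1+\beta+b\varrho_X)$. Using $\beta=s_0H+\alpha$ with $s_0$ slightly above the reference value, Lemma \ref{lem:coh-FM} shows this equals, up to a positive multiple and modulo $K(X')_{v'}$ adjustments, $-n'(1+\gamma'+b'\varrho_{X'}) - m\,(\text{class of } \Phi(\alpha))$. Here $\gamma'$ is the twisting class for $G'$ and $m$ is large.

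\textbf{Step 3 (Gieseker side).} On $M_{H'_{f'}}^{G'}(v')$, the classical result (Le Potier, Li; cf. \cite[sect. 5.1]{Y:elliptic} and the construction via GIT) says that the following class is ample for $n\gg m\gg0$ relative to the morphism given by $\theta_{v'}$ of the point-type class:
\[
\theta_{v'}\bigl(-n(1+\gamma'+b'\varrho)\bigr)\otimes\theta_{v'}(\text{fiber class})^{m}.
\]
It is the GIT polarization for the $G'$-twisted Gieseker problem with polarization $H'+mf'$. Since adding a multiple of the pullback of an ample class from $\psi$ does not affect $\psi$-ampleness, we conclude that $\theta_v(F)$ is $\psi$-ample.

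\textbf{Main obstacle.} I expect the main difficulty to be the bookkeeping in Step 2. One must show that the transformed class lies exactly in the ``twisted Gieseker polarization'' form. The term coming from the $H'$-component must be absorbed into the fiber-direction twist $mf'$, which is permitted only because $\beta\in{\cal C}$ (and not on a wall). One also has to handle the shifts $[k]$ and the dual case of Definition \ref{defn:fGH2}, via \eqref{eq:D}. I would first do the computation numerically with Lemma \ref{lem:coh-FM}, treating $\alpha$ general, and then check positivity using $s_0$ in the chamber.
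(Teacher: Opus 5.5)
There is a genuine gap in Steps 2--3. Write $\beta_1=s_1H+\alpha$ for your reference point, with $s_1=d_0/r_0$, and set $t:=s_0-s_1>0$. Since $(H^2)=0$, \eqref{eq:H'} gives
\[
\overline{\Phi}(e^{\beta})=\overline{\Phi}(e^{\beta_1})+t\,\overline{\Phi}(He^{\beta_1})
=tr_0\,e^{\beta'}\bigl(1-\tfrac{1}{tr_0^2}H'\bigr).
\]
So, modulo fiber classes, $\Phi(F)$ is a fixed positive combination of two pieces: a $\beta'$-twisted ``$u_0$''-type class of negative rank, and the rank-zero class with $c_1=H'$. The relative weight on the second piece is $1/(tr_0^2)$.

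The GIT construction only shows such a combination is ample once this weight exceeds a threshold, and that threshold depends on the bounded family on $X'$, on $v'$ and on the twist. Here the weight is not free. For $s_0=p/q$ one has $tr_0^2=r_0(pr_0-qd_0)/q\geq 1/q$, so the weight never exceeds $q$, while $X'$ changes every time you change $v_0$. Calling $\theta_{v'}(\Phi(F))$ ``the GIT polarization'' is therefore essentially a restatement of the claim. The condition ``$n\gg m\gg0$'' does not help: $n$ only rescales, and $m$ only adds $\psi$-trivial fiber terms.

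Separately, $\Phi(\alpha)$ is not a point class. With $d=(\xi\cdot f)$,
\[
\overline{\Phi}(rf+d\varrho_X)=e^{\beta'}\bigl((d-rs_1)r_0f'-\tfrac{r}{r_0}\varrho_{X'}\bigr),
\]
which is a nonzero fiber class because $s_1\ne d/r$.

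The missing idea is to put the reference point exactly at $\beta$, which removes the $u_0$-part entirely. Then $\overline{\Phi}(r_0e^{\beta})=-H'e^{\beta'}$ and $\overline{\Phi}(\varrho_X)=r_0e^{\beta'}f'$ give $\rk\Phi(F)=0$ and $c_1(\Phi(F))\in\frac{n}{r_0}H'+{\Bbb Q}f'$.

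To make $X'=M_{H_f}^{\alpha'}(r_0f+d_0\varrho_X)$ fine with $d_0/(r_0(f\cdot H))=s_0$, the paper first reduces to $\gcd(r_0(f\cdot H),d_0)=1$. Choose $\epsilon$ with $\beta\pm\epsilon H\in{\cal C}$. The class at $s_0$ is then a positive combination of the classes at two nearby slopes $\frac{d_0k\pm1}{r_0k(f\cdot H)}$ lying in ${\cal C}$, and $\psi$-ampleness is preserved under positive combinations. Then $\alpha$ is perturbed to a general $\alpha'$.

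Because $\beta$ itself lies in ${\cal C}$, and not merely $\beta+sH$ for small $s>0$, the argument of Corollary \ref{cor:Gieseker} shows the target Gieseker moduli consists of $\mu$-stable vector bundles. Lemma \ref{lem:ample} then gives ampleness of $\theta_{v'}(\Phi(F))\otimes\theta_{v'}(\Phi(\alpha))^{k}$ for $k\gg0$, since its first Chern class becomes a positive multiple of $H'+Nf'$. This is where $\beta\in{\cal C}$ is really used, not in absorbing the $H'$-component into $mf'$.
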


\begin{proof}
We take $\epsilon>0$ such that 
$\beta \pm \epsilon H \in {\cal C}$. 
We shall prove the claim by modifying the proof of Corollary \ref{cor:Gieseker}.
We set $s_0=\frac{d_0}{r_0 (f \cdot H)}$ ($r_0 \in {\Bbb Z}_{>0}$, $d_0 \in {\Bbb Z}$).
We take an integer $k \in \frac{1}{(f \cdot H)}{\Bbb Z}$ such that
$$
s_0-\epsilon <\frac{d_0 k-1}{r_0 k (f \cdot H)}
<\frac{d_0 }{r_0 (f \cdot H)}<\frac{d_0 k+1}{r_0 k (f \cdot H)}<
s_0+\epsilon.
$$
If the claim holds for $\lambda=\frac{d_0 k-1}{r_0 k (f \cdot H)}, \frac{d_0 k+1}{r_0 k (f \cdot H)}$,
then the claim also holds for $\lambda=\frac{d_0}{r_0 (f \cdot H)}$.
\begin{NB}
$s(1,\eta_1,b_1)+t(1,\eta_2,b_2) \sim
(1,\frac{s \eta_1+t \eta_2}{s+t},\frac{s b_1+t b_2}{s+t})$.
\end{NB}
Hence we shall prove the claim under the assumption $\gcd(r_0 (f \cdot H),d_0)=1$.
We may also replace $\alpha$ by any general $\alpha' \in B$ with respect to
$r_0 f+d_0 \varrho_X$, where $B$ is a small neighborhood of $\alpha$.
\begin{NB}
We take $\alpha+\pm \eta$ from opposite chambers.
If the claim holds for $\alpha+\pm \eta$, then the claim holds for $\alpha$. 
\end{NB}
We set $X'=M_{H_f}^{\alpha'}(r_0 f+d_0 \varrho_X)$.
Then 
$X'$ is a fine moduli space and we have a Fourier-Mukai transform
$\Phi_{X \to X'}^{{\cal P}^{\vee}[1]}:{\bf D}(X) \to {\bf D}(X')$ which induces an isomorphism
${\cal M}_{(f,H)}^{s_0 H+\alpha'}(r+\xi+a\varrho_X) \cong 
{\cal M}_{H'_{f'}}(r'+\xi'+a' \varrho_{X'})^{ss}$.

We set $F':=\Phi_{X \to X'}^{{\cal P}^{\vee}[1]}(F)$.
By \cite[Lem. 3.2.1]{PerverseII}, 
$\rk (F')=0$ and
$c_1(F')$
is $\pi$-ample. 
By Corollary \ref{cor:Gieseker} and Lemma \ref{lem:ample}, 
$\theta_{v'}(F')$ is $\psi$-ample, where
$v'=r'+\xi'+a' \varrho_{X'}$. Hence the claim holds.
\end{proof}

\begin{NB}
For the moduli space $M_{H_f}(v)$, 
$-(1,\eta,b)+k(0,H,*)$ is ample if $k \gg 0$.
If all fibers of $\pi$ are irreducible, then
$\theta_{\bf e}(F)$ is ample if $(\eta \cdot f) \ll 0$.
\end{NB}

\begin{lem}[{\cite[Lem. 5.8]{Y:elliptic}}]\label{lem:ample}
Assume that $M_{H_f}(v)$ consists of $\mu$-stable vector bundles.
Then $\theta_v (F)$ is ample, where $F \in K(X)_v$ satisfies
$\rk F=0$ and
$c_1(F) \in {\Bbb Z}_{>0} H_f$. 
\end{lem}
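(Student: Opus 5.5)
This statement is quoted from \cite[Lem. 5.8]{Y:elliptic}, so the plan is to recall the argument there and check that it applies to the present $M_{H_f}(v)$. The idea is to represent $\theta_v(F)$ by determinant line bundles of restrictions of the universal family to general curves in $|mH_f|$. The key fact is that $\mu$-stable vector bundles restrict to semistable bundles on such curves (Mehta--Ramanathan), so these restriction line bundles become generalized theta divisors on moduli of bundles on curves, which are ample.

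\textbf{Reduction.} Since $\rk F=0$ and $c_1(F)=mH_f$ with $m>0$, replacing $F$ by a multiple we may write $F=\sum \pm[{\cal O}_{C}(\cdot)]$ with $C\in|mH_f|$ smooth and irreducible, and the remaining correction a combination of point classes. The condition $F\in K(X)_v$ forces the degree on $C$ to be such that $\chi(F,E)=0$ for $E$ with $v(E)=v$. Then $\theta_v(F)$ is $\det p_!({\cal E}_{|C}\otimes L^\vee)$ for a suitable class $L$ on $C$. This is the pull-back of the theta line bundle on the moduli $U_C(r,\deg)$ of semistable bundles on $C$, along the restriction map $E\mapsto E_{|C}$.

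\textbf{Semiampleness and finiteness.} First, by Mehta--Ramanathan and boundedness of $M_{H_f}(v)$, for $m\gg0$ and $C$ general, $E_{|C}$ is semistable for every $E$ in the moduli. (The $\mu$-stability with respect to $H_f$ is used here; it is uniform because the family is bounded.) Hence $\theta_v(F)$ is the pull-back of an ample line bundle from the coarse space $U_C$, so a power is base point free, as in Faltings's construction. Second, the induced morphism $M_{H_f}(v)\to \prod_C U_C$, taken over finitely many general $C$, is finite (indeed injective on points) for $m\gg0$. This would follow from showing that two $\mu$-stable bundles $E,E'$ with $E_{|C}\cong E'_{|C}$ for general $C\in|mH_f|$ are isomorphic. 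To see this, note that $\Hom(E,E')\to\Hom(E_{|C},E'_{|C})$ is surjective once $H^1(E^\vee\otimes E'(-C))=0$, which holds for $m\gg0$ uniformly by Serre vanishing and boundedness. Then an isomorphism on $C$ lifts to a nonzero map $E\to E'$, and this map is an isomorphism by $\mu$-stability of bundles with the same invariants. Finiteness plus pull-back of an ample bundle gives ampleness.

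\textbf{Main obstacle.} The hard part is the uniformity in $m$: the Mehta--Ramanathan restriction and the lifting vanishing must hold simultaneously for all points of the moduli. There is a further subtlety that ampleness requires $F$ to be an arbitrary positive multiple of $H_f$, not merely a large one. I would handle the uniformity via effective restriction theorems (Bogomolov/Langer) applied to the bounded family. For arbitrary positive multiples, I would note that the $\theta_v(F)$ for $c_1(F)=mH_f$ are linear in $m$ up to the $K(X)_v$ normalization, so ampleness for one $m$ gives it for all positive multiples.
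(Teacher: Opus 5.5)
Your proposal works, but it takes a genuinely different route from the paper. The paper gives no new argument here. It only observes that the proof of \cite[Lem. 5.8]{Y:elliptic} never uses $\gcd(r,(\xi\cdot f))=1$, so that lemma applies verbatim.

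You instead give a self-contained proof, which is essentially the Li--Le Potier argument for the Donaldson--Uhlenbeck line bundle in the case where no boundary points occur. You realize $\theta_v(F)$ as the pull-back of a generalized theta bundle under restriction to a curve $C\in|mH_f|$. You then prove that the restriction map is injective by lifting an isomorphism $E_{|C}\cong E'_{|C}$ to a map $E\to E'$. Your route is independent of the elliptic fibration and of any coprimality hypothesis, so it works on any surface once the moduli space consists of $\mu$-stable bundles. It costs an effective restriction theorem and a sign check, which the citation avoids. The sign does come out right: numerically $F^\vee=-[\iota_*G]$ for a line bundle $G$ on $C$, where $\iota\colon C\hookrightarrow X$, and $\chi(E_{|C}\otimes G)=0$ because $F\in K(X)_v$. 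Hence $\theta_v(F)=\det p_!({\cal E}_{|C}\otimes G)^{-1}={\cal O}(\Theta_G)$, which is the ample direction.

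The one step you must justify differently from how you first state it is the uniform restriction.
\begin{itemize}
\item Mehta--Ramanathan plus boundedness gives, for each $E$, a dense open set of good curves, uniformly in $m$. It does not give a single $C$ on which every $E\in M_{H_f}(v)$ restricts semistably. So it does not by itself produce a morphism $M_{H_f}(v)\to U_C$.
\item The tool you name in your last paragraph does give this. Bogomolov's effective restriction theorem says a $\mu$-stable vector bundle restricts to a \emph{stable} bundle on every smooth $C\in|mH_f|$ once $2m$ exceeds a bound depending only on $r$ and the discriminant. That bound is uniform on $M_{H_f}(v)$.
\item With it, one smooth $C$ suffices, and no product over several curves is needed.
\item Stability, not just semistability, is what matters. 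It makes $\phi_C(E)=\phi_C(E')$ mean $E_{|C}\cong E'_{|C}$ rather than mere S-equivalence, and your lifting step needs an isomorphism.
\item The vanishing $H^1(E^\vee\otimes E'(-C))=0$ follows from Serre duality together with uniform Serre vanishing for the bounded family $E'^\vee\otimes E\otimes K_X$.
\item $M_{H_f}(v)$ is projective, since semistable equals stable here. So the injective morphism $\phi_C$ is finite, and $\theta_v(F)$ is ample.
\end{itemize}
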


\begin{proof}
For the proof of \cite[Lem. 5.8]{Y:elliptic}, we do not use 
the assumption $\gcd(r,(\xi \cdot f))=1$.
So the claim holds.
\end{proof}

\section{Examples}\label{sect:example}

\subsection{Group action on the parameter space of stability.}

Let $\pi:X \to C$ be an elliptic surface with a section $\sigma$.
Assume that $e:=\chi({\cal O}_X)$ is even.
Then $(\sigma^2)=-e$ is even and $H:=\sigma-\frac{(\sigma^2)}{2}f \in \NS(X)$
satisfies $(H^2)=0$.
$\NS(X)$ is an even lattice with an orthogonal decomposition
$$
\NS(X)=({\Bbb Z}H+{\Bbb Z}f)+({\Bbb Z}H+{\Bbb Z}f)^\perp.
$$
In this case 
$$
v(E) \in H^*(X,{\Bbb Z}),\; E \in {\bf D}(X).
$$
We set $X_1:=M_H(0,f,0)$. Then $X_1 \cong X$ is a compactification of the relative Jacobian. 
Let ${\cal P}$ be the universal family on $X \times X$.
For the equivalence $\Phi_{X \to X}^{{\cal P}^{\vee}[1]}$, we may assume that
$\beta=\beta'=0$ and $H'=H$.
Then we see that 
\begin{equation}
\overline{\Phi}_{X \to X}^{{\cal P}^{\vee}[1]}(r+dH+kf+D+a \varrho_X)=
d-rH+af-D-k \varrho_X 
\end{equation}
(cf. \cite[(6.21)]{BBH}).
\begin{rem}
$\Phi$ in \cite[6.2.3]{BBH} corresponds to $\otimes {\cal L} \circ \Phi_{X \to X}^{{\cal P}^{\vee}[1]}$,
where $c_1({\cal L})=-\frac{e}{2}f$.
\end{rem}
Hence
$\overline{\Phi}_{X \to X}^{{\cal P}^{\vee}[1]}(e^{sH})=se^{-\frac{1}{s}H}$.
For ${\cal L} \in \pi^*(\Pic(C))$, we have an autoequivalence $\otimes {\cal L} \in {\bf Auteq}({\bf D}(X))$.
Thus $\Pic(C)$ is regarded as a subgroup of ${\bf Auteq}({\bf D}(X))$. 
We set
$$
G:=\langle \Phi_{X \to X}^{{\cal P}^{\vee}[1]},\;\otimes {\cal O}_X(H),\;[1],\;
\Pic(C) \rangle \subset {\bf Auteq}({\bf D}(X)).
$$ 
Then we have a homomorphism
\begin{equation}
\varphi:G \to \SL(2,{\Bbb Z}) \times \SL(2,{\Bbb Z})
\end{equation}
such that
\begin{equation}
\begin{matrix}
\varphi(\Phi_{X \to X}^{{\cal P}^{\vee}[1]})=
\begin{pmatrix}
0 & 1 \\
-1 & 0
\end{pmatrix} \times 
\begin{pmatrix}
1 & 0 \\
0 & 1
\end{pmatrix},\;
&
\varphi(\otimes {\cal O}_X(H))=
\begin{pmatrix}
1 & 0 \\
1 & 1
\end{pmatrix} \times 
\begin{pmatrix}
1 & 0 \\
0 & 1
\end{pmatrix},\;\\
\varphi([1])=
\begin{pmatrix}
-1 & 0 \\
0 & -1
\end{pmatrix} \times 
\begin{pmatrix}
1 & 0 \\
0 & 1
\end{pmatrix},\;&
\varphi(\otimes {\cal L})=
\begin{pmatrix}
1 & 0 \\
0 & 1
\end{pmatrix} \times 
\begin{pmatrix}
1 & k \\
0 & 1
\end{pmatrix}
\end{matrix}
\end{equation}
where $c_1({\cal L})=kf$ and the group action $\cdot$ on
$\SL(2,{\Bbb Z}) \times \SL(2,{\Bbb Z})$ is $(A_1,B_1) \cdot (A_2,B_2):=(A_1 A_2,B_2 B_1)$.
\begin{NB}
$c_1(K_X)=(2g(C)-2+e)f$.
\end{NB}
We set
$$
\Phi_{X \to X}^{{\cal P}_1^{\vee}[2]}
:=(\Phi_{X \to X}^{{\cal P}^{\vee}[1]})^{-1} \circ \otimes {\cal O}_X(H) \circ 
\Phi_{X \to X}^{{\cal P}^{\vee}[1]}.
$$
Then 
${\cal P}_1 \in \Coh(X \times X)$ such that
${\cal P}_{1| \{ x \} \times X}^{\vee}[1] \in M_{H_f}(f-\varrho_X)$ and
${\cal P}_{1| X \times \{ x \}} \in M_{H_f}(f+\varrho_X)$.
\begin{NB}
$\Phi_{X \to X}^{{\cal P}_1^{\vee}[2]}({\Bbb C}_x)=(\Phi_{X \to X}^{{\cal P}^{\vee}[1]})^{-1}
(L)$,
where $v(L)=f+\varrho_X$.
Since $(\Phi_{X \to X}^{{\cal P}^{\vee}[1]})^{-1} (L)=\Phi_{X \to X}^{{\cal P}[1]}(L)(-K_X)$
is $L'[1]$ with $v(L')=f-\varrho_X$,
${\cal P}_{1|\{x \} \times X}^{\vee}[2]=L'[1]$, which implies
${\cal P}_1^{\vee}[1] \in \Coh(X \times X)$.
\end{NB}
We see that
\begin{equation}
\overline{\Phi}_{X \to X}^{{\cal P}_1^{\vee}[2]}(r+dH+kf+D+a \varrho_X)=r-d+dH+(k-a)f+D+a \varrho_X
\end{equation}
and hence
$$
\varphi(\Phi_{X \to X}^{{\cal P}_1^{\vee}[2]})=
\begin{pmatrix}
1 & -1 \\
0 & 1
\end{pmatrix} \times 
\begin{pmatrix}
1 & 0 \\
0 & 1
\end{pmatrix}.
$$
We set
$$
H^*(X,{\Bbb Z})_H:={\Bbb Z}+{\Bbb Z}H+{\Bbb Z}f+{\Bbb Z}\varrho_X.
$$
Then we have an orthogonal decomposition
$$
H^*(X,{\Bbb Z})= H^*(X,{\Bbb Z})_H \oplus H^*(X,{\Bbb Z})_H^\perp.
$$
Let $M_2({\Bbb Z})$ be the set of 2 by 2 matrices of integer coefficient.
Then we have an identification
\begin{equation}
\begin{matrix}
\nu:& H^*(X,{\Bbb Z})_H & \to & M_2({\Bbb Z})\\
& r+dH+kf+a \varrho_X & 
\mapsto &
\begin{pmatrix}
r & k\\
d & a
\end{pmatrix}.
\end{matrix}
\end{equation}
We define the action of $(A,B) \in \SL(2,{\Bbb Z}) \times \SL(2,{\Bbb Z})$ on $X \in M_2({\Bbb Z})$ by
$(A,B) \cdot X=AXB$.
Then
\begin{equation}
\varphi(\Phi) \cdot \nu(v)=\nu(\overline{\Phi}(v)),\; \Phi \in G.
\end{equation}
\begin{NB}
$\Phi_{X \to X}^{{\cal P}^{\vee}[1]}$ corresponds to 
$\begin{pmatrix}
0 & 1 \\
-1 & 0
\end{pmatrix}
\in SL(2,{\Bbb Z})$ and
$\begin{pmatrix}
0 & 1 \\
-1 & 0
\end{pmatrix}
\begin{pmatrix}
r & k\\
d & a
\end{pmatrix}
=
\begin{pmatrix}
d & a\\
-r & -k
\end{pmatrix}
$.

$\otimes {\cal O}_X(H)$ corresponds to
$\begin{pmatrix}
1 & 0 \\
1 & 1
\end{pmatrix} \in \SL(2,{\Bbb Z})$ and
$\begin{pmatrix}
1 & 0 \\
1 & 1
\end{pmatrix}
\begin{pmatrix}
r & k\\
d & a
\end{pmatrix}
=
\begin{pmatrix}
r & k \\
r+d & k+a
\end{pmatrix}
$.
For an equivalence whose matrix is
$
\begin{pmatrix}
-d_1 & r_1\\
p_2 & q_2
\end{pmatrix}
$,
$(0,r_1 f,d_1) \mapsto -\varrho_X$.

\begin{rem}
Assume that $(r_1 a-d_1 k,l)=1$.
Then $(r_1+nl p)a-(d_1+nl q)k,l)=1$.
Moreover $\frac{d_1+nl q}{r_1+nl p} \to \frac{q}{p}$ ($n \to \infty$). 
By a suitable FM, $(f,G,H)$-semistability is equivalent to $\mu$-stability, if
$\gcd(r,d,k,a)=1$.
\end{rem}

For a line bundle $L$ with $c_1(L)=f$,
$\otimes L$ corresponds to right multiplication 
$\begin{pmatrix}
r & k\\
d & a
\end{pmatrix}
\mapsto
\begin{pmatrix}
r & k\\
d & a
\end{pmatrix}
\begin{pmatrix}
1 & 1 \\
0 & 1
\end{pmatrix}
$.



\end{NB}

For any Mukai vector $v$, there is a relative Fourier-Mukai transform $\Phi$ such that
$$
\overline{\Phi}(v)=r+kf+D-a \varrho_X,\; D \in N_H.
$$
So we assume that $v=r+kf+D-a \varrho_X$.
For an open interval $I (\subset {\Bbb R})$,
we set
$$
{\cal G}(v)_I:=\{(s,\alpha) \in {\cal G}(v) \mid s \in I \}.
$$
We define a family of intervals $\{I_n \}_{n \in {\Bbb Z}}$
by
\begin{equation}
\begin{split}
I_n=& (-\tfrac{1}{rn},-\tfrac{1}{r(n+1)}),\quad n \ne -1,0,\\ 
I_0=& (-\infty,-\tfrac{1}{r}),\\
I_{-1}=& (\tfrac{1}{r},\infty).
\end{split}
\end{equation} 
We set
\begin{equation}\label{eq:auto}
\Phi_{X \to X}^{{\cal R}^{\vee}[2]}:=(\Phi_{X \to X}^{{\cal P}_1^{\vee}[2]})^r \circ {\cal L}_a \in G
\end{equation}
where $c_1({\cal L}_a)=-af$.
Then we see that 
${\cal R} \in \Coh(X \times X)$ and
$$
\varphi(\Phi_{X \to X}^{{\cal R}^{\vee}[2]})=
\begin{pmatrix}
1 & -r \\
0 & 1
\end{pmatrix} \times 
\begin{pmatrix}
1 & -a \\
0 & 1
\end{pmatrix}.
$$ 
Hence
$\overline{\Phi}_{X \to X}^{{\cal R}^{\vee}[2]}(r+kf-a \varrho_X)=r+kf-a \varrho_X$ and
$\overline\Phi_{X \to X}^{{\cal R}^{\vee}[2]}(e^{sH})=(1-rs)e^{\frac{s}{1-rs}H}$.
\begin{NB}
$v({\cal R}_{|\{ x \} \times X}^{\vee}[2])=(0,-rf,1)$ and
$v({\cal R}_{|X \times \{ x \}})=(0,rf,1)$.
\end{NB}
Thus we have an action of 
$\Phi_{X \to X}^{{\cal R}^{\vee}[2]}$ on ${\cal G}(v)$ by
$(s,\alpha) \mapsto (\tfrac{s}{1-rs},\alpha)$. 
\begin{NB}
$\Phi_{X \to X}^{{\cal R}^{\vee}[2]}(-\infty)=-\frac{1}{r},\;\Phi_{X \to X}^{{\cal R}^{\vee}[2]}(-\frac{1}{r})=-\frac{1}{2r}, 
\Phi_{X \to X}^{{\cal R}^{\vee}[2]}(-\frac{1}{2r})=-\frac{1}{3r}$,...
\end{NB}
In particular
\begin{equation}
\Phi_{X \to X}^{{\cal R}^{\vee}[2]}({\cal G}(v)_{I_n})={\cal G}(v)_{I_{n+1}}.
\end{equation}
In order to study walls for $v$, it is sufficient to study walls in ${\cal G}(v)_{I_0}$.

\begin{NB}
The stabilizer for the case of rank 0:
For $v=rH+kf+D+a \varrho_X$,
an autoequivalence $\otimes {\cal O}_X(-rH+kf)$ corresponding to 
$
\begin{pmatrix}
1 & 0 \\
-r & 1
\end{pmatrix} \times 
\begin{pmatrix}
1 & k \\
0 & 1
\end{pmatrix}
$
fixes $v$. Under this action, $s$ is replaced by $s-r$.
\end{NB}

\subsection{Examples for an elliptic K3 surface. }

Let $\pi:X \to {\Bbb P}^1$ be an elliptic K3 surface such that
$\NS(X)={\Bbb Z}\sigma+{\Bbb Z}f$, where 
$\sigma$ is a section and $f$ a fiber of $\pi$.
We set $H:=\sigma+f$.

\begin{lem}
Let $W$ be a wall defined by $v_1$.
We set $L:=({\Bbb Q}v+{\Bbb Q}v_1) \cap H^*(X,{\Bbb Z})$.
Then $L$ contains an isotropic Mukai vector.
\end{lem}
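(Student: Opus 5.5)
The plan is to make the wall explicit in the setting $\NS(X)={\Bbb Z}\sigma+{\Bbb Z}f$, $H=\sigma+f$, and then exhibit an isotropic vector in $L$ by hand.

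First, the shape of $H^*(X,{\Bbb Z})_{\alg}$. Since $(\sigma^2)=-2$ and $(\sigma\cdot f)=1$, we get $(H^2)=0$ and $(H\cdot f)=1$. Hence $N_H=0$, so $D(v)=D(v_1)=0$ and $\alpha=0$ for every point of ${\cal G}(v)$. Consequently every Mukai vector can be written as $r+dH+kf+a\varrho_X$ with integer coordinates, and the Mukai pairing is the hyperbolic form
\[
\langle v,v\rangle = 2dk-2ra .
\]
Under the identification $\nu$ of the previous subsection, this says $\langle v^2\rangle=-2\det\nu(v)$. In this language, a vector $u$ is isotropic if and only if $\nu(u)$ is a singular integral $2\times 2$ matrix.

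Next, the conditions on $v_1$. For $v_1\in U(v)$ we have $d_1=r_1 d/r$, so $r d_1-r_1 d=0$. Put $w:=rv_1-r_1v\in L$. Its $1$-component and its $H$-component both vanish, so $w=xf+y\varrho_X$ with $x=rk_1-r_1k$ and $y=ra_1-r_1a$. Then $\langle w^2\rangle=(x f)^2=0$, and $w$ is integral. Two cases remain. If $w\neq 0$, then $w$ is an isotropic vector of $L$ and we are done; the condition $rv_1-r_1v\ne0$ in Definition~\ref{defn:wall}(2)(a) says exactly that $w\neq 0$. If $r_1=0$, then $v_1=k_1f+a_1\varrho_X$ itself has the form $xf+y\varrho_X$, so it is isotropic. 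This includes the case $v_1\in{\cal U}(v)$ with $\langle v_1^2\rangle=-2$, which cannot occur because $N_H=0$ forces every such $u$ to be isotropic.

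In short, $L$ always contains the nonzero isotropic vector $rv_1-r_1v$ (or $v_1$ itself when $r_1=0$), and it is integral after clearing denominators.

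The only step needing care is checking that $rv_1-r_1v$ really has zero $H$-component, i.e.\ that walls are only taken among subobjects of equal $f$-slope, which is built into $U(v)$. This is not a real obstacle. Beyond that, the argument is essentially bookkeeping once $N_H=0$ is noted.
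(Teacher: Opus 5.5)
Your argument is correct and is the same as the paper's: the paper's proof simply observes that $(\rk v_1)v-(\rk v)v_1\in L$ is isotropic, which is your $w$ up to sign, and your checks (vanishing rank and $H$-components, $N_H=0$, $w\neq 0$ by condition (a) of $U(v)$) fill in details the paper leaves implicit. Your separate $r_1=0$ case is redundant, since then $w=rv_1$ is already covered by the general argument.
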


\begin{proof}
$u:=(\rk v_1) v-(\rk v) v_1 \in L$ is an isotropic Mukai vector.
\end{proof}

We assume that $v=r+kf+a \varrho_X$.
By using Proposition \ref{prop:sswall},
we shall classify totally semistable walls. 

\begin{lem}
We set $u:=x+yf+z \varrho_X \in L$ $(x,y,z \in {\Bbb Z})$. 
\begin{enumerate}
\item[(1)]
Assume that $\langle u^2 \rangle=-2$.
Then 
$u=1+yf+\varrho_X, -1+yf- \varrho_X$.
\item[(2)]
Assume that $u$ is isotropic.
Then $x=0$ or $z=0$.
\end{enumerate}
\end{lem}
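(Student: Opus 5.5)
The plan is a direct computation of the Mukai pairing on the restricted lattice. We have $u=x+yf+z\varrho_X$, so the $\NS$-part of $u$ is $yf$. Since $(f^2)=0$, the pairing defined at the start of the paper gives
$$
\langle u^2 \rangle=(yf \cdot yf)-2xz=-2xz.
$$
Both claims will be read off from this single identity.

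For (2), isotropy means $\langle u^2 \rangle=-2xz=0$, and therefore $x=0$ or $z=0$.

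For (1), the condition $\langle u^2 \rangle=-2$ gives $xz=1$. Since $x,z\in{\Bbb Z}$, this forces $(x,z)=(1,1)$ or $(-1,-1)$. This yields exactly $u=1+yf+\varrho_X$ or $u=-1+yf-\varrho_X$, with $y$ arbitrary.

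The only point needing care is to confirm that every element of $L$ really has the form $x+yf+z\varrho_X$ with integer coefficients, i.e. that $L\subset{\Bbb Z}\oplus{\Bbb Z}f\oplus{\Bbb Z}\varrho_X$. This holds because $v=r+kf+a\varrho_X$ and a wall-defining $v_1$ span a rational plane inside ${\Bbb Q}+{\Bbb Q}f+{\Bbb Q}\varrho_X$. Indeed, $v_1$ lies in ${\Bbb Q}+{\Bbb Q}H+{\Bbb Q}f+{\Bbb Q}\varrho_X$ because $N_H=0$ here, and $d_1=r_1 d/r=0$ since $d=0$, so its $H$-component vanishes. Intersecting this plane with $H^*(X,{\Bbb Z})$ keeps the $H$-coefficient zero. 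The statement presupposes this form anyway, so there is no real obstacle: the lemma reduces to the one-line computation of $\langle u^2\rangle$.
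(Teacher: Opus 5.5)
Your proposal is correct and is the intended argument. The paper states this lemma without a written proof; it follows from the identity $\langle u^2\rangle = y^2(f^2)-2xz=-2xz$, which gives $xz=1$ (so $(x,z)=\pm(1,1)$) in case (1) and $xz=0$ in case (2). Your side remark that $L\subset {\Bbb Z}+{\Bbb Z}f+{\Bbb Z}\varrho_X$, because $N_H=0$ and $d=d_1=0$ here, is a correct check of the form the statement already assumes.
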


If a $(-2)$-vector $u=1+yf+\varrho_X$ defines a totally semistable wall, then
$\langle u,v \rangle<0$. Hence $r+a \geq 1$.

If an isotropic vector $u=x+yf+z \varrho_X$ defines a totally semistable wall, then 
$\langle u,v \rangle=1$, and hence
$-rz=1$ or $-xa=1$.
In particular $r=1$ or $a=-1$.

\begin{prop}\label{prop:bir-FM}
For a Mukai vector $v=r+\xi+a \varrho_X$, we set
$p:=\gcd(r,(\xi \cdot f))$.
If $p \geq 2$ and $\langle v^2 \rangle \geq 2p^2$, then
there is no totally semistable walls.
In particular
$\Phi_{X \to X}^{{\cal P}^{\vee}}(E)$ is a stable sheaf up to shift for a general
$E \in M_{(f,H)}^G (v)$. 
\end{prop}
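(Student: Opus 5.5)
The plan is to reduce to a normal form for $v$ using the group $G$ of relative Fourier-Mukai transforms from the previous subsection, and then apply the classification of totally semistable walls in Proposition \ref{prop:sswall} together with the two lemmas just above.

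\textbf{Step 1: reduction to $v=r+kf+a\varrho_X$ with $r=p$.} Since $\NS(X)={\Bbb Z}\sigma+{\Bbb Z}f$, we have $N_H=0$, so $H^*(X,{\Bbb Z})=H^*(X,{\Bbb Z})_H$. The identification $\nu$ turns $v$ into the matrix $\begin{pmatrix} r & k\\ d & a\end{pmatrix}$, and $G$ acts through $\varphi$ by $X\mapsto AXB$.
\begin{enumerate}
\item[(a)] Multiplication on the left by $A\in\SL(2,{\Bbb Z})$ acts on the first column $(r,d)^t$ and preserves $\gcd(r,d)=p$.
\item[(b)] Multiplication on the right by the image of $\Pic(C)$ is upper unitriangular, so it fixes the first column.
\end{enumerate}
Hence $p$ is an invariant of the $G$-orbit. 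Choosing $A$ that sends $(r,d)^t$ to $(p,0)^t$, we reach $v=p+kf+a\varrho_X$ with $r=p\ge 2$. Theorem \ref{thm:FMpreserve} identifies the chamber structures of ${\cal G}(v)$ and ${\cal G}(\overline{\Phi}(v))$. Both $\langle v^2\rangle$ and $\langle v,u\rangle$ are preserved by Proposition \ref{prop:isometry}. So it suffices to treat this normal form.

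\textbf{Step 2: locating the wall vectors.} In the normal form, any $v_1\in U(v)$ defining a wall has $d_1=r_1 d/r=0$. So $v_1$, and hence the lattice $L=({\Bbb Q}v+{\Bbb Q}v_1)\cap H^*(X,{\Bbb Z})$, lies in ${\Bbb Z}+{\Bbb Z}f+{\Bbb Z}\varrho_X$. Therefore any $u$ defining a totally semistable wall has the form $u=x+yf+z\varrho_X$, and the preceding lemma applies. Note that $\langle v^2\rangle=-2ra$.

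\textbf{Step 3: excluding both kinds of totally semistable wall.} By Proposition \ref{prop:sswall} there are two kinds to exclude.
\begin{enumerate}
\item[(i)] \emph{$(-2)$-vectors.} Here $u=\pm(1+yf+\varrho_X)$, and a totally semistable wall would need $\langle u,v\rangle<0$, i.e. $r+a\ge 1$. But $\langle v^2\rangle=-2ra\ge 2p^2=2r^2$ gives $a\le -r$, so $r+a\le 0$, a contradiction.
\item[(ii)] \emph{Isotropic vectors with $\langle u,v\rangle=1$.} Here $x=0$ or $z=0$, so $-rz=1$ or $-xa=1$.
\begin{itemize}
\item[$\cdot$] The first is impossible since $r=p\ge 2$.
\item[$\cdot$] The second forces $a=\pm1$, and $\langle v^2\rangle>0$ forces $a<0$, so $a=-1$. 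Then $\langle v^2\rangle=2r=2p<2p^2$, contradicting the hypothesis.
\end{itemize}
\end{enumerate}
Thus there are no totally semistable walls.

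\textbf{Step 4: the "in particular".} By Corollary \ref{cor:Gieseker} and Theorem \ref{thm:FMpreserve}, $\Phi_{X\to X}^{{\cal P}^\vee}$ carries $M^G_{(f,H)}(v)$ onto $M^{G'}_{(f',H')}(v')$ for a chamber of $v'$ up to shift. The target chamber is joined to a Gieseker chamber (Proposition \ref{prop:Gieseker-chamber}) by finitely many walls. None of these is totally semistable, and by Step 1 the same conclusion applies to $v'$. So each wall crossing is birational, and a general stable object stays stable across it. Hence a general $E$ is sent to a Gieseker stable sheaf up to shift.

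The main obstacle I expect is Step 4. There one must check that the absence of totally semistable walls really gives generic stability across every wall: the open loci of objects stable on both sides must be nonempty and compatible. Step 1 also needs care, namely that the normalization commutes with the wall/chamber identification.
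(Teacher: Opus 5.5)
Your proposal is correct and is essentially the paper's proof. After normalizing to $v=p+kf+b\varrho_X$, the hypothesis $\langle v^2\rangle=-2pb\geq 2p^2$ gives $b\leq -p$. This rules out a $(-2)$-vector $u=1+yf+\varrho_X$ with $\langle u,v\rangle<0$, which needs $r+a\geq 1$; the sign $-(1+yf+\varrho_X)$ that you also allow is excluded anyway, since $u\in U(v)$ has nonnegative rank. It also rules out an isotropic $u$ with $\langle u,v\rangle=1$, which needs $r=1$ or $a=-1$. Your check that $p$ is invariant under the group action, and your Step 4, go beyond what the paper writes: the paper just says ``we may assume'' for the normal form and treats the ``in particular'' as immediate from the absence of totally semistable walls.
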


\begin{proof}
We may assume that $v=p+kf+b\varrho_X$.
Then our assumption implies $p \geq 2$ and $b \leq -p$.
Hence there is no totally semistable wall.
\end{proof}

\begin{cor}
$\Phi_{X \to X}^{{\cal R}^{\vee}[2]}$ induces a infinite order birational automorphism
of $M_{H_f}(r+kf-a\varrho_X)$ if $r,a \geq 3$ and $a>r$.
\end{cor}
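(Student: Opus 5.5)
The idea is to realize $\Phi_{X \to X}^{{\cal R}^{\vee}[2]}$ as an isomorphism between moduli spaces attached to two different chambers of ${\cal G}(v)$, and then to compose it with the birational identification obtained by crossing walls. Here $v=r+kf-a\varrho_X$, so $\xi=kf$ and $(\xi \cdot f)=0$. Hence $p=\gcd(r,0)=r\geq 3$ and $\langle v^2 \rangle=2ra$. The hypothesis $a>r$ gives $\langle v^2 \rangle > 2r^2=2p^2$, so Proposition \ref{prop:bir-FM} applies: there are no totally semistable walls for $v$.

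\textbf{Step 1 (the map is an isomorphism onto another chamber).} By Proposition \ref{prop:Gieseker-chamber}, ${\cal M}_{H_f}(v)$ is ${\cal M}_{(f,H)}^G(v)$ for $(s,\alpha)$ in the unbounded chamber ${\cal C}_0 \subset {\cal G}(v)_{I_0}$ with $s \ll 0$. We have $\overline{\Phi}_{X \to X}^{{\cal R}^{\vee}[2]}(v)=v$, and the induced action on ${\cal G}(v)$ is $(s,\alpha)\mapsto (\tfrac{s}{1-rs},\alpha)$. Since $\Phi_{X \to X}^{{\cal R}^{\vee}[2]}$ is a composite of the generators of the group in subsection 6.1, Theorem \ref{thm:FMpreserve} can be applied to each generator in turn (tensoring by line bundles trivially preserves $(f,G,H)$-semistability), keeping track of the sign $\epsilon$ of the rank of the transformed $G$. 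This should give an isomorphism
\[
{\cal M}_{(f,H)}^{G}(v)^{ss} \cong {\cal M}_{(f,H)}^{G_1}(v)^{ss},
\]
where $G_1$ lies in the chamber ${\cal C}_1:=\Phi({\cal C}_0)$, contained in ${\cal G}(v)_{I_1}$ near $s=-1/r$. I would choose $G$ general, so that both sides are fine moduli spaces of stable objects.

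\textbf{Step 2 (birational identification across walls).} Connect ${\cal C}_1$ back to ${\cal C}_0$ by a path in ${\cal G}(v)$. By the local finiteness of walls (the lemma following Definition \ref{defn:wall}), this path crosses only finitely many walls. None of them is totally semistable, so at each wall a general member of one moduli space is stable on both sides. Since $X$ is K3, every ${\cal M}_{(f,H)}^{G}(v)$ with $G$ general is smooth of dimension $\langle v^2 \rangle+2$ (the $\Ext^2$ is one-dimensional and the trace part is unobstructed). Irreducibility then follows via Corollary \ref{cor:Gieseker}, which identifies each chamber with a Gieseker moduli space on a K3. Hence each wall crossing gives a birational map. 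Composing these with Step 1 yields a birational automorphism $\phi$ of $M_{H_f}(v)$.

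\textbf{Step 3 (infinite order) — the main obstacle.} Here I would use line bundles. By Proposition \ref{prop:rel-ample} and the commutative diagram \eqref{eq:comm}, the class $\theta_v(F_\beta)$ with $v(F_\beta)=-n(1+\beta+b\varrho_X)$ is relatively ample for a chamber containing $\beta$. Moreover, since the birational maps of Step 2 are isomorphisms in codimension one, they identify all the Picard groups with $\theta_v(K(X)_v)$. Under this identification, $\phi^m$ sends the ample-type class of ${\cal C}_0$ to that of $\Phi^m({\cal C}_0)\subset {\cal G}(v)_{I_m}$. These chambers are pairwise distinct, since the intervals $I_m$ are disjoint, and the corresponding classes are not proportional, since $s$ tends to $0$ as $m$ grows. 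Hence $\phi^m \ne \mathrm{id}$ for all $m\neq 0$.

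The delicate points are these. First, one must check that the chambers $\Phi^m({\cal C}_0)$ really give distinct nef cones rather than the same cone. This is where I expect $a>r$ (and not just $a \geq r$) to be needed, to rule out finitely many walls with a periodic pattern. Second, one must control the signs $\epsilon$ and the dualizations of Definition \ref{defn:fGH2} when $s$ crosses $d/r=0$ in Step 1. I would try first to show that $\theta_v$ is injective on the relevant rank-two part spanned by $1$, $H$, $f$, $\varrho_X$, and to compute explicitly that the images of the ${\cal C}_0$ ample class under $\phi^m$ are pairwise different.
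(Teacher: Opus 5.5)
Your argument is the route the paper intends (the corollary is stated without proof right after Proposition \ref{prop:bir-FM}): with $p=r$ and $\langle v^2\rangle=2ra\geq 2r^2$ there are no totally semistable walls, $\Phi_{X \to X}^{{\cal R}^{\vee}[2]}$ fixes $v$ and moves the Gieseker chamber $s\ll 0$ into ${\cal G}(v)_{I_1}$ via $s\mapsto \frac{s}{1-rs}$, and the finitely many intervening walls give the birational identification. Your $\theta_v$ argument in Step 3 supplies the infinite-order claim the paper leaves implicit. It only needs two facts: $\overline{\Phi}_{X \to X}^{{\cal R}^{\vee}[2]}$ acts on $v^\perp$ as a nontrivial unipotent, and $\theta_v$ is injective on a K3. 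For the first, take $w=1+sH+\frac{sk+a}{r}\varrho_X\in v^\perp$ with $s\neq 0$; then $\overline{\Phi}^m(w)$ has rank $1-mrs\neq 1$. So $a>r$ plays no role in that step; it enters only through Proposition \ref{prop:bir-FM}, and there only as $a\geq r$.

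One correction to Step 1: Theorem \ref{thm:FMpreserve} cannot be applied to the generator $\Phi_{X \to X}^{{\cal P}^{\vee}[1]}$. Since $\langle v,f\rangle=(\xi\cdot f)=0$, the image has rank $0$. Apply it instead directly to $\Phi_{X \to X}^{{\cal R}^{\vee}[2]}$, or $r$ times to $\Phi_{X \to X}^{{\cal P}_1^{\vee}[2]}$ as in Proposition \ref{prop:Lambda}. Each kernel is a sheaf with fibres of Mukai vector $rf+\varrho_X$ (resp.\ $f+\varrho_X$), so $\langle v,v_0\rangle=-r\neq 0$ and $k=2$.
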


\begin{NB}
\begin{rem}
Assume that $a>r \geq 3$. Then 
for a general $E \in M_{H_f}(r+kf-a \varrho_X)$,
$E_{|\pi^{-1}(c)}$ is a semistable sheaf for any $c \in C$.
\end{rem}
\end{NB}

\begin{NB}
For a description of the wall crossing behavior under totally semistable walls,
we introduce a spherical twist appearing in our setting.
Let $E_0$ be a rigid stable sheaf with $v(E_0) \in L$.
We set 
\begin{equation}
{\cal E}:=\ker(p_1^*(E_0^{\vee}) \otimes p_2^*(E_0) \to {\cal O}_\Delta).
\end{equation}
Then the spherical twist is an auotequivalence defined by
$$
R_{E_0}(E):={\bf R}p_{2*}(p_1^*(E) \otimes {\cal E})[1],\; E \in {\bf D}(X).
$$
If $E \in \Coh(X)$ satisfies $\Ext^2(E_0,E)=0$,
then we have an exact sequence 
$$
0 \to \Hom(E_0,E) \otimes E_0 \to E \to R_{E_0}(E) \to \Ext^1(E_0,E) \otimes E_0 \to 0.
$$
\end{NB}
   
\subsubsection{Walls for $v=2+f-\varrho_X$.}
We set $v=2+f-\varrho_X$.
Then $\langle v^2 \rangle=4$.
For a wall $W$, we have a decomposition
$$
v=v_1+v_2, v_1, v_2 \in U(v).
$$
Then one of the following holds:
\begin{enumerate}
\item
$v_1=1+k_1 f-2\varrho_X, v_2=1+(1-k_1)f+\varrho_X$. In this case $s=-\frac{3}{2k_1-1}$.
\item
$v_1=1+k_1 f-\varrho_X, v_2=1+(1-k_1)f$. In this case
$s=-\frac{1}{2k_1-1}$.
\item
$v_1=k_1 f,-\varrho_X, v_2=2+(1-k_1)f$. In this case
$s=-\frac{1}{k_1}$.
\item
$v_1=(2m+1) f-3\varrho_X, v_2=2(1-m f+\varrho_X)$. In this case $s=-\frac{3}{2m+1}$.
\end{enumerate}

We note that $s=-\frac{1}{2}$ is a wall.
Indeed we have a decomposition $v=(2f-\varrho_X)+(2-f)$.
We shall study walls and chambers in $s \leq -\frac{1}{2}$.
There are threee walls $s=-3, -1,-\frac{3}{5}$ and hence 4 chambers
\begin{equation}
J_1:=(-\infty, -3),\;
J_2:=(-3,-1),\;
J_3:=(-1,-\tfrac{3}{5}),\;
J_4:=(-\tfrac{3}{5},-\tfrac{1}{2}).
\end{equation}
All walls are totally semistable.

For the analysis of wall- crossing, we also need to 
walls for $v=1+lf-2 \varrho_X$.
It is easy to see that walls are defined by 
$v_1=k_1 f+a_1\varrho_X$, where $a_1=-3,-2,-1$ and $k_1 \geq 1$.
$v_1$ defines a wall $W_{\tfrac{a_1}{k_1}}$: $s=\tfrac{a_1}{k_1}$.

For $s<-\tfrac{1}{2}$, we have the following walls: 

\begin{center}
\begin{tabular}{ccc}
(i) $s=-3$ with $v_1=f-3\varrho_X$. & (ii) $s=-2$ with $v_1=f-2\varrho_X$.  
& (iii) $s=-\frac{3}{2}$ with $v_1=2f-3\varrho_X$. \\
(iv) $s=-1$ with $v_1=f-\varrho_X$. &
(v) $s=-\frac{3}{4}$ with $v_1=4f-3\varrho_X$. &  (vi) $s=-\frac{2}{3}$ with $v_1=3f-2\varrho_X$.\\
 (vii) $s=-\frac{3}{5}$ with $v_1=5f-3\varrho_X$ &  &  \\ 
\end{tabular}
\end{center}
and
$s=-1$ is the unique totally semistable wall in $(-\infty,-\tfrac{1}{2})$.
Hence we have the following chambers for $v=1+lf-2\varrho_X$.
\begin{equation}
J_1':=(-\infty,-3), J_2':=(-3,-2), J_3':=(-2,-\tfrac{3}{2}), J_4':=(-\tfrac{3}{2},-1),
J_5':=(-1,-\tfrac{3}{4}),...
\end{equation}

For simplicity, we set
$$
M_J(v)=M_{(f,H)}^{{\cal O}_X(sH)}(v),\; s \in J 
$$
where $J$ is a chamber for $v$.

(1) 
The wall $W_{-3}$: $s=-3$.
We note that
$M_{J_1}(v)=M_{H_f}(v)$.
$W_{-3}$ is a totally semistable wall defined by $v({\cal O}_X)$. 
By \cite{BM:2}, $R_{{\cal O}_X}$ induces isomorphisms
\begin{equation}\label{eq:reflection1}
\begin{split}
R_{{\cal O}_X}:& M_{J_1}(2+f-\varrho_X) \stackrel{\sim}{\longrightarrow} M_{J_2'}(1+f-2\varrho_X),\\
R_{{\cal O}_X}:& M_{J_1'}(1+f-2\varrho_X) \stackrel{\sim}{\longrightarrow} M_{J_2}(2+f-\varrho_X).
\end{split}
\end{equation}
We note that
$M_{J_1}(1+f-2\varrho_X)=M_{H_f}(1+f-2\varrho_X)$.
In order to describe the wall crossing for $v':=1+f-2\varrho_X$,
we set 
$$
M_{H_f}(1+f-2\varrho_X)_i:=\{ I_Z(f) \in M_{H_f}(1+f-2 \varrho_X) \mid h^0(I_Z(f))=i \}.
$$
Then 
\begin{enumerate}
\item
$M_{H_f}(1+f-2\varrho_X)_0=M_{J_1}(1+f-2 \varrho_X) \cap M_{J_2'}(1+f-2 \varrho_X)$,
\item
$M_{H_f}(1+f-2\varrho_X)_1$ is a ${\Bbb P}^2$-bundle over $M_{H_f}(f-3 \varrho_X)$ and
\item
$M_{H_f}(1+f-2\varrho_X)_i=\emptyset$ for $i>1$.
\end{enumerate}
Hence we have a birational map
$$
M_{J_2'}(1+f-2 \varrho_X) \cdots \to  M_{J_1}(1+f-2 \varrho_X)
$$
which is a Mukai flop along $M_H(1+f-2\varrho_X)_1$. 
Combining \eqref{eq:reflection1}, we see that 
$R_{{\cal O}_X} \circ R_{{\cal O}_X}$ induces a birational map
$$
M_{J_1}(2+f-\varrho_X) \cdots \to M_{J_2}(2+f-\varrho_X),
$$
which is a Mukai flop.

By \eqref{eq:reflection1},
a general $E \in M_{J_1}(2+f-\varrho_X)$ fits in an exact sequence
$$
0 \to {\cal O}_X \to E \to I_Z(f) \to 0
$$
and
a general $E' \in M_{J_2}(2+f-\varrho_X)$ fits in
an exact sequence
$$
0 \to I_Z(f) \to E' \to {\cal O}_X \to 0
$$
where $I_Z(f)=R_{{\cal O}_X}(E) \in M_{H_f}(1+f-2\varrho_X)_0$.

\begin{NB}
For $E \in M_{J_1}(2+f-\varrho_X)$,
$\Hom(E,{\cal O}_X)=0$.
Hence we have an exact sequence
$$
0 \to H^0(X,E) \otimes {\cal O}_X \to E \overset{\phi}{\to} R_{{\cal O}_X}(E) \to
H^1(X,E) \otimes {\cal O}_X \to 0
$$
and $\phi(E)=I_Z(f) \in M_{H_f}(1+f-2\varrho_X)$ if $h^0(E)=1$ and
$\phi(E)=L \in M_{H_f}(f-3\varrho_X)$ if $h^0(E)=2$.

For $I_Z(f) \in M_{H_f}(1+f-2\varrho_X)$,
$\Hom(I_Z(f),{\cal O}_X)=0$.
Hence we have an exact sequence
$$
0 \to H^0(X,I_Z(f)) \otimes {\cal O}_X \to I_Z(f) \overset{\phi}{\to} R_{{\cal O}_X}(I_Z(f)) \to
H^1(X,I_Z(f)) \otimes {\cal O}_X \to 0.
$$
$\phi(I_Z(f))=I_Z(f)$ if $h^0(I_Z(f))=0$ and $\phi(I_Z(f))=
L \in M_{H_f}(0,f-3\varrho_X)$ if $h^0(I_Z)=1$.
\end{NB}

\begin{NB}
If $h^0(I_Z(f))=0$, then
\begin{equation}
0 \to I_Z(f) \to \Phi_{X \to X}^{I_\Delta [1]}(I_Z(f)) \to H^1(X,I_Z(f)) \otimes {\cal O}_X \to 0.
\end{equation}

If $h^0(I_Z(f))=1$, then
\begin{equation}
0 \to L \to \Phi_{X \to X}^{I_\Delta [1]}(I_Z(f)) \to H^1(X,I_Z(f)) \otimes {\cal O}_X \to 0.
\end{equation}
\end{NB}

(2) The wall $W_{-1}$:$s=-1$.  
$W_{-1}$ is a totally semistable wall defined by $v({\cal O}_X(-f))$.
Hence we have isomorphisms
\begin{equation}\label{eq:reflection2}
\begin{split}
R_{{\cal O}_X(-f)}:& M_{J_2}(2+f-\varrho_X) \stackrel{\sim}{\longrightarrow} M_{J_5'}(1+2f-2\varrho_X),\\
R_{{\cal O}_X(-f)}:& M_{J_4'}(1+2f-2\varrho_X) \stackrel{\sim}{\longrightarrow} M_{J_3}(2+f-\varrho_X).
\end{split}
\end{equation}

We note that $R_{{\cal O}_X}({\cal O}_X(f))={\cal O}_X(-f)[1]$
and $E:=R_{{\cal O}_X}(I_Z(f)) \in M_{J_2}(2+f-\varrho_X)$ for $I_Z \in M_{J_3}(1+f-2\varrho_X)$
by \eqref{eq:reflection1}.
Hence
$$
\Hom({\cal O}_X(-f),E)=\Hom({\cal O}_X(f),I_Z(f)[1])=H^1(X,I_Z)\cong {\Bbb C}^2
$$
and we have an exact sequence
$$
0 \to {\cal O}_X(-f)^{\oplus 2} \to E \to F \to {\cal O}_X(-f) \to 0,
$$
where $F \in M_{J'_5}(1+2f-2\varrho_X)$.
A general $F$ fits in an exact sequence
$$
0 \to \oplus_{i=1}^3 L_i \to F \to {\cal O}_X(-f) \to 0,
$$
where 
$L_i \in X':=M_{H_f}(f-\varrho_X)$.

Since $s=-1$ is a totally semistable wall for $v=1+2f-2\varrho_X$,
$M_{J_4'}(1+2f-2\varrho_X)$ and $M_{J_5'}(1+2f-2\varrho_X)$ have Hilbert-Chow type contractions, and 
we have an isomorphism
\begin{equation}\label{eq:reflection:r=1}
M_{J_4'}(1+2f-2\varrho_X)  \stackrel{\sim}{\longrightarrow} M_{J_5'}(1+2f-2\varrho_X) 
\end{equation} 
by a suitable contaraviant Fourier-Mukai transform.
Combining \eqref{eq:reflection2} with \eqref{eq:reflection:r=1},
we have an isomorphism
\begin{equation}
M_{J_2}(2+f-\varrho_X) \stackrel{\sim}{\longrightarrow} M_{J_3}(2+f-\varrho_X).
\end{equation}
By \eqref{eq:reflection2},
we also have an exact sequence
$$
0 \to \Hom({\cal O}_X(-f),I_Z(2f)) \otimes {\cal O}_X(-f) \to I_Z(2f) \to
E' \to \Ext^1({\cal O}_X(-f),I_Z(2f)) \otimes 
{\cal O}_X(-f) \to 0
$$
where $I_Z(2f) \in M_{J_4'}(1+2f-2\varrho_X) \cap M_{J_1}(1+2f-2\varrho_X)$ and
$E'=R_{{\cal O}_X(-f)}(I_Z(2f)) \in M_{J_3}(2+f-\varrho_X)$.

\begin{NB}
$R_{{\cal O}_X(mf)}({\cal O}_X((m+1)f))={\cal O}_X((m-1)f)[1]$.
$$
\Hom({\cal O}_X(-2f),R_{{\cal O}_X(-f)}(I_Z(2f))) \cong
\Hom({\cal O}_X[-1],I_Z(2f))=H^1(I_Z(2f)) \cong {\Bbb C}.
$$
\end{NB}

\begin{NB}
$\Hom({\cal O}_X(-f),I_Z(2f)) \ne 0$.
For a general $Z=\{ x_1,x_2,x_3 \}$,
$f_1+f_2+f_3$ ($x_i \in f_i$) is the corresponding section.

For $E$ with $H^0(E)=H^2(E)=0$,
$E_1:=\Phi_{X \to X}^{I_\Delta^{\vee}[1]}(E)$ fits in an exact sequence
$$
0 \to H^1(E) \otimes {\cal O}_X \to E_1
\to E \to 0.
$$
Applying $\Phi_{X \to X}^{I_\Delta^{\vee}}$, we get 
$$
0 \to H^1 (E) \otimes {\cal O}_X \to
\Phi_{X \to X}^{I_\Delta^{\vee}}(E_1) \to E_1 [-1] \to 0
$$

\end{NB}

\begin{NB}
Walls for $v=(1,2f,-2)$.

$v_1=(0,k_1 f,a_1)$, $a_1=-3,-2,-1$ and $k_1 \geq 1$.

$s=-3$. $v=(0,f,-3)+(1,f,1)$.

$s=-2$. $v=(0,f,-2)+(1,f,0)$.

$s=-\frac{3}{2}$. $v=(0,2f,-3)+(1,0,1)$.

$s=-1$. $v=3(0,f,-1)+(1,-f,1)$. totally semistable wall.

$s=-\frac{3}{4}$. $v=(0,4f,-3)+(1,-2f,1)$.

$s=-\frac{2}{3}$. $v=(0,3f,-2)+(1,-f,0)$.

$s=-\frac{3}{5}$. $v=(0,5f,-3)+(1,-3f,1)$.
Hence we have chambers for $v=1+2f-2\varrho_X$.
\begin{equation}
J_1':=(-\infty,-3), J_2':=(-3,-2), J_3':=(-2,-\tfrac{3}{2}), J_4':=(-\tfrac{3}{2},-1),
J_5':=(-1,-\tfrac{3}{4}),...
\end{equation}

\end{NB}

\begin{NB}
$s=-\infty$ corresponds to the Donaldson-Uhlenbeck contraction.
\end{NB}


\begin{rem}
For $s>-1$, 
$\Hom(E,{\cal P}_{1|X \times \{ x \}}^{\vee}[1])=0$ for all $x \in X$.
Since $\Phi_{X \to X}^{{\cal P}_1^{\vee}[1]}(E)$ is not a line bundle,
$\Hom({\cal P}_{1|X \times \{ x \}}^{\vee}[1],E) \ne 0$ for a point $x \in X$.
Thus $E$ is not torsion free.
\end{rem}

\begin{rem}
Let $\Phi_{X \to X}^{{\cal P}_2^{\vee}[1]}$ be a relative Fourier-Mukai
transform such that
${\cal P}_2 \in \Coh(X \times X)$ and
$$
\varphi(\Phi_{X \to X}^{{\cal P}_2^{\vee}[1]})=
\begin{pmatrix}
2 & 1\\
-1 & 0
\end{pmatrix}
\times 
\begin{pmatrix}
1 & 0\\
0 & 1
\end{pmatrix}.
$$
Then we have
${\cal P}_{2| X \times \{ x \}} \in M_{H_f}(f-2 \varrho_X)$
and ${\cal P}_{2| \{x \} \times X} \in M_{H_f}(f)$.
$\overline{\Phi}_{X \to X}^{{\cal P}_2^{\vee}[1]}(2+f-\varrho_X)=4-2H+f-\varrho_X$ and
we have an isomorphism
$$
M_{J_2}(2+f-\varrho_X) \stackrel{\sim}{\longrightarrow} M_{H_f}(4-2H+f-\varrho_X).
$$
In order to construct a birational map
$M_{H_f}(2+f-\varrho_X) \to M_{H_f}(4-2H+f-\varrho_X)$, we need to use 
Fourier-Mukai transform
$\Phi_{X \to X}^{{\cal P}_2^{\vee}[1]} \circ R_{{\cal O}_X} \circ R_{{\cal O}_X}$ whose kernel is
not a sheaf:
We set
$$
\Phi_{X \to X}^{{\cal P}'[1]}:=(\Phi_{X \to X}^{{\cal P}_2^{\vee}[1]} \circ R_{{\cal O}_X} \circ R_{{\cal O}_X})^{-1}
=\Phi_{X \to X}^{I_\Delta^{\vee}[1]}\circ \Phi_{X \to X}^{I_\Delta^{\vee}[1]} 
\circ \Phi_{X \to X}^{{\cal P}_2[1]}.
$$
Then ${\cal P}'$ is a two-term complex with the triangle 
$$
{\cal O}_X^{\oplus 2}[1] \to {\cal P}'_{|X \times \{ x \}} \to 
\Phi_{X \to X}^{I_\Delta^{\vee}[1]}({\cal P}_{2|X \times \{ x \}})
\to  {\cal O}_X^{\oplus 2}[2]. 
$$ 
\begin{NB}
$E:=\Phi_{X \to X}^{I_\Delta^{\vee}[1]}( {\cal P}_{2| X \times \{ x \}} ) \in {\cal M}_{H_f}(2+f)$
fits in an exact sequence
$$
0 \to {\cal O}_X^{\oplus 2} \to E \to {\cal P}_{2| X \times \{ x \}} \to 0.
$$
\end{NB}
Thus we can not find a relative Fourier-Mukai transform
inducing a birational map.
\end{rem}

(3) The wall $W_{-\frac{3}{5}}$: $s=-\frac{3}{5}$.

It is also a totally semistable wall defined by $v({\cal O}_X(-2f))$.
\begin{NB} 
$v=(1,3f,-2)+(1,-2f,1)=(0,5f,-3)+2(1,-2f,1)$.
\end{NB}

We set
$$
\Lambda:=\Phi_{X \to X}^{{\cal P}_1^{\vee}[2]} \circ D_X \circ
 (\Phi_{X \to X}^{{\cal P}_1^{\vee}[2]})^{-1}.
$$
Then we see that
$$
\Lambda(r+dH+kf+a \varrho_X)=(r+2d)-dH-(k+2a)f+a \varrho_X.
$$
$\Lambda$ acts on the parameter space by
$s \mapsto \frac{-s}{1+2s}$.
So we get
\begin{equation}\label{eq:Lambda}
\begin{matrix}
\Lambda(J_1)=J_4, & \Lambda(J_2)=J_3,\\
\Lambda(W_{-3})=W_{-\frac{3}{5}},& \Lambda(W_{-1})=W_{-1}.
\end{matrix}
\end{equation}

\begin{prop}\label{prop:Lambda}
$\Lambda$ induces an isomorphism
$$
M_{(f,H)}^{{\cal O}_X(sH)}(2+f-\varrho_X) \to
M_{(f,H)}^{{\cal O}_X(s' H)}(2+f-\varrho_X) 
$$
where $s$ is in a chamber and $s'=\frac{-s}{1+2s}$.
In particular
\begin{equation}
M_{J_1}(2+f-\varrho_X) \cong M_{J_4}(2+f-\varrho_X),\;
M_{J_2}(2+f-\varrho_X) \cong M_{J_3}(2+f-\varrho_X).
\end{equation}
\end{prop}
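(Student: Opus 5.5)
Write $\Lambda=\Psi\circ D_X\circ\Psi^{-1}$ with $\Psi:=\Phi_{X \to X}^{{\cal P}_1^{\vee}[2]}$. The plan is to factor $\Lambda$ into pieces whose effect on $(f,G,H)$-semistability is already controlled, namely $\Psi^{\pm1}$ through Theorem \ref{thm:FMpreserve} and $D_X$ through Definition \ref{defn:fGH2}. We then check that the parameters transform by $s\mapsto s'=\frac{-s}{1+2s}$.

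\textbf{Step 1: transport $E$ by $\Psi^{-1}$.} Fix $s$ in a chamber for $v=2+f-\varrho_X$ (so $d=0$, and we may restrict to $s<-\frac12$), and let $E$ be $(f,{\cal O}_X(sH),H)$-semistable. The relevant vector is $w:=\overline{\Psi}^{-1}(v)$. From the formula for $\overline{\Phi}_{X \to X}^{{\cal P}_1^{\vee}[2]}$ we get $w=(r+d)+dH+(k+a)f+a\varrho_X$ at the level of $\nu$, so here $w=2+0\cdot H+0\cdot f-\varrho_X$. The action on the parameter is read off from $\overline{\Psi}^{-1}(e^{sH})$, which is proportional to $e^{\frac{s}{1+s}H}$ (an $\SL(2,{\Bbb Z})$ M\"obius action). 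Since $\Psi^{-1}$ is a composition of $\Phi^{{\cal P}^{\vee}[1]}$, $\otimes{\cal O}_X(-H)$ and $(\Phi^{{\cal P}^{\vee}[1]})^{-1}$, we apply Theorem \ref{thm:FMpreserve} to each relative Fourier--Mukai factor. Tensoring by a line bundle trivially preserves the stability, with $G\mapsto G(-H)$. The hypothesis $\langle v,v_0\rangle\neq0$ must be checked at each stage; it holds since $\rk$ of each intermediate transform is nonzero for our $v$. The outcome is that $\Psi^{-1}(E)$ is $(f,G_1,H)$-semistable up to shift, where $c_1(G_1)/\rk G_1\equiv s_1H$ with $s_1=\frac{s}{1+s}$. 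Genericity of $G_1$ follows because the cohomological transform maps walls $W_{v_1}$ for $v$ bijectively to walls for $w$, by Proposition \ref{prop:isometry} and the linear form $\langle e^{sH+\alpha},rv_1-r_1v\rangle$.

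\textbf{Step 2: apply $D_X$.} Here $w=2-\varrho_X$ has $d=0$, so $w^\vee=w$. Definition \ref{defn:fGH2} identifies $(f,G_1,H)$-semistability of an object $F$ on one side of $\mu_f(G_1)$ with $(f,G_1^\vee,H)$-semistability of $F^\vee(K_X)$; on a K3 surface $K_X=0$. Hence $D_X$ sends $(f,G_1,H)$-semistable objects of class $w$ to $(f,G_1^\vee,H)$-semistable objects of class $w$, with parameter $s_1\mapsto -s_1$. Both sides lie in ${\cal G}(w)$ because $rs-d\neq0$ is preserved. A shift may need to be absorbed at this point.

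\textbf{Step 3: return by $\Psi$ and conclude.} Applying Step 1 in reverse, $\Psi$ takes parameter $-s_1$ to $\frac{-s_1}{1-s_1}$. Composing, $s\mapsto \frac{s}{1+s}\mapsto \frac{-s}{1+s}\mapsto \frac{-s/(1+s)}{1+s/(1+s)}=\frac{-s}{1+2s}$, which matches the stated action. Since $\Lambda$ is an (anti-)equivalence preserving $v$ and sending families to families (via the universal family and relative duality), it induces morphisms of moduli stacks in both directions. These are mutually inverse because $\Lambda^2\cong\mathrm{id}$ up to shift, as $D_X^2=\mathrm{id}$. This gives the isomorphism $M_{(f,H)}^{{\cal O}_X(sH)}(v)\cong M_{(f,H)}^{{\cal O}_X(s'H)}(v)$. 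The specific identifications $J_1\leftrightarrow J_4$ and $J_2\leftrightarrow J_3$ then follow from \eqref{eq:Lambda}, since $s\mapsto s'$ maps walls to walls and the listed chambers accordingly.

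\textbf{Main obstacle.} The delicate point is Step 1: keeping track of signs, shifts, and which of $k=1,2$ and $\epsilon=\pm1$ applies in Theorem \ref{thm:FMpreserve} at each factor. The issue is that $\Psi$ involves $\otimes{\cal O}_X(H)$, and the intermediate Mukai vector may have $\langle v,v_0\rangle=0$, which would force Theorem \ref{thm:FMpreserve0} and a detour through 1-dimensional sheaves. My first attempt is to verify directly that the intermediate ranks $\pm d$-type quantities are nonzero for $v=2+f-\varrho_X$ and $s<-\frac12$. If this fails, I would instead argue directly with $\Psi$ as a single relative transform with kernel ${\cal P}_1$, which the paper notes is a sheaf, and invoke Propositions \ref{prop:FMstability1}--\ref{prop:FMstability3} for it.
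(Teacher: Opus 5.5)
Your overall plan (conjugate $D_X$ by $\Psi=\Phi_{X\to X}^{{\cal P}_1^{\vee}[2]}$, use Theorem \ref{thm:FMpreserve} for $\Psi^{\pm1}$ and Definition \ref{defn:fGH2} for $D_X$) is the paper's one-line proof. Your bookkeeping is also correct: $\overline{\Psi}^{-1}(v)=2-\varrho_X$, $\Lambda(v)=v$, $s\mapsto\frac{s}{1+s}\mapsto\frac{-s}{1+s}\mapsto\frac{-s}{1+2s}$, and $J_1\leftrightarrow J_4$, $J_2\leftrightarrow J_3$.

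However, the route you write out in Step 1 fails at its first step. You factor $\Psi^{-1}$ through $\Phi_{X\to X}^{{\cal P}^{\vee}[1]}$, whose fiber class is $v_0=f$, and assert that $\langle v,v_0\rangle\neq0$ at every stage. But $\langle v,f\rangle=(\xi\cdot f)=d$, and $d=0$ both for $v=2+f-\varrho_X$ and for $w=2-\varrho_X$. Concretely, $\overline{\Phi}_{X\to X}^{{\cal P}^{\vee}[1]}(v)=-2H-f-\varrho_X$ has rank $0$. So $\Phi_{X\to X}^{{\cal P}^{\vee}[1]}(E)$ is a shifted $1$-dimensional sheaf, Theorem \ref{thm:FMpreserve} does not apply, and the intermediate object carries no $(f,G,H)$-semistability to chain through. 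The ``first attempt'' in your last paragraph (checking that the $d$-type ranks are nonzero) therefore fails immediately.

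What works is your fallback, and it is exactly what the paper does. Regard $\Psi$ as a single relative Fourier--Mukai transform with sheaf kernel ${\cal P}_1$, whose fiber class is $v({\cal P}_{1|X\times\{x\}})=f+\varrho_X$. Then $\langle w,f+\varrho_X\rangle=\langle v,f+\varrho_X\rangle=-2<0$, so Theorem \ref{thm:FMpreserve} applies with $k=2$, matching the shift in $\Psi=\Phi_{X\to X}^{{\cal P}_1^{\vee}[2]}$. Applying its ``if and only if'' to $\Psi^{-1}(E)$, which has class $w$, handles $\Psi^{-1}$ as well, with no stray shift.

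If you insist on the three-factor decomposition, the first and last factors must instead go through Theorem \ref{thm:FMpreserve0}. You would also need that $\otimes{\cal O}_X(-H)$ preserves twisted semistability of $1$-dimensional sheaves, with the twist moved accordingly. This is more roundabout and buys nothing. So replace Step 1 by the direct application to $\Psi$.
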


\begin{proof}
$\Phi_{X \to X}^{{\cal P}_1^{\vee}[2]}$ and $D_X$ preserve $(f,G,H)$-semistability
by Theorem \ref{thm:FMpreserve} and Definition \ref{defn:fGH2}.
\end{proof}

By \eqref{eq:Lambda} and Proposition \ref{prop:Lambda},
we get a description of $W_{-\frac{3}{5}}$ from $W_{-3}$. 
As a summary of our description of walls,
we present a general members of the moduli spaces. 
\begin{prop}
\begin{enumerate}
\item
A general $E \in M_{J_2}(2+f-\varrho_X)=M_{H_f}(2+f-\varrho_X)$ fits in 
ian exact sequence
$$
0 \to  {\cal O}_X \to E \to I_Z(f) \to 0,
$$
where $I_Z$ is an ideal sheaf of three points. 
\item
A general $E \in M_{J_2}(2+f-\varrho_X)$ fits in an exact sequence
$$
0 \to I_Z(f) \to E \to {\cal O}_X \to 0,
$$
where $I_Z$ is an ideal sheaf of three points. 
\item
A general $E \in M_{J_3}(2+f-\varrho_X)$ fits in an exact sequence
$$
0 \to \oplus_{i=1}^3 L_i \to E \to {\cal O}_X(-f)^{\oplus 2} \to 0,
$$
$L_i \in M_{H_f}(f-\varrho_X)$.
\item
A general $E \in M_{J_4}(2+f-\varrho_X)$ fits in an exact sequence
$$
0 \to F \to E \to {\cal O}_X(-2f) \to 0,
$$
where
$F \in M_{J_5'}(1+2f-2\varrho_X)$
\end{enumerate}

\end{prop}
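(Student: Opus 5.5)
The statement is the summary proposition listing general members of $M_{J_i}(2+f-\varrho_X)$ for $i=1,\dots,4$. Item (1) is evidently meant for $J_1$, where $M_{J_1}(v)=M_{H_f}(v)$ by Proposition \ref{prop:Gieseker-chamber}. The plan is to read each item off the wall-crossing descriptions already obtained across $W_{-3}$, $W_{-1}$ and $W_{-\frac35}$, and to use $\Lambda$ for the last chamber.

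Items (1) and (2) come from $W_{-3}$. By \eqref{eq:reflection1}, $R_{{\cal O}_X}$ identifies $M_{J_1}(2+f-\varrho_X)$ with $M_{J_2'}(1+f-2\varrho_X)$. The latter agrees with $M_{H_f}(1+f-2\varrho_X)$ on the dense open subset $M_{H_f}(1+f-2\varrho_X)_0$; its complement is the ${\Bbb P}^2$-bundle over the two-dimensional $M_{H_f}(f-3\varrho_X)$, which has dimension $4$, smaller than $\langle (1+f-2\varrho_X)^2\rangle+2=6$.
\begin{enumerate}
\item For general $E$ we have $h^0(E)=1$, $H^1(E)=0$ and $\Hom(E,{\cal O}_X)=0$. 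The exact sequence for $R_{{\cal O}_X}$ then becomes $0\to {\cal O}_X\to E\to I_Z(f)\to 0$. Since $v(I_Z(f))=1+f-2\varrho_X$, the length of $Z$ is $3$.
\item The inverse identification $R_{{\cal O}_X}:M_{J_1'}(1+f-2\varrho_X)\to M_{J_2}(2+f-\varrho_X)$ gives the reversed extension. For general $I_Z(f)$ one has $H^0(I_Z(f))=0$ and $H^1(I_Z(f))={\Bbb C}$, so $0\to I_Z(f)\to E\to {\cal O}_X\to 0$.
\end{enumerate}

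Item (3) uses $W_{-1}$. \eqref{eq:reflection2} identifies $M_{J_3}(2+f-\varrho_X)$ with $M_{J_4'}(1+2f-2\varrho_X)$ via $R_{{\cal O}_X(-f)}$. Dually to the computation in the text, I take the general $F$ in that moduli space to be $I_Z(2f)$, with $Z$ three general points on three distinct fibres $f_i$. Then:
\begin{itemize}
\item $\Hom({\cal O}_X(-f),I_Z(2f))=H^0(I_Z(3f))$ is $1$-dimensional, spanned by $f_1+f_2+f_3$.
\item $\Ext^1({\cal O}_X(-f),I_Z(2f))=H^1(I_Z(3f))\cong{\Bbb C}^2$, by Riemann--Roch together with $H^2=0$.
\end{itemize}
The image of ${\cal O}_X(-f)\to I_Z(2f)$ has cokernel $\oplus_i I_{x_i,f_i}(2f)|_{f_i}$. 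Each summand has Mukai vector $f-\varrho_X$, so it is some $L_i\in M_{H_f}(f-\varrho_X)$. The four-term sequence for the spherical twist then yields $0\to\oplus L_i\to E\to {\cal O}_X(-f)^{\oplus2}\to 0$. Mukai vectors check out: $3(f-\varrho_X)+2(1-f+\varrho_X)=2+f-\varrho_X$.

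Item (4) follows from item (1) by Proposition \ref{prop:Lambda}, since $\Lambda(J_1)=J_4$, $\Lambda(W_{-3})=W_{-\frac35}$, and $W_{-\frac35}$ is the totally semistable wall defined by $v({\cal O}_X(-2f))$. Concretely, I would apply the reflection $R_{{\cal O}_X(-2f)}$ across $W_{-\frac35}$, exactly as was done with ${\cal O}_X$ at $W_{-3}$. This gives a general $E\in M_{J_4}$ as an extension with quotient ${\cal O}_X(-2f)$ and kernel $F$ with $v(F)=v-v({\cal O}_X(-2f))=1+3f-2\varrho_X$. One then has to verify that $F$ lies in the moduli space named in the statement, which would be the $J_5'$-chamber for that vector.

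The main obstacle is this identification of $F$'s stability chamber. I would handle it by transporting the generic description in (1) through $\Lambda$, which acts by $s\mapsto -s/(1+2s)$. This reduces the question to tracking where $\Lambda$ sends the chamber $J_2'$ of $1+f-2\varrho_X$, which is routine once the action on $1+f-2\varrho_X$ is computed.
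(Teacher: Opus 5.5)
\paragraph{Parts (1)--(3).} Your treatment of these is fine, and it is what the paper means by ``it is sufficient to prove (iv)''. These items are read off from \eqref{eq:reflection1}, \eqref{eq:reflection2} and the four-term sequence for the spherical twist. Your computations $h^0(I_Z(3f))=1$, $h^1(I_Z(3f))=2$ and $L_i={\cal O}_{f_i}(-x_i)$ correctly fill in the details.

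\paragraph{Part (4): the starting point.} Your idea is to apply the contravariant $\Lambda$ to the sequence in (1), using $\Lambda({\cal O}_X)={\cal O}_X(-2f)$ and $\Lambda(J_1)=J_4$. This is also the paper's route. You are right that $v(F)=1+3f-2\varrho_X$. The walls $s=a_1/k_1$ for $1+lf-2\varrho_X$ do not depend on $l$, so the labels $J_i'$ still make sense for this vector.

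\paragraph{Part (4): the gap.} The gap is in the last step, which you call routine. The subobject $I_Z(f)$ in (1) is a general point of $M_{J_1'}(1+f-2\varrho_X)\cap M_{J_2'}(1+f-2\varrho_X)$. Since $\Lambda$ acts by $s\mapsto -s/(1+2s)$, we get
\[
\Lambda(J_1')=(-\tfrac35,-\tfrac12), \qquad \Lambda(J_2')=(-\tfrac23,-\tfrac35).
\]
Neither of these is $J_5'=(-1,-\tfrac34)$. The walls $W_{-\frac35}$, $W_{-\frac23}$, $W_{-\frac34}$ for $1+3f-2\varrho_X$ lie in between. Equivalently, $\Lambda(J_4')=J_5'$, but the general $I_Z(f)$ coming from (1) is only known to lie in the chambers $J_1'$ and $J_2'$. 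So chamber-tracking alone does not place $F$ in $M_{J_5'}$.

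Your alternative has the same problem. Reflecting by ${\cal O}_X(-2f)$ at $W_{-\frac35}$ can only relate $M_{J_4}(v)$ to the moduli space of $1+3f-2\varrho_X$ for a chamber adjacent to $W_{-\frac35}$, and $J_5'$ is not such a chamber.

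\paragraph{The missing ingredient.} The one substantive sentence of the paper's proof supplies it: $W_{-1}$ is the unique totally semistable wall for $1+lf-2\varrho_X$ in $s\le -\frac12$. Hence crossing each of $W_{-\frac35}$, $W_{-\frac23}$, $W_{-\frac34}$ preserves a dense open subset of the moduli space. This gives
\[
M_{(-\frac35,-\frac12)}(1+3f-2\varrho_X)\cap M_{J_5'}(1+3f-2\varrho_X)\neq\emptyset,
\]
so the general $F=\Lambda(I_Z(f))$ does lie in $M_{J_5'}$. Without this input your argument stops one step short of (4).
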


\begin{proof}
It is sufficient to prove (iv).
$\Lambda$ induces an isomorphism
$$
M_{J_1}(1+f-2\varrho_X) \cong M_{J_4}(1+2f-2\varrho_X)
$$
and $\Lambda({\cal O}_X)={\cal O}_X(-2f)$.
Since $W_{-1}$ is the unique totally semistable wall for
$1+2f-2\varrho_X$
in $s \leq -\frac{1}{2}$,
$M_{J_4}(1+2f-2\varrho_X) \cap M_{J_5'}(1+2f-2\varrho_X) \ne \emptyset$.
Therefore (iv) follows from (i).
\end{proof}

\begin{NB}
$0 \to {\cal O}_X(-3f) \to F \to \oplus_{i=1}^3 L_i' \to 0$,
$L_i' \in M_H(2f-\varrho_X)$. 
\end{NB}

\begin{NB}
$\Phi_{X \to X}^{{\cal P}_1^{\vee}[2]}({\cal O}_X (f))={\cal O}_X$.
$(\Phi_{X \to X}^{{\cal P}_1^{\vee}[2]})^{-1}({\cal P}_{1| X \times \{ x \}}^{\vee}[1])={\Bbb C}_x[-1]$. 
$(\Phi_{X \to X}^{{\cal P}_1^{\vee}[2]})^{-1}({\cal O}_X(-f))={\cal O}_X$.
$D_X({\Bbb C}_x[-1])={\Bbb C}_x[-1]$ and $D_X({\cal O}_X)={\cal O}_X$.
Hence $\Lambda({\cal P}_{1| X \times \{ x \}}^{\vee}[1])={\cal P}_{1| X \times \{ x \}}^{\vee}[1]$
and $\Lambda({\cal O}_X(-f))={\cal O}_X(-f)$.
\end{NB}

\begin{NB}
$s=-\frac{1}{2}$.
$v=(0,2f,-1)+(2,-f,0)$. Uhlenbeck contraction.

$s=-\frac{3}{7}$.
$v=(0,7f,-3)+2(1,-3f,1)$.

$s=-\frac{1}{3}$.
$v=3(0,3f,-1)+2(1,-4f,1)$.
Hilbert-Chow contraction.
\end{NB}

\begin{NB}

Assume that $v=(2,f,-3)$.
Then the walls in $s \leq -\frac{1}{2}$ are the following.

\begin{NB2}
$v_1=(1,k_1 f,a_1)$ defines a wall if
$-4 \leq a_1 \leq 1$.
$s=\frac{5}{2k_1-1}$.
\end{NB2}

\begin{enumerate}
\item
$s=-5$. $v_1=(0,f,-5)$.
\begin{NB2}
${\Bbb P}^2$-bundle over $M_H(1,f,-4)$:
$$
0 \to {\cal O}_X \to E \to I_Z(f) \to 0.
$$ 
\end{NB2}
\item
$s=-3$.
$v_1=(0,f,-3)$.
\item
$s=-2$.
$v_1=(0,f,-2)$.
\item
$s=-\frac{5}{3}$.
$v_1=(0,3f,-5)$.
\begin{NB2}
${\Bbb P}^2$-bundle over $M_H(1,2f,-4)$:
$$
0 \to {\cal O}_X(-f) \to E \to I_Z(2f) \to 0.
$$ 
\end{NB2}
\item
$s=-\frac{3}{2}$. $v_1=(0,2f,-3)$.
\item
$s=-1$.
$v_1=(0,f,-1)$. Uhlenbeck contraction.
\item
$s=-\frac{3}{4}$.
$v_1=(0,4f,-3)$.
\item
$s=-\frac{5}{7}$.
$v_1=(0,7f,-5)$.
\item
$s=-\frac{2}{3}$.
$v_1=(0,3f,-2)$.
\item
$s=-\frac{3}{5}$.
$v_1=(0,5f,-3)$.
\item
$s=-\frac{5}{9}$.
$v_1=(0,9f,-5)$.
\end{enumerate}

\begin{NB2}
$$
\Lambda'=\Phi_{X \to X}^{{\cal P}_1^{\vee}[2]} \circ \otimes {\cal L}_2 \circ D_X
\circ (\Phi_{X \to X}^{{\cal P}_1^{\vee}[2]})^{-1},
$$
where $c_1({\cal L}_2)=-2f$.
$$
\Lambda'(r+dH+kf+a \varrho_X)=
(r+2d)-dH-(2r+4d+k+2a)f+(2d+a)\varrho_X.
$$
$\Lambda'(2+f-3\varrho_X)=2+f-3\varrho_X$.
$s'=\frac{-s}{1+2s}$.

$\Lambda'((-\infty,-1))=(-1,-\frac{1}{2})$.
For $s=-5,-3,-2,-\frac{5}{3},-\frac{3}{2}$, we have flops.

For $s=-1$, we have an isomorphism
$M_{(-\frac{3}{2},-1)}(2+f-3\varrho_X) \cong M_H(2-2f-3\varrho_X)$.
Hence we have a divisorial contraction.

\end{NB2}

\begin{rem}
As in \cite{BM:2},
we have an example of wall such that the associated lattice contains infinitely many
$(-2)$-vectors.

Let $\pi:X \to {\Bbb P}^1$ be an elliptic K3 surface such that
$\NS(X)={\Bbb Z}\sigma+{\Bbb Z}f+{\Bbb Z}D$, where $(D \cdot \sigma)=(D \cdot f)=0$ and
$(D^2)=-2(n+1)$ $(n \geq 4)$.

We set $u_1=1-f+\varrho_X, u_2=1+D-n \varrho_X$.
Then $\langle u_1^2 \rangle=\langle u_2^2 \rangle=-2$ and $\langle u_1,u_2 \rangle=(n-1)$.
$L:={\Bbb Z}u_1+{\Bbb Z}u_2$ is a hyperbolic lattice containing infinitely many $(-2)$-vectors.
For $v=xu_1+yu_2$ with $x,y>0$ and 
$-x^2+(n-1)xy-y^2>0$, $u_1$ defines a wall such that the associated lattice is $L$.
 \end{rem}

\end{NB}

\subsection{Examples for an elliptic abelian surface}

Let $\pi:X \to C$ be an elliptic abelian surface.
\begin{NB}
We set $v=r+kf-\varrho_X$.
Then $(1,k_1 f,0)$ defines a totally semistable walls 
$s=-\frac{1}{k-rk_1}$.

Assume that $k=1$.
If $s<-1$, then we have an exact sequence
$$
0 \to E_1 \to E \to L \to 0
$$ 
where $L \in {\cal M}_H(0,f,-1)$ and $E_1 \in {\cal M}_H(r,0,0)$.
We set $X':=M_H(0,2f,-1)$.
Let ${\cal P}$ be a universal family.
Then
$\Phi_{X \to X'}^{{\cal P}^{\vee}[1]}(L)[2]$ is a stable sheaf and
$\Phi_{X \to X'}^{{\cal P}^{\vee}[1]}(E_1)[1]$ is a stable sheaf.
Hence $\Phi_{X \to X'}^{{\cal P}^{\vee}[1]}(E)$ is a two-term complex.
\end{NB}
\begin{NB}
We set $v=r+(kf+D)-\varrho_X$, with
$(D^2)=-2m<0$.
Then $\langle v^2 \rangle=2(r-m)$ and
we have 
$$
v=(r-m)(1+k_1 f)+(m+(k-rk_1+mk_1)f+D-\varrho_X).
$$
$1+k_1 f$ defines a totally semistable wall 
$s=-\frac{1}{k-rk_1}$.
\end{NB}
Let 
$v=r+\xi+a \varrho_X$ be a primitive Mukai vector.
Assume that $l:=\gcd(r,(\xi \cdot f))>1$.
An isotropic Mukai vector
$u:=p f+q \varrho_X$ defines a totally semistable wall if and only if
$\langle v,u \rangle=l$, $l \mid (v-u)$ and $\langle (v-u)^2 \rangle=0$.
Thus there is an isotropic Mukai vector 
$w=r_0+\xi_0+a_0 \varrho_X$ such that 
$p (\xi_0 \cdot f)-q r_0=1$ and $v=lw+u$.
In this case, $r=lr_0$, $(\xi \cdot f)=l(\xi_0 \cdot f)$ and 
$\langle v^2 \rangle=2l \leq 2r$.
Let $u'=p' f+q' \varrho_X$ be another isotropic vector 
defining totally semistable wall.
Then we see that $u'=u+nl(r_0 f+(\xi_0 \cdot f) \varrho_X)$ ($n \in {\Bbb Z})$.
We take $p$ with $0<p<r$.
\begin{NB}
$v \equiv u \mod l$ and $v \equiv u' \mod l$ implies
$l \mid (u'-u)$.

$\frac{q}{p}<\frac{q+(\xi \cdot f)n}{p+rn}<\frac{(\xi \cdot f)}{r}$.
\end{NB}

Assume that $v(G)=e^{xH+\alpha}$, where $(x,\alpha) \in {\cal G}(v)$.
If $x<\frac{q}{p}$, then all totally semistable walls for $(f,G,H)$-semistability are
\begin{equation}
v=l v_1+v_2, \langle v_1,v_2 \rangle=1,\;v_i=r_i e^{\eta_i}, (\eta_i \cdot f)=\frac{(\xi \cdot f)}{r}. 
\end{equation}
Since $r_1 r_2((\eta_1-\eta_2)^2)=-2$,
$\eta_1-\eta_2$ defines a totally semistable wall with respect to $\mu$-semistability.
\begin{NB}
there is an ample ${\Bbb Q}$-divisor $L$ such that
$(\eta_1 \cdot L)=(\eta_2 \cdot L)$.
We note that $(\eta_1 \cdot (L+mf))=(\eta_2 \cdot (L+mf))$.
\end{NB}
For each chamber, there is an ample divisor $L$ such that 
${\cal M}_{(f,H)}^G(v) \cap {\cal M}_L(v) \ne \emptyset$.  
Since $((\eta_1-\eta_2) \cdot f)=0$,
${\cal M}_L(v)={\cal M}_{L_f}(v)$, and hence
${\cal M}_{(f,H)}^G(v) \cap {\cal M}_{L_f}(v) \ne \emptyset$.

Assume that 
$\langle v,(r' f+d' \varrho_X) \rangle=lt$.
Then there is an integer $n$ such that
$r'=tp+n r_0$ and $d'=tq+n(\xi \cdot f)$.
$$
\frac{d'}{r'}-\frac{q}{p}=\frac{n}{pr'}.
$$
If $r' \leq tp$, then $n \leq 0$.
For $X'=M_{H_f}(r' f+d' \varrho_X)$, 
$\Phi_{X \to X'}^{{\cal P}^{\vee}[1]}$ induces an isomorphism
${\cal M}_{(f,H)}^G(v) \cong {\cal M}_{H'_{f'}}(v')$.
Since $\tfrac{d'}{r'}<\tfrac{q}{p}$, we get 
a birational map
$$
{\cal M}_{L_f}(v) \cdots \to {\cal M}_{H'_{f'}}(v').
$$

\begin{lem}
Assume that $l>1$ and $\langle v^2 \rangle>2r$.
Then there is no totally semistable wall.
\end{lem}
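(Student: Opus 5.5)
The plan is to argue by contradiction from the description of totally semistable walls on an abelian surface given just above the lemma. For an abelian surface the only totally semistable walls come from isotropic classes, by Proposition \ref{prop:sswall}(ii): a wall is totally semistable only if it is defined by $u\in U(v)$ with $\langle u^2\rangle=0$ and $\langle v,u\rangle=1$ for $u$ primitive. The case (i) does not occur, since $\chi({\cal O}_X)=0$ and there are no $(-2)$-vectors with stable representatives on an abelian surface. So I will assume such a wall exists and derive a contradiction with $\langle v^2\rangle>2r$.

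\textbf{Step 1: reduce to the two shapes of totally semistable walls.} By the discussion preceding the lemma, any totally semistable wall gives one of the following.
\begin{enumerate}
\item[(a)] A decomposition $v=lw+u$ with $u=pf+q\varrho_X$ isotropic, $\langle v,u\rangle=l$, $l\mid (v-u)$, $\langle (v-u)^2\rangle=0$, and $w$ isotropic with $\langle w,u\rangle=1$. Then $\langle v^2\rangle=2l\langle w,u\rangle=2l$.
\item[(b)] For $x<q/p$, a decomposition $v=lv_1+v_2$ with $\langle v_1,v_2\rangle=1$, $v_i=r_ie^{\eta_i}$ isotropic of positive rank, and $(\eta_i\cdot f)=(\xi\cdot f)/r$.
\end{enumerate}
I will redo the derivation briefly. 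Consider the Harder--Narasimhan filtration of a general member with respect to $G_\pm$. Since $\dim{\cal M}(v_i)=\langle v_i^2\rangle+2$ on an abelian surface and $\dim{\cal M}(v)=\langle v^2\rangle+2$, formula \eqref{eq:HNF-dim} forces the filtration to have two distinct isotropic factors $v_1,v_2$ with $\langle v_1,v_2\rangle=1$ and $v=mv_1+nv_2$. The dimension count then gives $m=1$ or $n=1$. Moreover, the requirement that $(c_1\cdot f)/\rk$ be constant combined with $\gcd(r,(\xi\cdot f))=l$ forces the other multiplicity to equal $l$.

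\textbf{Step 2: compute $\langle v^2\rangle$ and compare with $2r$.} In both shapes, $\langle v^2\rangle=2l\cdot 1=2l$. Since $l=\gcd(r,(\xi\cdot f))$ divides $r$, we get $\langle v^2\rangle=2l\le 2r$. This contradicts the hypothesis $\langle v^2\rangle>2r$, so no totally semistable wall exists.

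\textbf{Main obstacle.} The hard part is Step 1: justifying that the multiplicity of the isotropic factor in the decomposition $v=mv_1+nv_2$ is exactly $l$. This requires combining the equality of $f$-slopes with the primitivity of $v$. I would write $v_i$ in terms of $r_i$ and $(c_1(v_i)\cdot f)=r_i\,(\xi\cdot f)/r$, and use integrality to see that $l$ divides the relevant multiplicity. If that proves awkward, I would fall back on the weaker statement that suffices: $\langle v^2\rangle=2mn\langle v_1,v_2\rangle$, with $\min(m,n)=1$ and the other multiplicity at most $\rk v\le r$ (or $\le l$ when a rank-zero factor appears). This gives $\langle v^2\rangle\le 2r$ directly, which is all the lemma needs.
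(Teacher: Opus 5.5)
Your reduction to $v=\ell v_1+v_2$ with $v_1,v_2$ isotropic, $\langle v_1,v_2\rangle=1$ and $\langle v^2\rangle=2\ell$ is the paper's starting point too. However, the proposal has a genuine gap in exactly the one case where the hypothesis $l>1$ is needed. The bound $\ell\le r$ comes from $r=\ell r_1+r_2$, so it requires $r_1>0$, i.e.\ the factor carrying the multiplicity must have positive rank. Your shapes (a) and (b) simply omit the case $\rk v_1=0$. Citing the preceding discussion does not close this, because its list of rank-zero walls (those with $\langle v,u\rangle=l$) already presupposes that the case has been excluded. Your fallback dismisses the case with ``$\le l$ when a rank-zero factor appears'', for which you give no argument; in fact it holds only vacuously, because the case cannot occur. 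The missing idea is a divisibility argument. If $v_1=mf+a_1\varrho_X$, then
\[
1=\langle v_1,v_2\rangle=\langle v_1,v\rangle=m(\xi\cdot f)-ra_1\in l{\Bbb Z},
\]
so $l=1$, a contradiction. Hence $r_1>0$ and $r\ge \ell=\langle v^2\rangle/2$; this is the paper's proof.

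This is not a technicality. Your Step 2 never uses $l>1$ (the inequality $2l\le 2r$ holds for every $l$), yet the lemma is false for $l=1$. For example, $v=1-n\varrho_X=n(f-\varrho_X)+v({\cal O}_X(-nf))$ has $l=1$. Here $u=f-\varrho_X$ is isotropic with $\langle v,u\rangle=1$, so by Proposition \ref{prop:sswall}(ii) it gives a totally semistable wall at $s=-1$. Concretely, once $s>-1$ every $I_Z\otimes L$ surjects onto a negative-degree line bundle on a fibre lying in ${\cal F}_G^-$. Meanwhile $\langle v^2\rangle=2n$ is arbitrarily large.

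Separately, the statement you single out as the main obstacle is false in shape (b). If both factors have positive rank and $f$-slope $(\xi\cdot f)/r$, then $r_i=k_i r/l$ with $k_i\ge 1$ and $\ell k_1+k_2=l$, so $\ell<l$. You only need $\ell\le r$ there, and your fallback already supplies it.
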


\begin{proof}
Assume that there is a decomposition $v=\ell v_1+v_2$ such that 
$\langle v_1^2 \rangle=\langle v_2^2 \rangle=0$
and $\langle v_1,v_2 \rangle=1$, where
$v_i=r_i+\xi_i+a_i \varrho_X$ with $r_i \geq 0$.
If $r_1=0$, then we get
$\xi_1=mf$ ($m \in {\Bbb Z}_{>0}$) and
$(\xi \cdot \xi_1)-ra_1=\langle v_1,v \rangle=\langle v_1,v_2 \rangle=1$,
which implies $l=1$.
Hence $r_1>0$ and $r \geq \ell=\langle v^2 \rangle/2$.
Therefore there is no totally semistable wall.
\end{proof}

\begin{rem}
If $r_2=0$, then $\xi_2=m f$ $(m \in {\Bbb Z})$ and
$m(\xi_1 \cdot f)-r_1 a_2=1$. Hence
$l=\ell$.
\end{rem}

   
\subsubsection{An example for $X=C \times C$.}
Let $C$ be an elliptic curve with $\End(C) \cong {\Bbb Z}$
and
$X=C \times C$.
Let $\pi:C \times C \to C$ be a projection,
$f$ a fiber of $\pi$ and 
$H$ the $0$-section of $\pi$.
Then $(H^2)=(f^2)=0$ and $(H \cdot f)=1$.
We set $\xi:=\Delta-H-f$, where $\Delta$ is the diagonal.
Then $\xi \in N_H$, $(\xi^2)=-2$ and 
$\NS(X)={\Bbb Z}H+{\Bbb Z}f+{\Bbb Z}\xi$.
\begin{NB}
Let $\Gamma_n$ be the graph of $n:C \to C$.
Then $\Gamma_n=H+n^2 f+n\xi$.
\end{NB}

We set
$v=2+f+\xi-2\varrho_X$.
Then
$\langle v^2 \rangle=6$.
We shall give a chamber decomposition of 
$$
{\cal G}(v)=\{(x,y\xi) \mid x \ne 0 \}.
$$
By simple calculations, we see that there are three families of decompositions
$$
v=u_1^k+u_2^k,\;v=v_1^k+v_2^k,\; v=w_1^k+w_2^k,\; k \in {\Bbb Z}, 
$$
where 
\begin{equation}
\begin{split}
u_1^k:=& kf-\varrho_X,\;\; u_2^k:=2+(1-k)f+C-\varrho_X,\\
v_1^k:=& e^{kf+\xi}=1+(kf+\xi)-\varrho_X,\;\;v_2^k:= 1+(1-k)f-\varrho_X,\\
w_1^k:=& 1+(kf+\xi)-2\varrho_X,\;\;w_2^k:= e^{(1-k)f}=1+(1-k)f.
\end{split}
\end{equation}
Thus we have three families of walls $U_k,V_k,W_k$ defined by $u_1^k,v_1^k,w_1^k$ respectively. 
$$
U_k:x=-\frac{1}{k},\;V_{k}:y=\frac{2k-1}{2}x,\;
W_{k}:y=\frac{2k-1}{2}x+1.
$$
In particular there is no totally semistable wall and each wall 
corresponds to a Donaldson-Uhlenbeck type divisorial contraction.
We note that
\begin{equation}
\begin{split}
-\langle e^{xH+y\xi},v_1^k \rangle<-\langle e^{xH+y\xi},v_2^k \rangle
& \iff y<\frac{2k-1}{2}x.\\
-\langle e^{xH+y\xi},w_1^k \rangle<-\langle e^{xH+y\xi},w_2^k \rangle
& \iff y>\frac{2k-1}{2}x+1.
\end{split}
\end{equation}

\begin{NB}

$$
U_k:x=-\frac{1}{k}.
$$

$v_1^k$ defines a wall 
$$
V_{k}:y=\frac{2k-1}{2}x.
$$
\begin{equation}
-\langle e^{xH+y\xi},v_1^k \rangle<-\langle e^{xH+y\xi},v_2^k \rangle
\iff y<\frac{2k-1}{2}x.
\end{equation}

$w_1^k$ defines a wall 
$$
W_{k}:y=\frac{2k-1}{2}x+1.
$$
\begin{equation}
-\langle e^{xH+y\xi},w_1^k \rangle<-\langle e^{xH+y\xi},w_2^k \rangle
\iff y>\frac{2k-1}{2}x+1.
\end{equation}
\end{NB}

By using \eqref{eq:auto}, we shall describe chambers in $x<-\frac{1}{2}$. 
Assume that $x<-1$.
Then there are two kind of chambers.
Their closure intersect the line at infinity and 
they parameterizes Gieseker semistable sheaves (Proposition \ref{prop:unbounded}).
\begin{enumerate}
\item
${\cal C}_k$:$\frac{2k+1}{2}x+1<y<\frac{2k-1}{2}x$.
\begin{NB}
It is a triangle bounded by $W_{k+1}$, $V_k$ and the line at infinity.
We set
$D_1^+:=\{E \mid E_2 \subset E, E_2 \in {\cal M}_{(f,H)}^G(w_2^{k+1}) \}$.
$D_1^+$ is a ${\Bbb P}^1$-bundle over ${\cal M}_{(f,H)}^G(w_1^{k+1}) \times {\cal M}_{(f,H)}^G(w_2^{k+1})$.
It is contracted on $W_{k+1}$.

We set
$D_2^-:=\{ E \mid E_1 \subset E, E_1 \in {\cal M}_{(f,H)}^G(v_1^k) \}$.
It is contracted on $V_k$.
\end{NB}
We have 
${\cal M}_{(f,H)}^G(v)={\cal M}_{(H+p\xi)_f}(v)$ 
for $\frac{2k-1}{2}<p<\frac{2k+1}{2}$.

\item
${\cal C}_k'$:
$\frac{2k-1}{2}x<y<\frac{2k-1}{2}x+1$.
\begin{NB}
It is a triangle bounded by $U_1$, $V_k$ and $W_k$.
$D_2^+:=\{ E \mid E_2 \subset E, E_2 \in {\cal M}_{(f,H)}^G(v_2^k) \}$.
It is contracted on $V_k$.

$D_1^-:=\{ E \mid E_1 \subset E, E_1 \in {\cal M}_{(f,H)}^G(w_1^k) \}$.
$D_1^-$ is a ${\Bbb P}^1$-bundle over ${\cal M}_{(f,H)}^G(w_1^k) \times {\cal M}_{(f,H)}^G(w_2^k)$.
It is contracted on $W_k$.
\end{NB}
We have
${\cal M}_{(f,H)}^G(v)={\cal M}_{(H+\frac{2k-1}{2}\xi)_f}^G(v)$.
\end{enumerate}

For $-1<x<-\frac{1}{2}$, we use the following Fourier-Mukai transform to describe chambers.
Let ${\cal L}_{-f+\xi}$ be a line bundle with $c_1({\cal L}_{-f+\xi})=-f+\xi$.
Then
$$
\Xi:=\Phi_{X \to X}^{{\cal P}_1^{\vee}[2]} \circ \otimes {\cal L}_{-f+\xi} \circ D_X \circ 
(\Phi_{X \to X}^{{\cal P}_1^{\vee}[2]})^{-1}
$$
 induces an isomorphism
$$
{\cal M}_{x,y}(v) \stackrel{\sim}{\longrightarrow} {\cal M}_{\frac{-x}{1+2x},\frac{1+x-y}{1+2x}}(v),
$$
where ${\cal M}_{x,y}(v):={\cal M}_{(f,H)}^G(v)$ with $v(G)=e^{xH+y \xi}$.
Hence
$\Xi({\cal C}_k), \Xi({\cal C}_k')$ $(k \in {\Bbb Z})$ are chambers in $-1<x<-\tfrac{1}{2}$
 and they are described as follows:
$$
\Xi({\cal C}_k):
x>-\frac{1}{2},\;y<\frac{2k+1}{2}x+1,\; y<\frac{2k-1}{2}x,
$$
$$
\Xi({\cal C}_k'):
x<-1,\;y>\frac{2k+1}{2}x+1,\; y<\frac{2k-3}{2}x.
$$

\begin{NB}
$\Xi({\cal C}_k)$: triangle with the virtices
$$
V_{k} \cap W_{k+1}=\{(-1,\tfrac{1-2k}{2})\},\;
V_{k} \cap U_2=\{(-\tfrac{1}{2},\tfrac{1-2k}{4})\},\;
U_2 \cap W_{k+1}=\{(-\tfrac{1}{2},\tfrac{3-2k}{4})\}.
$$
\end{NB}

\begin{NB}
$\Xi({\cal C}_k')$: triangle with the virtices
$$
V_{k-1} \cap W_{k+1}=\{(-\tfrac{1}{2},\tfrac{3-2k}{4})\},\;
V_{k-1} \cap U_1=\{(-1,\tfrac{3-2k}{2})\},\;
U_1 \cap W_{k+1}=\{(-1,\tfrac{1-2k}{2})\}.
$$
\end{NB}

\begin{NB}
$\Xi:{\cal M}(v_1^k) \cong {\cal M}(w_2^{k+1})$.

For $E \in {\cal M}_{H_f}(v_1^k)$,
we have an exact sequence
$$
0 \to F_1 \to \Xi(E) \to F_2 \to 0
$$ 
$F_1 \in {\cal M}_{H_f}(f-\varrho_X)$ and
$F_2 \in {\cal M}_{H_f}(1+(k-1)f+\xi-\varrho_X)$.
$\Xi(E)$ is not torsion free.

$\Xi$ acts on ${\cal G}(v)$ as an involution with $\Xi_{|U_1}=1_{U_1}$ and
$\Xi(V_k)=W_{k+1}$.
$V_k \cap W_{k+1}=\{(-1,\frac{1-2k}{2})\}$.
$V_k \cap U_2=\{(-\frac{1}{2},\frac{1-2k}{4})\}$.
$W_{k+1} \cap U_2=\{(-\frac{1}{2},\frac{3-2k}{4})\}$.
\end{NB}

\begin{NB}
For $v'=2+f+\xi-\varrho_X$, walls are defined by $e^{kf+\xi}$.
\end{NB}

\begin{NB}
For $H+\frac{2k-1}{2}\xi$,
$((kf+\xi) \cdot (H+\frac{2k-1}{2}\xi))=((1-k)f \cdot (H+\frac{2k-1}{2}\xi))$.
\end{NB}

\begin{NB}

\section{}

Assume that 
$X':=M_H^G(0,r_1 f,d_1)$ is a fine moduli space,
where $\rk G d_1-r_1 (c_1(G) \cdot f)=0$.
Then $F$ is $(G,H)$-semistable if and only if 
$\Hom_{p_{X'}}(F,{\cal P}[1])$ is semistable.
\begin{NB2}
$\Hom(F,{\cal P}_{|X \times \{ x' \}}[2])=
\Hom({\cal P}_{|X \times \{ x' \}},F)^{\vee}=0$.
\end{NB2}

\begin{lem}[{\cite[Lem. 1.5, Rem. 1.6]{Y:elliptic}}]\label{lem:G-stability}
Let $A$ be a $G$-twisted stable sheaf supported on a fiber.
\begin{enumerate}
\item[(1)]
Assume that 
$\chi(G,A)<0$. Then $\Phi_{X \to Y}^{{\cal P}^{\vee}}(A)[2]$ is a $G'$-twisted stable sheaf. 
\item[(2)]
Assume that $\chi(G,A)>0$. Then $\Phi_{X \to Y}^{{\cal P}^{\vee}}(A)[1]$ is a $G'$-twisted stable sheaf. 
\end{enumerate}
\end{lem}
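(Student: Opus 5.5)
We reduce the statement to a fiberwise statement on elliptic curves plus a torsion-freeness argument, as in \cite[Lem. 1.5, Rem. 1.6]{Y:elliptic}. By symmetry (replacing $A$ by $A^{\vee}[1]$ and using Grothendieck duality, which exchanges the two cases and the sign of $\chi(G,\cdot)$), it suffices to prove (2), with (1) handled by the same argument with the roles of $\Hom$ and $\Ext^2$ interchanged.

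\textbf{Step 1: the transform is a sheaf.} Let $A$ be $G$-twisted stable on a fiber with $\chi(G,A)>0$. For $y \in Y$ the object ${\cal P}_{|X \times \{y\}}$ is a $G$-twisted stable fiber sheaf with $\chi(G,{\cal P}_{|X\times\{y\}})=0$ (normalization \eqref{eq:beta}). We compare twisted slopes $\chi(G,\cdot)/(c_1(\cdot)\cdot H)$. Since $A$ has positive slope, stability gives $\Hom({\cal P}_{|X\times\{y\}},A)^{\vee}=\Ext^2(A,{\cal P}_{|X\times\{y\}}(K_X))$, so that $\Hom(A,{\cal P}_{|X \times \{y\}}(K_X))=0$ unless $A \cong {\cal P}_{|X\times\{y\}}(K_X)$, which is excluded by slope. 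Hence $\Ext^i(A,{\cal P}_{|X\times \{y\}})$ vanishes except for $i=1$. By base change, $\Phi_{X\to Y}^{{\cal P}^{\vee}}(A)$ is therefore concentrated in degree $1$, and $\Phi_{X \to Y}^{{\cal P}^{\vee}}(A)[1]$ is a sheaf supported on the corresponding fiber of $Y$.

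\textbf{Step 2: stability.} Suppose $B:=\Phi(A)[1]$ has a destabilizing quotient $B \to B_2$ with respect to $G'$, where $\chi(G',\cdot)=\chi(G,\Phi^{-1}(\cdot))$ as in \S\ref{subsect:FM}. We may take $B_2$ to be $G'$-stable. Applying the inverse transform $\Phi_{Y\to X}^{{\cal Q}^{\vee}[1]}$ (suitably shifted and twisted by $K_X$) to $B_2$, Step 1 on the $Y$ side, with slopes reversed, shows that the inverse transform of $B_2$ is again a shifted sheaf $A_2$ on the fiber. The morphism $B\to B_2$ then corresponds to a nonzero map $A \to A_2$ with
\[
\chi(G,A_2)/(c_1(A_2)\cdot H)\le \chi(G,A)/(c_1(A)\cdot H)
\]
(using Lemma \ref{lem:f-slope} and the linearity of the cohomological transform, Lemma \ref{lem:coh-FM}). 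This contradicts the stability of $A$ unless $A \cong A_2$. Purity of $B$ follows the same way: a 0-dimensional subsheaf ${\Bbb C}_{y}\subset B$ would give a nonzero map from ${\cal P}$-type objects to $A$, which Step 1 excludes.

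\textbf{Main obstacle.} The hard part is to ensure that subobjects and quotients of $B$ transform back to sheaves in a single degree. This requires controlling $\Hom({\cal Q}_{|\{x\}\times Y},B_2)$, and the key point is that the condition defining $B_2$ as a destabilizing quotient pins its twisted slope to the correct side of $\chi(G',{\cal Q}_{|\{x\}\times Y})$. I would settle this first by an explicit slope computation on the fiber, reducing via the restriction to the smooth elliptic-curve case where the Fourier–Mukai transform is known to preserve stability.
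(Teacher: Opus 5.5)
Step 1 is wrong as written, and Step 2 depends on it.

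\textbf{Step 1.} The derived fibre of $\Phi_{X\to Y}^{{\cal P}^{\vee}}(A)$ at $y$ is ${\bf R}\Hom({\cal P}_{|X\times\{y\}},A)$, not ${\bf R}\Hom(A,{\cal P}_{|X\times\{y\}})$. Your pointwise claim that only $\Ext^1$ survives is false in either direction. For two sheaves supported on fibres,
\[
\chi({\cal P}_{|X\times\{y\}},A)=-(c_1({\cal P}_{|X\times\{y\}})\cdot c_1(A))=0 .
\]
So if two of the three groups vanished, the third would too, and $\Phi(A)$ would be $0$. Put differently, a non-zero object whose derived fibres all sit in one degree is a shifted vector bundle, and such an object cannot be supported on a fibre.

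Concretely, in case (2) on a smooth fibre, $\Hom({\cal P}_{|X\times\{y\}},A)\neq 0$ for every $y$ over the fibre of $A$, since it is a map between stable bundles of increasing slope. So the derived fibres occupy degrees $0$ and $1$; in case (1) they occupy degrees $1$ and $2$.

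What stability and Serre duality actually give is one-sided:
\begin{itemize}
\item in case (2), $\Ext^2({\cal P}_{|X\times\{y\}},A)=\Hom(A,{\cal P}_{|X\times\{y\}}(K_X))^{\vee}=0$ (the vanishing you did prove), which kills $H^{\ge 2}(\Phi(A))$ by base change for the top cohomology;
\item in case (1), $\Hom({\cal P}_{|X\times\{y\}},A)=0$.
\end{itemize}

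The missing ingredient is the bound from the other side, and it comes from the support, not from further vanishing. All cohomology sheaves of $\Phi(A)$ are supported on one fibre of $Y$. Let $H^m$ be the lowest non-zero one and take $y\in\operatorname{Supp}H^m$. Then $\mathrm{Tor}_j(H^m,k(y))\neq 0$ for $j=\mathrm{pd}_yH^m\ge 1$. This term survives the base-change spectral sequence: differentials out of it land below the lowest row, and differentials into it come from $\mathrm{Tor}_{\ge 3}=0$. Hence $H^{m-j}({\bf R}\Hom({\cal P}_{|X\times\{y\}},A))\neq 0$. This forces $m\ge 1$ in case (2) and $m\ge 2$ in case (1), which together with the top-degree bounds is exactly the sheaf statement.

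\textbf{Step 2.} Granting a corrected Step 1, this is the standard argument, but three points need fixing.

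(a) With the normalization $\chi(G,{\cal P}_{|X\times\{y\}})=0$ of the statement, your $G'$ defined by $\chi(G',\cdot)=\chi(G,\Phi^{-1}(\cdot))$ has rank $\pm\chi(G,{\cal P}_{|X\times\{y\}})=0$. Then $\chi(G',\cdot)/(c_1(\cdot)\cdot H')$ ignores the Euler characteristic of fibre sheaves and cannot be the stability in the conclusion. You must fix a positive-rank $G'$ and check that its slopes order the transforms as the $G$-slopes order the originals. Do this with Lemma \ref{lem:coh-FM} for $r=p=0$, i.e.\ $e^\beta(qf+D+a\varrho_X)\mapsto e^{\beta'}(ar_0f'-D'-\frac{q}{r_0}\varrho_Y)$. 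Lemma \ref{lem:f-slope} is vacuous for rank-$0$ objects.

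(b) The shift-matching you flag as the main obstacle then amounts to $\mu'(B_2)\le\mu'(B)<\mu'({\cal Q}_{|\{x\}\times Y})$, and this does follow from that computation.

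(c) Purity of $B$ is not a consequence of Step 1. Since ${\Bbb C}_y\cong\Phi({\cal P}_{|X\times\{y\}})[2]$, one has $\Hom({\Bbb C}_y,B)=\Ext^{-1}({\cal P}_{|X\times\{y\}},A)=0$, a negative $\Ext$. The degree-$0$ group $\Hom({\cal P}_{|X\times\{y\}},A)$, by contrast, is typically non-zero in case (2).
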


\begin{rem}\label{rem:G-stability}
\begin{enumerate}
\item[(1)]
By \eqref{eq:PQ}, Lemma \ref{lem:G-stability} implies that
$A \in {\bf D}(X)$ is a $G$-twisted stable sheaf supported on a fiber
such that $\chi(G,A)<0$
if and only if $A':=\Phi_{X \to Y}^{{\cal P}^{\vee}}(A)[2]$
is a $G'$-twisted stable sheaf supported on a fiber such that $\chi(G',A')>0$.
\item[(2)]
$G'$-twisted stability is also defined for 0-dimensional sheaves and 
$G$-twisted stable sheaf $A$ supported on a fiber with $\chi(G,A)=0$
corresponds to a structure sheaf of a point via $\Phi_{X \to Y}^{{\cal P}^{\vee}[2]}$.
\end{enumerate}
\end{rem}

\begin{rem}\label{rem:G-stability2}
Let $A$ be an object of ${\bf D}(X)$.
\begin{enumerate}
\item[(1)]
$A$ is a $G$-twisted stable sheaf supported on a fiber
such that $\chi(G,A)<0$
if and only if $A'':=\Phi_{X \to Y}^{{\cal P}^{\vee}[1]}(A)^{\vee}$
is a $G''$-twisted stable sheaf supported on a fiber such that $\chi(G'',A'')<0$.
\item[(2)]
$A$ is a $G$-twisted stable sheaf supported on a fiber
such that $\chi(G,A)>0$
if and only if $A'':=\Phi_{X \to Y}^{{\cal P}^{\vee}[1]}(A)^{\vee}[1]$
is a $G''$-twisted stable sheaf supported on a fiber such that $\chi(G'',A'')>0$.
\end{enumerate}
\end{rem}

We have an order
$$
{\Bbb C}_x[-1]<A_1<{\cal P}_{|X \times \{ y \}} < A_2<{\Bbb C}_x
$$
where $A_1,A_2$ are $\alpha$-twisted stable sheaves with 
$\chi(G,A_1)<0<\chi(G,A_2)$.

\section{}

For a coherent sheaf $E$ with $\rk E>0$, we set
$$
\mu_f^\beta (E):=((c_1(E)-\rk E \beta) \cdot f)/\rk E.
$$
For $E$, we set
$$
\mu_{\min,f}^\beta(E):=\min \{\mu_f^\beta(F) \mid
\text{ $F$ is a quotient of $E$ with $\rk F>0$} \}.
$$

For $t \gg s \gg 0$,
we set $\omega:=t^{-1}H+ts f$.
\begin{NB2}
$(s,t)=(\frac{n}{r_0^2},r_0^2 m)$.
\end{NB2}

We note that
\begin{equation}
\begin{split}
\widehat{Z}_{(\beta,\omega_{s,t})}(E)=&
\langle e^{\beta+\sqrt{-1}(t^{-1}H+st f)},e^\beta(r+\eta^\beta+a^\beta) \rangle\\
=&
sr-a^\beta+\sqrt{-1}((t^{-1}H+stf) \cdot \eta^\beta),
\end{split}
\end{equation}
where $v(E)=e^\beta(r+\eta^\beta+a^\beta \rho_X)$.

For $v=e^\beta(r+\eta^\beta+a^\beta \rho_X), v_1=e^\beta(r_1+\eta_1^\beta+a_1^\beta \rho_X)$,

\begin{equation}\label{eq:slope}
\begin{split}
& \frac{a^\beta-sr}{st(\eta^\beta \cdot f)+t^{-1}(\eta^\beta \cdot H)}-
\frac{a_1^\beta-sr_1}{st(\eta_1^\beta \cdot f)+t^{-1}(\eta_1^\beta \cdot H)}\\
=& \frac{s^2 t(r_1(\eta^\beta \cdot f)-r(\eta_1^\beta \cdot f))+
st((-a_1^\beta (\eta^\beta \cdot f)-(-a)(\eta_1^\beta \cdot f))}
{(st(\eta^\beta \cdot f)+t^{-1}(\eta^\beta \cdot H))(st(\eta_1^\beta \cdot f)+
t^{-1}(\eta_1^\beta \cdot H))}\\
& +\frac{s t^{-1}(r_1(\eta^\beta \cdot H)-r(\eta_1^\beta \cdot H))+
t^{-1}((-a_1^\beta (\eta^\beta \cdot H)-(-a)(\eta_1^\beta \cdot H))}
{(st(\eta^\beta \cdot f)+t^{-1}(\eta^\beta \cdot H))(st(\eta_1^\beta \cdot f)+t^{-1}(\eta_1^\beta \cdot H))}.
\end{split}
\end{equation}

\begin{equation}
\begin{split}
& \frac{a^\beta-sr}{st(\eta^\beta \cdot f)+t^{-1}(\eta^\beta \cdot H)}-
\frac{a_1^\beta-sr_1}{nt(\eta_1^\beta \cdot f)+t^{-1}(\eta_1^\beta \cdot H)} >0 \;(t \gg s \gg 0)\\
\iff & 
(r_1(\eta^\beta \cdot f)-r(\eta_1^\beta \cdot f),-a_1^\beta(\eta^\beta \cdot f)+a(\eta_1^\beta \cdot f),
r_1(\eta^\beta \cdot H)-r(\eta_1^\beta \cdot H))>0
\end{split}
\end{equation}

\begin{prop}
Let $E$ be an object of ${\bf D}(X)$ with
$v(E)=e^\beta(r+\eta^\beta+a^\beta \rho_X)$. 
Assume that $(\eta^\beta \cdot f)>0$ and
$E$ is $\widehat{\sigma}_{(\beta,\omega_{s,t})}$-stable for all $t \gg s \gg 0$.
Then
$E \in \Coh(X)$ and one of the following holds for all subsheaf $E_1$ of $E$ with
$v(E_1)=e^\beta(r_1+\eta_1^\beta+a_1^\beta \rho_X)$:
\begin{enumerate}
\item
$(\eta_1^\beta \cdot f)<\frac{r_1}{r}(\eta^\beta \cdot f)$
\item
$(\eta_1^\beta \cdot f)=\frac{r_1}{r}(\eta^\beta \cdot f)$ and 
$a_1^\beta <\frac{r_1}{r}a^\beta $
\item
$(\eta_1^\beta \cdot f)=\frac{r_1}{r}(\eta^\beta \cdot f)$, 
$a_1^\beta =\frac{r_1}{r}a^\beta $ and
$(\eta_1^\beta \cdot H)<\frac{r_1}{r}(\eta^\beta \cdot H)$.
\end{enumerate} 
\end{prop}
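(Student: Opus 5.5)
We want: if $E$ is $\widehat{\sigma}_{(\beta,\omega_{s,t})}$-stable for all $t \gg s \gg 0$ with $(\eta^\beta\cdot f)>0$, then $E\in\Coh(X)$ and every subsheaf satisfies (i), (ii) or (iii). The plan is to read everything off the large volume limit, using the slope expansion \eqref{eq:slope} and its consequence: for $t\gg s\gg 0$, the sign of the $\widehat{Z}$-phase difference is governed by the lexicographic sign of the triple
$$
\bigl(r_1(\eta^\beta\cdot f)-r(\eta_1^\beta\cdot f),\;-a_1^\beta(\eta^\beta\cdot f)+a^\beta(\eta_1^\beta\cdot f),\;r_1(\eta^\beta\cdot H)-r(\eta_1^\beta\cdot H)\bigr).
$$

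\textbf{Step 1: $E\in\Coh(X)$.} We use the tilted heart ${\cal A}_{(\beta,\omega)}$ with torsion pair $({\cal T},{\cal F})$ defined by $\omega$-slope relative to $\beta$. Since $\omega=t^{-1}H+stf$ with $st\gg t^{-1}$, this slope is essentially the $f$-slope $\mu_{\beta,f}$. Write $E$ as $H^{-1}(E)[1]\to E\to H^0(E)$ with $H^{-1}(E)\in{\cal F}$, $H^0(E)\in{\cal T}$. Suppose $H^{-1}(E)\neq 0$. Then $H^{-1}(E)[1]\subset E$ in the heart, and $H^{-1}(E)$ has nonpositive $\omega$-slope. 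Its $\widehat{Z}$ therefore has imaginary part $\leq 0$ up to lower order terms, and its phase tends to $1$ as $t\gg s$. Meanwhile $E$, with $(\eta^\beta\cdot f)>0$, has phase bounded away from $1$, because its imaginary part is about $st(\eta^\beta\cdot f)$ and dominates the real part $sr-a^\beta$ once $t\gg s$. This contradicts stability.

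The borderline cases need separate treatment: $H^{-1}(E)$ of $f$-slope exactly $0$, and fiber-supported pieces. I would bound them using the Bogomolov-type inequality (Lemma \ref{lem:Bogomolov}) together with the finiteness statements (Lemma \ref{lem:wall1}, Lemma \ref{lem:bddv}), so that a single choice of $t\gg s\gg 0$ works uniformly. This uniformity is the main obstacle. I would try first to restrict to the finitely many Mukai vectors allowed by Lemma \ref{lem:wall1}.

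\textbf{Step 2: the subsheaf conditions.} Now $E\in\Coh(X)$ and $E\in{\cal T}$. Let $E_1\subset E$ be a subsheaf. Its image in the heart is handled by the torsion pair: replace $E_1$ by its ${\cal T}$-part, i.e. the maximal subsheaf with all quotients of positive $\mu_{\beta,f}$. If $E_1$ itself has a quotient of nonpositive slope, then the $f$-slope of $E_1$ already satisfies (i), via the HN filtration with respect to $\mu_f$. Otherwise $E_1\in{\cal T}$, so $E_1\to E$ is a subobject in ${\cal A}$, and stability gives $\phi(E_1)<\phi(E)$ for all $t\gg s\gg0$.

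By the lexicographic criterion, the triple for $(E,E_1)$ is then positive, with the sign conventions chosen so that stability means this. This is exactly the trichotomy:
\begin{enumerate}
\item the first entry gives (i);
\item if the first entry vanishes, the second reduces, using $(\eta_1^\beta\cdot f)=\frac{r_1}{r}(\eta^\beta\cdot f)$, to $a_1^\beta<\frac{r_1}{r}a^\beta$, which is (ii);
\item the third entry gives (iii).
\end{enumerate}

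Torsion subsheaves ($r_1=0$) fall into the same formula. Fiber sheaves have $(\eta_1^\beta\cdot f)=0$, so the first entry vanishes and the remaining entries decide, giving (ii) or (iii). Subsheaves with $(\eta_1^\beta\cdot f)>0$ and $r_1=0$ give (i) directly.
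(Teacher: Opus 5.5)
\textbf{Verdict.} Your architecture is the paper's: view $H^{-1}(E)[1]\to E\to H^0(E)$ as a short exact sequence in ${\cal A}_{(\beta,\omega_{s,t})}$, turn subsheaves lying in ${\cal T}$ into subobjects, and read (i)--(iii) off the lexicographic sign of the triple. However, Step~1 as you argue it has a genuine gap in the main case. The fix uses only the triple you already wrote down.

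\textbf{The gap in Step 1.} Write $v(H^{-1}(E)[1])=e^\beta(r_1+\eta_1^\beta+a_1^\beta\varrho_X)$, so $r_1<0$. Membership $H^{-1}(E)\in{\cal F}$ for all $t\gg s\gg0$ only gives $(\eta_1^\beta\cdot f)\ge 0$.
\begin{itemize}
\item When $(\eta_1^\beta\cdot f)>0$, i.e.\ $H^{-1}(E)$ has strictly negative $\mu_{\beta,f}$, the term $\mathrm{Im}\,\widehat{Z}(H^{-1}(E)[1])\approx st(\eta_1^\beta\cdot f)$ dominates $\mathrm{Re}\,\widehat{Z}(H^{-1}(E)[1])=sr_1-a_1^\beta$. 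So this phase tends to $\tfrac12$, exactly like $\phi(E)$, and ``tends to $1$'' versus ``bounded away from $1$'' gives no contradiction.
\item The case you set aside as borderline, $(\eta_1^\beta\cdot f)=0$, is the only one where your phase claim is actually true.
\end{itemize}
What is missing is the next-order comparison, which is the first entry of your triple. Since $r_1<0$, $(\eta^\beta\cdot f)>0$, $(\eta_1^\beta\cdot f)\ge0$, and $r>0$ (implicit in the statement, which divides by $r$), we get $r_1(\eta^\beta\cdot f)-r(\eta_1^\beta\cdot f)<0$. Hence $H^{-1}(E)[1]$ destabilizes $E$; this is exactly how the paper argues. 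Equivalently, for $s\gg0$ one has $\mathrm{Re}\,\widehat{Z}(E)=sr-a^\beta>0$ and $\mathrm{Re}\,\widehat{Z}(H^{-1}(E)[1])<0$, so $\phi(E)<\tfrac12<\phi(H^{-1}(E)[1])$.

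\textbf{The uniformity obstacle is not real.} $E$ is fixed, so $H^{-1}(E)[1]$, and in Step~2 each $E_1$, is a single object. The hypothesis ``stable for all $t\gg s\gg0$'' is applied to one pair of Mukai vectors at a time. Lemma~\ref{lem:Bogomolov}, Lemma~\ref{lem:wall1} and Lemma~\ref{lem:bddv} concern $(f,G,H)$-semistable sheaves and play no role here.

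\textbf{Two slips in Step 2.} Otherwise Step~2 agrees with the paper.
\begin{itemize}
\item ``If $E_1$ has a quotient of nonpositive slope then $E_1$ already satisfies (i)'' is false as stated. Take $E_1$ to be a direct sum of a sheaf of large $f$-slope and one of negative $f$-slope. What is true, and suffices, is this: the first entry $\rk E_1(\eta^\beta\cdot f)-r(\eta_1^\beta\cdot f)$ is additive in $E_1$ and strictly positive on every nonzero $Q\in{\cal F}$. So passing from $E_1$ to its ${\cal T}$-part $E_1'$ strictly lowers it when $E_1'\neq E_1$. Hence a subsheaf violating (i), or attaining equality there, yields a subobject of $E$ in the heart that violates (i). The paper does this by choosing $E_1$ with $\mu^\beta_{\min,f}(E_1)>\mu^\beta_f(E/E_1)\ge0$. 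It then notes that equal-slope subsheaves automatically satisfy $\mu^\beta_{\min,f}(E_1)=\mu^\beta_f(E)>0$.
\item A torsion subsheaf with $r_1=0$ and $(\eta_1^\beta\cdot f)>0$ does not ``give (i)''. It violates (i), since its first entry is $-r(\eta_1^\beta\cdot f)<0$, and your criterion shows no such subsheaf exists.
\end{itemize}
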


\begin{proof}
Let $E$ be a $\widehat{\sigma}_{(\beta,\omega_{s,t})}$-semi-stable object with
 $v(E)=e^\beta(r+\eta^\beta+a^\beta \varrho_X)$
with $(\eta^\beta \cdot f)>0$.
Assume that $H^{-1}(E) \ne 0$.
We set $v(H^{-1}(E)[1]):=e^\beta(r_1+\eta^\beta_1 +a_1^\beta \varrho_X)$.
Then $r_1<0$ and $(\eta^\beta_1 \cdot (H+t^2 s f)) \geq 0$ for all $t \gg s \gg 0$.
In particular $(\eta^\beta_1 \cdot f) \geq 0$.
Then $r_1(\eta^\beta \cdot f)-r(\eta^\beta_1 \cdot f)<0$, which implies $E$ is not semi-stable.
Therefore $E$ is a sheaf.

Assume that there is a subsheaf $E_1$ of $E$ such that
$v(E_1)=e^\beta(r_1+\eta_1^\beta+a_1^\beta \rho_X)$ with 
$r_1<r$ and
$(\eta_1^\beta \cdot f) > \frac{r_1}{r}(\eta^\beta \cdot f)$.
We set $E_2:=E/E_1$.
We may assume that
$\mu_{\min, f}^\beta(E_1)>\mu_f^\beta(E_2) \geq 0$.
Then we have $E_1 \in {\cal A}_{(\beta,\omega_{s,t})}$ for $t \gg s \gg 0$.
Hence we have an exact sequence 
\begin{equation}
0 \to E_1 \to E \to E_2 \to 0
\end{equation}
in ${\cal A}_{(\beta,\omega_{s,t})}$ ($t \gg s \gg 0$).
\begin{NB2}
Since $E \in {\cal A}_{(\beta,\omega_{s,t})}$ for $t \gg s \gg 0$,
$(\eta^\beta_2 \cdot(H+st^2 f))>0$ for $t \gg s \gg 0$.
Thus $(\eta^\beta_2 \cdot f) \geq 0$.
If $E$ is not $f$-semi-stable, then by taking the Harder-Narashimhan filtration
of $E$, 
we may assume that $\mu_{\min, f}^\beta(E_1)>\mu_f^\beta(E_2) \geq 0$.
Then we have $E_1 \in {\cal A}_{(\beta,\omega_{s,t})}$,
\end{NB2}
By $\widehat{\sigma}_{(\beta,\omega_{s,t})}$-semi-stability of $E$,
$r_1 (\eta^\beta \cdot f)-r(\eta^\beta_1 \cdot f) \leq 0$, which is a contradiction.
Therefore $(\eta_1^\beta \cdot f) \leq  \frac{r_1}{r}(\eta^\beta \cdot f)$
for all subsheaves $E_1$ with $v(E_1)=e^\beta(r_1+\eta_1^\beta+a_1^\beta \rho_X)$ and 
$r_1<r$.
Assume there is a subsheaf $E_1$ of $E$ such that
$v(E_1)=e^\beta(r_1+\eta_1^\beta+a_1^\beta \rho_X)$ with 
$r_1<r$ and
$(\eta_1^\beta \cdot f) = \frac{r_1}{r}(\eta^\beta \cdot f)$.
We set $E_2:=E/E_1$.
Then $\mu_{\min, f}^\beta(E_1)=\mu_f^\beta (E)>0$ and 
$E_1 \in {\cal A}_{(\beta,\omega_{s,t})}$ for $t \gg s \gg 0$.
Hence we have an exact sequence 
\begin{equation}
0 \to E_1 \to E \to E_2 \to 0
\end{equation}
in ${\cal A}_{(\beta,\omega_{s,t})}$ ($t \gg s \gg 0$).
The we also see that (ii) or (iii) holds.
\end{proof}

\begin{rem}
If $E$ satisfies the conditions, then $E \in {\cal A}_{(\beta,\omega_{s,t})}$ for $t \gg s \gg 0$.
\end{rem}

Assume that $\pi:X \to C$ is an elliptic abelian surface.
Then $(0,f,0)$ defines a fibration $M_H(v) \to S^{\ell} C$.

Mukai vectors $u$ such that
$\langle u^2 \rangle=0$ and 
$0 < \langle v,u \rangle \leq \ell$,
$0<\langle u^2 \rangle$ and $2\langle u^2 \rangle+1 \leq \langle v,u \rangle \leq \ell$.

We set $X':=M_H(0,f,-1)$.
Let ${\cal P}$ be a universal family on $X \times X'$.
Since $\Phi_{X \to X'}^{{\cal P}^{\vee}[1]}({\cal O}_X)$ is a line bundle on $X'$, we may assume that
$\Phi_{X \to X'}^{{\cal P}^{\vee}[1]}({\cal O}_X) \cong {\cal O}_{X'}$.
Then we see that
\begin{equation}
\begin{split}
\Phi_{X \to X'}^{{\cal P}^{\vee}[1]}(1)=& 1\\
\Phi_{X \to X'}^{{\cal P}^{\vee}[1]}(H)=& 1+H\\
\Phi_{X \to X'}^{{\cal P}^{\vee}[1]}(f)=&f\\
\Phi_{X \to X'}^{{\cal P}^{\vee}[1]}(\varrho_X)=& f+\varrho_{X'}. 
\end{split}
\end{equation}

Since $X' \cong X$, we regard $\Phi_{X \to X'}^{{\cal P}^{\vee}[1]}$ as an autoequivalence of ${\bf D}(X)$.

\begin{NB2}
$\Phi_1:=(T_{f} \circ \Phi_{X \to X'}^{{\cal P}^{\vee}[1]} \circ \Phi_{X \to X'}^{{\cal P}^{\vee}[1]})^{-1}$
is 
\begin{equation}
\begin{split}
\Phi_1(1)=& 1-f\\
\Phi_1(H)=& -2+H+2f-\varrho_X \\
\Phi_1(f)=&f \\
\Phi_1 (\varrho_X)=& -2f+\varrho_{X} 
\end{split}
\end{equation}

By the action of $\Phi_1$,
$\Phi_1(e^{sH})=(1-2s)e^{\frac{s}{1-2s}H}$.
We set $\varphi(s):=\frac{s}{1-2s}$.
Then $(-\infty,-\frac{1}{2})$ is the fundamental domain.
$x=0$ is the fixed point of $\varphi$.
\end{NB2}

We set
$\Phi_r:=(\Phi_{X \to X'}^{{\cal P}^{\vee}[1]})^{-r} \circ T_{-af}$.
Then we see that
\begin{equation}
\begin{split}
\Phi_r(1)=& 1-a f\\
\Phi_r(H)=& -r+H+arf-a\varrho_X \\
\Phi_r(f)=&f \\
\Phi_r (\varrho_X)=& -rf+\varrho_{X}. 
\end{split}
\end{equation}

We set $v:=(r,kf,-a)$. Then
$\Phi_r(v)=v$.

Assume that $a \geq 1$. Then $v_1=(0,rf,-1)$ satisfies
$\langle v,v_1 \rangle=2r$ and $\langle (v-v_1)^2 \rangle=2r(a-1)$.
Hence $v_1$ defines a wall $s=-\frac{1}{r}$. 
We set $\varphi_r (s)=\frac{s}{1-rs}$.
Then $\Phi_r(e^{sH})=(1-rs) e^{\varphi_r (s) H-af}$.
$\varphi_r$ is regarded as an automorphism of ${\Bbb P}^1={\Bbb R} \cup \{ \infty \}$. 
For any chamber $I$, there is an integer $n$ such that $\varphi_r^n(I) \subset (-\infty,-\frac{1}{r})$. 

$x=\varphi_r^n(-\frac{1}{r})$ $(n \in {\Bbb Z})$ are walls for $v$.
Thus there are infinitely many walls.

Assume that $v=(r,kf,-a)$ is primitive, that is, $\gcd(r,k,a)=1$.

Assume that $a>r \geq 3$.
Then $\Phi_r$ induces a birational map of $M(r,kf,-a)$.

\end{NB}

\end{document}